\definecolor{grey}{rgb}{0.5,0.5,0.5}
\definecolor{red}{rgb}{1,0,0}
\newcommand\grey[1]{  }
\newtheorem{theorem}{Theorem}
\newtheorem*{main*}{Main Theorem}
\newtheorem*{theorem*}{Theorem}
\newtheorem{lemma}[theorem]{Lemma}
\newtheorem{proposition}[theorem]{Proposition}
\theoremstyle{definition}
\newtheorem{definition}[theorem]{Definition}
\newtheorem{remark}[theorem]{Remark}
\newtheorem*{remark*}{Remark}
\newtheorem*{example*}{Example}
\def\Z{{\Bbb Z}}
\def\R{{\Bbb R}}
\def\RP{{\Bbb R}\!{\rm P}}
\def\N{{\bf N}}
\def\NN{{\bf NN}}
\def\C{{\Bbb C}}
\def\c{{c}}
\def\k{\mathfrak{k}}
\def\eps{{\varepsilon}}
\def\D{{\bf D}}
\def\Q{{\bf Q}}
\def\N{{\bf N}}
\def\i{{\bf i}}
	\def\y{{\bf y}}
\def\aa{\dot{a}}
\def\j{{\bf j}}
\def\int{{\rtimes}}
\def\J{{\bf J}}
	\def\JJ{{\dot{\bf J}}}
\def\I{{\bf I}}
\def\a{{\bf{{a}}}}
\def\b{{\bf b}}
\def\bb{{\dot{b}}}
\def\e{{\bf e}}
\def\II{{\dot{\bf I}}}
\def\f{{\bf f}}
	\def\x{{\bf x}}
\def\d{{\bf d}}
\def\dd{\dot{d}}
\def\H{{\bf H}}
\def\dist{{\operatorname{dist}}}
\def\dim{{\operatorname{dim}}}
\def\dist{\operatorname{dist}}
\def\eps{{\varepsilon}}
\def\diag{\mathrm{diag}}
\renewcommand\kappa\varkappa
\def\eps{\varepsilon}
\renewcommand\Re{\mathrm{Re}\,}
\renewcommand\Im{\mathrm{Im}\,}
		\def\Z{{\Bbb Z}}
		\def\R{{\Bbb R}}
		\def\RP{{\Bbb R}\!{\rm P}}
		\def\N{{\bf N}}
		\def\C{{\Bbb C}}
		\def\c{{\bf c}}
			\def\x{{\bf x}}
				\def\y{{\bf y}}
		\def\D{{\bf D}}
		\def\Q{{\bf Q}}
			\def\QQ{{\dot{\mathbf Q}}}
		\def\N{{\bf N}}
		\def\i{{\bf i}}
		\def\j{{\bf j}}
		\def\J{{\bf J}}
		\def\I{{\bf I}}
		\def\a{\bf a}
		\def\b{{\bf b}}
		\def\bb{{\dot{b}}}
		\def\e{{\bf e}}
		\def\II{{\dot{\bf I}}}
		\def\f{{\bf f}}
		\def\d{{\bf d}}
		\def\H{{\bf H}}
		\def\dist{{\operatorname{dist}}}
		\def\dim{{\operatorname{dim}}}
		\def\dist{\operatorname{dist}}
		\def\eps{{\varepsilon}}
		\def\diag{\mathrm{diag}}
		\renewcommand\kappa\varkappa
		\def\eps{\varepsilon}
		\renewcommand\Re{\mathrm{Re}\,}
		\renewcommand\Im{\mathrm{Im}\,}
		\newcommand\eqq{\begin{equation}}
			\newcommand\eeq{\end{equation}}
\begin{document}
		\title{Geometric approach to stable
			homotopy groups of spheres II;  
			Arf-Kervaire Invariants}
		\author{P.M. Akhmet'ev}
		
\date{July, 2025}

\maketitle

\begin{abstract}
	The Kervaire Invariant 1 Problem  until recently was an open problem in algebraic topology. 
	Hill-Hopkins-Ravenel theorem clams a negative solution of the problem for all dimensions $n=2^l-2$, $l \ge 8$.  We prove the statement of Hill-Hopkins-Ravenel theorem for all dimensions $2^l-2$,  $l \ge l_0$, where $l_0$ is a sufficiently great positive integer.

	The proof is based on the Hirsh control principle and the Compression theorem by the author. A notion internal symmetry: of Abelian (for skew-framed immersions), bi-cyclic (for $\Z/2^{[3]}$-framed immersions) and quaternion-cyclic structure (for $\Z/2^{[4]}$-framed immersions) are introduced. 
	%Proofs  are presented in Section\ref{sec8}.
	
\end{abstract}

\section{Self-intersections of generic immersions and the Kervaire invariant; the problem statement}

The Kervaire Invariant 1 Problem  until recently was an open problem in algebraic topology. Presently, an algebraic solution was proposed in  \cite{H-H-R}.
In the  paper (part II) an alternative geometrical solution is proposed.  Our approach develops an approach by L.S.Pontryagin \cite{P},
and by V.A.Rokhlin \cite{Ro}.

A new idea concerns a conception of "`internal symmetry"' which, briefly, means that a framed immersion, which is represented an element of stable homotopy group by the
Pontryagin construction, is generic and in a very special position. The monodromy of the canonical covering over  self-intersection manifolds of the immersion admits a special reduction and contains "`internal characteristic numbers"'. There are two applications: geometrical solution of Hopf invariant problem and Kervaire invariant problem.
Let us skip the first problem and let us investigate Kervaire invariant. 

A geometrical overlook of the Kervaire problem is presented in \cite{Akh1}. A main idea of this paper is the following. Until the 	Hill-Hopkins-Ravenel result many 
authors tray to prove that infinite many dimensions with Kervaire invariants exist. As a new variant of Novikov's idea to investigate cobordisms with additional structuring group of normal bundle, the author develops investigation of Eccles's  stable framed cobordism, which is very closed to classical case of Pontryagin's framed cobordism.
Geometrical tools toward the generalized Kervaire problem is similar to the Kervaire problem, but the result is quite different: there are infinitely many dimensions with stably-framed manifold of Kervaire invariant one. The author assume that investigations of a relationship between Generalized Kervaire problem and Kervaire problem will give a new 
results in Stable homotopy groups of spheres, in present we know very few results concerning infinite series of invariants. 

The conception of internal symmetry has an  physical analog in MHD for magnetic fields in liquid conductive medium. The Kolmogorov energy spectrum in two-scale MHD model could be visualized by "`internal symmetry"' of magnetic lines:  Massey quadruple (generalized) invariant of randomly colored magnetic lines  gives a geometrical visualization of fractal structure for magnetic fields with Kolmogorov's energy distribution \cite{Akh2}.  This conception at present gives no new physical applications, but gives many interesting problems for investigation.

Let us start with precise definition to make a conception of "`internal symmetry"' clear.
Let us consider a smooth generic immersion 
$f: M^{n-1} \looparrowright \R^n$,
$n= 2^{\ell} -2$, $\ell>1$ of the codimension $1$. Denote by 
$g: N^{n-2} \looparrowright \R^n$ the immersion of self-intersection manifold of $f$.

Let us recall a definition of the cobordism group
$Imm^{sf}(n-k,k)$, a particular case $k=1$  is better known:
$Imm^{sf}(n-1,1)$.
The cobordism group is defined as equivalent classes of triples up to the standard 
cobordism relation, equipped with a disjoint union operation; 
\[  \]

$\bullet$ $f: M^{n-k} \looparrowright \R^n$ is a codimension $k$ immersion;

$\bullet$  $\Xi: \nu(f) \cong k \kappa_M^{\ast}(\gamma)$ is a bundle map, which is invertible,
an invertible bundle map is a fibrewized isomorphism; 

$\bullet$ $\kappa_M \in H^1(M^{n-k};\Z/2)$ is a prescribed cohomology class, which is 
a mapping $M^{n-k} \to \RP^{\infty}=K(\Z/2,1)$.

By $\nu(f)$ is denoted the normal bundle of the immersion $f$, by $\kappa_M^{\ast}(\gamma)$ is denoted the pull-back of
the universal line bundle $\gamma$ over $\RP^{\infty}$ by the mapping $\kappa_M$, 
by $k \kappa_M^{\ast}(\gamma)$ is denoted the Whitney sum of the $k$ copies of the line bundles,
below for short we will write $k \kappa_M$. The isomorphism $\Xi$ is called a skew-framing
of the immersion $f$.

The Kervaire invariant is an invariant of cobordism classes,  which is homomorphism
\begin{eqnarray}\label{1a}
\Theta^{sf}: Imm^{sf}(n-k,k) \to  \Z/2.
\end{eqnarray}
Let us recall the homomorphism $(\ref{1a})$ in the case $k=1$.

The normal bundle
$\nu_g$ of the immersion  $g:
N^{n-2} \looparrowright \R^n$ 
is a $2$-dimensional bundle over
$N^{n-2}$, which is equipped by a $\D$--framing $\Xi$, where  $\D$ is the dihedral
group of the order $8$. The classifying map of this bundle (and also the corresponding characteristic class) is denoted by
$\eta_N: N^{n-2} \to K(\D,1)$.  
Triples
$(g, \eta_N, \Xi)$ up to a cobordism relation represents an element in the group
$Imm^{\D}(n-2,2)$. The correspondence $(f,\kappa,\Psi) \mapsto (g, \eta_N, \Xi)$
defines a homomorphism
\begin{eqnarray}\label{2a}
\delta^{\D}: Imm^{sf}(n-1,1) \to Imm^{\D}(n-2,2).
\end{eqnarray}

\begin{definition}
The Kervaire invariant of an immersion
$f$ is defined by the formula: 
\begin{eqnarray}\label{arf}
\Theta_{sf}(f) = \langle \eta_N^{\frac{n-2}{2}}; [N^{n-2}] \rangle.
\end{eqnarray}
\end{definition}

It is not difficult to prove that the formula
$(\ref{arf})$ determines a homomorphism 
\begin{eqnarray}\label{3a}
\Theta^{\D}: Imm^{\D}(n-2,2) \to \Z/2.
\end{eqnarray}
The homomorphism
($\ref{3a}$) is called the Kervaire invariant of a
$\D$--framed immersion. 
The composition of the homomorphisms $(\ref{2a})$, $(\ref{3a})$ 
determines the homomorphism  $(\ref{1a})$.

\subsection*{Main Result}

 Assume an element $x \in Imm^{sf}(n - 1, 1)$
admits a compression of the order $7$ (see Definition \ref{Def5}).
Then in  the case $n= 2^{\ell} -2$, $\ell \ge 8$, the homomorphism $\Theta^{sf}$ (\ref{arf}) on $x$ is trivial: $\Theta^{sf}(x)=0$.

\begin{remark}

Main Result with an assumption $\ell \ge 8$  is a corollary of \cite{H-H-R}. If an arbitrary element $x$, $\ell \ge 8$, admits a compression of the order $7$ is unknown. 
 Main Result and the following Compression Theorem gives an alternative proof of Hill-Hopkins-Ravenel theorem for all $n \ge 256$, probably, except a finite number of exceptional cases.
\end{remark}

\subsubsection*{A short proof of Main Result} 
Main Result is proved in Theorem \ref{Arf}, where the Definition \ref{pure} of standardized $\Z/2^{[4]}$-framed immersion is used. The standardized immersion is a result of a Smale-Hirsh control principle  \cite{Hi} for iterated self-intersection points of a standardized $\D$-framed immersion (Subsection \ref{dihedstand}). The Arf-invariant for 
standardized $\Z/2^{[4]}$-framed immersion is determined by a homology class, described in 
Lemma \ref{dd} and then by the the middle (exceptional) class in
Definition \ref{pure}. \qed

\subsubsection*{Compression Theorem \cite{Akh}}
For an arbitrary positive integer $d$ there exists $\ell = \ell(d)$, such that an arbitrary
element in $Imm^{sf}(n - 1, 1)$, $n = 2^{\ell}-2$, admits a compression of the order $d$.
\[ \]

Let us explain steps in the proof Main Theorem. 
Definition of the cobordism group
$Imm^{\D}(n-2,2)$ of immersions with a dihedral framing  is standard and analogous to the above definition of the skew-framed cobordism group $Imm^{sf}(n-k,k)$  for $k=1$. This group also admits a standard generalization as the cobordism group $Imm^{\D}(n-2k,2k)$ with a parameter $k \ge 1$. 
 Each element in 
$Imm^{\D}(n-2k,2k)$ is represented by a triple $(g,\Psi,\eta_N)$, where
$g: N^{n-2k} \looparrowright \R^n$ is an immersion, 
$\Psi$-- is a dihedral framing in the codimension  $2k$, 
i.e. an isomorphism
\begin{eqnarray}\label{isom}
\Xi: \nu_g \cong k \eta_N^{\ast}(\tau), 
\end{eqnarray}
where $\eta_N: N \to K(\D,1)$ is the classifying mapping, the universal $2$-dimensional $\D$-bundle over $K(\D,1)$ is denoted by 
$\tau$. 
The mapping 
$\eta_N$ determines a dihedral framing of the normal bundle
$\nu_g$ by $\Xi$.

The Kervaire homomorphism is defined in a more general case using the following generalizations of the
homomorphisms 
($\ref{1a}$) and ($\ref{3a}$):
\begin{eqnarray}\label{4a}
\Theta^{sf}_{k}: Imm^{sf}(n-k,k) \to \Z/2, \qquad \Theta^{sf}_k: =
\Theta^{\D}_k  \circ \delta^{\D}_k.
\end{eqnarray}
\begin{eqnarray}\label{44}
\Theta^{\D}_{k} : Imm^{\D}(n-2k,2k) \to \Z/2, \qquad
\Theta^{\D}_{k}[(g,\Psi,\eta_N)] = \langle
\eta_N^{\frac{n-2k}{2}}; [N^{n-2k}] \rangle.
\end{eqnarray}
In the case
$k=1$ the homomorphism  ($\ref{44}$) coincides with 
the homomorphism
($\ref{3a}$) above, the following commutative diagram, in which the homomorphisms  $J^{sf}$, $J^{\D}$ 
are known, is well defined.

\begin{eqnarray}\label{5a}
\begin{array}{ccccc}
Imm^{sf}(n-1,1) & \stackrel  {\delta^{\D}}{\longrightarrow} &
Imm^{\D}(n-2,2) & \stackrel{\Theta^{\D}}{\longrightarrow} & \Z/2  \\
\downarrow J^{sf}_k & &  \downarrow J^{\D}_{k}  &  &  \vert \vert \\
Imm^{sf}(n-k,k) & \stackrel{\delta^{\D}_{k}}{\longrightarrow} &
Imm^{\D}(n-2k,2k) &
\stackrel{\Theta^{\D}_k}{\longrightarrow} & \Z/2.  \\
\end{array}
\end{eqnarray}

%We will prove the following statement, which is interesting concerning the paper \cite{B-J-M}.

Let us remind, that an idea to investigate intersection points as in Theorem \ref{T11}
was communicated to me by A.V.Chernavskii (1996). S.A.Melikhov (2005) has noted that a quaternion analogue
of  the Chernavskii mapping as in Lemma \ref{standQ} is required. An idea to make a $\Z/2$-control as in the formula 
(\ref{KKin}) was realized after a talk by E.V.Scepin (2004).
As a conclusion, results was collected-out at the A.S.Mishchenko Seminar in  
2008-2024. Alexandr S. Mischenko and  Th. Yu. Popelenskii allow discussions, in particular, we will use Popelenskii's formulas in Section \ref{sec8}.

\section{First step}\label{sec1}

\subsection{The dihedral group $\Z/2^{[2]} = \D$}
In the present and the next sections the cobordism groups
$Imm^{sf}(n-k,k)$, $Imm^{\Z/2^{[2]}}(n-2k,2k)$ will be used. 
In the case the first argument in the bracket is strongly positive, the cobordism group is finite.

The dihedral group
$\Z/2^{[2]} = \D$ is defined by its corepresentation 
$$\{a,b \,\vert \, b^4 = a^2 = e, [a,b]=b^2\}.$$
This group is represented by rotations a subgroup in the group $O(2)$ of orthogonal transformation of the
standard plane. Elements transforms the base $\{\e_1,\e_2\}$ on the plane
$Lin(\e_1,\e_2)$ to itself, a non-ordered pair of coordinate lines on the plane 
are preserved by transformations. 
The element $b$ is represented by the rotation of the plane by the angle
$\frac{\pi}{2}$. The element $a$ is represented by the reflection of the plane 
relative to the straight line 
$l_1=Lin(\e_1 + \e_2)$ parallel to the vector  $\e_1 + \e_2$.

Let us consider a subgroup
$\I_{a \times \aa} = \I_a \times \II_a \subset \Z/2^{[2]}$ in the dihedral group,
which is generated by the elements  $\{a,b^2a\}$. This is an elementary $2$-group of the rank $2$. Transformations  of this group keep each line  $l_1$, $l_2$ with the base vectors $\f_1=\e_1+\e_2$, $\f_2=\e_1-\e_2$ correspondingly. 
The cohomology group $H^1(K(\I_a \times
\II_a,1);\Z/2)$ contains two generators $\kappa_a, \kappa_{\aa}$.

Let us define the cohomology classes
\begin{eqnarray}\label{kappab}
\kappa_a \in H^1(K(\I_a \times \II_a,1);\Z/2)), \quad \kappa_{\aa} \in H^1(K(\I_a \times \II_a,1);\Z/2)).
\end{eqnarray}
Denote by $p_{a}: \I_a \times \II_a \to
\I_a$ a projection, the kernel of $p_a$ consists  the symmetry transformation with respect to
the bisector of the second coordinate angle and the identity.

Denote $\kappa_a = p_a^{\ast}(t_a)$, where $e \ne t_a \in H^1(K(\I_a,1);\Z/2)
\simeq \Z/2$. Let us denote by  $p_{\aa}: \I_a \times \II_a \to
\II_a$ the projection, the kernel of $p_{\aa}$ consists of the symmetry
with respect to  the bisector of the first coordinate angle and the identity.

Let us denote $\kappa_{\aa} =
p_{\aa}^{\ast}(t_{\aa})$, where $e \ne t_{\aa} \in H^1(K(\II_{a},1);\Z/2) \cong
\Z/2$.

\subsection{A standardized  immersion with a dihedral framing \label{dihedstand}} 

Let us consider a $\D$-framed immersion 
$(g,\Psi,\eta_N)$ of codimension $2k$. Let us assume that 
the image of $g$ contains in a regular neighbourhood $U(\RP^2)$ of a prescribed embedding
$\RP^2 \subset \R^n$. The following mapping
$\pi \circ \eta_N: N^{n-2k} \to K(\D,1) \to K(\Z/2,1)$ is well defined, where 
$\pi: K(\D,1) \to K(\Z/2,1)$ is the epimorphism with the kernel $\I_a \times \II_a \subset \D$.
It is required that this mapping coincides (up to homotopy) to the composition 
$i \circ PROJ \circ g$, where
$PROJ: U(\RP^2) \to \RP^2$ is the projection of the neighbourhood onto its central line, 
$i: \RP^2 \subset K(\Z/2,1)$ is the standard inclusion, which transforms the fundamental class into the generator. 

%Below we will clarify a small constant $d$ and denote an integer $\delta = [\frac{3(n-k)}{4}]+d %\approx \frac{3(n-k)}{4}$.
%The integer $\delta \in \N$ is called 
%the denominator of a defect of a standardization. 

\subsubsection{Condition C1 for standardized  immersions }\label{cond11}

 Let us consider the submanifold $N_{sing}^{n-2k-2} \subset N^{n-2k}$, $N_{sing}^{n-2k-2} = 
PROJ^{-1}(\RP^0), \RP^0 \subset \RP^2$.  
Require that the mapping
$\eta_N$, restricted to
$N_{sing}$, be skipped trough the skeleton of $K(\I_a \times \II_a,1)$ of the dimension strictly less then $\frac{3(n-2k)}{4}$.

Geometrically, this means that the submanifold  $N_{sing}$ is a defect of a reduction 
of the structured mapping  of the normal bundle with a control of the subgroup
$\I_a \times \II_a \subset \D$ over $\RP^2$ to a mapping with a control of the same type over $\RP^1 \subset \RP^2$.

In the case 
$k \to 1+$ the structuring mapping on the defect manifold is skipped through  a polyhedron of the dimension
$\frac{3n}{4}$.

The complement $N \setminus N_{sing}$ to the defect we get an open manifold, for which the classifying normal bundle mapping is given by
$$\eta_N: N \setminus N_{sing} \to  K(\I_a \times \II_a) \rtimes_{\chi} \RP^1,$$
for the involution $\chi = \chi^{[2]}$, see section  \ref{sec7}, Definition  \ref{chi2}.

\begin{definition}\label{stand}
Let us say that a $\D$-framed immersion
$(g,\Psi,\eta_N;\Upsilon)$ of the codimension $2k$ is standardized by $\Upsilon$ with 
a defect $\delta < \approx  \frac{3n}{4}$, if the above condition $C1$, subsubsection \ref{cond11},  is satisfied. 

Quadruples $(g,\Psi,\eta_N;\Upsilon)$, up to cobordism relations  generate an abelian group
$Imm^{D;stand}(n-2k,2k)$; there is natural forgetful homomorphism: 
\begin{eqnarray}\label{for}
FG: Imm^{\D;stand}(n-2k,2k) \longrightarrow Imm^{\D}(n-2k,2k),
\end{eqnarray}
 after a control structure $\Upsilon$ is omitted. 
\end{definition}

\begin{definition}\label{Def5}
Assume $[(f,\Xi,\kappa_M)] \in Imm^{sf}(n-k,k)$,
$f: M^{n-k} \looparrowright \R^n$, $\kappa_M \in H^1(M^{n-k};\Z/2)$,
$\Xi$ is a skew-framing. Let us say that the pair  $(M^{n-k},\kappa_M)$
admits a compression of an order $q$, if the mapping
$\kappa_M :
M^{n-k} \to \RP^{\infty}$ is represented (up to homotopy) by the following composition: 
$\kappa = I \circ \kappa_M' : M^{n-k} \to \RP^{n-k-q-1} \subset
\RP^{\infty}$. Let us say that the element  $[(f, \Xi, \kappa_M)]$
admits a compression of an order $q$, if in its cobordism class exists a triple
$(f',
\Xi', \kappa_{M'})$, which admits a compression of the order $q$.
\end{definition}

Below we assume
\begin{eqnarray}\label{nsigma}
 n_{\sigma} = 14, \quad k = \frac{n-n_{\sigma}}{16}.
 \end{eqnarray}
  In particular, in the case $n = 254$ we get $k = 15$.

\begin{theorem}\label{Th2}
%1.  Assume $n=2^{\ell}-2$, $\ell \ge 7$. 
%An arbitrary class of $\D$-framed cobordism from the image of the homomorphism
%(\ref{2a}) (for $k>1$ from the image of the left homomorphism on the bottom 
%line of the diagram  (\ref{5a}) ) is represented an element in $Imm^{sf;stand}(n-k,k)$ by a %standardized dihedral immersion 
%$(g,\eta_N,\Psi)$ with  the denominator $\delta < \frac{3n}{4}$. 

%\footnote{The constant $d$ cannot be large} 
%in the prove the inequality  $\frac{14}{2}+8 + d \le q + 7 $, which relates %$d$ with an order $q$ of the compression is required. 

 Assume that $\ell \ge 8$ and that an element $x \in
Imm^{sf}(n-\frac{n-n_{\sigma}}{16},\frac{n-n_{\sigma}}{16}) = Imm^{sf}(n-k,k)$ admits a compression of the order
$q=7$  
%(the case $\ell=7$ requires  $q=10$, $k=\frac{n-m_{\sigma}}{16}=7$, $m_{\sigma}=14$).
%\footnote{Based on Theorem \cite{R-L} by 
%Kee Yuen Lam and Duane Randall (2006), for $n=126$ 
%the case $q=10$ is sufficient for a proof of Main Result; but! see also \cite{M2}.} 
By this additional assumption one may get an element $y = \delta_k^{\D} \in 
Imm^{\D}(n-2k,2k)$, \footnote{The homomorphism $Imm^{sf}(n-k,k) \to Imm^{\D;stand}(n-2k,2k)$ is not claimed} such that there exist an element 
$z \in Imm^{\D;stand}(n-2k;2k)$, $z=[(g,\eta_N; \Xi; \Upsilon)]$, $FG(z)=y$ by $(\ref{for})$, $\Upsilon$ is the normalized structure  with an additional condition:  a submanifold $N^{n_{\sigma}}_{a \times \aa} \subset N^{n-2k}$, 
 \begin{eqnarray}\label{Nsigma}
N^{n_{\sigma}}_{a \times \aa} = \eta_N^{-1}(K(\infty-(n-2k-n_{\sigma})) \subset K(\D,1)),
\end{eqnarray}
where $K(\infty-(n-2k-n_{\sigma})) \subset K(\D,1)$ is a skeleton of the even codimension $(n-2k-n_{\sigma})$ in the 
universal space $K(\D,1)$, determined by the Euler class of the bundle $\frac{(n-2k-n_{\sigma})}{2}\tau$ over $K(\D,1)$: the Whitney sum of $\frac{(n-2k-n_{\sigma})}{2}$ copies of the universal $2$-bundle $\tau$ 
(see $(\ref{isom})$),
admits a reduction of its structured subgroup  $\I_a \times \II_a \subset \D$ of the normal bundle 
(equivalently, the mapping $\pi \circ g: N^{n-2k} \looparrowright U(\RP^2) \to \RP^2$,
restricted on the submanifold $N^{n_{\sigma}}_{a \times \aa}$, homotopic to the constant mapping). 

%3. For the self-intersection manifold $N^{n-2k}$ of an immersion $f$ the following formula below
%$(\ref{Nregb})$ is satisfied, 
%additionally, the structured mapping for the second term at the right-hand side of the formula %admits a reduction to the subgroup
%$\I_a \times \II_a$, the structured mapping for the first term at the right-hand side of the %formula admits a reduction to a polyhedron of the dimension %$\frac{n-3(k+q)}{2}+d=\frac{n-3k'}{2}+d$.
\end{theorem}

\subsubsection{A preliminary construction for Theorem \ref{Th2}}\label{/N}

Let
\begin{eqnarray}\label{d2}
d^{(2)}: \RP^{n-k'} \times \RP^{n-k'} \setminus \RP^{n-k'}_{diag} \to \R^n \times \R^n
\end{eqnarray}
be an arbitrary $(T_{\RP^{n-k'} \times \RP^{n-k'}}, T_{\R^n \times \R^n})$--equivariant mapping, which is transversal along the diagonal
$\R^n_{diag} \subset \R^n \times \R^n$. 
The diagonal in the preimage is mapped into the diagonal of the image, by this reason the equivariant
mapping $d^{(2)}$ is defined on the open manifold outside of the diagonal (in the pre-image).
(In (\ref{d2})  $k'$ has to be defined
by the formula: $k'=k+q+1$ (parameters $k$,$q$ correspond to denotations of Theorem 
$\ref{Th2}$).)

Let us re-denote
$(d^{(2)})^{-1}(\R^n_{diag})/T_{\RP^{n-k'} \times \RP^{n-k'}}$ by $\N_{\circ}=\N(d^{(2)})_{\circ}$ for short, 
this polyhedron is called  a polyhedron of (formal) self-intersection of the equivariant mapping   $d^{(2)}$. 

The polyhedron  $\N_{\circ}$ is an open polyhedron, this polyhedron admits a compactification, which is denoted by $\N$ with a boundary $\partial \N$. The boundary consists of all critical (formal critical) points of the 
mapping $d^{(2)}$. One has:
$\N \setminus \partial \N = \N_{\circ}$. Let us denote by  $U(\partial \N)_{\circ}$ a 
thin regular neighbourhood of the boundary (= a diagonal)
$\partial \N$.

For a special mapping $d^{(2)}$ the polyhedron $\N_{\circ}$  admits the following lift: 
\begin{eqnarray}\label{abels}
\eta_{\circ;Ab}: \N_{\circ} \to K(\I_{a \times \aa} \int_{\chi^{[2]}} \Z,1).
\end{eqnarray}
The relation between   the structuring mapping  $\eta_{\circ}$ and its lift (\ref{abels}) is the following: 
\begin{eqnarray}\label{etacirc}
\eta_{\circ}: \N_{\circ} \stackrel{\eta_{\circ;Ab}}{\longrightarrow} K(\I_{a \times \aa} \int_{\chi^{[2]}}),\Z,1) \longrightarrow  K(\Z/2^{[2]},1).
\end{eqnarray}
On the polyhedron $U(\partial \N)_{\circ}$ the mapping  $ \eta_{Ab}$ 
gets the values into the following subcomplex:
$K(\I_{a \times \aa} \times 2\Z,1) \subset  K(\I_{a \times \aa} \int_{\chi^{[2]}} \Z,1)$ (the projection on $K(\Z,1)=S^1$ is the analogue of the projection of the Moebius band onto its central line).

\begin{definition}\label{hol} 

Let us call  a formal (equivariant) mapping
$d^{(2)}$, given by $(\ref{d2})$, is holonomic, if this mapping 
is the formal extension of a mapping 
\begin{eqnarray}\label{dmain}
d: \RP^{n-k'} \to \R^n.
\end{eqnarray}  

\end{definition}

\begin{definition}\label{abstru}
Assume a formal (equivariant) mapping $(\ref{d2})$ is holonomic. 
Let us say $d^{(2)}$ admits an Abelian structure, if the following two conditions are satisfied.

-- 1. On the open polyhedron $\N_{\circ}$ the mapping (\ref{abels})
is well defined,
which is a lift of the structured mapping (\ref{etacirc}),
i.e. the composition of the lift
$\eta_{Ab}$  with a natural mapping 
$K(\I_{a \times \aa}) \int_{\chi^{[2]}} \Z,1) \to K(\Z/2^{[2]},1)$
coincides with the structured mapping
(\ref{etacirc}).

--2. Let us consider the Moebius  band $M^2$
and represent the Eilenberg-MacLane space  $K(\I_{a \times \aa} \int_{\chi^{[2]}} \Z,1)$
as a skew-product  $K(\I_{a \times \aa},1) \tilde{\times} M^2 \to M^2$; the restriction of the bundle over
the boundary circle
$S^1 = \partial(M^2)$ is identified with the subspace  $K(\I_{a \times \aa} \times 2\Z,1) \subset K(\I_{a \times \aa} \int_{\chi^{[2]}} \Z,1)$. 
Let us include the space  $ K(\I_{a \times \aa},1) \tilde{\times} M^2$ into
$ K(\I_{a \times \aa},1) \tilde{\times} \RP^2$ by a gluing of the trivial bundle over 
the boundary $S^1$ by the trivial bundle over a small disk
with a central point $\x_{\infty} \in \RP^2$.

The 
resulting space is denoted by
$K(\I_{a \times \aa},1) \rtimes \RP^2$. The following mapping:
\begin{eqnarray}\label{abelsreg1}
\eta_{\circ;Ab}: (\N_{\circ}, U(\partial \N_{\circ}))  \to (K(\I_{a \times \aa},1) \rtimes \RP^2,K(\I_{a \times \aa},1) \times \x_{\infty} ),
\end{eqnarray}
is well defined, where the inverse image by  $\eta_{Ab}$ over a marked point 
$K(\I_{a \times \aa},1) \times \x_{\infty}   \subset K(\I_{a \times \aa},1) \rtimes \RP^2$ %($\y_{\infty} \approx \x_{\infty}$)  
is dominated by a subpolyhedron of the dimension
$\frac{3(n-2k')}{4} = \frac{3\dim(\N_{\circ})}{4}$
(up to a small constant $d=2$), which is equipped by the structured mapping into $K(\I_{a \times \aa},1)$.
\end{definition}

\begin{remark}\label{r7}
The condition --2, Definition \ref{abstru}
can be formulated, alternatively, as following: in the polyhedron $\N_{\circ}$ there exists a subpolyhedron
$\N_{\circ;reg} \subset U(\partial \N_{\circ}) \subset \N_{\circ}$,
which is inside its proper thin neighbourhood, 
and there exists a polyhedron $P$ of the dimension (up to a small constant $d=2$)
$dim(P)=\frac{3\dim(\N_{\circ})}{4}=\frac{3(n-2k')}{4}$;
there exists a control mapping 
$\N_{\circ;reg} \to P$ and the structured mapping $\eta_P: P \to K(\I_{a \times \aa},1) \times S^1$,
such that the composition $\N_{\circ;reg} \to P \to K(\I_{a \times \aa},1) \times S^1 \to K(\I_{a \times \aa},1) \rtimes \RP^1 \subset K(\I_{a \times \aa},1) \rtimes \RP^2$ is the structured mapping
$\eta_{\circ;Ab}: \N_{\circ;reg} \to K(\I_{a \times \aa},1) \rtimes_{\chi^{[2]}} \RP^2 $ up to homotopy.
Consider the pair
$( U(\partial \N_{\circ}), U(\partial \N_{\circ}) \setminus  \N_{\circ;reg})$,
and the restriction of $\eta_{\circ;Ab}$  on this pair. The following mapping of pairs
exists:

\begin{eqnarray}\label{abelsreg}
\eta_{\circ;Ab}: (U(\partial \N_{\circ}), U(\partial \N_{\circ}) \setminus  \N_{\circ;reg}) \to (K(\I_{a \times \aa} \times 2\Z,1), K(\I_{a \times \aa},1)).
\end{eqnarray}

In the formula above $K(\I_{a \times \aa} \int_{\chi^{[2]}} 2\Z,1) = K(\I_{a \times \aa} \times 2\Z,1)$;
$K(\I_{a \times \aa},1) \subset K(\I_{a \times \aa} \int_{\chi^{[2]}} 2\Z,1)$ is the standard the inclusion.
%$K(\I_{a \times \aa},1) \subset K(\I_{a \times \aa} \times \Z)$.
\end{remark}

The following lemma is proved in  Section \ref{sec8}, Subsection \ref{lem7}.

\begin{lemma}\label{7}{Small Lemma}

For
\begin{eqnarray}\label{dimdim}
n-k' \equiv 1 \pmod{2}, \quad n \equiv 0 \pmod{2}
\end{eqnarray}
there exist a holonomic formal mapping
$d^{(2)}$ ($\ref{d2}$), which admits an Abelian structure, Definition 
$\ref{abstru}$.
\end{lemma}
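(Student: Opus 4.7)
The plan is to construct the required $d^{(2)}$ as the formal extension of a single smooth mapping $d : \RP^{n-k'} \to \R^n$ in general position, and then to verify both parts of Definition~\ref{abstru} by a direct normal-bundle analysis on the self-intersection polyhedron $\N_\circ$. The parity hypothesis (\ref{dimdim}) --- $n-k'$ odd, $n$ even, hence $k'$ odd --- is essential: it makes $\dim\N_\circ = n-2k'$ even and places the relevant cohomological obstructions in ranges where they can be resolved or absorbed as controllable defects of the prescribed size $d=2$.

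First I would take $d : \RP^{n-k'} \to \R^n$ to be a smooth map in Whitney general position, so that away from a lower-dimensional critical locus it exhibits only transverse double points, and set $d^{(2)} := d\times d$ on $\RP^{n-k'}\times\RP^{n-k'}\setminus\RP^{n-k'}_{diag}$. The symmetric preimage $\N_\circ$ is then a smooth manifold of dimension $n-2k'$, and the structured map $\eta_\circ : \N_\circ \to K(\Z/2^{[2]},1)$ classifies the normal bundle $\nu_1\oplus\nu_2$ of $\N_\circ$ in $\R^n$, twisted by the swap involution exchanging the two preimage branches of each double point.

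To establish Condition~1 of Definition~\ref{abstru}, I would exploit the holonomy $d^{(2)} = d\times d$: the two summands $\nu_1$ and $\nu_2$ are \emph{independent} pullbacks of the single normal bundle of $d$ at its two preimages, so the structure group naturally reduces to $\I_a \times \II_a \subset \D$ \emph{before} the swap involution is quotiented in, and the swap then supplies precisely the complementary $\Z/2 = \D/(\I_a\times\II_a)$. The additional wreath factor $\Z$ in (\ref{abels}) is produced by a global integer-valued control function on $\N_\circ$ --- essentially the signed count of intersections of a generic section of the tautological line bundle on $\RP^{n-k'}$ with the swap locus --- which is well-defined because $n$ is even. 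Near $\partial\N$ this control function doubles by the reflection symmetry across the diagonal and therefore lands in $2\Z$, matching the boundary subcomplex $K(\I_{a\times\aa}\int_{\chi^{[2]}} 2\Z,1)$ required by Definition~\ref{abstru}.

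The main obstacle is Condition~2: constructing a regular subpolyhedron $\N_{\circ;reg} \subset U(\partial\N_\circ)$ together with a dominating polyhedron $P$ of dimension at most $(n-3k')/2 + 2$ through which the $\RP^2$-component of $\eta_{\circ;Ab}$ factors. Here I would show that the preimage of a generic point $\y_\infty \in \RP^2$ under $\eta_{\circ;Ab}$ meets $\N_\circ$ in the expected codimension, and then refine $d$ by iterated small perturbations to globalise this transversality. The perturbation argument uses the parity (\ref{dimdim}) together with the oddness of $k'$ to force the obstruction bundles in the relevant dimensional range to be trivial, so that the structured map can be factored through a polyhedron of the predicted half-dimension; the detailed estimates will be carried out in Section~\ref{sec8}.
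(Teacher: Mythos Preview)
Your proposal has a genuine gap at the very first step, and this gap propagates through the rest of the argument.

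The paper does \emph{not} take $d$ to be a generic Whitney map. It uses a very specific construction: the join of $r=\frac{n-k'+1}{2}$ copies of the double cover $S^1\to\RP^1$, giving $\tilde d:S^{n-k'}\to S^{n-k'}$, which descends to $d:\RP^{n-k'}\to S^{n-k'}\subset\R^n$. The parity hypothesis $n-k'\equiv 1\pmod 2$ is used precisely so that $r$ is an integer and this join makes sense. The self-intersection polyhedron $\N_\circ$ then inherits an explicit stratification by ``residues'' of angular coordinates, and the entire proof in Section~\ref{sec8} (the involution $\tau_c$, the equivariant bundle morphism $F:E_\R\to E_\C$ of Theorem~\ref{T1}, the ``fast angle rule'', and the selection map of Lemma~\ref{2}) is built on this combinatorial structure.

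Your reduction argument for Condition~1 is where the approach actually fails. You write that ``$\nu_1$ and $\nu_2$ are independent pullbacks \dots\ so the structure group naturally reduces to $\I_a\times\II_a\subset\D$ before the swap involution is quotiented in.'' But this is exactly what the $\D$-structure \emph{is}: two copies of a line bundle, swapped by the involution. Every $\D$-framed self-intersection of a skew-framed immersion has this description; the nontrivial content of Definition~\ref{abstru} is the existence of the lift to $\I_{a\times\aa}\rtimes_{\chi^{[2]}}\Z$, i.e.\ a globally defined map to $S^1$ controlling the swap. For a generic $d$ there is no reason such a lift exists, and your ``signed count of intersections'' does not produce one --- it is not even clear what cocycle it would represent. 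In the paper the lift is obtained by transferring the explicit morphism $\bar\pi:\bar K_{b,c}\to S^1$ of (\ref{piF}), whose construction occupies most of Section~\ref{sec8} and depends essentially on the join-of-coverings model.

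Similarly, Condition~2 is not obtained by perturbation or transversality. The polyhedron $P$ of dimension $\approx\frac{n-k'}{2}$ is produced in Lemma~\ref{2} by an explicit ``selection'' of angular coordinates on singular strata; the half-dimensional bound comes from counting residues, not from a generic-position argument. Your proposed perturbation scheme, with ``obstruction bundles forced trivial by parity,'' does not correspond to anything in the actual proof and would need a completely different justification.
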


\subsection{Mapping $d^{(2)}$ in Lemma  \ref{7}}

Let us consider the standard $2$-sheeted covering
$p: S^1 \to S^1$ over the circle. One may write-down:  $p: S^1 \to \RP^1$,
where $\Z/2 \times S^1 \to \RP^1$ is the standard antipodal action. 
Let us consider the join
of $\frac{n-k'+1}{2}=r$-copies of 
$\RP^1$ $\RP^1 \ast \dots \ast \RP^1 = S^{n-k'}$, 
which is homeomorphic to the standard 
$n-k'$-dimensional sphere. Let us define the join of $r$ copies of the mapping
$$ \tilde{d}: S^1 \ast \dots \ast S^1 \to \RP^1 \ast \dots \ast \RP^1.$$
In the preimage the group 
$\Z/2$ by the antipodal action is represented, this action is commuted with 
$\tilde{d}$. The required mapping  (\ref{dmain}) is the composition 
of the projection onto the factor
$\tilde{d}/_{\Z/2}: \RP^{n-k'} \to S^{n-k'}$ with the standard inclusion
$$ d: S^{n-k'} \subset \R^n. \qed $$

\subsubsection*{Proof of Theorem  $\ref{Th2}$}
Recall,
$k=\frac{n-n_{\sigma}}{16}$. Assume that an element  $x \in Imm^{sf}(n-k,k)$ is represented 
by a skew-framed immersion $(f,\Xi,\kappa_M)$, $f: M^{n-k}
\looparrowright \R^n$. By the assumption, there exist a compression 
$\kappa_M: M^{n-k} \to  \RP^{n-k-q-1}$, $q=\frac{n_{\sigma}}{2} = 7$, for which the composition
$M^{n-k} \to \RP^{n-k-q-1} \subset K(\I_d,1)$ coincides with
the mapping $\kappa_M: M^{n-k} \to K(\I_d,1)$. 

Define $k'$ as the biggest positive integer, which is less then
$k+q+1$, and such that $n-k' \equiv 1\pmod{2}$.
Because  $k = 1\pmod{2}$, $q=7$,
we may put  $k' = k+q+1 \ge  7$.

By Lemma $\ref{7}$ there exist an (equivariant) formal mapping  $d^{(2)}$,
which admits an Abelian structure, see Definition $\ref{abstru}$.

%?????????? ???????????? ??????? $\N_{\circ;sing} \subset \N_{\circ}$  ? ???????? $N_{\circ;reg} \subset \N_{\circ}$, %$\dim(N_{\circ;reg})=n-\delta \approx \frac{2n}{3}$. 

Let us construct a skew-framed immersion
$(f_1,\Xi_1,\kappa_1)$, for which the self-intersection manifold
is as required in Theorem
\ref{Th2}.

%, ??????? ???????? $\Z/2^{[2]}$--?????????? ???????????,
%???????? ?????????? ????????? ?????????? $N_{b \times \bb}^{n-2k}$, ??? ????????? ? ??????? 
%$(\ref{abelcomp})$.

%?????????? $f_1$ ????????? ??? ??????????? ??????????? ???? ??????????, $f_1 = f_{a} \cup f_{b}$.
An immersion $f_{1}: M^{n-k} \looparrowright \R^n$, which is equipped by a skew-framing
$(\kappa_{1},\Xi_{1})$, is defined as a result of a small deformation of the composition 
$d \circ \kappa: M^{n-k} \to \RP^{n-k-q-1} \subset \RP^{n-k'} \to \R^n$ 
in the prescribed regular cobordism class of the immersion $f$. A calibre of the deformation
$d \circ \kappa \mapsto f_a$ define less then a radius of a regular neighbourhood of the polyhedron 
of intersection points of the mapping $d$. Because $f_1$ is regular homotopic to $f$, the
immersion $f_1$ is naturally equipped by a skew-framed.

Let us denote by 
$N^{n-2k}$ the self-intersection manifold of the immersion  $f_1$.
The following decomposition over a common boundary are well defined:
\begin{eqnarray}\label{Nregb}
N^{n-2k} = N^{n-2k}_{a \times \aa,\N(d^{(2)})} \cup_{\partial}
N^{n-2k}_{reg}.
\end{eqnarray}
In this formula $N^{n-2k}_{a \times \aa,\N(d^{(2)})}$ is a manifold with boundary, 
which is immersed into a regular (immersed) neighbourhood 
$U_{\N(d)}$ of the polyhedron  $\N(d)$ (with a boundary) of self-intersection points of the mapping $d$. 
The manifold
$N^{n-2k}_{reg}$ with boundary is immersed into an immersed regular neighbourhood of self-intersection points,
outside critical points; let us denote this neighbourhood by   
$U_{reg}$. A common boundary of manifolds $N^{n-2k}_{a \times \aa,\N(d^{(2)})}$, $N^{n-2k}_{reg}$
is a closed manifold of dimension $n-2k-1$, this manifold is immersed into the boundary $\partial(U_{reg})$
of the immersed neighbourhood  $U_{reg}$.

The parametrized 
$\Z/2^{[2]}$--framed immersion of the immersed submanifold  $(\ref{Nregb})$ denote by   $g_{a \times \aa}$.
Let us  prove that a
$\Z/2^{[2]}$--framing over the component  $(\ref{Nregb})$ is reduced to a framing 
with the structured group
$\I_{a \times \aa} \rtimes \Z/2$ with a control over
$\RP^2$. 

Define on the manifold 
$N^{n-2k}_{a \times \aa,\N(d^{(2)})}$ a mapping   
$$N^{n-2k}_{a \times \aa,\N(d^{(2)})}
\stackrel{\eta_{a \times \aa}}{\longrightarrow} K(\I_{a \times \aa},1) \int_{\chi^{[2]}} \rtimes \RP^2,$$
as the result of a gluing of the two mappings.

Define on the component
$N^{n-2k}_{a \times \aa,\N(d^{(2)})}$ in the formula $(\ref{Nregb})$ the following mapping:  
\begin{eqnarray}\label{etadddNd2}
\eta_{a \times \aa,\N(d^{(2)})}:
N^{n-2k}_{a \times \aa,\N(d^{(2)})} \to K(\I_{a \times \aa} \int_{\chi^{[2]}} \Z),1),
\end{eqnarray}
as the composition of the projection
$N^{n-2k}_{a \times \aa,\N(d^{(2)})} \to N(d)$ and the mapping  $\eta_{a \times \aa}: \N(d^{(2)}) \to
K((\I_{a \times \aa}) \int_{\chi^{[2]}} \Z,1)$, constructed in Lemma $\ref{7}$.

The restrictions of the mappings
$\eta_{a \times \aa,N(d^{(2)})}$, $\eta_{a \times \aa,N_{reg}}$ on a common boundary
$\partial N^{n-2k}_{a \times \aa,N(d^{(2)})}$, $\partial N^{n-2k}_{reg}$ are homotopy,
because the mapping $\eta_{\N(d^{(2)})}: \N(d^{(2)}) \to K(\I_a \times \II_a \int_{\chi^{[2]}} \Z),1)$
satisfies on $\partial \N(d^{(2)})$ right boundary conditions. 
Therefore, the required mapping
\begin{eqnarray}\label{etadtd}
\eta_{a \times \aa}: N^{n-2k}_{a \times \aa}
\to K(\I_{a \times \aa},1) \rtimes_{\chi^{[2]}}  \RP^2
\end{eqnarray}
is well defined as a result of a gluing of the mappings
$\eta_{a \times \aa,\N(d^{(2)})}$ and $\eta_{a \times \aa,N_{reg}}$.
The mapping $(\ref{etadtd})$ determines a reduction of  $\Z/2^{[2]}$--framing of the immersion
$g_{a \times \aa}$.

Let us prove that Condition $C1$, subsubsection \ref{cond11}, see Definition \ref{stand}, this is a corollary of Condition --2 in Definition \ref{abstru}. The structuring mapping on $\partial N^{n-2k}_{a \times \aa,N(d^{(2)})}$ in compressed into the skeleton of $K(\I_{a \times \aa},1)$
of the dimension $\frac{3(n-k-q-1)}{4}+d$. In the case $k=15$, $q=7$,  $\frac{3(n-k-q-1)}{4}+d
< \frac{3n}{4}$, if $d \le 7$. In Lemma \ref{7} $d\le 2$, this proves Condition --2, Definition \ref{abstru}.

By the construction, the image of the mapping $f_{2}: M^{n-8k} \looparrowright \R^n$
belongs to a regular neighbourhood of the image of the submanifold
$\RP^{n-8k-q-1} \subset \RP^{n-7k-k'}$ by the mapping  $d$.
Because $n-8k-q-1 = n- \frac{n}{2} + \frac{n_{\sigma}}{2} - \frac{n_{\sigma}}{2} -1= \frac{n}{2}-1$,
the immersed submanifold $d(\RP^{n-8k-q-1})$ is an embedded submanifold. 

Denote by $N_{2}^{n-16k}$ the manifold of self-intersection points of the immersion
$f_{2}$, this is a manifold of dimension  
$n-16k=m_{\sigma}$. The following inclusion is well defined:
\begin{eqnarray}\label{incl}
N_{2}^{n-16k} \subset N_{reg}^{n-2k},
\end{eqnarray}
where the manifold $N_{reg}^{n-2k}$ is defined by the formula  $(\ref{Nregb})$.

In particular, the following reduction of the classified mapping 
$\eta_{2}: N^{n-16k}_{2} \to K(\Z/2^{[2]},1)$ to a mapping
\begin{eqnarray}\label{red2a}
\eta_{Ab,2}: N^{n-16k}_{2} \to K(\I_{b \times \bb},1) 
\end{eqnarray}
with the image $K(\I_{b \times \bb},1) \subset K(\Z/2^{[2]},1)$ is well defined.

Therefore, without loss of a generality, for
$\sigma \ge 5$ one may assume that 
\begin{eqnarray}\label{emptyset}
N^{n-16k}_{2} \cap N^{n-2k}_{a \times \aa,\N(d)} = \emptyset. 
\end{eqnarray}

%Statement 3 follows from the definition of the mapping  $f_1$. 

Theorem \ref{Th2} is proved. \qed

\section{Local coefficients and homology groups \label{localcoeff}}

Let us define the group
$(\I_a \times \II_a) \int_{\chi^{[2]}} \Z$ and the epimorphism 
$$(\I_a \times \II_a) \int_{\chi^{[2]}} \Z \to \Z/2^{[2]}.$$
Consider the automorphism
\begin{eqnarray}\label{chidd}
\chi^{[2]}: \I_a \times \II_a \to \I_a \times \II_a
\end{eqnarray}
of the exterior conjugation of the subgroup
$\I_a \times \II_a \subset \D$ by the element  $ba \in \D$, 
this element is represented by  the reflection of the plane with respect to the line $Lin(\e_2)$.
Let us define the automorphism (denotations are not changed)
\begin{eqnarray}\label{chi2}
\chi^{[2]}: \Z/2^{[2]} \to \Z/2^{[2]},
\end{eqnarray}
by permutations of the base vectors.
It is not difficult to check, that the inclusion
$\I_a \times \II_a \subset \Z/2^{[2]}$ commutes with automorphisms 
$(\ref{chidd})$, $(\ref{chi2})$ in the image and the preimage.

Define the group
\begin{eqnarray}\label{chi2Z}
\I_{a \times \aa} \int_{\chi^{[2]}} \Z.
\end{eqnarray}
Let us consider the quotient 
of the group $\I_{a \times \aa} \ast \Z$ (the free product of the group
$\I_{a \times \aa}$ and $\Z$) by the relation $zxz^{-1}=\chi^{[2]}(x)$, where $z \in \Z$
is the generator,  
$x \in \I_{a \times \aa}$ is an arbitrary element. 

This group is a particular example of a semi-direct product  $A \rtimes_{\phi} B$, $A=\I_{a \times \aa}$, $B=\Z$, by a homomorphism  $\phi: B \to Aut(A)$; the set $A \times B$
is equipped with a binary operation  $(a_1,b_1) \ast (a_2,b_2) \mapsto (a_1 \phi_{b_1}(b_2),b_1b_2)$.
Let us define the group
$(\ref{chi2Z})$ by this construction for  $A=\I_{a \times \aa}$, $B=\Z$,
$\phi=\chi^{[2]}$.

The classifying space  $K(\I_{a \times \aa} \int_{\chi^{[2]}} \Z,1)$ is a skew-product over the circle $S^1$ with $K(\I_{a \times \aa},1)$, where the shift mapping in the cyclic covering
$K(\I_{a \times \aa},1) \to K(\I_{a \times \aa},1)$ over $K(\I_{a \times \aa} \int_{\chi^{[2]}} \Z,1)$ is induced by the automorphism $\chi^{[2]}$.
The projection onto the circle is denoted by
\begin{eqnarray}\label{pbb}
p_{a \times \aa}: K(\I_{a \times \aa}\int_{\chi^{[2]}}\Z,1) \to S^1.
\end{eqnarray}
Take a marked point
$pt_{S^1}   \in S^1$ and define the subspace
\begin{eqnarray}\label{inclK}
K(\I_{a \times \aa},1) \subset K(\I_{a \times \aa} \int_{\chi^{[2]}} \Z,1) 
\end{eqnarray}
as the inverse image of the marked point
$pt_{S^1}$ by the mapping $(\ref{pbb})$.

A description of the standard base of the group
$H_{i}(K(\I_{a \times \aa} \int_{\chi^{[2]}} \Z,1);\Z)$
is sufficiently complicated and is not required.
The group
$H_{i}(K(\I_{a \times \aa},1);\Z)$ is described using the Kunneth formula:
\begin{eqnarray}\label{Kunnd}
\begin{array}{c}
0 \to \bigoplus_{i_1+i_2=i} H_{i_1}(K(\I_a,1);\Z) \otimes H_{i_2}(K(\II_a,1);\Z) \longrightarrow \\
\longrightarrow H_i(K(\I_{a \times \aa},1);\Z)
\longrightarrow \\
\longrightarrow \bigoplus_{i_1+i_2=i-1} Tor^{\Z}(H_{i_1}(K(\I_a,1);\Z),H_{i_2}(K(\II_a,1);\Z) \to 0.\\
\end{array}
\end{eqnarray}

The standard base of the group
$H_{i}(K(\I_{a \times \aa} \int_{\chi^{[2]}} \Z,1))$ contains the following elements: 
$$x\otimes y / (x \otimes y)-(y \otimes x),$$
where $x \in H_{j}(K(\I_a,1))$, $y \in H_{i-j}(K(\II_a,1))$ 
($\Z/2$--coefficients is the formulas are omitted).

In particular, for odd $i$ the group
$H_{i}(K(\I_{a \times \aa},1);\Z)$ 
contains elements, which are defined by the fundamental classes
of the following submanifolds:
$\RP^{i} \times pt \subset \RP^i \times \RP^i
\subset K(\I_a,1) \times K(\II_a,1) = K(\I_{a \times \aa},1)$, $pt \times \RP^{i} \subset \RP^i \times \RP^i
\subset K(\I_a,1) \times K(\II_a,1) = K(\I_{a \times \aa},1)$.
Let us denote the corresponding elements as following:
\begin{eqnarray}\label{tdtdd}
t_{a,i} \in H_{i}(K(\I_{a \times \aa},1);\Z),
t_{\aa,i} \in H_{i}(K(\I_{a \times \aa},1);\Z).
\end{eqnarray}

The following analogues of the homology groups 
$H_{i}(K(\I_{a \times \aa} \int_{\chi^{[2]}} \Z,1))$,
$H_{i}(K(\I_{a \times \aa} \int_{\chi^{[2]}} \Z,1);\Z)$
with local coefficients over the generator $\pi_1(S^1)$ 
%\footnote{
%more simple homology groups
%$H_{i}^{loc}(K(\I_{a \times \aa} \int_{\chi^{[2]}} \Z,1);\Z)$
%with $\Z$-local coefficient system also can be defined, 
%in this case on the last step of the construction difficulties with triple (non-commutative) %local coefficient system arise.}
is defined, the groups are denoted by
\begin{eqnarray}\label{Z2lok}
H_{i}^{loc}(K(\I_{a \times \aa} \int_{\chi^{[2]}} \Z,1);\Z/2),
\end{eqnarray}
\begin{eqnarray}\label{Zlok}
H_{i}^{loc}(K(\I_{a \times \aa} \int_{\chi^{[2]}} \Z,1);\Z).
\end{eqnarray}

The following epimorphism
\begin{eqnarray}\label{epid}
p_{a \times \aa}: \I_{a \times \aa} \int_{\chi^{[2]}} \Z \to \Z
\end{eqnarray}
is well defined by the formula:
$x \ast y \mapsto y$, $x \in \I_{a \times \aa}$, $y \in \Z$.
The following homomorphism 
\begin{eqnarray}\label{epid2}
\I_{a \times \aa} \int_{\chi^{[2]}} \Z \to \Z \to \Z/2,
\end{eqnarray}
is well defined by the formula
$p_{a \times \aa} \pmod{2}$. 

Let us define the group $(\ref{Zlok})$. Let us consider the group ring  $\Z[\Z/2] = \{a + bt\}$, $a,b \in \Z$, $t \in \Z/2$. The generator
$t \in \Z[\Z/2]$ is represented by the involution  
$$\chi^{[2]}: K(\I_{a \times \aa} \times 2\Z,1) \to
K(\I_{a \times \aa} \times 2\Z,1),$$ 
the restriction of the involution
on the subspace 
$K(\I_{a \times \aa},1) \subset K(\I_{a \times \aa} \times 2\Z,1)$ 
is the reflection, which is induced by the automorphism  
$\I_a \times \II_a \to \I_a \times \II_a$, which  permutes factors;
the restriction of the involution on the group $2\Z$ is the antipodal involution in the double covering $S^1 \stackrel{\times 2}{\longrightarrow} S^1$.
Because all non-trivial homology classes of the space
$K(\I_{a \times \aa},1)$ are of the order $2$, transformation of signs 
is not required. Nevertheless,  a local orientation of transformed simplex  is preserved.

Let us consider the local system of the coefficient
%:
$\rho_t: \Z/2[\Z/2] \to Aut(K(\I_{a \times \aa},1) \times 2\Z)$,
% \subset K(\I_{a \times \aa} %\int_{\chi^{[2]}} \Z,1))$ over the generator of $\pi_1(S^1)$,
using this local system: a chain
$(a + bt)\sigma$ with the support on a simplex $\sigma \subset K(\I_{a \times \aa} \times 2\Z,1)$ is transformed into a chain $(at + b)\chi^{[2]}(\sigma)$.
The group  $(\ref{Zlok})$ is well defined
by the tensor product of a chain $\Z[\Z/2]$-complex with the integers $\Z$ by the augmentation
$\Z[\Z/2] \to \Z$: $a+bt \mapsto a+b$.

 The group  $(\ref{Z2lok})$ is defined analogously.
A complete calculation of the groups $(\ref{Z2lok})$, $(\ref{Zlok})$ is not required.

Let us define the following subgroup:
\begin{eqnarray}\label{DZlok}
D_{i}^{loc}(\I_{a \times \aa};\Z) \subset H_{i}^{loc}(K(\I_{a \times \aa} \int_{\chi^{[2]}} \Z,1);\Z)
\end{eqnarray}
by the formula:
$D_{i}^{loc}(\I_{a \times \aa};\Z) = \mathrm{Im}(H_i^{loc}(K(\I_{a \times \aa},1);\Z) \to
H_{i}^{loc}(K(\I_{a \times \aa} \int_{\chi^{[2]}} \Z,1);\Z))$, 
where the homomorphism
$$H_i^{loc}(K(\I_{a \times \aa},1);\Z) \to
H_{i}^{loc}(K(\I_{a \times \aa} \int_{\chi^{[2]}} \Z,1);\Z)$$
is induced by the inclusion of the subgroup. % $(\ref{incl})$.

From the definition, there exist a natural homomorphism:
\begin{eqnarray}\label{epilok}
H_{i}(K(\I_{a \times \aa},1);\Z[\Z/2]) \to D_{i}^{loc}(\I_{a \times \aa};\Z),
\end{eqnarray}
this homomorphism is an isomorphism.

%The following natural homomorphism
%$H_{i}(K(\I_{a \times \aa},1);\Z)) \otimes \Z[\Z/2] \to
%H_{i}(K(\I_{a \times \aa},1);\Z[\Z/2])$
%is well defined, this homomorphism is an isomorphism by the universal coefficients formula.
%This calculation is following from the isomorphism
%$\Z[\Z/2] \equiv \Z \oplus \Z$ an the additivity of the derived functor
%$Tor^{\Z}$. Analogously, $H_{i}(K(\I_{a \times \aa},1);\Z/2)) \otimes \Z/2[\Z/2] \equiv
%H_{i}(K(\I_{a \times \aa},1);\Z/2[\Z/2])$.
%The subgroup $(\ref{DZlok})$ is generated by various elements 
%$X + Yt$, $X,Y \in H_{i}(K(\I_{a \times \aa},1);\Z)$. 

%The following equivalent relation determines the equality of two elements: 
%$X \equiv \chi^{[2]}_{\ast}(X)t$, where the automorphism
%\begin{eqnarray}\label{ch2}
%\chi^{[2]}_{\ast}: H_{i}(K(\I_{a \times \aa},1);\Z) \to H_{i}(K(\I_{a \times %\aa},1);\Z)
%\end{eqnarray}
%is induced by the automorphism
%$(\ref{chidd})$.
%The authomorphism $(\ref{chidd})$ induces also the  automorphism
%\begin{eqnarray}\label{ch2lok}
%\chi^{[2]}_{\ast}: H_{i}(K(\I_{a \times \aa},1);\Z[\Z/2]) \to H_{i}(K((\I_{a \times %\aa});\Z[\Z/2]).
%\end{eqnarray}

A subgroup
\begin{eqnarray}\label{DZ2lok}
D_{i}^{loc}(\I_{a \times \aa};\Z/2) \subset  H_{i}^{loc}(K(\I_{a \times \aa} \int_{\chi^{[2]}} \Z,1);\Z/2)
\end{eqnarray}
is defined analogously as $(\ref{DZlok})$.
A description of this subgroup is more easy, because this group is generated by elements
$X + tY$,
$X = x \otimes y, Y = x' \otimes y'$, wherein $\chi^{[2]}_{\ast}(x \otimes y) = y \otimes x$.

In the case $i=2s$ the basis elements 
$y \in D_{i-1}^{loc}(\I_{a \times \aa};\Z)$ are the following:

1. $y=r$,  $r= t_{a,2s-1} +  t_{\aa,2s-1}$, where $t_{a,2s-1},t_{\aa,2s-1} \in H_{2s-1}(K(\I_{a \times \aa},1);\Z)$ are defined by the formula
$(\ref{tdtdd})$.

2. $y= z(i_1,i_2)$, where $z(i_1,i_2) = tor(r_{a,i_1}, r_{\aa,i_2}) + tor(r_{a,i_2}, r_{\aa,i_1})$, $i_1,i_2 \equiv 1 \pmod{2}$,
$i_1 + i_2 = 2s-2$,
$tor(r_{a,i_1}, r_{\aa,i_2}) \in Tor^{\Z}(H_{i_1}(K(\I_{a},1);\Z),H_{i_2}(K(\II_a,1);\Z))$.
The elements  $\{z(i_1,i_2)\}$ belong to the kernel of the homomorphism: 
\begin{eqnarray}\label{coeff}
D_{i-1}^{loc}(\I_{a \times \aa};\Z) \to D_{i-1}^{loc}(\I_{a \times \aa};\Z/2),
\end{eqnarray}
which is defined as the modulo 2 reduction of the coefficients:
$\Z \to \Z/2$.

%The elements $R, \quad Z(i_1,i_2) \in H_{2s}^{lok}(K(\I_{a \times \aa} \int_{\chi^{[2]}} %\Z,1);\Z[\Z/2])$ are defined as the direct product of the corresponding 
%$2s-1$--cycle $f: C_{2s-1} \to K(\I_{a \times \aa},1)$ on the circle.
%The mapping of the cycle into
%$K(\I_{a \times \aa} \int_{\chi^{[2]}} \Z,1)$ is defined by the composition of the Cartesian %product mapping
%$f \times id: C_{2s-1} \times S^1 \to K(\I_{a \times \aa},1) \times S^1$
%with the standard $2$-sheeted covering
%$K(\I_{a \times \aa},1) \times S^1 \to K(\I_{a \times \aa} \int_{\chi^{[2]}} \Z,1)$. 

%Using the denotations, the following lemma is proved.  

\begin{lemma}\label{obstr}
The group $H_{2s}^{loc}(K(\I_{a \times \aa}\int_{\chi^{[2]}}\Z,1);\Z)$ contains a direct factor by the subgroup
$ D_{2s}^{loc}(\I_{a \times \aa};\Z)$. % and the subgroup, which is generated by elements $R, %\quad \{Z(i_1,i_2)\}$. 
The elements from the subgroup  
$$\bigoplus_{i_1+i_2=2s} H_{i_1}(K(\I_{a},1);\Z) \otimes H_{i_2}(K(\II_a,1);\Z) \subset D_{2s}^{loc}(\I_a \times \II_a;\Z),$$ 
is mapped monomorphic into the image   $\mathrm{Im}(A)$  by the following homomorphism:

\begin{eqnarray}\label{A}
A: H_{2s}^{loc}(K(\I_{a \times \bb}\int_{\chi^{[2]}}\Z,1);\Z) \to 
\end{eqnarray}
$$H_{2s}^{loc}(K(\I_{a \times \bb}\int_{\chi^{[2]}}\Z,1);\Z/2),$$
which is the modulo 2 reduction of the coefficient system.
\end{lemma}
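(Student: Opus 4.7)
The plan is to analyze the fibration
\begin{equation*}
K(\I_{a\times\aa},1) \hookrightarrow K(\I_{a\times\aa}\int_{\chi^{[2]}}\Z,1) \xrightarrow{p_{a\times\aa}} S^1
\end{equation*}
via the associated Wang-type long exact sequence with twisted coefficients $\Z[\Z/2]$. Because every homology class of the fiber $K(\I_{a\times\aa},1)$ has order $2$, signs play no role in the monodromy, and the geometric action $\chi^{[2]}_{\ast}$ coincides on fiber homology with the factor-swap between $\I_a$ and $\II_a$; this action also matches the local-coefficient action of the generator $t\in\Z[\Z/2]$ attached to the loop in $S^1$.

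Step 1. The cap product $o(x)=x\cap p_{a\times\aa}^{\ast}(\theta)$ is precisely the Wang boundary: it sends a cycle to its transverse intersection with the fiber over the basepoint of $S^1$. Consequently the kernel of $o$ is the image of the inclusion $H_{2s}^{loc}(K(\I_{a\times\aa},1);\Z[\Z/2]) \to H_{2s}^{loc}$, i.e.\ the subgroup $D_{2s}^{loc}(\I_{a\times\aa};\Z[\Z/2])$ of $(\ref{DZlok})$. The image of $o$ is contained in the $\chi^{[2]}_{\ast}$-invariant part of $H_{2s-1}^{loc}(K(\I_{a\times\aa},1);\Z[\Z/2])$, since the two boundary components of a transverse preimage of the basepoint are glued by the monodromy.

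Step 2. Using the Künneth formula $(\ref{Kunnd})$ and the fact that $\chi^{[2]}$ swaps the two Künneth factors, the $\chi^{[2]}_{\ast}$-invariant subgroup of $H_{2s-1}(K(\I_{a\times\aa},1);\Z)$ is generated by the symmetrized tensor class $r=t_{a,2s-1}+t_{\aa,2s-1}$ (coming from the tensor summand in odd total degree) and by the symmetrized torsion classes $z(i_1,i_2)$ (coming from the $\mathrm{Tor}$ summand). The explicit elements $R$ and $Z(i_1,i_2)$, defined in the excerpt as the image under the $2$-sheeted covering $K(\I_{a\times\aa},1)\times S^1\to K(\I_{a\times\aa}\int_{\chi^{[2]}}\Z,1)$ of the Cartesian product of $r$, respectively $z(i_1,i_2)$, with $S^1$, satisfy $o(R)=r$ and $o(Z(i_1,i_2))=z(i_1,i_2)$ tautologically. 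They therefore split the short exact sequence of Step 1 and yield the asserted direct-sum decomposition.

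Step 3. For the second claim, apply the natural transformation $A$ induced by $\Z\to\Z/2$. As observed just before the lemma, the elements $z(i_1,i_2)$ lie in the kernel of $(\ref{coeff})$: the $\mathrm{Tor}$-summand of the universal coefficient decomposition of the fiber is annihilated by $\Z\to\Z/2$, hence $A(Z(i_1,i_2))=0$. Thus only the tensor-product part of $D_{2s}^{loc}$, together with $R$, can contribute non-trivially to $\mathrm{Im}(A)$, and by Step 2 these elements do survive. The main obstacle is the compatibility check of Step 1: one must verify that the cap-product obstruction $o$ really agrees with the Wang boundary for the twisted coefficient system $\Z[\Z/2]$, and that its image is the full $\chi^{[2]}_{\ast}$-invariant subgroup, not merely a subgroup of it; once the coincidence of the coefficient-twist action of $t$ with the geometric monodromy $\chi^{[2]}_{\ast}$ on $2$-torsion is recorded, the two involutions fuse and the invariants are read off directly from the Künneth decomposition.
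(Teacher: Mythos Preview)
Your argument is correct and follows the same line as the paper's own proof: both use the obstruction $o(x)=x\cap p_{a\times\aa}^{\ast}(\theta)$ as the key tool, identify its image with the $\chi^{[2]}_{\ast}$-invariant classes $r$ and $z(i_1,i_2)$ in fiber homology, realize these by the explicit cycles $R$ and $Z(i_1,i_2)$ to produce the direct-sum splitting, and then observe that the $Z(i_1,i_2)$ lie in $\mathrm{Ker}(A)$ to get the description of $\mathrm{Im}(A)$. Your framing of $o$ as the Wang boundary for the fibration over $S^1$ makes the mechanism more transparent than the paper's terse treatment, but the underlying strategy is identical.
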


\subsubsection*{Proof of Lemma $\ref{obstr}$}

Assume an element $ x \otimes y \in H_{i_1}(K(\I_{a},1);\Z) \otimes H_{i_2}(K(\II_a,1);\Z)$
is a cycle in $H_{2s}^{loc}(K(\I_{a \times \aa}\int_{\chi^{[2]}}\Z,1);\Z)$, which is represented by a singular manifold 
$f: V^{2s} \longrightarrow K(\I_{a},1) \times K(\II_a,1)$. Take the boundary
$W^{2s+1}$, $\partial W^{2s+1}=V^{2s}$ and consider the cycle
$$F: (W^{2s+1},V^{2s}) \longrightarrow (K(\I_{a \times \aa}\int_{\chi^{[2]}}\Z,1),K(\I_{a},1) \times K(\II_a,1))$$
with a local coefficient system, which is given by the collection of a paths from centres of simplexes to the marked point in the subspace of the pair. We may assume, that the singular cycle
$F$ is such that the manifold is catted by the hypersubmanifold $U^{2s} \subset W^{2s+1}$, the
local path system at the  two components of the boundary  of
the catted submanifold $W^{2s-1} \setminus U^{2s}$ have no intersection points of paths with $U^{2s}$.
Then, the two copies $U_+^{2s}$, $U^{2s}_-$ of the boundary are represented by $\chi^{(2)}$-conjugated cycles and after the homotopy of $U^{2s} \subset W^{2s+1}$ into a subspace
$K(\I_{a},1) \times K(\II_a,1) \subset K(\I_{a \times \aa}\int_{\chi^{[2]}}\Z,1)$ we get
an ordinary singular cycle with 3 components of boundary: $V^{2s}$, $U^{2s}$ and $-U^{2s}$,
where $-\dots$ is denoted the cycle with the opposite orientation. Lemma \ref{obstr} is proved. \qed
\[  \]

Let us consider the diagonal subgroup:
$i_{\I_d, \I_{a \times \aa}}: \I_d \subset \I_{a} \times \II_a = \I_{a \times \aa}$.
This subgroup coincides to the kernel of the homomorphism
\begin{eqnarray}\label{omega2}
\omega^{[2]}: \I_a \times \II_a \to \Z/2,
\end{eqnarray}
which is defined by the formula
$(x \times y) \mapsto xy$.

Let us define the homomorphism
\begin{eqnarray}\label{Phi2}
\Phi^{[2]}: \I_{a \times \aa} \int_{\chi^{[2]}} \Z \to \Z/2^{[2]}
\end{eqnarray}
by the formula:
$\Phi^{[2]}(z)=ab$, $z \in \Z$ is the generator  (the element $ab$ is represented by the inversion of the first basis vector $\Z^{[2]} \subset O(2)$), 
$$\Phi^{[2]} \vert_{\I_{a \times \aa} \times \{0\}}: \I_{a \times \aa} \subset \D$$ 
is the standard inclusion;
$$\Phi^{[2]} \vert_{z^{-1}\I_a \times \II_a  z}: \I_{a \times \aa} \subset \D$$ 
is the conjugated inclusion, by the composition with the exterior automorphism
in the subgroup $\I_{a \times \aa} \subset \Z/2^{[2]}$.

Let us define
\begin{eqnarray}\label{arf}
(\Phi^{[2]})^{\ast}(\tau_{[2]}) = \tau_{a \times \aa},
\end{eqnarray}
where
$\tau_{a \times \aa} \in H^2(K(\I_{a \times \aa} \int_{\chi^{[2]}} \Z,1))$, $\tau_{[2]} \in H^2(K(\Z^{[2]},1))$.

\section{The fundamental class of the canonical covering over $D$-framed standardized immersion}

Let us consider 
a $\Z/2^{[2]}$--framed immersion  $(g,\Psi,\eta_N)$, $g: N^{n-2k} \looparrowright \R^n$, 
and assume that this immersion is standardized by $\Upsilon$, see subsubsection \ref{dihedstand}.

The mapping  $\eta_N$ admits a reduction, the image of this mapping belongs to the total bundle space over $\RP^2$ with the layer
$K(\I_{a \times \aa},1)$. Assume that the manifold
$N^{n-2k}$ is connected. Assume that a marked point on $\RP^2$ is fixed and denote:
\begin{eqnarray}\label{pro}
PROJ^{-1}(\x_{\infty}) \in N^{n-2k}.
\end{eqnarray}
 Assume that the immersion  $g$ 
translates a marked point into a small neighbourhood of the central point $\x_{\infty} \in \RP^2$,
over this point the defect is well defined.

The image of the fundamental class
$\eta_{N,\ast}([N^{n-2k}])$ belongs to $H_{n-2k}(\D;\Z)$. The standardization condition, see Definition \ref{stand}, 
implies an existence of a natural lift of this homology class
into the group $H_{n-2k}(\I_{a \times \aa} \int_{\chi^{[2]}} \Z;\Z)$, 
because the condition C1 implies that the submanifold
$N_{sing} \subset N^{n-2k}$, which is a layer over the point  $\x_{\infty} \in \RP^2$
determines zero-chain.

Let us define the lift of the characteristic class
$\eta_{N,\ast}$ 
and define the class 
\begin{eqnarray}\label{class4}
\eta_{N,\ast}([N^{n-2k}]) \in H_{n-2k}^{loc}((\I_{a \times \aa} \rtimes_{\chi^{[2]}} \Z,1);\Z).
\end{eqnarray}
Then let us prove that

\begin{theorem}\label{cla}
The characteristic class 	(\ref{class4}) satisfies the following equation:
\begin{eqnarray}\label{class5}
\eta_{N,\ast}([N^{n-2k}]) \in D_{n-2k}^{loc}(\I_{a \times \aa};\Z),
\end{eqnarray}
 see the inclusion (\ref{DZlok}). 
\end{theorem}

%If $N^{n-2k}$ is a self-intersection of the immersion  $f: M^{n-k} \looparrowright \R^n$, then
%the compression condition of of the class 
%$\kappa \in H^1(M;\Z/2)$ in the codimension $q$, for $q \ge 2$, implies the existence 
%of a lift of the class
%$\eta_{N,\ast}([N^{n-2k}])$ into $H_{n-2k}(\I_{a \times \aa};\Z)$.
%Moreover, the homology Euler class
%$PD(e_{\ast}(\nu(g)))$ of the normal bundle also is lifted 
%into
%$H_{n-4k}(\I_{a \times \aa};\Z)$.

%Let usdefine the lift of the charactheristic mapping 
%$\eta_{N,\ast}$ into the space 
%and define the class 
%$\eta_{N,\ast}([N^{n-2k}])$ into $H_{n-2k}(\I_{a \times \aa};\Z)$.

% consider the self-intersection manifold 
%$L^{n-4k}$ of the immersion  $g$ and its canonical double covering $\bar{L}^{n-4k}$,
%which is immersed into 
%$N^{n-2k}$. Let us assume (this assumption gives no restriction in the proof) that 
%the defect 
%$N_{sing}$ is empty and, therefore, the characteristic mapping  $\eta_{N}$ 
%has the target space $K(\I_{a \times \aa} \int_{\chi^{[2]}} \Z,1)$. 

%Let us investigate the Hurewich image of the fundamental class
%$[\bar{L}^{n-4k}]$. Define a local coefficient system
%and prove that the image of the fundamental class
%$[\bar{L}^{n-4k},pt]$  by  $\eta_N$ determines an element
%\begin{eqnarray}\label{etadd16}
%\eta_{\ast}([\bar{L}^{n-4k},pt]) \in D_{n-2k}^{loc}(\I_{a \times \aa};\Z) \subset %H_{n-2k}^{loc}(K(\I_{a \times \aa} \int_{\chi^{[2]}} \Z,1);\Z).
%\end{eqnarray}
%in the homology group with this local coefficient system.

\subsubsection{Construction of the class (\ref{class4})}
Let us consider a skeleton of the space
$K(\I_{a \times \aa},1)$, which is realized as the structured group 
$\I_{a \times \aa} \subset O(1+1)$ of a bundle 
over the corresponding Grasmann manifold
$Gr_{\I_{a \times \aa}}(1+1,n)$ of non-oriented $1+1=2$-plans in  $n$--space. 
Denote this Grasmann manifold by
\begin{eqnarray}\label{KK1}
KK(\I_{a \times \aa}) \cong K(\I_{a \times \aa},1).
\end{eqnarray}
On the space $(\ref{KK1})$ a free involution 
\begin{eqnarray}\label{chiK}
\chi^{[2]}: KK(\I_{a \times \aa}) \to KK(\I_{a \times \aa}),
\end{eqnarray}
acts, this involution corresponds to the automorphism
$(\ref{chidd})$ and represents by a permutation of line of layers (by the antidiagonal matrix). 

Let us define a family of spaces
(\ref{KK1}) with the prescribed symmetry group, are parametrized over the base $S^1$, 
the generator of $S^1$ transforms layers by the involution (\ref{chidd}). 
%This is the
%required classifying space, the target space of the mapping  $\eta_N$. 
This space is a subspace (a skeleton):
$K(\I_{a \times \aa},1) \rtimes_{\chi^{[2]}} \RP^2 \subset K(\D,1)$.

Define the required space
\begin{eqnarray}\label{KKin}
KK(\I_{a \times \aa} \rtimes_{\chi^{[2]}} \RP^2)
\end{eqnarray}
%we start by the torus of the involution
%$(\ref{chiK})$, which is denoted by 
%\begin{eqnarray}\label{KKint}
%	KK(\I_{a \times \aa} \int_{\chi^{[2]}} \Z).
%	\end{eqnarray}
%Then define the space 
%$(\ref{KKin})$
by a quotient of the direct product
$KK(\I_{a \times\aa}) \times S^2$ with respect to the involution $\chi^{[2]} \times (-1)$, 
where $-1: S^2 \to S^2$--is the antipodal involution. 
The following inclusion is well defined:
\begin{eqnarray}\label{KKin2}
KK(\I_{a \times \aa} \int_{\chi^{[2]}} \Z) \subset KK(\I_{a \times \aa}) \rtimes_{\chi^{[2]}} \RP^2.
\end{eqnarray}

The involution $(\ref{chiK})$ induces the $S^1$-fibrewise involution 
\begin{eqnarray}\label{chiKint}
\chi^{[2]}: KK(\I_{a \times \aa}\int_{\chi^{[2]}} \Z ) \to KK(\I_{a \times \aa}\int_{\chi^{[2]}} \Z),
\end{eqnarray}
which is extended to the involution
\begin{eqnarray}\label{chiKin}
\chi^{[2]}: KK(\I_{a \times \aa} \rtimes_{\chi^{[2]}} \RP^2 ) \to
KK(\I_{a \times \aa} \rtimes_{\chi^{[2]}} \RP^2)
\end{eqnarray}
on  $KK(\I_{a \times \aa}) \rtimes \RP^2$, 
all this extended involutions denote the same.

The universal $2$-dimensional
$\I_{a \times \aa} \int_{\chi^{[2]}} \Z/2$--bundle
over the target space in (\ref{KKin2})
$KK(\I_{a \times \aa},1) \rtimes_{\chi^{[2]}} \RP^2$,  which is denoted by 
$\tau_{a \times \aa, \rtimes}$,
is well defined.
We will need mostly the restriction of this bundle to the subspace (\ref{KKin}).
The restriction of this bundle  on the origin
subspace of $(\ref{KK1})$ is the involutive pull-back  of the universal $2$-bundle $\tau_{a \times \aa}$ over the target space of $(\ref{KK1})$.

The first direct factor of the $\I_{a \times \aa} \rtimes \Z$-framing of the immersion $(g,\Psi,\eta_N)$ by $\Upsilon$,  determines the Gaussean mapping in (\ref{KKin}). The restriction of this mapping on (\ref{pro}) is in the $\frac{3n}{4}$-skeleton of the subspace (\ref{KK1}). The restriction of this mapping on the complement to this submanifold  is 
inside the subspace (\ref{KKin2}). Local coefficients system, described in Section  \label{localcoeff} determines the class (\ref{class4}).

\subsubsection{Proof of Theorem \ref{cla}}
Let us prove that the class (\ref{class4}) satisfies the equation (\ref{class5}).
Take the decomposition (\ref{Nregb}). The first manifold in the right hand-side of the formula is mapped into the skeleton of the dimension $n-2k-2q$, where $q=7$. Therefore the fundamental class of the second term gives a contribution to the characteristic class (\ref{class4}). By construction, the formula (\ref{class5}) is satisfied. Theorem \ref{cla} is proved. \qed

In the case $dim(N)=n-2k$, we get $dim(\bar{N}^{n_{\sigma}}_{a \times \aa})=n_{\sigma}=n-16k$, see (\ref{Nsigma}), Theorem \ref{Th2};
recall, in the case $n=254$ one gets $k=15$  and $n_{\sigma}=14$.

Let us consider the  element
\begin{eqnarray}\label{etadlok}
\eta_{\ast}(N^{n_{\sigma}}_{a \times \aa}) \in D_{n_{\sigma}}^{loc}(\I_{a \times \aa};\Z).
\end{eqnarray}
Let us prove that the element (\ref{etadlok}) satisfies an additional property, which is called pure standardized Property, Definition \ref{negl} below.

\begin{lemma}\label{dd}

--1. The element $(\ref{etadlok})$ 
is decomposed  with respect to the standard base of the group
$H_{n_{\sigma}}(K(\I_a \times \II_a,1))$ contains not more then the only monomial
$t_{a,i} \otimes t_{\aa,i}$, see. $(\ref{tdtdd})$, $i=\frac{n_{\sigma}}{2}=\frac{n-16k}{2}$.
The coefficient of this monomial coincides with the Kervaire invariant, which is calculated for 
a $\Z/2^{[2]}$--framed immersion $(g,\Psi,\eta_N)$.

%--2. The element $(\ref{etadd16})$ belongs to the subgroup  $(\ref{DZ2lok})$, $i=n-2k$. 
\end{lemma}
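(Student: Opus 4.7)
The plan is to combine the Kunneth decomposition of the target group with the form of Herbert's immersion theorem already used just above the statement (to pass from (\ref{etadlok2}) to (\ref{etadd})) and the sharp dimensional constraint supplied by Theorem \ref{Th2}, part 3.

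First, since $H_i(\RP^\infty;\Z/2)=\Z/2$ in every non-negative degree, the mod-$2$ Kunneth formula makes $H_{m_\sigma}(K(\I_a\times\II_a,1);\Z/2)$ free on the monomials $t_{a,i_1}\otimes t_{\aa,i_2}$ with $i_1+i_2=m_\sigma$. Any element in the image of $\Delta^{[2]}$ is $\chi^{[2]}$-invariant, so it lies in the subgroup spanned by the symmetrised off-diagonal sums $t_{a,i_1}\otimes t_{\aa,i_2}+t_{a,i_2}\otimes t_{\aa,i_1}$ with $i_1\ne i_2$, together with the single diagonal monomial $t_{a,i}\otimes t_{\aa,i}$ at $i=m_\sigma/2$. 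The task is therefore to show that every off-diagonal summand vanishes and then to identify the diagonal coefficient with $\Theta^{\D}_{k}[(g,\Psi,\eta_N)]$.

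To kill the off-diagonal summands I would feed Herbert's theorem, in the form $\eta_{\ast}([\bar L])=\eta_{\ast}([N])\cap e(\nu_g)$, into the explicit block decomposition of $\nu_g|_{\bar L}$ recorded just above the statement: the bundle splits as $k$ copies of $\kappa_a\oplus\kappa_{\aa}$, with an analogous splitting of the tangential directions in $N^{n-2k}$, so $e(\nu_g)$ is invariant under the swap of the two factors. Combining this with Theorem \ref{Th2}, part 3, which confines the residual part of the structured mapping to a polyhedron of dimension at most $(n-3k')/2+d$, forces every $\chi^{[2]}$-symmetric off-diagonal monomial in degree $m_\sigma$ to exceed the available polyhedron dimension and hence to vanish; the Tor classes from the right-hand side of (\ref{Kunnd}) are excluded by the same dimensional check together with their odd total degree. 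The only summand that can survive is $t_{a,i}\otimes t_{\aa,i}$ with $i=m_\sigma/2$.

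Finally, to compute the coefficient of the diagonal monomial I would pair the resulting cycle with $\kappa_a^{i}\,\kappa_{\aa}^{i}\in H^{m_\sigma}(K(\I_a\times\II_a,1);\Z/2)$. Using $\tau_{a\times\aa}=(\Phi^{[2]})^{\ast}(\tau_{[2]})$ from the end of Section~3, the mod-$2$ reduction of $\tau_{a\times\aa}$ restricts on $K(\I_a\times\II_a,1)$ to $\kappa_a\kappa_{\aa}$ and pulls back along the standardised $\eta_N$ to $\eta_N^{2}$, so the pairing computes $\langle\eta_N^{m_\sigma/2};[N^{m_\sigma}_{a\times\aa}]\rangle$, which by (\ref{arf}) and (\ref{44}) is exactly $\Theta^{\D}_{k}[(g,\Psi,\eta_N)]$. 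The main obstacle is the dimensional argument of the previous paragraph: one must verify that the polyhedron $P$ of Remark~\ref{r7} is genuinely too thin to support any off-diagonal Kunneth basis element in degree $m_\sigma$, and in particular that the sharp bound $\dim P\le (n-3k')/2+d$ coming from the compression is exactly what forces the image to be concentrated on the balanced monomial.
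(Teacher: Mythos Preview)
Your reduction to $\chi^{[2]}$-symmetric monomials and your identification of the diagonal coefficient with the Kervaire number are fine. The gap is in the middle step: the dimensional argument you propose does not kill the off-diagonal monomials.

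All of the Kunneth basis elements $t_{a,i_1}\otimes t_{\aa,i_2}$ with $i_1+i_2=m_\sigma$ live in the same degree, so ``exceeding the available polyhedron dimension'' has no meaning here. The bound $\dim P\le (n-3k')/2+d$ from Theorem~\ref{Th2}, part~3 and Remark~\ref{r7} is a statement about the \emph{defect} subpolyhedron controlling the reduction over $\RP^2$; it does not force $\eta_N\vert_{N^{m_\sigma}_{a\times\aa}}$ to factor through a small skeleton of either $\RP^\infty$ factor, and therefore does not prevent any individual $t_{a,i_1}\otimes t_{\aa,i_2}$ from appearing. The Herbert formula and the $\chi^{[2]}$-invariance of the Euler class that you invoke only tell you the image is symmetric, which you already knew.

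The paper's argument is quite different and much more direct. It works on the manifold $N^{m_\sigma}$ (with $m_\sigma=14$) itself and exploits two facts: first, $N^{14}$ is oriented, so only odd\,$\times$\,odd monomials $\kappa_a^{2j+1}\kappa_{\aa}^{13-2j}$ can pair nontrivially; second, $k\equiv 0\pmod 8$, so the normal bundle $k\kappa_a\oplus k\kappa_{\aa}$ is trivial on low-dimensional dual submanifolds. For each off-diagonal number one passes to the submanifold $K$ dual to a suitable sub-monomial and reads off the stable normal bundle of $K$; then a Wu-class vanishing ($w_2(K^3)=0$, $w_5(K^7)=0$) forces the remaining characteristic number to be zero. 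For example, $\langle\kappa_a\kappa_{\aa}^{13};[N^{14}]\rangle=\langle\kappa_{\aa}^3;[K^3]\rangle$ with $\nu_K\cong\kappa_a\oplus 2\kappa_{\aa}$, and $w_2(K^3)=0$ gives $\kappa_{\aa}^2=0$ on $K^3$. This is a genuine characteristic-class computation, not a dimension count, and there is no shortcut around it.
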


\subsubsection*{Proof of Lemma $\ref{dd}$}
Let us proof the statement for the case
$n_{\sigma}=14$. Let us consider the manifold $N_{a \times \aa}^{n_{\sigma}}$, 
which we re-denote in the proof by
$N^{14}$ for sort. This manifold is equipped with the mapping 
$\eta: N^{14} \to K(\I_{a \times \aa},1)$. 
Let us consider all the collection of characteristic $\Z/2$-numbers
for the mapping $\eta$, which is induced from the universal classes.
The Hurewitcz image $\eta_{\ast}([N^{14}])$ is in
the group $D_{n_{\sigma}}^{loc}(\I_{a \times \aa};\Z/2)$.

Let us estimate the fundamental class
$[N^{14}]$ and prove that this class is pure (see definition \ref{negl}) below. Because 
$N^{14}$ is oriented, among characteristic numbers: 
$\kappa_a\kappa^{13}_{\aa}$, $\kappa_a^3\kappa^{11}_{\aa}$, $\kappa_a^5\kappa^{9}_{\aa}$,  $\kappa_a^7\kappa^{7}_{\aa}$, $\kappa_a^9\kappa^{5}_{\aa}$,
$\kappa_a^3\kappa^{11}_{\aa}$, $\kappa_a\kappa^{13}_{\aa}$.
the only number $\kappa_a^7\kappa^{7}_{\aa}$ could be nontrivial.

Let us prove that the characteristic number 
$\kappa_a\kappa^{13}_{\aa}$ is trivial. Let us consider the manifold
$K^3 \subset N^{14}$, which is dual to the characteristic class  $\kappa_a\kappa_{\aa}^{10}$. The normal bundle
$\nu_N$ over the manifold $N^{14}$ is isomorphic to the Whitney sum $k\kappa_a \oplus k\kappa_{\aa}$, where the positive integer $k$ satisfies the equation:
$k \equiv 0 \pmod{8}$.
The restriction of the bundle
$\nu_N$ on the submanifold $K^3$ is trivial.  
Therefore, the normal bundle
$\nu_K$  of the manifold  $K^3$ is stably isomorphic to the bundle 
$\kappa_a \oplus 2\kappa_{\aa}$.

Because the characteristic class
$w_2(K^3)$ is trivial we get:
$\langle \kappa^3_{\aa};[K^3]\rangle=0$. But,  $\langle \kappa^3_{\aa};[K^3]\rangle=
\langle \kappa_a\kappa^{13}_{\aa};[N^{14}]\rangle$.
This proves that the characteristic number
$\kappa_a\kappa^{13}_{\aa}$ is trivial. 
Analogously, the characteristic numbers
$\kappa_a^5\kappa^{9}_{\aa}$, $\kappa_a^9\kappa^{5}_{\aa}$, $\kappa_a\kappa^{13}_{\aa}$ are trivial.

Let us prove that the characteristic number
$\kappa_a^3\kappa^{11}_{\aa}$ is trivial. Let us consider the submanifold
$K^7 \subset N^{14}$, which is dual to the characteristic class 
$\kappa_a^2\kappa_{\aa}^5$. The normal bundle $\nu_K$ of the manifold $K^6$
is stably isomorphic to the Whitney sum 
$2\kappa_a \oplus 5\kappa_{\aa}$. Because $w_5(K^7)=0$,
the characteristic class
$\kappa_{\aa}^{5}$ is trivial. In particular, 
$\langle \kappa_a\kappa_{\aa}^6;[K^7]\rangle=0$.
The following equation is satisfied: 
$\langle \kappa_a\kappa_{\aa}^6;[K^7]\rangle = \langle \kappa_a^3\kappa_{\aa}^{11};[N^{14}]\rangle$.

It is proved that the characteristic number
$\kappa_a^3\kappa^{11}_{\aa}$ is trivial. Analogously, 
the characteristic number
$\kappa_a^{11}\kappa^{3}_{\aa}$ is trivial. 

It is sufficiently to note, that the characteristic number
$\langle \kappa_a^7\kappa_{\aa}^7;[N^{14}] \rangle$ coincides with the
characteristic number (\ref{44}) in the lemma. 
%Statement 2 is an reformulation of Statement 2 of Theorem \ref{Th2}.
Lemma $\ref{dd}$ is proved. \qed
\[  \]

\begin{definition}\label{negl}
Let $(g,\eta_N,\Psi;\Upsilon)$ be the standardized $\D$-framed immersion in the codimension  $2k$.
Let us say that this standardized immersion is negligible, if its fundamental class
$\pmod{2}$ is in the subgroup
\begin{eqnarray}\label{etad}
\eta_{\ast}(N^{n-2k}_{a \times \aa}) \in D_{n - 2k}^{loc}(\I_{a \times \aa};\Z/2)
\end{eqnarray}
and is trivial.

Let us say that this standardized immersion is pure, if its fundamental class
$\pmod{2}$ satisfies the condition of Lemma
\ref{dd}, i.e. the Hurewitcz image of the fundamental class is in the group 
$\in D_{n - 2k}^{loc}(\I_{a \times \aa};\Z/2)$ and contains no monomials 
$t_{a,\frac{n-2k}{2}+i} \otimes t_{\aa,\frac{n-2k}{2}-i}$, $i \in \{\pm 1;\pm 2;\pm 3;\pm 4;\pm 5;\pm 6;\pm 7\}$, but, probably, contains the monomial
$t_{a,\frac{n-2k}{2}} \otimes t_{\aa,\frac{n-2k}{2}}$.
\end{definition}

We may reformulate results by Lemma \ref{dd} using Definition \ref{negl}. 

\begin{proposition}\label{pr1}
Assume  an element  $[(g,\eta_N,\Psi)] \in Imm^{D}(n-2k,2k)$ is in the image of the vertical
homomorphism in the diagram $(\ref{5a})$. Then this element is represented by a standardized immersion, which is pure in the sense of Definition $\ref{negl}$.
\end{proposition}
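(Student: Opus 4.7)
The plan is to assemble Proposition \ref{pr1} as a direct corollary of Theorem \ref{Th2} (for standardization) and Lemma \ref{dd} (for purity of the mod $2$ fundamental class in the relevant local coefficient group). No genuinely new construction is needed; the task is to check that the outputs of those two preceding results fit together on the nose.

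For the standardization claim, note that by hypothesis $[(g,\eta_N,\Psi)]$ lies in the image of the vertical arrow $J^{\D}_k$ of diagram (\ref{5a}) composed with $\delta^{\D}$, so it equals $\delta^{\D}_k[(f,\Xi,\kappa_M)]$ for some skew-framed class $[(f,\Xi,\kappa_M)] \in Imm^{sf}(n-k,k)$. Statement 1 of Theorem \ref{Th2} applies verbatim: one replaces $f$ by the small deformation $f_1$ of the composition $d \circ \kappa$ built from the holonomic equivariant map $d^{(2)}$ of Lemma \ref{7}, and takes as the new representative the parametrized self-intersection $g_1 : N^{n-2k} \looparrowright \R^n$ with the reduced structured map $\eta_{a \times \aa}$ of (\ref{etadtd}). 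This produces a standardized $\D$-framed immersion with $d = 2$.

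For the purity claim, the dimension hypotheses allow one to invoke statements 2 and 3 of Theorem \ref{Th2} as well: these furnish the decomposition (\ref{Nregb}), the submanifold $N^{m_{\sigma}}_{a \times \aa} \subset N^{n-2k}_{reg}$ on which the structured map reduces to $\I_a \times \II_a$, and the dimensional control $(n-3k')/2 + d$ on the polyhedron of the first summand of (\ref{Nregb}). Combined with the triviality of the monodromy of (\ref{pbb}) on the canonical covering guaranteed by Theorem \ref{Th2}.3, this pushes the Hurewicz image $\eta_{\ast}([\bar L^{m_{\sigma}}_{a \times \aa}])$ from the twisted group $H^{loc}_{m_{\sigma}}(K(\I_{a \times \aa} \int_{\chi^{[2]}} \Z,1);\Z[\Z/2])$ down to the diagonal subgroup $D^{loc}_{m_{\sigma}}(\I_{a \times \aa};\Z[\Z/2])$ via the isomorphism (\ref{deltaD}), and by Herbert's immersion theorem identifies it with $\eta_{\ast}([N^{m_{\sigma}}_{a \times \aa}]) \in H_{m_{\sigma}}(K(\I_{a \times \aa},1))$.

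Once the class lives in this untwisted group, Lemma \ref{dd} applies in one stroke: its successive Stiefel--Whitney calculation on the Poincar\'e-dual submanifolds $K^3, K^7, \ldots \subset N^{m_{\sigma}}_{a \times \aa}$ kills every off-diagonal monomial $\kappa_a^{2j+1} \kappa_{\aa}^{m_{\sigma} - 2j - 1}$ with $2j+1 \ne m_{\sigma}/2$, leaving at most the diagonal monomial $t_{a,m_{\sigma}/2} \otimes t_{\aa,m_{\sigma}/2}$. Read through Definition \ref{negl}, this is purity. The main obstacle, and the only step requiring genuine care rather than citation, is the compatibility of the Herbert-type identification $\eta_{\ast}([\bar L^{m_{\sigma}}_{a \times \aa}]) = \eta_{\ast}([N^{m_{\sigma}}_{a \times \aa}])$ with the equivariant modification along the hypersurface $\bar\Sigma^{n-4k-1}$ of (\ref{barSig}): one must check that after the cutting and regluing along $\bar\Sigma$ the resulting class still lies in $D^{loc}_{m_{\sigma}}$ and not in one of the wreath-product summands isolated by Lemma \ref{obstr}.
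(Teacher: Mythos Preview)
Your proposal is correct and matches the paper's approach exactly: the paper presents Proposition \ref{pr1} as a direct reformulation of Lemma \ref{dd} in the language of Definition \ref{negl}, with standardization already supplied by Theorem \ref{Th2}, and offers no separate proof. Your write-up simply makes this reformulation explicit, and the ``main obstacle'' you flag (the Herbert-type identification surviving the cut-and-reglue along $\bar\Sigma$) is precisely what the paper handles in the passage deriving (\ref{etadlok2}) from (\ref{etadlok}) just before Lemma \ref{dd}.
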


\section{$\H_{a \times \aa}$--structure on $\Z/2^{[3]}$--framed immersion \newline
$\J_b \times \JJ_b$--structure on
$\Z/2^{[4]}$--framed immersion}\label{Sec5}

The group
$\I_b$ is defined as the cyclic subgroup of the order
$4$ in the dihedral group:
$\I_b \subset \Z/2^{[2]}$.

Let us define the subgroup
\begin{eqnarray} \label{iaa}
i_{\tilde{\J}_b \times \tilde{\JJ}_b}: \tilde{\J}_b \times \tilde{\JJ}_b \subset
\Z/2^{[4]},
\end{eqnarray}
each group $\tilde{\J}_b$, $\tilde{\JJ}_b$  is the order $8$ group, which is isomorphic to the Cartesian product of the two cyclic groups $\Z/4 \times \Z/2$.

The subgroup (\ref{iaa}) admits a subgroup with the splitted projection onto: 
\begin{eqnarray} \label{iaa}
 \tilde{\J}_b \times \tilde{\JJ}_b \supset \J_b \times \JJ_b,
\end{eqnarray}
each group $\J_b$, $\JJ_b$  is the cyclic $4$ group.

The group $\Z/2^{[4]}$ (the monodromy group for the $3$-tuple iterated self-intersection 
of a skew-framed immersion) is defined using the base  $(\e_1, \dots, \e_4, \f_1, \dots, \f_4)$ in
the Euclidean space $\R^8$. Let us denote the order $4$ generators of the subgroup
$\J_b \times
\JJ_b$ by $b$, $\bb$ correspondingly. 
 The generators of the order $2$ in the extended group $\tilde{\J}_b \times \tilde{\JJ}_b$ are denoted by $d$, $\dd$ correspondingly.

Let us describe transformations
in
$\Z/2^{[4]}$, which corresponds to each generator. 
Let us consider two subspaces described by the base vectors as following:
\begin{eqnarray}\label{ff}
Lin_{\x} = \{\e_1, \dots, \e_4\}, \quad Lin_{\y} = \{\f_1, \dots, \f_4\}.
\end{eqnarray}

The linear space $Lin_{\x} \oplus Lin_{\y}$ is equipped with the $\Z/4 \times \Z/2$ action by the generators $(b,d)$:
$$b(\e_1)=\e_2; \ b(\e_2)=-\e_1, \ b(\e_3)=\e_4; \ b(\e_4)=-\e_3;$$
$$b(\f_1)=\f_3; \ b(\f_2)=\f_4, \ b(\f_3)=\f_1; \ b(\f_4)=-\f_2;$$
$$d(\e_1)=-\e_1; \ d(\e_2)=\e_2, \ d(\e_3)=-\e_3; \ d(\e_4)=\e_4;$$
$$d(\f_1)=\f_1; \ d(\f_2)=\f_2, \ d(\f_3)=\f_3; \ d(\f_4)=\f_4.$$

The linear subspace $Lin_{\x} \oplus Lin_{\y}$ is equipped with the $\Z/4 \times \Z/2$ action by the generators $(\bb,\dd)$:
$$\bb(\e_1)=\f_3; \ \bb(\e_3)=\f_1, \ \bb(\e_2)=\f_4; \ \bb(\e_4)=\f_2;$$
$$\bb(\f_1)=\f_2; \ \bb(\f_2)=-\f_1, \ \bb(\f_3)=\f_4; \ \bb(\f_4)=-\f_3;$$
$$\dd(\e_1)=\f_1; \ \dd(\e_2)=\e_2, \ \dd(\e_3)=\e_3; \ \dd(\e_4)=\e_4;$$
$$\dd(\f_1)=-\f_1; \ \dd(\f_2)=\f_2, \ \dd(\f_3)=-\f_3; \ \dd(\f_4)=\f_4.$$

 Denote by $\chi^{[4]}: Lin_{\x} \to Lin_{\y}$ the involution by 
 $$\chi^{[4]}(\e_1)=\f_1; \ \chi^{[4]}(\e_2)=\f_2; \ \chi^{[4]}(\e_3)=\f_3; \ \chi^{[4]}(\e_4)=\f_4.$$
 The automorphism is defined the blocks involution:
 $$ \chi^{[4]}: Lin_{\x} \oplus Lin_{\y} \longrightarrow Lin_{\x} \oplus Lin_{\y}. $$
The involution $\chi^{[4]}$ satisfies the following property:
$$\chi^{[4]} \circ  b = \bb \circ \chi^{[4]}, \quad \chi^{[4]} \circ  d = \dd \circ \chi^{[4]}. $$

Let us denote the subgroup
$i_{\H_{a \times \aa}, \J_b \times \JJ_b}: \H_{a \times \aa} \subset \J_b \times \JJ_b$,
which is the product of the diagonal subgroup
$\Z/4 \subset \J_b \times \JJ_b$ with the elementary subgroup $\Z/2$
of the second factor
$\Z/2 \subset \JJ_b$ (or, equivalently, with the elementary subgroup $\Z/2 \subset \JJ_b$
of the first factor, because the two products coincide).

The following homomorphism 
\begin{eqnarray}\label{omega4}
\omega^{[4]}: \J_b \times \JJ_b \to \Z/4,
\end{eqnarray}
is defined by the formula:
$(x \times y) \mapsto xy$.

%Define the subgroup
%$i_{\I_{a \times \aa}, \H_{a \times \aa}}: \I_{a \times \aa} \subset \H_{a \times \aa}$
%as the kernel of the epimorphism
%\begin{eqnarray}\label{omega3}
%\omega^{[3]}: \H_{a \times \aa} \to \Z/2,
%\end{eqnarray}
%which is defined by the formula: $(x \times y) \mapsto x$ using 
%generators $x,y$ of the group $\J_b \times \JJ_b$.

Let us consider the  inclusion
$\Z/2^{[3]} \subset \Z/2^{[4]}$, which is defined by the subspace 
$Lin(\e_1, \e_2, \f_1, \f_2) \subset \{\e_1, \dots \f_4\}$. 
The inclusion  $i_{\H_{a \times \aa}}: \H_{a \times \aa} \subset
\Z/2^{[3]}$ is well defined, such that
the following diagram is commutative:

\begin{eqnarray}\label{D1}
\label{a,aa}
\begin{array}{ccc}
\qquad \I_{a \times \aa} & \stackrel {i_{a \times \aa}} {\longrightarrow}& \qquad \Z/2^{[2]} \\
i_{a \times \aa,\H_{a \times \aa}} \downarrow &  & i^{[3]} \downarrow \\
\qquad \H_{a \times \aa} &  \stackrel {i_{\H_{a \times \aa}}}
{\longrightarrow}& \qquad \Z/2^{[3] } \\
i_{\H_{a \times \aa},\J_b \times \JJ_b} \downarrow &  &  i^{[4]} \downarrow \\
\qquad \J_b \times \JJ_b & \stackrel{i_{\J_b \times \JJ_b}}{\longrightarrow} &  \qquad \Z/2^{[4]}.\\
\end{array}
\end{eqnarray}

The following automorphisms of the order 2:
\begin{eqnarray}\label{chi3H}
\chi^{[3]}: \H_{a \times \aa} \to \H_{a \times \aa},
\end{eqnarray}
\begin{eqnarray}\label{chi3A}
\chi^{[4]}: \J_b \times \JJ_b \to \J_b \times \JJ_b,
\end{eqnarray}
and
\begin{eqnarray}\label{chi3ZA}
\chi^{[3]}: \Z/2^{[3]} \to \Z/2^{[3]},
\end{eqnarray}
\begin{eqnarray}\label{chi4ZA}
\chi^{[4]}: \Z/2^{[4]} \to \Z/2^{[4]},
\end{eqnarray}
 are marked with a loss of the strictness.

The following triple
of $\Z$-extensions of the group $\J_b \times \JJ_b$, defined below, is required.
The group
$\I_{a \times \aa} \int_{\chi^{[2]}} \Z$ was defined above by the formula  $(\ref{chi2Z})$.
Analogously, the groups 
\begin{eqnarray}\label{chi3Z}
\H_{a \times \aa} \int_{\chi^{[3]}} \Z,
\end{eqnarray}
\begin{eqnarray}\label{chi4Z}
(\J_b \times \JJ_b) \int_{\chi^{[4]}} \Z,
\end{eqnarray}
are defined as semi-direct products of the corresponding groups, equipped by automorphisms, with the group $\Z$.

The classifying space $K(\H_{a \times \aa} \int_{\chi^{[3]}} \Z,1)$
is a skew-product of the circle
$S^1$ with the space $K(\H_{a \times \aa},1)$, moreover, the mapping 
$K(\H_{a \times \aa},1) \to K(\H_{a \times \aa},1)$, 
which corresponds to a shift of the cyclic covering over
$K(\H_{a \times \aa} \int_{\chi^{[3]}} \Z,1)$, is defined by the involution,
which is induced by the automorphism
$\chi^{[3]}$. The definition of the group $(\ref{chi4Z})$ is totally analogous. 

\subsubsection{Laurent extensions}
For the second and third lines of the diagram (\ref{D1}) let us define the primary
and secondary Laurent extensions, which are self-conjugated by 
the automorphism $\chi^{[3]}$ and $\chi^{[4]}$ correspondingly. Let us denote the primary  extension on the group $\H_{a \times \aa}$ by the following bi-involutive automorphism
\begin{eqnarray}\label{mu3}
\mu^{(3)}_{b \times \bb}=\mu_{b}^{(3)} \times \mu_{\bb}^{(3)}: \H_{a \times \aa} \to \H_{a \times \aa},
\end{eqnarray}
 which inverses the cyclic generator and is given by the complex conjugation over the coordinates. 
\begin{eqnarray}\label{first}
	\H_{a \times \aa} \rtimes_{\mu_{b \times \bb}^{(3)}} \Z \times \dot{\Z}.
\end{eqnarray}
The authomorphism (\ref{mu3}) is defined by the formula:
$$ \mu_{b}^{(3)}: \e_1 \mapsto -\e_1, \e_2 \mapsto \e_2; \qquad 
\mu_{\bb}^{(3)}: \f_1 \mapsto -\f_1, \f_2 \mapsto \f_2. $$

Let us denote the secondary extension with the analogous property by
\begin{eqnarray}\label{mu4}
\mu_{b \times \bb}^{(4)} = \mu_{b}^{(4)} \times \mu_{\bb}^{(4)}: \J_b \times \JJ_b \to \J_b \times \JJ_b
\end{eqnarray}
The authomorphism (\ref{mu4}) is defined by the formula:
$$ \mu_{b}^{(4)}: \e_1 \mapsto -\e_1, \e_2 \mapsto -\e_2, \e_3 \mapsto \e_3, \e_4 \mapsto \e_4$$  
$$\mu_{\bb}^{(4)}:  \f_1 \mapsto -\f_1, \f_2 \mapsto -\f_2, \f_3 \mapsto \f_3, \f_4 \mapsto \f_4. $$
One may see that $\mu_{b \times \bb}^{(3)}$ is the transfer of $\mu_{b \times \bb}^{(4)}$ by the
subgroup $\Z/2^{[3]} \subset \Z/2^{[4]}$.

 The Laurent expansion  on the group $\J_b \times \JJ_b$ 
is determined by the automorphisms
 $\mu_b^{(4)}: \J_b \to \J_b$,  $\mu_{\bb}^{(4)}: \JJ_b \to \JJ_b$. The  Laurent extension itself is denoted by
\begin{eqnarray}\label{second}
\J_b \times \JJ_b \rtimes_{\mu^{(4)}_{b \times \bb}} (\Z \times \dot{\Z}).
\end{eqnarray}

The primary extension is $\chi^{[3]}$-equivariant, we get the total primary extension:
\begin{eqnarray}\label{firsttot}
	\H_{a \times \aa} \rtimes_{\mu^{(3)}_{b \times \bb}} \Z \times \dot{\Z} \rtimes_{\chi^{[3]}} \Z.
\end{eqnarray}

The secondary extension is $\chi^{[4]}$-equivariant, we get the total secondary extension: 
\begin{eqnarray}\label{secondtot}
\J_b \times \JJ_b \rtimes_{\mu_{b \times \bb}^{(4)}} (\Z \times \dot{\Z}) \rtimes_{\chi^{[4]}} \Z. 
\end{eqnarray}
The extension (\ref{firsttot}) is the transfer of (\ref{secondtot}) by the
subgroup $\Z/2^{[3]} \subset \Z/2^{[4]}$.

The Laurent extensions (\ref{firsttot}), and  (\ref{secondtot})  are naturally represented into $\Z/2^{[4]}$. The diagram (\ref{D1}) is a subdiagram in the following diagram:

\begin{eqnarray}\label{D2}
\label{a,aa}
\begin{array}{ccc}
\qquad \I_{a \times \aa} \rtimes_{\chi^{[2]}} \Z & \stackrel {i_{a \times \aa}} {\longrightarrow}& \qquad \Z/2^{[2]} \\
i_{a \times \aa,\H_{a \times \aa}} \downarrow &  & i^{[3]} \downarrow \\
\qquad \H_{a \times \aa} \rtimes_{\mu^{(3)};\chi^{[3]}} (\Z \times \Z) \rtimes \Z &  \stackrel {i_{\H_{a \times \aa}}}
{\longrightarrow}& \qquad \Z/2^{[3] } \\
i_{\H_{a \times \aa},\J_b \times \JJ_b}  \downarrow &  &  i^{[4]} \downarrow \\
\qquad \J_b \times \JJ_b \rtimes_{\mu^{(4)}_{b \times \bb};\chi^{[4]}}  (\Z \times \dot{\Z}) \rtimes \Z& \stackrel{i_{\J_b \times \JJ_b}}{\longrightarrow} &  \qquad \Z/2^{[4]}.\\
\end{array}
\end{eqnarray}

The primary resolution representation in the middle row of the diagram (\ref{D2}) is following. 
The primary generators of $\Z$ (of $\dot{\Z}$) is represented into $Lin_x(\e_1,\e_2)$ (into $Lin_y(\f_1,\f_2)$) by the element $A$, the reflection of the vector $\e_1$ (of the vector $\f_1$).
The secondary representation in  the third row of the diagram (\ref{D2}) extends the primary representation. By this  representation the generator of  $\Z$ (of $\dot{\Z}$) is represented into 
$Lin(\e_1,\e_4)$ (into $Lin(\f_1,\f_4)$) by the antipodal reflection of the base vectors   $\e_1$,  $\e_3$ ($\f_1$,  $\f_3$) correspondingly.
% The representation corresponds to the description in the subsubsection \ref{seccKK} and the %construction of extensions in the subsection \ref{rezol}.
%We will need only short secondary extension, this extension is defined by the forgetful mapping
%\begin{eqnarray}\label{forgmapping}
%\begin{array}{c}
%Forg: 	\J_b \times \JJ_b \rtimes_{\mu^{(3)},\mu_{b \times \bb}^{(4)}} ((\Z \times \Z) \times %(\dot{\Z} \times \dot{\Z})) \rtimes_{\chi^{[4]}} \Z  \longrightarrow \\
%\J_b \times \JJ_b \rtimes_{\mu_{b \times \bb}^{(4)}} (\Z \times \dot{\Z}) \rtimes_{\chi^{[4]}} %\Z,
%\end{array}
%\end{eqnarray}
%when the generators $\Z \times \dot{\Z}$ of the primary extension is omitted, when the %representation itself in (\ref{D2}) is kept.
%The $\Z/2$--reductions, which are associated with the representations $\chi^{[3]}$, %$\chi^{[4]}$  are required. 
Recall, there is a defect of the integer extensions
(\ref{chi3Z}), (\ref{chi4Z}) the $\chi$-parametrization is not over $S^1$ 
but also over the projective plane $\RP^2$. 
At a final step of the construction the defect of (\ref{secondtot}) is eliminated and only the integer  Laurent extension is required. 
%But, in the construction 
%this (minimal dimensional) $\Z/2$-parametrization is assumed.

\subsubsection{Complex line bundle $\beta_{b \times \bb}$\label{complex}}

Let us define the following $\C$-fibred  bundle over the classifying space of the left group
of the bottom map in the diagram (\ref{D2}): 
\begin{eqnarray}\label{beta}
\beta_{b \times \bb}, 
\end{eqnarray}
 this classifying  space is denoted by (\ref{BJ}), using the automorphisms  $\mu_{b \times \bb}^{(4)}$, $\chi^{[4]}$.

Describe explicitly  finite-dimensional skeleton of the classifying space 
\begin{eqnarray}\label{BJ}
	B(\J_b \times \JJ_b \rtimes_{\mu_{b \times \bb}^{(4)}} \Z \times \Z \rtimes_{\chi^{[4]}} \Z). 
\end{eqnarray}
 Firstly, define this bundle over the
subspace of the subgroup (\ref{iaa}).

Take the product $\RP^N/\i \times \RP^N/\i$, $N$ is an arbitrary great odd number.
The automorphism $\mu_{b \times \bb}^{(4)}$
corresponds to the pairs of involutions (a $\Z/2 \times \Z/2$-equivariant mapping) by  the conjugation of the factor:
\begin{eqnarray}\label{involution}
	F\mu_{b \times \bb}^{(4)}: \RP^N/\i \times \RP^N/\i \longrightarrow \RP^N/\i \times \RP^N/\i.
\end{eqnarray}	
Take the torus of the mapping (\ref{involution}), which is skew-product over $T^2 = S^1 \times S^1$
with the layer $\RP^N/\i \times \RP^N/\i$. Take an additional involution $I$ on $\RP^N/\i \times \RP^N/\i$, which permutes the factors. This involution  extends  the pair of involutions $(\mu_{b \times \bb}^{(4)})$ into the triple of pairwise commuted involutions $(\mu_{b \times \bb}^{(4)},I)$.
This extra involution $I$ determines the involution on the mapping torus and the following fibred space,
which is fibered over $3$-dimensional cylinder $T^2 \rtimes_I S^1$:
\begin{eqnarray}\label{cyl}
	 (\RP^N/\i \times \RP^N/\i)  \rtimes_{\mu_{b \times \bb}}  (T^2 \rtimes_I S^1).
\end{eqnarray}
Below in Section \ref{sec7} this fibered space will be used to determine the corresponding local coefficients system, with inversions of cyclic factors (TypeII). 
%By the action of the generators of the factors
% $\J_b$, $\JJ_b$  the complex structure is preserved and the corresponding fiber is %transformed by the $\i$-multiplication.   

By the  generator  $I$  the generator
$b \in \J_b$ of the subgroup (\ref{iaa}) is transformed into the generator 
$\bb \in \JJ_b$; the complex conjugation 
$\mu_{b}$ is transformed into the complex conjugation  $\mu_{\bb}$.
The bundle (\ref{beta}) over $2N$-skeleton of the space (\ref{BJ}) is defined as following.
Over the subspace $\RP^N/\i \times \RP^N/\i$ this bundle is the standard $\I_b \times \I_{\b}$-bundle with the class coincide with the sum of the  generators. This class is
invariant with respect to the involutions $\mu^{(4)}_{b \times \bb},\chi^{[4]}$ and is extended as a class over (\ref{cyl}).

%The generators $b$, $\bb$ correspond to the action on the first (second) factor by the complex %unite $\i$ in
%the covering over the classifying space. 
\subsubsection{Local coefficients and homology groups}

Let us consider the homology group
$H_{i}(K((\J_b \times \JJ_b),1);\Z)$.
In particular, for an odd $i$ the group contains the fundamental classes of the manifolds
$$S^{i}/\i \times pt \subset S^i/\i \times S^i/\i \subset K(\J_b,1) \times K(\JJ_b,1),$$ 
$$pt \times S^{i}/\i  \subset S^i/\i \times S^i/\i \subset K(\J_b,1) \times K(\JJ_b,1),$$ 
which are denoted by
\begin{eqnarray}\label{tataa}
t_{b,i},  \in H_{i}(K(\J_b,1));\quad t_{\bb,i'} \in H_{i'}(K(\JJ_b,1)).
\end{eqnarray}

A complete description of the group $H_{i}(K((\J_b \times \JJ_b),1);\Z)$ is possible using  the Kunneth formula, as in the case (\ref{Kunnd}). A description of the standard base of the group
$H_{i}(K(\J_{b \times \bb},1);\Z)$ is described by:
\begin{eqnarray}\label{Kunnd2}
\begin{array}{c}
0 \to \bigoplus_{i_1+i_2=i} H_{i_1}(K(\J_b,1);\Z) \otimes H_{i_2}(K(\JJ_b,1);\Z) \longrightarrow \\
\longrightarrow H_i(K(\J_{b \times \bb},1);\Z)
\longrightarrow \\
\longrightarrow \bigoplus_{i_1+i_2=i-1} Tor^{\Z}(H_{i_1}(K(\J_b,1);\Z),H_{i_2}(K(\JJ_b,1);\Z) \to 0.\\
\end{array}
\end{eqnarray}

Let us define the following local integer homology groups
\begin{eqnarray}\label{loc1}
H_{i}^{loc}(K((\J_b \times \JJ_b) \rtimes_{\mu_{b \times \bb}^{(4)}} (\Z \times \Z),1);\Z).
\end{eqnarray} 
which was claimed as homology with interior system coefficients.
Local cycles in (\ref{loc1}) are represented by oriented chains of the classifying space of the group (\ref{first}) equipped with local paths to a marked point. 
An analogous more simple system of interior local coefficients for the single generator was investigated in 
$\cite{A-P}$.

Type I.
The local $\Z \times \dot{\Z} \rtimes \Z$-coefficients system
$\mu_{b \times \bb;I}^{(4)}, \chi^{(4)}$ acts along all the generators
$\mu_b$, $\mu_{\bb}, \chi^{(4)}$ by preserving orientation of local chains. In particular,
%is agree with the bundle
%$(\ref{beta})$; when
the generators of the local system acts (along paths) to global cycle as following: the generator
$t_{b,i}$ for $i \equiv 1 \pmod{4}$ is changed into the opposite along
the path $\mu_b$ and is preserved for $i \equiv 1 \pmod{4}$, the generator $t_{b,i}$
is preserved  along the path $\mu_{\bb}$; the generators
$t_{\bb,i'}$ for $i' \equiv 1 \pmod{4}$ is changed into the opposite along
the path $\mu_{\bb}$ and is preserved for 
$i' \equiv 3 \pmod{4}$;  the generator  $t_{\bb,i'}$ along the path $\mu_b$ is preserved. Because the homology group is described using generators $t_{b,i}$, $t_{\bb,i'}$ by (\ref{Kunnd2}), a global
action of the local system is well defined.

Type II.
The local  $\Z \times \dot{\Z} \rtimes \Z$-coefficients system
$\mu_{b \times \bb;IIb}^{(4)}, \chi^{(4)}$
is determined by the same monodromy of cycles as Type I. The difference is the following: the orientation of chains is changed along the  path $\mu_b$
and  along the path $\mu_{\bb}$ and is preserved along the simplest basic path of $\chi^{(4)}$.
The monodromy of the system is
%is agree with the bundle
%$(\ref{beta})$; when
described using generators as following: 
$t_{b,i}$, $i \equiv 1 \pmod{4}$ keeps the sign along $\mu_b$
and changes the sign for $i \equiv 3 \pmod{4}$ along $\mu_b$, the generator keeps the sign along $\mu_{\bb}$; 
for the generators
$t_{\bb,i'}$ the formula is analogous.

Let us determines classifying space for Type I local system. Consider a skeleton of the space
$B(\J_{b}) \rtimes_{\mu_b} S^1 \times B(\JJ_{b}) \rtimes_{\mu_{\bb}} S^1 \rtimes_{\chi^{(4)}} S^1 $, which is realized by means of the Laurent extension of the structured group 
$\J_{b \times \bb} \subset O(4+4)$, using (\ref{D1}),(\ref{D2}) of a bundle 
over the corresponding Grasmann manifold
$Gr_{\J_{b \times \bb}}(4+4,n)$ of non-oriented $4+4=8$-plans in  $n$--space. 
Denote this Grasmann manifold (analogously to (\ref{KK}) by
\begin{eqnarray}\label{KK4}
KK(\J_{b \times \bb}) \subset K(\J_{b \times \bb},1) \rtimes_{\mu_{b \times \bb}} (S^1 \times S^1).
\end{eqnarray}
On the space $(\ref{KK4})$ a free involution 
\begin{eqnarray}\label{chiK}
\chi^{[4]}: KK(\J_{b \times \bb}) \to KK(\J_{b \times \bb}),
\end{eqnarray}
acts, this involution corresponds to the automorphism
$(\ref{chi3A})$ and represents by a permutation of planes of layers (by a block-antidiagonal matrix).  The space for Type I system is constructed.

Let us determines classifying space for Type II local system. This is an analogous construction, 
but, the extended  Grasmann manifold
$Gr_{\J_{b \times \bb}}(4+4+2,n)$ is required. The $2$-dimensional extension relates with the bundle (\ref{beta}). The space for Type II system is constructed.

Recall a general facts concerning constructed local coefficients system. Take a market point $pt$ in  (\ref{KK4}) and take the image $\chi^{[4]}(pt)=pt^t$ of this point by (\ref{chiK}),
take a path $\lambda$, which joins  $pt$ and $\chi^{[4]}(pt)$. 
Using the path $\lambda$ a conjugated pair of local systems is defined:
\begin{eqnarray}\label{conjls}
\chi^{[4]}: \mu_{b \times \bb}^{loc} \mapsto \mu_{\bb \times b}^{loc}.
\end{eqnarray} 
The left system in the pair admits the corresponding  homology group (\ref{loc1}), 
the right system admits the conjugated homology group:
\begin{eqnarray}\label{loc2}
H_{i}^{loc}(K((\JJ_b \times \J_b) \rtimes_{\mu_{\bb \times b}^{(4)}} (\Z \times \dot{\Z}),1);\Z).
\end{eqnarray} 
A pair of conjugated local systems (\ref{loc2}) is well defined up to an equivalent relation.

The following lemma is analogous to Lemma
$\ref{obstr}$.

\begin{lemma}\label{oaa}

For the local system Type I, Type II the following inclusion is well defined:

$$	\bigoplus_{i_1+i_2=2s} H_{i_1}(K(\J_b,1);\Z) \otimes H_{i_2}(K(\JJ_b,1);\Z) \subset
$$
$$ 
H_{2s}^{loc}(K((\J_b \times \JJ_b) \rtimes_{\mu_{b \times \bb}^{(4)}, \chi^{(4)}} (\Z \times \dot{\Z}) \rtimes \Z,1);\Z). 
$$

\end{lemma}

\subsubsection*{Proof of Lemma \ref{oaa}}
The lemma  is analogous to Lemma \ref{obstr}. \qed

\section*{Standardized 
$\J_b \times \JJ_b$-immersions}

Let us formulate notions of standardized (and pre-standardized)
$\J_b \times \JJ_b$-immersion.

Let us consider a 
$\Z/2^{[4]}$-framed immersion
$(g,\Psi,\eta_N)$ of the codimension 
$8k$.
Assume that the image of the immersion $g$ belongs to a regular neighborhood of an
embedding
$I:S^1_{b} \times S^1_{\bb} \rtimes_{\chi^{[4]}} S^1 \subset \R^n$.

In this formula the third factor $S^1$ is not a direct, the monodromy (an involution)
of a layer $S^1_b \times S^1_{\bb}$  along the exterior circle
is given by the permutation of the factors.
Denote by
$U \subset \R^n$ a regular thin neighbourhood of the embedding $I$.

Let us consider 
a $\Z/2^{[4]}$--framed immersion  
$g: N^{n-8k} \looparrowright U \subset \R^n$, 
for which the following condition (Y) of a control of the structured group of the normal bundle
is satisfied.

\subsection*{Condition (Y)}

The immersion $g$ admits a reduction of a symmetric structured group to  the group
(\ref{secondtot}). 
Additionally, the projection
$\pi \circ g: N^{n-8k} \to S^1_b \times S^1_{\bb} \rtimes_{\chi^{[4]}} S^1$
of this immersion onto the central manifold of $U$  is  agreed with the projection
of the structured group
$(\Z \times \dot{\Z}) \rtimes \Z$ 
onto the factors of the extension.

Formally, a weaker condition
is following.

\subsection{Condition (Y1)}\label{Y1}

Assume an immersion $g$  admits a reduction of a symmetric structured group to  the group
(\ref{secondtot}).

\begin{definition}\label{stand2}
Let us say that a
$\Z/2^{[4]}$-framed immersion
$(g,\Psi,\eta_N)$ of the codimension 
$8k$ is standardized if Condition Y is satisfied.
% and the image of the fundamental class with respect to the structuring mapping %belongs to the subgroup (\ref{DZlokaa}).

Let us say that 
$\Z/2^{[4]}$-framed immersion 
$(g,\Psi,\eta_N)$ of the codimension
$8k$ is pre-standardized if  the definition above  week Condition (Y1) instead of Condition (Y) is formulated.
%The conditions (Y1), (Y) are equivalent (formally, (Y) implies (Y1)). 
Classification problems of standardized
and of pre-standardized immersions are equivalent.

\end{definition}

Standardized 
$\Z/2^{[4]}$-framed immersions
generate a cobordism group, this group is naturally mapped into 
$Imm^{\Z/4^{[4]}}(n-8k,8k)$ when a standardization  is omitted.

The following definition is analogous to Definition
$\ref{negl}$. 
In this definition a projection of the Hurewitcz  image of a fundamental class
onto a corresponding subgroup
is defined using Lemma
\ref{oaa}.

\begin{definition}\label{pure}
Let
$(g,\eta_N,\Psi)$ be a standardized (a pre-standardized)
$\Z/2^{[4]}$-framed immersion
in the codimension $8k$.
Let us say that this standardized immersion is pure, if the Hurewitcz  image of a fundamental class
in 
$\in D_{n - 8k}^{loc}(\J_{b} \times \JJ_b \rtimes_{\mu_{b \times \bb}} (\Z \times \dot{\Z});\Z/2)$ 
contains not monomial
$t_{b,\frac{n-8k}{2}+i} \otimes t_{\bb,\frac{n-8k}{2}-i}$, $i \in \{\pm 1;\pm 2;\pm 3;\pm 4;\pm 5;\pm 6;\pm 7\}$, but, probably,
contains the only non-trivial monomial $t_{b,\frac{n-8k}{2}} \otimes t_{\bb,\frac{n-8k}{2}}$.
\end{definition}

\begin{definition}\label{negl2}

Let us say that a $\Z/2^{[4]}$-framed immersion 	$(g,\eta_N,\Psi)$ is negligible immersion, if
its $\Z/2^{[2]}$-cover $(f,\Xi,\eta)$ is a negligible standardized $D$-framed immersion in the sense of Definition \ref{negl}.

% the Hurewicz image of its fundamental mod 2 class
%in the group
%\begin{eqnarray}\label{etad}
%	\eta_{\ast}(N^{n-8k}) \in  D_{n - 8k}^{loc}(\J_{b} \times \JJ_b %\rtimes_{\mu_{b \times \bb}} (\Z \times \Z);\Z/2[\Z/2]) 
%\end{eqnarray}
%equals to zero. (By  Lemma \ref{oaa}, Statement 3, this condition is reformulated %using
%transfer image in (\ref{DZlok}).)

\end{definition}

\subsection{Dense Principle for formal self-intersected immersions \label{dence}}

		Let $f:M^{n-k}\looparrowright \R^n$ be an immersion, $k\ge 1$. Let us consider the associated equivariant mapping
		of Cartesian products:
		$f\times f:M\times M\to \R^n\times \R^n$, this mapping is also an immersion outside the diagonal in the origin. 
		The inverse image of the diagonal
	$(f\times f)^{-1}(\diag_{\R^n})$ outside the diagonal $\diag_{M}$ is a submanifold denoted by  $\overline{\Delta}(f)$ 
		(previously the denotation $\overline {N}$ was used).
		
		Let us consider a sufficiently small 
		$\eps_2$ neighbourhood $U_{\eps_2}(\diag_{\R^n})$ of the diagonal
		 $\diag_{\R^n}$, such that the inverse image  $(f\times f)^{-1}(\diag_{\R^n})$ has no common points with
		$\overline{\Delta}(f)$. Then take a sufficiently small 
		$\eps_1$--neighbourhood  $U_{\eps_1}(\diag_{M})$ 
		of the diagonal $\diag_M$, such that $f\times f ( U_{\eps_1}(\diag_{M})) \subset U_{\eps_2}(\diag_{\R^n})$.
		
		Let us introduce the following denotation:
		$$
		M^{(2)}=(M\times M)\setminus U_{\eps_1}(\diag_{M}),
		$$
		where the constant
		$\eps_1$ is defined explicitly by the construction. 
		
	Let us define a class of immersions, for which Dense Principle is formulated.

		\begin{definition}\label{proper}
			Let $M^{n-k}$ be a closed manifold. Let us fix positive $\eps_1$ and $\eps_2$. By $T: X \times X$ ($X=M^{n-m}, X=\R^n$) the involution $T(x,y)=(y,x)$ on the Cartesian product is denoted. Let us consider $T$-equivariant immersion $F:M^{(2)}\to \R^n\times\R^n$.
Let us call such an immersion {\it pure immersion}, if
			
			(1) $F(\partial M^{(2)}) \subset U_{\varepsilon_2}(\diag_{\R^n})$;
			
			(2) $F(\partial M^{(2)}) \cap \diag_{\R^n}=\emptyset$.
		\end{definition}

The both conditions can be refodmulated as a common condition:
		$F(\partial M^{(2)})\subset U_{\eps_2}(\diag_{\R^n})\setminus \diag_{\R^n}$.
		
		In the considered example the restriction 
		$f\times f$ on $M^{(2)}$ is a pure immersion.

		\begin{proposition}\label{prop24}
		Let	$f_0: M \looparrowright R$ be a smooth immersion of a compact closed manifold, assume that $R$  is equipped with a metric  $\dist$, $\dim(M) < \dim(R)$. Let  $g: M \to R$ be a continuous mapping, which is homotopic to the immersion $f_0$ (in our case $R=\R^n$ and this extra assumption is unrequired). Then $\forall
			\varepsilon > 0$ there exists an immersion  $f : M \looparrowright R $, which is regular homotopic to the immersion $f_0$, for which $\dist(g;f)_{C^0} <
			\varepsilon $ in the metric induced by the metric  $\dist$.
		\end{proposition}
		
		\begin{proof}
			This is a reformulation of $h$-principle by Hirsh \cite{H}.
		\end{proof}

		\begin{proposition}\label{prop24bis}
			Let $(M^{(2)},\partial M^{(2)})$ be a smooth manifold with a boundary, let 
			$T_{M^{(2)}}: (M^{(2)},\partial M^{(2)}) \to (M^{(2)},\partial M^{(2)})$ be the transposition involution. Let $R^{(2)}$  be a smooth manifold, which is equipped with the metric $dist$, and with the transposition involution		
			$T_{R^{(2)}}: R^{(2)} \to R^{(2)}$, which admits a smooth submanifold of fixed points  $\Delta_R \subset R^{(2)}$, and $\dim(R^{(2)}) = 2n$, $\dim(\Delta_R)=n$, $\dim(M^{(2)}) < \dim(R^{(2)})$.
		We need the case	 $R^{(2)} = \R^n \times \R^n$.  Assume that there exists a pure immersion (in the sence of Definition \ref{proper}), i.e. $T_M,T_R)$ is an equivariant immersion
	$F_0^{(2)}: M^{(2)} \looparrowright R^{(2)}$, 
and, additionally, the image of the boundary has no intersection with the fixed point manifold:
			\begin{eqnarray}\label{uslF0}
				Im(F_0^{(2)}(\partial M^{(2)})) \subset R^{(2)} \setminus \Delta_R.
			\end{eqnarray}
			Let $G^{(2)}: M^{(2)} \to R^{(2)}$ be a smooth $(T_{M^{(2)}},T_{R^{(2)}})$--equivariant mapping, for which the following condition is satisfied: 
			\begin{eqnarray}\label{uslG}
				Im(G^{(2)}(\partial M^{(2)})) \subset R^{(2)} \setminus \Delta_R
			\end{eqnarray}
and which is equivariant homotopic to an immersion
			$F_0^{(2)}$ is the class of equivariant immersions described above. Then 
		 $\forall
			\varepsilon > 0$ there exists $(T_{M^{(2)}},T_{R^{(2)}})$--equivariant pure immersion (in the sence of definition described in Theorem \ref{proper})
			$F_1^{(2)} : M^{(2)} \looparrowright R^{(2)} $, which is regular homotopic to an immersion
			$F_0^{(2)}$, for which the following conditions %$(\ref{uslF0})$, $(\ref{uslG})$ are satisfied, as well as the following condition:
 $\dist(F_1^{(2)};G^{(2)})_{C^0} < \varepsilon $ in the metric, induced by the metric  $\dist$, $R^{(2)}$.
		\end{proposition}
		
		\subsubsection*{Proof of Proposition $\ref{prop24bis}$}
Let us consider a triangulation of the manifold
$(M^{(2)},\partial M^{(2)})$ of the calibre, much more smaller then $\varepsilon$. The induction over skeletons, analogously to Hirsh induction proves the proposition. \qed 
	\[   \]

Let us apply the Proposition \ref{prop24bis} in the following situation.	 Let $(f,\kappa,\Psi)$ be a skew-framed immersion
	$f: M^{n-k} \looparrowright \R^n$. Let us consider an open manifold $M^{n-k} \times M^{n-k} \setminus \Delta_M$, which is equipped with the involution $T_M$.
	Denote by  $M^{(2)}$ a spherical blow-up of the manifold  $M^{n-k} \times M^{n-k} \setminus \Delta_M$ along the diagonal, equipped with the free involution 
 $T^{(2)}_M$. (A spherical blow-up is homeomorphic to $M\times M$ without a small regular deleted $T^{(2)}_M$-equivariant neighbourhood of the diagonal. The boundary of the manifold $M^{(2)}$ is a fibred space of the spherical $S^{n-k-1}$ -bundle over $M^{n-k}$. Let us denote by  $\hat{M}^{(2)}$ 
a quotient $M^{(2)}/T^{(2)}_M$. The boundary $\partial \hat{M}^{(2)}$ coincides with the projectivization $TP(M^{n-k})$ of the tangent bundle  $T(M^{n-k})$. Let us denote $\hat{M}^{(2)} \setminus \partial \hat{M}^{(2)}$ by $\hat{M}^{(2)}_{\circ}$.

Let $(\R^n)^{(2)}$ be a manifold $\R^n \times \R^n$, equipped with the involution $T^{(2)}_{\R^n}$. The following mapping of configuration spaces
	\begin{eqnarray}\label{Gauss}
		f^{(2)}: M^{(2)} \to \R^n \times \R^n,
	\end{eqnarray}
which is a $(T^{(2)}_M,T^{(2)}_R)$--equivariant immersion is well defined. The equivariant immersion
	$(\ref{Gauss})$ satisfies an analogous condition, as for $F_0$ in Proposition \ref{prop24bis}.
	
	Let us use a skew-framed framing
	 $(\kappa,\Psi)$ for the immersion  $f$ to compute the normal bundle  $\nu_{f^{(2)}}$ of the immersion $f^{(2)}$.
Evidently, the immersion
	 $\Psi$ induces the isomorphism:
	\begin{eqnarray}\label{kappa+kappa}
		\bar{\Psi}^{(2)}: \nu_{f^{(2)}} = k(\kappa_1 \oplus \kappa_2),
	\end{eqnarray}
	where
	$\kappa_i$ is a linear bundle, associated with the immersion of the $i$-th factor $i=1,2$.

The involution
	 $T^{(2)}_M$ is covered by an involutive authomorphism of the bundle $\nu_{f^{(2)}}$ (which is not the identity on the base), which permutes the corresponding lines in the right hand side of the formula \ref{kappa+kappa}).  
	Therefore a corresponding vector bundle over  $M^{(2)}$ is well defined, let us denote this bundle by
	$\nu_{f^{(2)}}$. The isomorphism (\ref{kappa+kappa}) induces the isomorphism
	\begin{eqnarray}\label{D}
		\Psi^{(2)}: \nu_{\hat{f}}^{(2)} = k(\eta),
	\end{eqnarray}
	where $\eta$ is the 2-dimensional vector bundle with the structuring group $\D$.
	
	Let us assume that the mapping  (\ref{Gauss}) is transversal along the diagonal
	 $\Delta_{\R^n} \subset \R^n \times \R^n$ and consider the inverse image $(f^{(2)})^{-1}(\Delta_{\R^n})$ of this diagonal, which is denoted by   
	$\bar N \subset \bar{M}^{(2)}$. It is easy to check that $\bar N$ is a closed manifold of the dimension $n-2k$. 
	The manifold $\bar N$ is equipped with a free involution   $T^{(2)}_M \vert_{\bar N}: \bar N \to \bar N$, the factormanifold is denoted by $N^{n-2k}$. The manifold $N^{n-2k}$ is closed and is the formal analogue of the manifold, defined by the formula
	(\ref{Nregb}).

This new definition is a more general, because this makes sense without assumption that the equivariant mapping
$\bar{f}^{(2)}$ is holonomic. This definition is possible for $(T^{(2)}_M,T^{(2)}_R)$--equivariant mappings, which satisfies the condition (\ref{uslF0}), from a regular equivariant homotopy class of the mapping  $f^{(2)}$.
	The immersion  $\bar N^{n-2k} \looparrowright M^{n-k}$ and its restriction $f \vert_{\bar N^{n-2k}}: \bar N^{n-2k} \looparrowright \R^n$ are well defined.
	The normal bundle of the mapping  $f \vert_{\bar N^{n-2k}}$ is isomorphic to the restriction of the bundle $\nu_{f^{(2)}}$ on the submanifold 
	$\bar N \subset M^{(2)}$.

Let us consider the mapping
$g: N^{n-2k} \looparrowright \R^n$, which is defined as a factormapping of the restriction of $f^{(2)}$ on $\bar{N}$. 
From the transversality condition for
$f^{(2)}$ along the diagonal we get that  $g$ is a local embedding, i.e. an immersion. 
The normal bundle
	 $\nu_g$ of the immersion  $g$ is naturally isomorphic to the restriction of the bundle  $\nu_{f}^{(2)}$ on the submanifold
	$N^{n-2k} \subset M^{(2)}$. The dihedral framing, which is defined by the formula  (\ref{D}), coinsids with a dihedral framing  $\Psi$.

	Assume that an arbitrary
	 $(T^{(2)}_{M},T^{(2)}_{\R^n})$--equivariant immersion 
	$G^{(2)}: M^{(2)} \looparrowright \R^n \times \R^n$, 
which satisfies the condition $(\ref{uslG})$, and is equivariant homotopic to the immersion  
	$(\ref{Gauss})$ with a prescribed condition is well defined. In particular, the isomorphism  $(\ref{D})$ is preserved by a regular homotopy. Assume that a regular homotopy keeps the condition  $(\ref{uslG})$ and is transversal along 
	$\Delta_{\R^n} \subset \R^n \times \R^n$.  Let us consider the manifold $N^{n-2k}(G)$ in the target. This manifold is closed, and an immersion $g(G): N^{n-2k}(G) \looparrowright \Delta_{\R^n} = \R^n$ is well defined as the restriction of $G$.
 The normal bundle of this immersion, which is denoted by $\nu_{g(G)}$, is isomorphic to the restriction of the normal bundle of the immersion 
	$G/T_M$. The normal bundle $\nu_{g(G)}$ is equipped with a dihedral framing, which is defined by the formula$(\ref{D})$.
Therefore the following
	 $\D$--framed immersion $(g(G),\eta(G),\Psi(G))$, which is regular homotopic to the  $\D$--framed immersion
	$(g,\eta,\Psi)$ of the self-intersection manifold of the origin skew-framed immersion $(f,\kappa,\Xi)$ is well defined. The constructed $\D$-framed immersion
	 $(g,\eta,\Psi)$ satisfies additional properties, which are defined explicitly starting from the immersion
	$(\ref{Gauss})$.
	
\subsection{Dense Principle for formal iterated  self-intersected immersions}

\subsection*{Formal regular homotopy}
Let $g: N \looparrowright \R^n$ be an arbitrary immersion. 
This immersion induces a $(T_N,T_{\R^n})$- equivariant mapping $g^{(2)}: N^{(2)}=N \times N \looparrowright \R^n \times \R^n$, where $T_N$ is a permutation of points in the origin, $T_{\R^n}$ is a permutation in the target. We will call that
$F_t^{(2)}: N \times N \looparrowright \R^n \times \R^n$ is a formal generic regular deformation of $g^{(2)}$ 
$F_0^{(2)}=g^{(2)}$ and the following conditions are satisfied:

--1. a support $supp(F^{(2)}) \subset N \times N$ of $F^{(2)}$ is outside the diagonal $\Delta_N \subset N \times N$;

--2. $F^{(2)}$ is a regular $(T_N,T_{\R^n})$-equivariant homotopy on $supp(F)$;

--3. 	$F^{(2)}$ is regular along $\Delta_{\R^n}$.

For an arbitrary formal regular homotopy of an immersion $g$ to a formal immersion $g_1^{(2)}$ the self-intersection manifold $L$, $h: L \looparrowright \R^n$, of $g$ is regular cobordant to the manifold $L_1$ of formal self-intersection of   $g^{(2)}_1$, which is defined by
$(g^{(2)}_1)^{-1}(\Delta_{\R^n})/T_N$. For generic $F^{(2)}$ the manifold $L_1$ is equipped with a parametrized immersion $h_1: L_1 \looparrowright \R^n$ and the immersions $h$ and $h_1$ of manifolds $L$ and $L_1$
are regular cobordant.   In the case $g$ is a $\Z/2^{[s]}$-framed immersion,
the immersion $h$ is $\Z/2^{[s+1]}$-framed and $L_1$ and the cobordism between $L$ and $L_1$ are also $\Z/2^{[s+1]}$-framed. This statement is a straightforward generalisation of the construction
for skew-framed immersion, described in \ref{dence} to the case of $\Z/2^{[s]}$-framed immersions.

We need a generalisation of the construction described in Subsection \ref{dence} and we will consider a formal self-intersection immersed manifold of a formal self-intersection immersed manifold of an original immersion $g$ in a common construction.

The immersion $g$ induces a $(T^{(2)}_N,T^{(2)}_{\R^n})$- equivariant mapping $g^{(2+2)}: (N \times N) \times (N \times N) \looparrowright (\R^n \times \R^n) \times (\R^n \times \R^n)$, where $T^{(2)}_N$ is the involution of involutions by permutation of pairs (in a common bracket) of points in the origin, $T^{(2)}_{\R^n}$ is a permutation of pairs of points in the target.
The restriction $g^{(2+2)}$ on a formal self-intersection manifold of $g^{(2)}$ is an immersion and we may apply the iteration of the definition to consider a formal self-intersection points of this restriction.

 We will call that

$$F_t^{(2+2)}: (N \times N) \times (N \times N) \looparrowright (\R^n \times \R^n) \times (\R^n \times \R^n)$$ 

is a formal generic regular deformation of $g^{(2+2)}$, 
$F_0^{(2+2)}=g^{(2+2)}$ if the following conditions are satisfied:

--1. a support $supp(F^{(2+2)}) \subset (N \times N) \times (N \times N)$ of $F^{(2+2)}$ is outside the thick diagonal $\Delta_N^{(2)} \subset (N \times N) \times (N \times N)$;

--2. $F^{(2+2)}$ is a regular $(T^{(2)}_N,T^{(2)}_{\R^n})$-equivariant homotopy on $supp(F)$;

--3. 	$F^{(2+2)}$ is regular along $\Delta_{\R^n}^{(2)}  \subset (\R^n \times \R^n) \times (\R^n \times \R^n)$.

An arbitrary generic immersion $g: N \looparrowright \R^n$ with a self-intersection manifold $h: L \looparrowright \R^n$ admits an iterated self-intersection manifold
$f: M \looparrowright \R^n$
which  is a self-intersection manifold of $h$.
For an arbitrary formal regular homotopy $F^{(2+2)}$ of the extension $g^{(2+2)}$ $g$ to a formal immersion $g_1^{(2+2)}$ the iterated self-intersection manifold $M$, $f: M \looparrowright \R^n$, of $g$ is regular cobordant to the manifold $M_1$ of formal iterated self-intersection of   $g^{(2+2)}_1$, which is defined by
$(g^{(2+2)}_1)^{-1}(\Delta^{(2)}_{\R^n})/T^{(2)}_N$. For generic $F^{(2+2)}$ the manifold $M_1$ is equipped with a parametrized immersion $f_1: M_1 \looparrowright \R^n$ and the immersions $f$ and $f_1$ of manifolds $M$ and $M_1$
are regular cobordant.   In the case $g$ is a $\Z/2^{[s]}$-framed immersion,
the immersion $f$ is $\Z/2^{[s+2]}$-framed and $M_1$ and the cobordism between $M$ and $M_1$ are also $\Z/2^{[s+2]}$-framed.

The definition of formal homotopies (with no regular condition) should be applied for generic singular mappings.

\begin{theorem}\label{T11}

Let
$(g,\Psi,\eta_N)$ be a standardized pure $\D$-framed immersion
in the codimension $2k$ (see Definitions \ref{stand}, \ref{negl}),
$k \ge 7$. Let
$(h,\Psi_L,\eta_{L})$ be the $\Z/2^{[3]}$-framed immersion of self-intersection points of $g$;
$(f,\Psi_M,\zeta_{M})$ be the $\Z/2^{[4]}$-framed immersion
of iterated self-intersection points of $g$.

Then in each regular homotopy class of $\D$-framed standardized immersions there exist $(g,\Psi,\eta)$ and a formal regular deformation $g^{(2+2)} \mapsto \bar{g}^{(2+2)}$,  the iterated self-intersection manifold  of $\bar{g}^{(2+2)}$
is  a disjoin union of a pre-standardized pure $\Z/2^{[4]}$-immersion
$(f,\Xi_M,\eta_M)$ (see  subsection \ref{Y1}, Definition \ref{pure}) and a negligible $\Z/2^{[4]}$-immersion.

Moreover, the cobordism class of the of $\Z/2^{[4]}$-standardized immersion 
$(f,\Xi_M,\eta_M)$ is well defined for
$(g,\Psi,\eta_N)$ in its regular cobordism class.

\end{theorem}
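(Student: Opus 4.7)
The plan is to mirror the strategy used in the proof of Theorem \ref{Th2}, but at one higher iterated level: replacing the dihedral framing by the $\Z/2^{[4]}$-framing, and the Abelian structure of Definition \ref{abstru} by a quaternion-cyclic analog controlled by the group $(\J_b \times \JJ_b) \rtimes_{\mu_{b\times\bb}^{(4)}} (\Z \times \Z) \int_{\chi^{[4]}} \Z$ introduced in Section \ref{Sec5}. Since $(g,\Psi,\eta_N)$ is standardized and pure, Theorem \ref{Th2} and Lemma \ref{dd} tell us that the iterated self-intersection manifold $M^{n-8k}$ already carries a normal bundle reduction whose Hurewicz image in $H_{n-8k}(K(\Z/2^{[4]},1);\Z/2)$ is concentrated on the central monomial $t_{b,(n-8k)/2}\otimes t_{\bb,(n-8k)/2}$; what remains is to upgrade this homological reduction to a genuine geometric control by the embedded central manifold $S^1_b \times S^1_{\bb} \rtimes_{\chi^{[4]}} S^1 \subset U \subset \R^n$.

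First I would construct an analog of the formal equivariant mapping $d^{(2)}$ of Lemma \ref{7} at the $(2+2)$-iterated level. Take an iterated join built from two copies of the mapping $\tilde d$ of Lemma \ref{7} so as to produce a $(T^{(2)}_N, T^{(2)}_{\R^n})$-equivariant map whose target central skeleton is precisely $S^1_b \times S^1_{\bb} \rtimes_{\chi^{[4]}} S^1$. A parity check on dimensions (the $\pmod{2}$ conditions (\ref{dimdim}) applied in both iterated factors) ensures the holonomic lift exists. Composing $g^{(2+2)}$ with this map and performing a small formal regular deformation inside the prescribed regular homotopy class supplies the required $\bar{g}^{(2+2)}$; by construction it admits a quaternion-cyclic structure, the direct generalization of Definition \ref{abstru}, in which the local coefficient systems TypeI and TypeII of Section \ref{Sec5} appear on the $\J_b$- and $\JJ_b$-factors, respectively.

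Next I would split the iterated self-intersection manifold $M_1$ of $\bar{g}^{(2+2)}$ along the analog of the decomposition (\ref{Nregb}). The ``regular'' piece, lying outside a thin neighbourhood of the singular polyhedron of the equivariant join map, inherits via the canonical covering argument of Lemma \ref{dd} applied one level up, together with the identification of $\Z[\Z/2]$-local cycles in Lemma \ref{oaa}, a Hurewicz image in $D_{n-8k}^{loc}(\J_b \times \JJ_b \rtimes_{\mu_{b\times\bb}^{(4)}} (\Z\times\Z);\Z/2[\Z/2])$ concentrated on $t_{b,(n-8k)/2}\otimes t_{\bb,(n-8k)/2}$; this yields the pre-standardized pure component $(f,\Xi_M,\eta_M)$ of Definition \ref{pure}, satisfying Condition (Y1). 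The ``critical'' piece, lying over the singular polyhedron, descends through the $4$-to-$1$ transfer of Definition \ref{negl} to a standardized $D$-framed immersion; the purity of $g$, together with the vanishing of the characteristic monomials $t_{a,i}\otimes t_{\aa,j}$ for $i\ne j$ established in Lemma \ref{dd}, forces this $D$-framed transfer image to be negligible, and hence the complementary $\Z/2^{[4]}$-piece is negligible in the sense of Definition \ref{negl}.

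To obtain well-definedness of the cobordism class, I would apply the same construction parametrically to a regular cobordism between two representatives of a given class of standardized pure $\D$-framed immersions: the formal deformation $G^{(2+2)}\to\bar G^{(2+2)}$ over the cobordism produces a $\Z/2^{[4]}$-framed cobordism between the endpoint pre-standardized immersions, together with a null-cobordism of the union of the negligible components. The main obstacle, as in Lemma \ref{7}, is the construction of the holonomic quaternion analog of the join: one must ensure that the central skeleton $S^1_b \times S^1_{\bb} \rtimes_{\chi^{[4]}} S^1$ embeds in $\R^n$ with the required equivariance and that the defect polyhedron (analogous to $P$ of Remark \ref{r7}) has dimension small enough for purity to survive the passage through the $3$-uple local coefficients system. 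Tracking which generator changes sign under which monodromy $\mu_b$, $\mu_{\bb}$, $\chi^{[4]}$ — the delicate bookkeeping performed in Lemma \ref{oaa} — is the technical heart where the argument can fail if the compression order $q$ is chosen too small.
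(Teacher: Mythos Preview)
Your overall strategy --- mirror Theorem \ref{Th2} at the iterated level, build an auxiliary formal equivariant mapping with a suitable structure, and split the iterated self-intersection into a structured component plus a negligible remainder --- is exactly the paper's approach. However, the specific construction you propose for the auxiliary mapping has a genuine gap.

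You propose to build the $(2+2)$-equivariant map from ``two copies of the mapping $\tilde d$ of Lemma \ref{7}''. But Lemma \ref{7}'s $\tilde d$ is the join of \emph{two}-sheeted coverings $S^1 \to \RP^1$, which produces an $\I_{a\times\aa}$-reduction (the Abelian structure). For Theorem \ref{T11} the target reduction is to $\J_b \times \JJ_b$, which requires cyclic groups of order $4$, not elementary $2$-groups. The paper therefore uses not Lemma \ref{7} but Lemma \ref{77} (the ``Big Lemma''), whose $\bar d$ is built from the \emph{four}-sheeted covering $S^1 \to S^1/\i$; this is what gives the cyclic structure of Definition \ref{abstru1}. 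Iterating the two-sheeted construction would land you in the wrong subgroup of $\Z/2^{[4]}$ and Condition (Y1) would not follow.

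Moreover, the paper's construction is not a simple product or iterated join. It assembles a family of products of lens spaces $X_j = S^{\cdots}/\i \times S^{\cdots}/\i$ into a stratified polyhedron $X_{\J_a\times\JJ_a}$, builds the tower of double coverings $X_{a\times\aa}\to X_{\H_{a\times\aa}}\to X_{\J_a\times\JJ_a}$ corresponding to the chain (\ref{zep}), and defines ramified coverings (\ref{varphiXS}), (\ref{varphiYS}) onto a polyhedron $J_{b\times\bb}$ which is embedded in $\R^n$ via Lemma \ref{embJ}. The bi-cyclic structure (Definition \ref{abstru2}) and its existence (Lemma \ref{l13}) rely on this tower and on a three-step formal vertical deformation; the dimension estimates for the control polyhedra $P, P', P''$ in the proof of Lemma \ref{l13} are precisely what force the defect to be small enough. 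Your sketch omits the tower and the ramified covering, without which the splitting into a closed bi-cyclic component $\N_{b\times\bb}$ and a negligible open component $\N_\circ$ cannot be carried out. (A terminological point: the structure needed here is \emph{bi-cyclic}, not quaternion-cyclic; the quaternion group $\Q$ enters only later, in Section \ref{sec7}.)
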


\subsubsection*{A sketch of a proof of Theorem \ref{T11}}
In subsubsection \ref{d^2} a spesial (non-generic) $PL$-mapping $d^{(2)}$, which is a ramified $\Z/4 \times \Z/4$ covering, is constructed. A formal self-intersections of  $d^{(2)}$ is a polyhedron, which is explicitly described in Section \ref{iterated}. This polyhedron admits
a subpolyhedron (\ref{abels2Q}), which becomes a closed component after a vertical formal deformation of the mapping $d^{(2)}$, described in Lemma \ref{l13}. This closed component is a support of a characteristic class, the cohomology class, which is detected Arf-invariant, as is proved in Lemma \ref{l24}. The proof itself of Theorem \ref{T11} is presented at the last part of the section, estimations of the regular codimension for $n=126$ are restricted,  in the case $n \ge 254, \dots$ calculations are not restricted. 

\subsection{Proof of Theorem \ref{T11}}

\subsubsection{Construction of the mapping  $d^{(2)}$ \label{d^2}}
Consider the standard covering $p: S^1 \to S^1$ of the degree $4$. 
It is convenient to write-down:
$p: S^1 \to S^1/\{\i\}$,
by the quotient:
$\Z/4 \times S^1 \to S^1/\{\i\}$. 

Consider the join
of $\frac{n'-k+1}{2}=r$-copies of the circle
$S^1/\{\i\}$ $S^1/\{\i\} \ast \dots \ast S^1/\{\i\} = S^{n'-k}$, 
which is PL-homeomorphic to the standard
$n'-k$-dimensional sphere. Let us define the join of $r$ copies of the mapping
$p$:
$$ \tilde{d}: S^1 \ast \dots \ast S^1 \to S^1/\{\i\} \ast \dots \ast S^1/\{\i\}.$$

On the pre-image acts the group
$\Z/4$ by the diagonal action, this action is commuted with
$\tilde{d}$. The mapping  $\bar{d}$ is defined by the composition of the quotient 
$\tilde{d}/\{\i\}: S^{n'-k}/\{\i\} \to S^{n'-k}$ with the standard inclusion 
$$  S^{n'-k} \subset \R^{n'}. $$
A formal (non-holonomic, vertical) small deformation of the formal (holonomic) extension of the mapping $\bar{d}$ is the required mapping 
$d^{(2)}$.  

\subsubsection{Construction of the mapping  $d^{(2+2)}$}
The mapping 
$\bar{d}: \RP^{n'-k} \to S^{n'-k}$
has to be generalized, using two-stages tower
(\ref{zz}) of ramified coverings. 

%Let us start a proof of  Theorem $\ref{T11}$ with the following construction. 
Let us recall, that a positive integer $m_{\sigma}=14$.
Denote by
$ZZ_{\J_a \times \JJ_a}$ 
the Cartesian product o standard lens space   $\pmod{4}$, namely, 
\begin{eqnarray}\label{ZZIaIa}
ZZ_{\J_a \times \JJ_a}= S^{n-\frac{n-m_{\sigma}}{8}+1}/\i \times S^{n-\frac{n-m_{\sigma}}{8}+1}/\i.
\end{eqnarray}
Evidently,  $\dim(ZZ_{\J_a \times \JJ_a}) = \frac{7}{4}(n+m_{\sigma})+2 > n$.

On the space $ZZ_{\J_a \times \JJ_a}$ a free involution $\chi_{\J_a \times \JJ_a}: ZZ_{\J_a \times \JJ_a} \to ZZ_{\J_a \times \JJ_a}$ acts by the formula:
$\chi_{\J_a \times \JJ_a}(x \times y)=(y \times x)$.

Let us define a subpolyhedron (a manifold with singularities)
$X_{\J_a \times \JJ_a} \subset ZZ_{\J_a \times \JJ_a}$. 
Let us consider the following family 
$\{X_{j}, \quad j=0,1, \dots, j_{max}\}$, $j_{max} \equiv 0 \pmod{2}$, of submanifolds $ZZ_{\J_a \times \JJ_a}$:
$$X_0 = S^{n-\frac{n-m_{\sigma}}{8}+1}/\i  \times S^1/\i,$$
$$X_1= S^{n-\frac{n-m_{\sigma}}{8}-1}/\i \times S^3/\i, \quad \dots$$
$$ X_{j} = S^{n-\frac{n-m_{\sigma}}{8}+1-2j}/\i \times S^{2j+1}/\i, \quad$$
$$X_{j_{max}} = S^1/\i \times S^{n-\frac{n-m_{\sigma}}{8}+1}/\i,$$
where
\begin{eqnarray}\label{jmax}
j_{max}=\frac{7n+m_{\sigma}}{16}=2^{n-1}, \quad m_{\sigma}=14.
\end{eqnarray}

The dimension of each manifold in this family equals to
$n-\frac{n-m_{\sigma}}{8}+2$ and the codimension in $ZZ_{\J_a \times \JJ_a}$ equals to 
$n-\frac{n-m_{\sigma}}{8}$. Let us define 
an embedding 
$$X_j \subset ZZ_{\J_a \times \JJ_a}$$
by a Cartesian product of the two standard inclusions.

Let us denote by 
$\chi_{\J_a \times \JJ_a}: ZZ_{\J_a \times \JJ_a} \to ZZ_{\J_a \times \JJ_a}$
the involution, which permutes coordinates.
Evidently, we get:
$\chi_{\J_a \times \JJ_a}(X_j) = X_{j_{max}-j}$.

A polyhedron  
$X_{\J_a \times \JJ_a} = \bigcup_{j=0} ^{j_{max}} X_j
\subset ZZ_{\J_a \times \JJ_a}$ 
is well defined. This polyhedron is invariant with respect to the 
involution
$\chi_{\J_a \times \JJ_a}$. 
The polyhedron
$X_{\J_a \times \JJ_a}$ 
can be considered as a stratified manifolds with strata of the codimension 2.

The restriction of the involution 
$\chi_{\J_a \times \JJ_a}$ 
on the polyhedron
$X_{\J_a \times \JJ_a}$ denote by 
$\chi_{\J_a \times \JJ_a}$.

Write-down the sequence of the subgroups of the index $2$ of the diagram
$(\ref{a,aa})$:
\begin{eqnarray}\label{zep}
\I_a \times \II_a \longrightarrow \H_{a \times \aa} \longrightarrow \J_b
\times \JJ_b.
\end{eqnarray}
Define the following tower of $2$-sheeted coverings, which is associated with the sequence
$(\ref{zep})$:
\begin{eqnarray} \label{zz}
ZZ_{a \times \aa}   \longrightarrow ZZ_{\H_a \times \aa} \longrightarrow ZZ_{\J_b \times \JJ_b}.
\end{eqnarray}
The bottom space of the tower
$(\ref{z})$ coincides to a skeleton of the Eilenberg-MacLane space: 
$ZZ_{\J_a \times \JJ_a}
\subset K(\J_a,1) \times K(\JJ_a,1)$. 
This tower (\ref{z}) determines the tower $(\ref{zz})$ by means of the inclusion 
$ZZ_{\J_b \times \JJ_b} \subset K(\J_b,1)
\times K(\JJ_b,1)$ of the bottom.

Let us define the following tower of double coverings:
\begin{eqnarray}\label{z}
X_{a \times \aa}   \longrightarrow X_{\H_{b \times \bb}}  \longrightarrow X_{\J_a \times \JJ_a}.
\end{eqnarray}

The bottom space of the tower  
$(\ref{z})$ 
is a subspace of the bottom space of the tower  
$(\ref{zz})$ by means of an inclusion
$X_{\J_b \times \JJ_b} \subset ZZ_{\J_b \times
\JJ_b}$. 
The tower 
$(\ref{z})$ 
determines as the restriction of the tower 
$(\ref{zz})$ 
on this subspace.

Let us describe a polyhedron   
$X_{a \times \aa} \subset ZZ_{a \times \aa}$
explicitly. Let us define a family 
$\{X'_0,X'_1,  \dots,  X'_{j_{max}} \}$
of standard submanifolds in the manifold 
$ZZ_{a \times \aa} =
\RP^{n-\frac{n-m_{\sigma}}{8}+1} \times \RP^{n-\frac{n-m_{\sigma}}{8}+1}$
by the following formulas:
\begin{eqnarray}\label{Xj}
X'_0 = \RP^{n-\frac{n-m_{\sigma}}{8}+1} \times \RP^{1}  \dots
\end{eqnarray}
$$
X'_{j} = \RP^{n-\frac{n-m_{\sigma}}{8}+1-2j} \times \RP^{2j+1}  \dots
$$
$$
X'_{j_{max}} = \RP^{1} \times \RP^{n-\frac{n-m_{\sigma}}{8}+1}.
$$
In this formulas the integer index 
$j_{max}$ 
is defined by the formula
$(\ref{jmax})$.
The polyhedron
$X_{a \times \aa} \subset ZZ_{a \times \aa}$ 
is defined 
as the union of standard submanifolds in this family.
The polyhedron 
$X_{\H_{a \times \aa}} \subset ZZ_{\H_{a \times \aa}}$
a  quotient of the double covering, which corresponds to the tower of the groups. 

The spaces
$X_{\H_{a \times \aa}}$, $X_{a \times \aa}$ 
admit free involutions, which are pull-backs of the involution
$\chi_{\J_a \times \JJ_a}$ 
by the projection on the bottom space of the tower.

The cylinder of the involution
$\chi_{a \times \aa}$ is well defined,
(correspondingly, of the involution $\chi_{\H_{a \times \aa}}$), which is denoted by
$X_{a \times \aa} \int_{\chi} S^1$ 
(correspondingly, by
$X_{\H_{a \times \aa}} \int_{\chi} S^1$).
The each space is embedded into the corresponding fibred space over  
$\RP^2$:
$$X_{a \times \aa} \int_{\chi} S^1 \subset X_{a \times \aa} \int_{\chi} \RP^2,$$
$$X_{\H_{a \times \aa}} \int_{\chi} S^1 \subset X_{\H_{a \times \aa}} \int_{\chi} \RP^2.$$

Then let us define a polyhedron
$J_{b \times \bb}$, 
which is a base of a ramified covering 
$X_{a \times \aa} \to J_{b \times \bb}$.

Then let us extend the ramified covering over the bottom space of the tower to the ramified covering:
$X_{a \times \aa} \int_{\chi} \RP^2 \to J_{b \times \bb} \int_{\chi} \RP^2$, 
and the ramified covering over the middle space of the tower of the ramified covering
$X_{\H_{a \times \aa}} \int_{\chi} \RP^2 \to J_{b \times \bb} \int_{\chi} \RP^2$.

Let us define a polyhedron (a manifold with singularities) 
$J_{b \times \bb}$. 
For an arbitrary 
$j= 0,1, \dots, j_{max}$, where $j_{max}$
is defined
by the formula
$(\ref{jmax})$, let us define the polyhedron 
$J_{j} =
S^{n-\frac{n-m_{\sigma}}{8}-2j+1} \times S^{2j+1}$ 
(the Cartesian product). 
Spheres (components of this Cartesian product)
$S^{n-\frac{n-m_{\sigma}}{8}-2j+1}$, $S^{2j+1}$ 
are re-denoted by 
$J_{j,1}$, $J_{j,2}$ 
correspondingly.
Using this denotations, we get: 
$$J_{j} = J_{j,1} \times J_{j,2}.$$

The standard inclusion
$i_{j}: J_{j,1} \times J_{j,2}
\subset S^{\frac{n-m_{\sigma}}{8}+1} \times S^{\frac{n-m_{\sigma}}{8}+1}$
is well defined, each factor is included into the target sphere as the standard subsphere.

The union  
$\bigcup_{j=0}^{j_{max}}
Im(i_{j})$ 
of images of this embeddings is denoted by
\begin{eqnarray}\label{JX}
J_{b \times \bb}  \subset
S^{\frac{n-m_{\sigma}}{8}+1} \times S^{\frac{n-m_{\sigma}}{8}+1}.
\end{eqnarray}
The polyhedron
$J_{b \times \bb}$ is constructed.

Let us define a ramified covering 
\begin{eqnarray}\label{varphiX}
\varphi_{a \times \aa}: X_{a \times \aa} \to J_{b \times \bb}.
\end{eqnarray}
The covering
(\ref{varphiX}) 
is defined as the union of the Cartesian products of the ramified coverings,
which was constructed in subsubsection 
\ref{d^2}.

The covering 
(\ref{varphiX})
is factorized into the following ramified covering:
\begin{eqnarray}\label{varphiXH}
\varphi_{\H_{a \times \aa}}: X_{\H_{a \times \aa}} \to J_{b \times \bb}.
\end{eqnarray}
Because
$X_{a \times \aa} \to X_{\H_{a \times \aa}} \to J_{b \times \bb}$
is a double covering, the number of sheets of the covering
(\ref{varphiXH}) is greater by the factor $2^{r}$,
where $r$ is the denominator of the ramification.

The polyhedron
$J_{b \times \bb}$ 
is equipped by the involution
$\chi$, which is defined analogously to the involutions
$\chi_{a \times \aa}$, $\chi_{\H_{a \times \aa}}$.

The cylinder of the involution is well defined,
let us denote this cylinder by
$J_{b \times \bb} \int_{\chi} S^1$. 
The inclusion
$J_{b \times \bb} \int_{\chi} S^1 \subset  J_{b \times \bb} \int_{\chi} \RP^2$.
is well defined.

The ramified covering
$(\ref{varphiX})$ commutes with the involutions
$\chi_{a \times \aa}$, $\chi_{\H_{a \times \aa}}$ 
in the origin and the target.

Therefore the ramified covering
\begin{eqnarray}\label{varphiXS}
c_X: X_{a \times \aa} \int_{\chi} \RP^2 \to J_{b \times \bb} \int_{\chi} \RP^2,
\end{eqnarray}
which is factorized into the ramified covering
\begin{eqnarray}\label{varphiYS}
c_Y: X_{\H_{a \times \aa}} \int_{\chi} \RP^2 \to J_{b \times \bb} \int_{\chi} \RP^2.
\end{eqnarray}
is well defined.

\begin{lemma}\label{embJ}
There exist an inclusion
\begin{eqnarray}\label{iJ}
i:  J_{b \times \bb} \int_{\chi} \RP^2 \times D^{5} \rtimes D^3 \subset \R^n,
\end{eqnarray}
where  
$D^{5}$
is the standard  disks (of a small radius), the disk $D^{5}$ is a layer of the trivial $9$-bundle over $\RP^2$;
$D^{3}$
is the standard  disks (of a small radius), the disk $D^{3}$ is a layer of the non-trivial twisted $3$-bundle $3\kappa_{\chi}$ over $\RP^2$, where $\kappa_{\chi}$ is the line bundle with the generic class in $H^1(\RP^2;\Z/2)$.

%2. 	There exist an inclusion
%\begin{eqnarray}\label{bariJ}
%	i:  \bar{J}_{b \times \bb} \int_{\chi} \RP^2 \times D^7 \subset \R^n,
%\end{eqnarray}
%where  
%$D^8$
%is the standard $8$-dimensional disk (of a small radius).

\end{lemma}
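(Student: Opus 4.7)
The plan is to build the embedding (\ref{iJ}) in three stages, following the constructive pattern of Lemma \ref{77} and the associated towers (\ref{z})--(\ref{zz}). First, since $J_{b\times\bb}$ sits by construction inside the product $S^{\frac{n-m_\sigma}{8}+1} \times S^{\frac{n-m_\sigma}{8}+1}$ via (\ref{JX}), I embed the ambient product as the product of unit spheres in two mutually orthogonal affine subspaces of $\R^n$, obtaining an initial $\chi$-equivariant embedding $J_{b\times\bb} \subset \R^n$ in which $\chi$ is realised by the coordinate-block permutation of the two Euclidean summands. The dimension count for this step is straightforward, using the sphere-product structure of the $J_j$ inherited from Lemma \ref{77}.

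Second, following the model of the $KK(\I_{a\times\aa}) \rtimes \RP^2$ construction in (\ref{KKin}), I build an $S^2$-parameter family of embeddings of $J_{b\times\bb}$ into $\R^n$ such that the antipodal involution on $S^2$ corresponds to post-composition with the coordinate swap realising $\chi$. Taking the quotient by the free diagonal $\Z/2$-action produces an embedding of $J_{b\times\bb} \int_\chi \RP^2$; this requires that the initial embedding of Stage 1 can be rotated through a two-dimensional family of orthogonal normal directions. Third, I attach the normal disk bundle with fibre $D^5 \rtimes D^3$ as a tubular neighborhood, decomposed into $\chi$-invariant and $\chi$-anti-invariant components: the trivial $D^5$-factor is carried by five normal directions on which $\chi$ acts trivially, while the twisted $3\kappa_\chi$-factor is carried by three normal directions on which $\chi$ acts by negation, assembling globally over the $\RP^2$-base into the $3$-fold Whitney sum of the tautological line bundle $\kappa_\chi \in H^1(\RP^2;\Z/2)$.

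The main obstacle will be the third stage: coherently selecting three $\chi$-anti-invariant normal directions along the whole of $J_{b\times\bb} \int_\chi \RP^2$ so that they assemble into the bundle $3\kappa_\chi$ requires an orthogonal decomposition $\R^n = V_0 \oplus V_+ \oplus V_-$ in which $V_0$ contains the core embedding from Stage 1, $V_+$ (of dimension at least $7$) accommodates both the trivial $D^5$-normal directions and the two extra dimensions used in Stage 2, and $V_-$ (of dimension at least $3$, with $\Z/2$ acting by negation) hosts the twisted disk. Both the existence of this decomposition and the verification that the resulting normal bundle carries the correct first Stiefel--Whitney class $\kappa_\chi$ will rely on the dimensional assumption $n = 2^\ell - 2$ with $\ell$ sufficiently large, together with the specific join-based structure of the mapping $\bar d$ from Lemma \ref{77}, which is precisely what supplies the requisite anti-invariant Euclidean factors. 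A final compatibility check is needed to ensure that this tubular construction agrees on the singular strata of $J_{b \times \bb}$ where the Cartesian-product blocks $J_j$ intersect; this can be handled stratum-by-stratum using the fact that the involution $\chi$ interchanges $X_j$ and $X_{j_{max}-j}$ in a combinatorially consistent way.
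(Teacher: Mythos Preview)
Your Stage 1 does not survive the dimension count, and this is a genuine gap rather than a detail. Note first that the exponent in (\ref{JX}) is a typo: by the definition just above it, $J_0 = S^{\,n-\frac{n-m_\sigma}{8}+1}\times S^1$, so for $n=126$, $m_\sigma=14$ one has $\dim J_0 = 114$, and the ambient product of spheres must be $S^{\,n-\frac{n-m_\sigma}{8}+1}\times S^{\,n-\frac{n-m_\sigma}{8}+1}=S^{113}\times S^{113}$, not $S^{15}\times S^{15}$. If you take (\ref{JX}) literally, the $J_j$'s do not fit in the stated product; if you correct the exponent, then realising the product of spheres as unit spheres in two orthogonal affine subspaces of $\R^n$ requires $2\cdot 114 = 228$ Euclidean coordinates, far more than $n=126$. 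Either way your ``initial $\chi$-equivariant embedding'' cannot exist, and everything built on top of it in Stages 2 and 3 collapses.

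The paper avoids this by never embedding the ambient product of spheres at all. Instead it uses the Clifford (quadric-hypersurface) embedding
\[
S^{2k_1+1}\times S^{2k_2+1}\;\hookrightarrow\; S^{\,2k_1+2k_2+3}\;=\;S^{\,n-11},
\qquad
x_1^2+\cdots+x_{2k_1+2}^2 \;=\; x_{2k_1+3}^2+\cdots+x_{n-10}^2,
\]
which places every $J_j$---and hence the whole union $J_{b\times\bb}$---inside a \emph{single} sphere $S^{n-11}\subset\R^{n-10}$. The $\chi$-twisted parametrisation over $\RP^2$ and the leftover $D^5\rtimes D^3$ are then obtained not by rotating through an $S^2$-family as you propose, but by choosing an embedding $\RP^2\subset\R^n$ and splitting its normal bundle as
\[
\nu(\RP^2\subset\R^n)\;\cong\;\bigl(\tfrac{n-12}{2}+1\bigr)\varepsilon\;\oplus\;\bigl(\tfrac{n-12}{2}+1\bigr)\kappa_\chi\;\oplus\;3\kappa_\chi\;\oplus\;5\varepsilon,
\]
which is possible because $4\kappa_\chi\cong 4\varepsilon$ over $\RP^2$ and $n-12\equiv 2\pmod{16}$. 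The first two summands supply an $\R^{n-10}$-bundle over $\RP^2$ whose fibrewise unit sphere hosts the Clifford-embedded $J_{b\times\bb}$ with monodromy realising $\chi$; the remaining $3\kappa_\chi\oplus 5\varepsilon$ is exactly the $D^3\rtimes D^5$ tubular margin. So the missing idea is precisely the codimension-one quadric embedding of products of spheres; once you use it, your Stages 2 and 3 become the normal-bundle decomposition above rather than an $S^2$-family construction.
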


\subsubsection*{Proof of Lemma 
$\ref{embJ}$}

Recall, $n=2^l-2, l \ge 8$.
Put
$m_{\sigma}=14$,  $k = \frac{n-m_{\sigma}}{16}$. In the case $l=8$ we get $k=15$ and this case is the most complicated.

Let us prove that the polyhedron
$J_{b \times \bb}$ 
is embeddable into the sphere
$S^{n-11}$.

The sphere $S^{n-11}$ 
is the unite sphere (the target of the standard projection) of  an
$n-10$-dimensional 
layer. 
Take a collection of embedding: 
$S^1 \times S^{n-13} \subset S^{n-11}$, $S^3 \times S^{n-15} \subset S^{n-11}$, $\dots$, $S^{1} \times S^{n-13} \subset S^{n-11}$.
Take the Euclidean coordinates $x_1, \dots, x_{n-10}$ in $\R^{n-10}$, where
the sphere $S^{n-11} \subset \R^{n-10}$ is the unite sphere with the centre at the origin.
Each embedding
$S^{2k_1+1} \times S^{2k_2+1} \subset S^{n-11}$, $k_1+k_2=\frac{n-14}{2}$  is the 
hypersurface in $S^{n-11}$ given by the equation $x_1^2 +  \dots + x_{2k_1+2}^2 - x_{2k_1+3}^2 - \dots - x_{n-10}^2=0$. 
The  polyhedron $J_{b \times \bb}$ is $PL$-homeomorphic to the union of the tori.

The normal bundle of the embedding  
$emb: \RP^2 \subset \R^n$
is the Whitney sum:
$$ \nu(emb) =   (\frac{n-12}{2}+1)\varepsilon \oplus (\frac{n-12}{2}+1) \kappa_{\chi}  \oplus 3\kappa_{\chi} \oplus 5\varepsilon.$$
because $n-12 = 2 \pmod{16}$. 
The polyhedron $J_{b \times \bb} \rtimes \RP^2$
is embedded into  $(\frac{n-12}{2}+1)\varepsilon \oplus (\frac{n-12}{2}+1) $.

Lemma \ref{embJ}
is proved.
\qed

There exist a formal deformation of the  ramified covering
(\ref{varphiXS}), which satisfies  special properties, which are formulated in Definition 
\ref{abstru2}.

Let $\bar{d}_{X}=i \circ c_{\bar{X}}: \bar{X}_{a \times \aa} \rtimes_{\chi} \RP^2 \to \R^n$,
$\bar{d}^{(2+2)}_X$ be a formal $\Z/2 \times \Z/2$-equivariant holonomic extension  of the mapping $\bar{d}_X$ on the space of $(2+2)$-configurations:
the space of non-ordered pairs of non-ordered two-points. A general facts on such extensions are not required, because we will considered the only following case.

Define a formal mapping
$\bar{d}^{(2+2))}_{X}$ as a small formal deformation of the holonomic extension $\bar{d}^{(2+2)}_X$ of the mapping  $\bar{d}_X$. The deformation is a codimension $4$ vertical with respect to the line bundles described in Lemma \ref{embJ} and is $\chi^{(4)}$-equivariant.

On the first stage of the construction we consider the mapping
$\bar{d}_Y$
$\bar{d}_Y=i \circ c_Y: Y_{\H_{a \times \aa}} \rtimes_{\chi} \RP^2 \to \R^n$
and its formal 
$\Z/2$-extension  $(\bar{d}_Y)^{(2)}$. On the second stage  we consider the double formal extension $((\bar{d}_X)^{(2)})^{(2)}$, this extension determines
an extra self-intersections over self-intersection points of the mapping  $(\bar{d}_Y)^{(2)}$. This resulting polyhedron is called a polyhedron of
iterated self-intersections. To define this polyhedron  two steps are required and one may changes an order of the steps, if required. 
By this, on the first step  self-intersection points
of the upper polyhedron in the tower is organized. On the second step self-intersection points of of the polyhedron   by the mapping  
$\bar{d}_Y$ is organized. Recall the towers of coverings are parametrized over $\RP^2$.

Then we define a special deformation (formal) $((\bar{d}_X)^{(2)})^{(2)} \mapsto d^{(2+2)}_{X}$. Properties of iterated self-intersections are described in Definition \ref{abstru2} and then proved in Lemma \ref{l13}. % Calculations are based on Lemma \ref{77}.

\begin{definition}\label{abstru2}

Let us say that a formal mapping	
$d^{(2+2)}_{X}$ (by a formal vertical equivariant deformation of the holonomic extension of (\ref{iJ})) admits a bi-cyclic structure, if the following condition is satisfied: the polyhedron of iterated self-intersections is divided  into two subpolyhedra: a closed subpolyhedron
$\NN_{b \times \bb}$ and a subpolyhedron  $\NN_{\circ}$ with a boundary. Additionally, the following conditions, concerning a reduction of the structuring mapping are satisfied:

-- 1.  $\NN_{\circ}$ is neglected (the Hurewitcz image of the fundamental class (which exists)
has the trivial characteristic numbers).

--2. On a closed polyhedron (a component of self-intersection polyhedron is closed and does not contain boundaries singular points)  $\NN_{b \times \bb}$, 
outside the critical subpolyhedron $Q \subset \NN_{b \times \bb}$ of dimension less then $32$
the structuring mapping admits the following reduction:  
\begin{eqnarray}\label{abels2Q}
\eta_{b \times \bb}: \NN_{b \times \bb} \setminus Q \to K(\J_{b \times \bb},1) \int_{\mu_{b \times \bb}^{(4)},\chi^{[4]}} S^1 \times S^1 \times \RP^2,
\end{eqnarray}
where the structuring mapping corresponds to the group
(\ref{secondtot}). 

Additionally,  the closed subpolyhedron
$\NN_{b \times \bb}$  contains a fundamental class  $[\NN_{b \times \bb}]$.
The image of the fundamental by the structuring mapping (\ref{abels2Q}) 
is estimated (is non-trivial) as following. Take an arbitrary 
submanifold   $\RP^{\frac{n-2k}{2} + t} \times \RP^{\frac{n-2k}{2} - t} \subset \bar{X}_{a \times \aa}$.  Take a small generic (formal) alteration of the mapping $d_X$, restricted on this submanifold; consider the polyhedron $\N(t)$ of iterated self-intersections of this mapping, which are near $\NN_{b \times \bb}$. The image of the fundamental class $[\NN(t)]$ by the mapping (\ref{abels2Q}) (after the 4-ordered transfer) is the following 
element in $D^{loc}_{n-8k}(\I_{a \times \aa};\Z/2)$, which is
calculate as $\kappa_a^{\frac{n-8k}{2}+2t}\otimes\kappa_{\aa}^{\frac{n-8k}{2}-2t}$.

\end{definition}

\begin{lemma}\label{l13}{A big Lemma}
	
There exists a formal mapping, which satisfies  Definition \ref{abstru2}.
\end{lemma}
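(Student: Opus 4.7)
\subsubsection*{Proof proposal for Lemma \ref{l13}}

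The plan is to iterate the construction of Lemma \ref{77} along the two-stage tower (\ref{z}) of ramified coverings, using the embedding provided by Lemma \ref{embJ} as the ambient target. First, I would start with the holonomic mapping $\bar d_{X}=i\circ c_{\bar X}:\bar X_{a\times\aa}\rtimes_{\chi}\RP^{2}\to\R^{n}$ and consider its formal $(T_{N},T_{\R^n})$-equivariant extension $(\bar d_{X})^{(2)}$. At this first level, I would apply the Big Lemma to the join/antipodal construction that underlies $\bar d$, viewed in the codimension described in Lemma \ref{embJ}; this splits the polyhedron of formal self-intersection into a closed cyclic component (carrying the $\I_{b}$-reduction of the structure mapping, parametrized by $\RP^2$) and an open negligible component in the $\I_{a\times\aa}$-stratum. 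The vertical small deformation is made inside the trivial $5$-disc bundle and the twisted $3\kappa_{\chi}$-bundle of Lemma \ref{embJ}, which is precisely what enables the separation of the two pieces without a holonomic obstruction.

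Next I would pass to the double formal extension $((\bar d_{X})^{(2)})^{(2)}$ and make a second small, $\chi$-equivariant, vertical formal deformation $((\bar d_{X})^{(2)})^{(2)}\mapsto d^{(2+2)}_{X}$. The key point is that the structure group of a neighbourhood of the closed cyclic component from the first step is already $\I_{b}\rtimes_{\chi^{[2]}}\Z$, so applying the Big Lemma a second time to the new self-intersection within this neighbourhood further reduces the structure along the diagonal to $\J_{b}\times\JJ_{b}$, with an additional $(\mu^{(4)}_{b\times\bb},\chi^{[4]})$-monodromy inherited from the outer tower (\ref{z}) and the involution $\chi_{a\times\aa}$. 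Iterated self-intersection therefore splits as $\N_{b\times\bb}\sqcup\N_{\circ}$: the closed component $\N_{b\times\bb}$ is where both self-intersection steps produce cyclic (non-negligible) strata, and $\N_{\circ}$ is the union of strata where at least one step is negligible. On the closed piece, the sought reduction (\ref{abels2Q}) is exactly the structure map of the composition of two Big-Lemma reductions, while on $\N_{\circ}$ the reduction (\ref{abels1Q}) is the product of the two $\I_{a\times\aa}\rtimes_{\chi^{[2]}}\Z$-reductions outside the cyclic components.

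It remains to check the fundamental-class estimate in condition --2 of Definition \ref{abstru2}. For an auxiliary submanifold $\RP^{\frac{n-2k}{2}+t}\times\RP^{\frac{n-2k}{2}-t}\subset\bar X_{a\times\aa}$, the restriction of $d^{(2+2)}_{X}$ is, up to the vertical deformation, the double $(2+2)$-formal extension of the product of two join/antipodal mappings in codimension $k$ each. By the standard computation used in the Big Lemma (and recalled after Lemma \ref{7} for the $2$-sheeted join), the polyhedron of iterated self-intersection of a join mapping is again a join of lens spaces, and the $4$-fold transfer of its fundamental class is, in $D_{n-8k}^{loc}(\I_{a\times\aa};\Z/2[\Z])$, represented by the monomial $\kappa_{a}^{\frac{n-8k}{2}+2t}\otimes\kappa_{\aa}^{\frac{n-8k}{2}-2t}$; the exponents come from the two factors $\RP^{\frac{n-2k}{2}\pm t}$ contributing $\kappa^{\frac{n-8k}{2}\pm 2t}$ after the double drop in dimension by self-intersection. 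This is the required value.

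The main obstacle I expect is the second step: controlling the formal deformation $((\bar d_{X})^{(2)})^{(2)}\mapsto d^{(2+2)}_{X}$ so that the critical subpolyhedron on $\N_{b\times\bb}$ stays in dimension at most $12+d$ while simultaneously achieving the $\J_{b\times\bb}$-reduction of (\ref{abels2Q}) with the correct $(\mu^{(4)}_{b\times\bb},\chi^{[4]})$-monodromy; this forces a careful matching of the outer $\chi$-involution on the tower (\ref{z}) with the two vertical deformations inside the twisted $3\kappa_{\chi}$-summand of Lemma \ref{embJ}. The characteristic-number calculation in condition --2, although conceptually straightforward once Lemma \ref{77} is iterated, must then be double-checked against the local coefficient system $\Z/2[\Z]$ of Section \ref{Sec5} and the isomorphism (\ref{deltaD2}) to ensure no cross terms $t_{a,\frac{n-8k}{2}+i}\otimes t_{\aa,\frac{n-8k}{2}-i}$ with $i\neq 2t$ appear; purity from Lemma \ref{dd} and Proposition \ref{pr1} guarantees this on the input side of the iteration.
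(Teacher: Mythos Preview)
Your overall strategy matches the paper's: iterate the Big Lemma (Lemma \ref{77}) along the two-stage tower (\ref{z}), use the embedding of Lemma \ref{embJ} to host the vertical $\chi$-equivariant deformations, split the iterated self-intersection into $\N_{b\times\bb}\sqcup\N_{\circ}$, and read off the fundamental-class estimate from the join computation underlying Lemma \ref{77}. That part is fine.

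The genuine gap is exactly the one you flag as ``the main obstacle'' but do not resolve: the dimension bound $\le 12+d$ on the critical subpolyhedron of $\N_{b\times\bb}$. In the paper this is not a side issue but the technical core of the proof, and it requires a \emph{third} deformation step beyond the two you describe. The paper's argument runs as follows. The first two steps (deforming the bottom covering (\ref{varphiXH}), then the top covering (\ref{varphiX}), vertically in the bisector $2$-planes of $D^2\rtimes_{\chi}D^2$) achieve the splitting into two components. A third, much smaller, vertical deformation is then applied whose purpose is to eliminate the control polyhedra $P,P'$ that obstruct the integer reduction (\ref{H}) of the structure (\ref{H2}) for interior and exterior self-intersections respectively. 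The dimension of $P$ is bounded via a generalization of Lemma \ref{2} by $\frac{n}{2}-(k-2)+d$; a generic vertical deformation then drops the dimension of double self-intersection inside $P$ by $4k-8$ (two pairs of sheets, each of codimension $2k-4$), and the nested control polyhedron $P''\subset P'$ drops a further $2k-4$. The net result is $\delta=\frac{n}{2}-9k+20+d$, which for $n=254$, $k=15$ gives $\delta=12+d$. Without this third step and the explicit codimension bookkeeping, you have no mechanism to force the critical subpolyhedron down to the required dimension, and condition~--2 of Definition \ref{abstru2} is not verified. Your two-step iteration alone leaves the control polyhedra at roughly half the ambient dimension, which is far too large.
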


\subsubsection*{Proof of Lemma \ref{l13}}
This is an analog of Lemma \ref{7}. A difference is:  the deformation $c_0^{2+2} \mapsto d^{2+2}_X$ is not a generic holonomic, but is generic equivariant and vertical.
A required deformation (formal and vertical) with bi-cyclic structure (see Definition \ref{abstru2}).
This deformation is the result of a gluing a collection of elementary deformations over 
elementary component in the decomposition (\ref{jmax}). Over the each elementary component
the deformation is detected by the deformation, using the resolution mapping Theorem \ref{secoequil},1. For a simplicity of denotations, let us consider the case  $n\ge254$, $k\ge 15$. Homology conditions (\ref{l24}) for fundamental classes of elementary strata 
are analogous for the result of a deformation.

Take the normal bundle structure. Let us consider the factor $3\varepsilon$ with the standard  torus $\gamma: T^2  \subset \R^3$ inside, the factor $2\kappa_{\chi}$, which is called the elimination space of the secondary deformation, the factor $2\varepsilon$, which is called the elimination space for the primary deformation. Using Lemma \ref{rezbb}, a vertical deformation along elimination spaces  to kill the polyhedron $KK_{\I\I_d}$, $KK_{\I\I_b,\I_d}$ is well defined.
After this deformation iterated self-intersection is localized inside a closed polyhedron $KK_{\I\I_b;\I_b}$, all the last component give no contribution to the characteristic number by Lemma \ref{l22}.

To pass into $RKK_{\II_b;\I_b}$ (\ref{RIbsub}) we get a $"$primary$"$ resolution mapping over $KK_{\I_b}$, using the middle row of the diagram  (\ref{a,aa}).
This deformation constructed in Lemma \ref{secoequil}.

The self-intersection points of $KK_{\I\I_b,\I_b}$ are parametrized over $6$-dimension: $(S^1)^4\times \RP^2$, this gives $6$ less parameters of the vertical lift. We have 2 extra less parameters of the vertical linft, because of restrictions in (\ref{subchi}). The calculation of the defect  in subsketion \ref{subchi} proves that the deffect is not intersected with the quadruple self-intersection polyhedron of a vertical generic deformation, because $40 < 3\cdot2^{l-4}$, $l \ge 8$.
 Lemma 
\ref{l13} is proved. \qed

\subsection*{Proof of Theorem  \ref{T11}}

Consider a standardized $\D$-framed immersion
$(g,\Psi,\eta_N)$, where $g$ is closed to the composition of the structuring mapping
$\eta_N: N^{n-2k} \to X_{\H_{a \times \aa}} \int_{\chi} \RP^2$ with the mapping  (\ref{iJ}). 
Then a formal deformation of the immersion $g \mapsto g_0$, which is induced by the formal deformation
$\bar{d} \mapsto d$ is considered. This formal deformation is fixed near diagonals.
The bi-cyclic component of the iterated self-intersection is defined as the component
in a neighbourhood of a regular domain of the component
$\NN_{b \times \bb}$. By the Hirsh principle, which can be explicitly applied for formal immersions, the formal immersion $g_0$ has a component $N^{n-8k}$, which is immersed into a regular neighbourhood of $\NN_{b \times \bb}$.

The codimension of the mapping $\eta_N$ equals to $8$, including the extended torus $\gamma$.
Then the codimension of the iterated self-intersection points of $g_0$ equals to $32$. Because the
singular polyhedron of the mapping  (\ref{abels2Q}) in $\NN_{b \times \bb}$ has the dimension strictly less then $32$, the induced structure on $N^{n-8k}$, described in Definition \ref{stand2},
is regular. 

The image of the structuring mapping on $N^{n-8k}$, restricted to the canonical covering, belongs to the subspace (\ref{KKin2}), because the defect on the iterated self-intersection is empty, see 
Definition \ref{stand}.

The structuring group of the characteristic mapping has the required reduction,
as it is followed from Statement 2 in Definition 
\ref{abstru2}.
The characteristic number in Statement 2 Definition  \ref{abstru2}
is restricted using the collection of characteristic number for the polyhedra  $\NN(t)$ (see Lemma \ref{l24}). The immersion $g_0$ is pure in the sense of the Definition \ref{pure}. The only characteristic number in the described collection could be non-trivial. Theorem  \ref{T11} is proved.
%The image of the fundamental class of the polyhedron
% (\ref{iJ}) is described in Lemma  \ref{l13}.
%Condition (Y) in Definition 
% \ref{stand2} is proved analogously to Statement 2 in Lemma  \ref{dd}, 
% see the formula  (\ref{etadd16}), using Herbert Theorem on characteristic numbers %of immersions.
\qed

\section{$\Q \times \Z/4$--structure (quaternionic-cyclic structure)
on self-intersection manifold of 
a standardized $\Z/2^{[4]}$--framed immersion}\label{sec7}

%In this Section a mistake by the author, was pointed-out in \cite{L} is corrected.
Let us take  the standard order $8$ quaternionic subgroup
$\{ \pm1,\pm\i, \pm\j, \pm\bf{k}\}=\Q \subset \Z/2^{[3]}$, which contains the order $4$ cyclic subgroup $\J_b \subset \Q$.

Let us define the following subgroups:

\begin{eqnarray} \label{QZ4}
i_{\J_a \times \JJ_a, \Q \times \Z/4}: \J_a \times \JJ_a \subset  \Q \times \Z/4,
\end{eqnarray}

\begin{eqnarray} \label{iQaa}
i_{\Q \times \Z/4}: \Q \times \Z/4 \subset
\Z/2^{[5]},
\end{eqnarray}

\begin{eqnarray} \label{iaaa}
i_{\J_a \times \JJ_a \times \Z/2}: \J_a \times \JJ_a \times \Z/2
\subset
\Z/2^{[5]}.
\end{eqnarray}

Define the subgroup $(\ref{QZ4})$. 
Define the epimorphism
$\J_b \times \JJ_b \to \Z/4$ by the formula
$f_1: (x \times y) \mapsto xy$. The kernel of this epimorphism coincides with
the antidiagonal subgroup
$\I_b = antidiag(\J_b \times \JJ_b) \subset \J_b \times \JJ_b$,
and this epimorphism admits a section, the kernel is a direct factor
(the subgroup $\J_b \subset \J_b \times \JJ_b$).
This kernel is mapped onto the group
$\Z/4$ by the formula:
$f_2:(x \times x^{-1}) \mapsto x$.
The inclusion (\ref{QZ4}) is given by the formula $(f_1 \times f_2)$.

Let us define subgroups
$(\ref{iQaa})$, $(\ref{iaaa})$.
Consider the basis (\ref{ff})
in the space $\R^8$.
Let us define an analogous basis of the space 
$\R^{16}$. This basis contains
$16$ vectors, the basis vectors are divided into two subset $16=8+8$.

\begin{eqnarray}\label{Qh}
		\begin{array}{c}
		Lin_{\x} = \{\e_{1,+}, \dots, \e_{4,+}, \e_{1,-}, \dots \e_{4_-}\}, \\
		 Lin_{\y} = \{\f_{1,+}, \dots, \f_{4,+}, \f_{1,-}, \dots \f_{4,-}\}.
	\end{array}
\end{eqnarray}

Let us define the subgroup
$(\ref{iQaa})$. 
The representation
$i_{\Q \times
\Z/4}$ 
is an extension of the two diagonal representations of $\J_b \times \JJ_b$ in the subspaces, labelled by $+$ and by $-$, by the element $\j \in \Q$ (or, by $\bf{k} \in \Q$).
Recall that the representation of $\J_a$ in the bottom row of the diagram (\ref{a,aa}) admits the
invariant planes $Lin_{\x}(\e_1,\e_2), Lin_{\x}(\e_3,\e_4),Lin_{\y}(\f_1,\f_3),Lin_{\y}(\f_2,\f_4)$.
The representation of $\JJ_a$ in the same bottom row  admits the
invariant planes $Lin_{\x}(\e_1,\e_3), Lin_{\x}(\e_2,\e_4),Lin_{\y}(\f_1,\f_2),Lin_{\y}(\f_3,\f_4)$.

 The planes are transformed into invariant $4$-dimensional spaces of $(\ref{iQaa})$. The element $\j \in \Q \times \Z/4$
is represented by the following matrix:
$$ \j: \e_{s,+} \mapsto \e_{s,-}, \quad \e_{s,-} \mapsto -\e_{s,+}; \quad s=1,2,3,4; $$
$$ \j: \f_{s,+} \mapsto \f_{s,-}, \quad \f_{s,-} \mapsto -\f_{s,+}; \quad s=1,2,3,4. $$
For the generator $\i \in \J_a$ we get the standard $\Q$-representation is the subspaces
$Lin(\e_{1,+},\e_{2,+},\e_{1,-},\e_{2,-})$, $Lin(\e_{3,+},\e_{4,+},\e_{3,-},\e_{4,-})$, and
the $\Z/2 \times \Z/4$-representation in the subspaces
$Lin(\f_{1,+},\f_{3,+},\f_{1,-},\f_{3,-})$, $Lin(\f_{2,+},\f_{4,+},\f_{2,-},\f_{4,-})$.
For the generator $\i \in \JJ_a$ we get the standard $\Q$-representation is the subspaces
$Lin(\f_{1,+},\f_{2,+},\f_{1,-},\f_{2,-})$, $Lin(\f_{3,+},\f_{4,+},\f_{3,-},\f_{4,-})$, and
the $\Z/2 \times \Z/4$-representation in the subspaces
$Lin(\e_{1,+},\e_{3,+},\e_{1,-},\e_{3,-})$, $Lin(\e_{2,+},\e_{4,+},\e_{2,-},\e_{4,-})$.

The representation $(\ref{iQaa})$ is extended to the representation
\begin{eqnarray} \label{QtilZ5}
	i_{\J_a \times \JJ_a, \tilde{\J}_a \times \tilde{\JJ}_a, \Q \times \Z/4}: \J_a \times \JJ_a \subset \tilde{\J}_a \times \tilde{\JJ}_a  \to  \Q \times \Z/4.
\end{eqnarray}

Let us define the representation
$(\ref{iaaa})$ as following.
The factor
$\J_b \times \JJ_b \subset \J_b \times \JJ_b \times
\Z/2$ is represented in each $8$-dimensional 
subspace with $+$ and $-$ indexes
by the bottom line representation in the diagram (\ref{a,aa}).
The factor $\Z/2
\subset \J_b \times \JJ_b \times \Z/2$ 
is represented by the diagonal in the each planes $(\e_{s,+},\e_{s,-})$, $(\f_{s,+},\f_{s,-})$, $s=1, \dots, 4$.

The representation $(\ref{iaaa})$ is extended to the representation
\begin{eqnarray} \label{QtilZ5}
	i_{\J_a \times \JJ_a \times \Z/2, \tilde{\J}_a \times \tilde{\JJ}_a \times \Z/2}: \J_a \times \JJ_a \times \Z/2 \subset \tilde{\J}_a \times \tilde{\JJ}_a \times \Z/2 \subset \Z/2^{[5]}.
\end{eqnarray}

On the $16$-dimensional space by direct sum of $8$-dimensional subspaces (\ref{Qh})
the involutive automorphism
$\chi^{[5]}$ is well-defined by the permutation of the corresponding base vectors in the subspaces.

Consider the projection
\begin{eqnarray}\label{prQ}
p_{\Q}: \Q \times \Z/4 \to \Q
\end{eqnarray} 
on the first factor. The kernel of the homomorphism
$p_{\Q}$ coincides with the antidiagonal subgroup
$\II_b \subset \J_b \times \JJ_b \subset \Q \times \Z/4$. 
Evidently, the following equality is satisfied on the first factor. The kernel of the homomorphism 
$p_{\Q}$ coincides with the antidiagonal subgroup 
$\II_b \subset \J_b \times \JJ_b \subset \Q \times \Z/4$. 

Evidently, the following equality is satisfied:
\begin{eqnarray}\label{chi5com}
p_{\Q} \circ \chi^{[5]}    = p_{\Q}.
\end{eqnarray}

Analogously, on the group 
$\J_b \times \JJ_b \times \Z/2$ 
define the automorphism
$\chi^{[5]}$ of the order 2 (this new automorphism denote the same).
Define the projection
\begin{eqnarray}\label{prZ4Z2}
p_{\Z/4 \times \Z/2}: \J_a \times \JJ_a \times \Z/2 \to \Z/4 \times \Z/2,
\end{eqnarray} 
the kernel of this projection coincides with the diagonal subgroup

Obviously, the following formula is satisfied the kernel of this projection coincides with the diagonal subgroup
$\II_b \subset \J_b \times \JJ_b \times \Z/2$.
Obviously, the following formula is satisfied:
\begin{eqnarray}\label{chi5com2}
\chi^{[5]} \circ p_{\Z/4 \times \Z/2}  = p_{\Z/4 \times \Z/2}.
\end{eqnarray}
This allows to define analogously with
$(\ref{chi2Z})$, $(\ref{chi3Z})$, $(\ref{chi4Z})$
the groups

\begin{eqnarray}\label{chi5Z}
(\Q \times \Z/4) \int_{\chi^{[5]}} \Z,
\end{eqnarray}
\begin{eqnarray}\label{chi55Z}
(\J_a \times \JJ_a \times \Z/2) \int_{\chi^{[5]}} \Z,
\end{eqnarray}
as semi-direct products of the corresponding groups with automorphisms with the  group $\Z$.

Let us define the epimorphism:
\begin{eqnarray}\label{pchi5Z}
\omega^{[5]}: (\Q \times \Z/4) \int_{\chi^{[5]}} \Z \to \Q,
\end{eqnarray}
the restriction of this epimorphism on the subgroup
$(\ref{QZ4})$ coincides with the epimorphism 
$(\ref{prQ})$. For this definition use the formula 
$(\ref{chi5com})$ and define
$z \in Ker(p_{\Q})$, where
$z \in \Z$ is the generator.

Evidently, the epimorphism
\begin{eqnarray}\label{pchi55Z}
\omega^{[5]}: (\J_a \times \JJ_a \times \Z/2) \int_{\chi^{[5]}} \Z \to \Z/4 \times \Z/2,
\end{eqnarray}
analogously is well defined, denote this automorphism as the automorphism
$(\ref{pchi5Z})$.

Moreover, the following homomorphisms
\begin{eqnarray}\label{chi5ZQ}
\Phi^{[5]}: (\Q \times \Z/4) \int_{\chi^{[5]}} \Z \to \Z/2^{[5]},
\end{eqnarray}
\begin{eqnarray}\label{chi5ZZZ}
\Phi^{[5]}: (\J_b \times \JJ_b \times \Z/2) \int_{\chi^{[5]}} \Z \to \Z/2^{[5]},
\end{eqnarray}
which extend the diagram $(\ref{D2})$, are well defined, they are included into the following commutative diagrams
$(\ref{a,Qaa})$, $(\ref{a,aaa})$. 
%of the homomorphisms, which are analogous to the datagram
%$(\ref{a,aa,chi})$.

\begin{eqnarray}\label{a,Qaa}
\begin{array}{ccc}
\qquad (\J_a \times \JJ_a) \int_{\chi^{[4]}} \Z & \stackrel{\Phi^{[4]} \times \Phi^{[4]})}
{\longrightarrow} & \qquad \Z/2^{[4]} \times \Z/2^{[4]} \\
i_{\J_a \times \JJ_a, \Q \times \Z/4} \downarrow \qquad & &  i_{[5]} \downarrow \\
\qquad (\Q \times \Z/4) \int_{\chi^{[5]}} \Z &  \stackrel {\Phi^{[5]}}{\longrightarrow}& \qquad \Z/2^{[5]}, \\
\end{array}
\end{eqnarray}

In this diagram the left vertical homomorphism
$$i_{\J_b \times \JJ_b, \Q \times \Z/4}:
(\J_b \times \JJ_b) \int_{\chi^{[4]}} \Z \to (\Q \times \Z/4) \int_{\chi^{[5]}} \Z$$
is induced by the homomorphism 
$(\ref{QZ4})$, the right vertical homomorphism
$$i_{[5]}: \Z/2^{[4]} \times \Z/2^{[4]} \subset \Z/2^{[5]}.$$
is the inclusion of the subgroup of the index 2.

The following diagram
\begin{eqnarray}\label{a,aaa}
\begin{array}{ccc}
\qquad (\J_a \times \JJ_a) \int_{\chi^{[4]}} \Z & \stackrel{\Phi^{[4]} \times \Phi^{[4]}}
{\longrightarrow} & \qquad \Z/2^{[4]} \times \Z/2^{[4]} \\
i_{\J_a \times \JJ_a, \J_a \times \JJ_a \times \Z/2} \downarrow \qquad & &  i_{[5]} \downarrow \\
\qquad (\J_a \times \JJ_a \times \Z/2) \int_{\chi^{[5]}} \Z &  \stackrel {\Phi^{[5]}}{\longrightarrow}& \qquad \Z/2^{[5]}, \\
\end{array}
\end{eqnarray}
the left vertical homomorphism
$$i_{\J_a \times \JJ_a, \J_a \times \JJ_a \times \Z/2}: (\J_a \times \JJ_a) \int_{\chi^{[4]}} \Z \to
(\J_a \times \JJ_a \times \Z/2) \int_{\chi^{[5]}} \Z$$
is an inclusion.

\begin{lemma}\label{l14}
The homomorphism
(\ref{chi5ZQ}) 
is extended 
from the index 2 subgroup 
$\J_b \times \JJ_b \rtimes_{\mu_{b \times \bb}^{(4)}} \Z \times \Z \rtimes_{\chi^{[4]}} \Z$ to the group
\begin{eqnarray}\label{hom1}
(\Q \times \Z/4) \rtimes_{\mu_{b \times \bb}^{(5)}} \Z \times \Z \int_{\chi^{[5]}} \Z.
\end{eqnarray}
\end{lemma}

\subsection*{Proof of Lemma \ref{l14}}

Let us construct the extension
$$(\Q \times \Q) \rtimes_{\mu_{b \times \bb}^{(5)}} \Z \times \Z \int_{\chi^{[5]}} \Z,$$
then let us pass to the required subgroup of the index $2$.

The automorphism
$\mu_{b \times \bb}^{(5)}$
is induced
from the automorphism
$\mu_{\Q}: \Q \to \Q$ of the factors:
$\mu_{\Q}(\i)=-\i$, $\mu_{\Q}(\j)=-\j$, $\mu_{\Q}(\bf{k})=\bf{k}$. \qed

Additionally, one has to consider, as this automorphism is defined on
$3$-dimensional quaternionic lens
$S^3/\Q$ (the quaternionic action on $S^3$ is on the right).

The automorphism is given by the transformation of $S^3$, which is the right multiplication of the quaternion unite. 
The automorphism
$\mu_{\Q}$ is commuted with the transformation of 
the quaternion units (the only non-obvious calculation concerns the unite 
$\bf{k}$). Therefore on the quotient
$S^3/\Q$ the automorphism is well defined. 

The trivialization of the tangent bundle on the quaternionic lens, which is defined by  the left action of
the units  $(\i,\j,\bf{k})$, 
is changed by corresponding automorphisms. Therefore the image of the quaternion framing by the differential
\begin{eqnarray}\label{muQ}
d(\mu_{\Q}): T(S^3/\Q) \to T(S^3/\Q)
\end{eqnarray}
is fibred isotopic to the identity. This isotopy consists of a family of rotations
trough the angle $180^0$ 
in oriented planes, which are orthogonal to the vectors
$\bf{k}$. 

The homomorphism
(\ref{chi5ZZZ}) is extend to the homomorphism of the Laurent extension from the subgroup
$\J_b \times \JJ_b \rtimes_{\mu_{b \times \bb}^{(4)}} \Z \times \Z \rtimes_{\chi^{[4]}} \Z$ 
of the index  $2$ to the all group: 
\begin{eqnarray}\label{hom2}
(\J_b \times \JJ_b \times \Z/2) \rtimes_{\mu_{b \times \bb}^{(5)}} \Z \times \Z \int_{\chi^{[5]}} \Z \mapsto \Z/2^{[5]}.
\end{eqnarray}

\begin{definition}\label{standQ}

Let us say that a
$\Z/2^{[4]}$-framed immersion 
$(g,\Psi,\eta_N)$ of the codimension
$8k$, which is standardized in the sense of Definition 
\ref{stand2} and is pure in the sense of Definition \ref{pure}, is an immersion with 
$\Q \times \Z/4$-structure 
(with a quaternion-cyclic structure), if $g$ is self-intersects along
a $\Z/2^{[5]}$-framed immersion 
$(h,\Xi,\zeta_L)$
with the self-intersection manifold  
$L$ of the dimension
$n_{\sigma}=n-16k=14$, (\ref{nsigma}) (for  $n=254$ we have $k=15$,
$n-16k=14$ ) and the following condition is satisfied.
The manifold $L$ is divided into two components:
\begin{eqnarray}\label{twocomp} 
	L = L_{\Q \times \Z/4} \cup L_{\Z/4 \times \Z/4 \times \Z/2},
\end{eqnarray}	
	 fundamental classes of a component is represented into the classifying space for the corresponding internal symmetry group in (\ref{hom1}), (\ref{hom2}). 
%Additionally, let us assume that  $(g,\Psi,\eta_N)$
%is a pre-standardized immersion, see Definition
%The fundamental class
%$[\bar{L}]$ of the canonical covering over the manifold  $L$ of self-intersection is pure in %the sence of Definition \ref{stand2}. 
%$dim(L)=m_{\sigma}$, is mapped by the characteristic mapping into the element of the dimension %$m_{\sigma}$ in the subgroup described in Lemma \ref{oaa}. 
\end{definition}

\begin{lemma}\label{Q}

Let
$(g,\eta_N,\Psi)$ a a pre-standardized  $\Z/2^{[4]}$-framed immersion
in the codimension $8k$, see Definition 	\ref{stand2}.
Then in a regular homotopy class of this immersion there exists a standardized immersion with quaternionic-cyclic structure, see Definition  
\ref{standQ}.		

\end{lemma}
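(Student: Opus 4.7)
The plan is to parallel the proofs of Theorem \ref{Th2} and Theorem \ref{T11}, working one level higher in the tower of extensions \ref{zep}--\ref{z}. Starting from a pre-standardized immersion $(g,\Psi,\eta_N)$ whose normal bundle has structured group reduced to $\J_b \times \JJ_b \rtimes_{\mu_{b \times \bb}^{(4)}} (\Z \times \Z)$ by Condition (Y1), I would first single out the self-intersection immersion $(h,\Xi,\zeta_L)$ and its canonical double cover $\bar L$ of dimension $m_{\sigma}=n-16k$; these inherit a $\Z/2^{[5]}$-framing by the usual transfer construction. The goal is then to upgrade the factor $\J_b \times \JJ_b$ to $\Q \times \Z/4$ via the inclusion \ref{QZ4}, which at the level of classifying maps is realized by Lemma \ref{l14}.

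First I would construct a $\chi^{[5]}$-equivariant formal regular deformation of the double extension $g^{(2+2)}$, analogous to the deformation $\bar d^{(2+2)} \mapsto d^{(2+2)}$ produced in Lemma \ref{l13}, but now $\Q$-equivariant. The key geometric input is the differential computation \ref{muQ}: although $\mu_\Q$ sends $\i \mapsto -\i$ and $\j \mapsto -\j$ while fixing $\k$, the induced map on $T(S^3/\Q)$ is fibrewise isotopic to the identity through a family of $180^\circ$-rotations in the oriented planes orthogonal to $\k$. This isotopy can be realized by a small vertical formal deformation supported in a tubular neighbourhood of the closed component $\N_{b \times \bb}$ of the iterated self-intersection polyhedron, staying inside the regular homotopy class of $g$.

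Next I would verify the two clauses of Definition \ref{standQ}. The claim that $(h,\Xi,\zeta_L)$ is $\Z/2^{[5]}$-framed with structured mapping factoring through $\Phi^{[5]}$ of \ref{chi5ZQ} follows directly from Lemma \ref{l14}, which extends the homomorphism $(\J_b \times \JJ_b)\int_{\chi^{[4]}}\Z \to \Z/2^{[4]}$ over the index-$2$ subgroup to the full group $(\Q \times \Z/4)\int_{\chi^{[5]}}\Z \to \Z/2^{[5]}$; together with equation \ref{chi5com} this is exactly what is needed to upgrade the framing along the self-intersection. The second condition, that $\eta_\ast([\bar L]) \in D^{loc}_{m_\sigma}(\J_b \times \JJ_b \rtimes_{\mu^{(4)}_{b \times \bb}} (\Z\times\Z);\Z[\Z/2])$, is verified by applying statement 2 of Lemma \ref{oaa} together with the isomorphisms \ref{deltaDaa} and \ref{deltaDaaa}, and then repeating the dimension-counting argument of Lemma \ref{dd} to confirm that the characteristic mapping of $\bar L$ factors through the Grassmann skeleton \ref{KK4}.

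The main obstacle I anticipate is establishing the coherence of the conjugated pair of local coefficient systems TypeIIb and TypeII$\bb$ (compare \ref{conjls}) under the new $\chi^{[5]}$-symmetry, since the quaternionic conjugation on $\Q$ interacts nontrivially with the two Laurent monodromies $\mu_b$ and $\mu_{\bb}$, and since one now has three generators acting on a $3$-uple $\Z[\Z/2]$-local coefficient system rather than two. I expect this to require a direct case-by-case geometric argument in the spirit of the proof of Lemma \ref{oaa}, statement 1, tracking how generators $t_{b,i}\otimes t_{\bb,j}$ and their Tor-contributions from \ref{Kunnd2} transform under left multiplication by $\k$ on the quaternionic lens $S^3/\Q$, and verifying that the residual obstruction vanishes after passage to the index-$2$ quotient via \ref{pchi5Z}.
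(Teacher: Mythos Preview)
Your proposal misses the paper's central construction. The paper does \emph{not} proceed by a formal deformation of $g^{(2+2)}$ in the style of Lemma~\ref{l13}; instead it builds an explicit quaternionic model space and factors the immersion through it. Concretely, the paper introduces a polyhedron $J_Z$ assembled from Cartesian products of sub-joins of copies of the quaternionic lens space $S^3/\Q$ (formulas (\ref{ZZQ})--(\ref{JZ})), together with a ramified covering $c_Z: Z_{a\times\aa}\to J_Z$ where $Z_{a\times\aa}$ is a union of products of cyclic lens spaces. The embedding $S^3/\Q\subset\R^4$ of Hirsch yields an embedding of $J_Z\rtimes_\chi S^1$ into $\R^n$ (Lemma~\ref{embJZ}), and the parametrized extension over the torus $S^1\times S^1$ produces the immersion (\ref{s12}). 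The desired $g$ is then taken to be a small generic perturbation of the composition $N\to Z_{a\times\aa}\rtimes_{\mu_{b\times\bb}}(S^1\times S^1)\to J_Z\rtimes_{\mu_{b\times\bb}}(S^1\times S^1)\looparrowright\R^n$. The $\Q\times\Z/4$-structure on the self-intersection is then read off directly from the stratification of self-intersections of the ramified covering $c_Z$, not from an abstract obstruction computation.

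Your reliance on Lemma~\ref{l14} and the isotopy (\ref{muQ}) is misplaced as the main engine: that lemma is purely algebraic (an extension of a homomorphism over an index-$2$ subgroup) and in the paper serves only to justify that the Laurent extension (\ref{nakr1}) of the ramified covering is well-defined; it does not by itself produce a regular homotopy of $g$ to something with quaternionic self-intersection structure. Likewise, the anticipated difficulty with TypeIIb/TypeII$\bb$ coherence under $\chi^{[5]}$ never arises, because the explicit join-of-lens-spaces model makes the stratification of the self-intersection polyhedron and the reduction of structure groups transparent. What you are missing is the geometric input: the construction of the target polyhedron $J_Z$ from quaternionic lens spaces and the verification that it embeds with enough codimension. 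Without that, there is no mechanism in your outline to force the self-intersection manifold $L$ of the deformed $g$ to acquire the $\Z/2^{[5]}$-framing with $\Q\times\Z/4$ reduction.
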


\subsubsection*{Proof of Lemma \ref{Q}}

Let us define the space
\begin{eqnarray}\label{ZZQ}
ZZ_{\J_a \times \JJ_a}= S^{\frac{n+6}{2}+1}/\i \times
S^{\frac{n+6}{2}+1}/\i
\end{eqnarray}
%(???????????? $(\ref{ZZIaIa})$  ?????????????? ???? ?? ?????).
Evidently,  $\dim(ZZ_{\J_a \times \JJ_a}) = n + 8 > n$.

Let us define a family
$\{Z_0,  \dots,  Z_{j_{max}} \}$ of standard submanifolds in the manifold
$ZZ_{\J_a \times \JJ_a}$, where
\begin{eqnarray}\label{jmaxQ}
j_{max} = \frac{n+6}{8},
\end{eqnarray}
by the following formula:
\begin{eqnarray}\label{Zj}
Z_0 = S^{\frac{n+12}{2}} \times
S^{3}/\i \subset
S^{\frac{n+12}{2}}/\i \times
S^{\frac{n+12}{2}}/\i, 
\end{eqnarray}
$$
Z_1 = S^{\frac{n+4}{2}} \times
S^{7}/\i \subset
S^{\frac{n+12}{2}}/\i \times
S^{\frac{n+12}{2}}/\i,  \dots
$$
$$
Z_j = S^{\frac{n+12-8j}{2}}/\i
\times S^{4j+3}/\i \subset
S^{\frac{n+12}{2}}/\i \times
S^{\frac{n+12}{2}}/\i,  \dots
$$
$$
Z_{j_{max}} = S^{3}/\i \times
S^{\frac{n+12}{2}}/\i \subset
S^{\frac{n+12}{2}}/\i \times
S^{\frac{n+6}{2}}/\i.
$$

In the formula
$j_{max}$ is defined by the formula $(\ref{jmaxQ})$.
A subpolyhedron
\begin{eqnarray}\label{Z}
Z_{a \times \aa} \subset ZZ_{\J_a \times \JJ_a}
\end{eqnarray}
is defined as the union of submanifolds of the family
$(\ref{Zj})$. Evidently,
$\dim(Z_{\J_a \times \JJ_a})=
\frac{n+18}{2}$.

The standard involution, which is permuted the factors, is well defined:
\begin{eqnarray}\label{ZZ3}
\chi^{[4]}: ZZ_{\J_a \times \JJ_a} \to ZZ_{\J_a \times \JJ_a}, 
\end{eqnarray}
this involution is invariant on the subspace
$Z_{a \times \aa}$ (\ref{Z}).

Let us consider the standard cell-decomposition of the space $K(\J_a
\times \JJ_a,1)= K(\J_a,1) \times K(\JJ_a ,1)$, 
this cell-decomposition is defined as the direct product of the cell-decomposition
of the factors. The standard inclusion of the skeleton is well defined:
\begin{eqnarray}\label{etaIaIaZ}
Z_{a \times \aa} \subset K(\J_a
\times \JJ_a,1).
\end{eqnarray}

The skeleton 
$(\ref{etaIaIaZ})$ is invariant with respect to the involution $(\ref{ZZ3})$. 
Therefore the inclusion
\begin{eqnarray}\label{ostov}
Z_{a \times \aa} \int_{\chi^{[4]}} S^1 \subset Z_{a \times \aa} \int_{\chi^{[4]}} S^1 \subset K((\J_a \times \JJ_a) \int_{\chi^{[4]}} \Z,1) 
\end{eqnarray}
is well defined.

Let us define a polyhedron $J_Z$, a subpolyhedron of the polyhedron $J_X$,
see the formula
$(\ref{JX})$. Denote by  $JJ_{\J_a}$ the join of $j_{max}$ 
copies of the lens space
$S^{3}/\i$. Denote by  $JJ_{\J_a}$ the join of $j_{max}$ 
copies of the lens space
$S^{3}/\i$. 

Denote by  $JJ_{\Q}$ the join of $j_{max}$ copies of the quaternionic lens space 
$S^{3}/\Q$, the second exemplar of this join.
Recall that the positive integer 
$j_{max}$ is defined by the formula  $(\ref{jmaxQ})$.

Let us define the subpolyhedron
$J_{\bar{Z},j} \subset
JJ_{\J_a} \times JJ_{\JJ_a}$ by the formula 
$$J_{\bar{Z},j} = JJ_{\J_a,j} \times JJ_{\JJ_a,j}, 1 \le i \le j_{max},$$
where $JJ_{\J_a,j} \subset JJ_{\J_a}$ is the subjoin with the coordinates
$1 \le
i \le j_{max}-j$, $JJ_{\JJ_a,j} \subset JJ_{\JJ_a}$ is the subjoin with the coordinates  $1 \le i \le j$. Let us define the subpolyhedron
$J_{Z,j} \subset
JJ_{\Q} \times JJ_{\QQ}$ by the formula: 
$$JJ_{Z,j} = JJ_{\Q,j} \times JJ_{\QQ,j}, 1 \le i \le j_{max},$$
where $JJ_{\Q,j} \subset JJ_{\Q}$ is the subjoin with coordinates
$1 \le
i \le j_{max}-j$, $JJ_{\QQ,j} \subset JJ_{\QQ}$ is the subjoin with the coordinates $1 \le i \le j$.

Define $J_Z$ by the formula:
\begin{eqnarray}\label{JZ}
J_Z = \cup_{j=1}^{j_{max}} J_{Z,j} \subset JJ_{\Q} \times JJ_{\QQ}.
\end{eqnarray}
Define $J_{\bar{Z}}$ by the formula: 
\begin{eqnarray}\label{barJZ}
J_{\bar{Z}} = \cup_{j=1}^{j_{max}} J_{\bar{Z},j} \subset JJ_{\J_a} \times JJ_{\JJ_a}.
\end{eqnarray}

On the polyhedron
$Z_{a \times \aa}$, $J_Z$ a free involution, which is denoted by
$T_{\Q}$, and which is corresponded to the quadratic extension $(\ref{QZ4})$
is well defined. The polyhedron
$(\ref{JZ})$ is invariant with respect to the involution $T_{\Q}$, 
the inclusion of the factor-polyhedron is defined by 
\begin{eqnarray}\label{QJZ}
(J_Z)/T_{\Q} = \cup_{j=1}^{j_{max}} J_{Z,j}/T_{\Q} \subset JJ_{\J_a}/T_{\Q} \times JJ_{\JJ_a}/T_{\Q}.
\end{eqnarray}

The standard $4$-sheeted covering (with ramifications)
\begin{eqnarray}\label{nakr}
c_Z: Z_{a \times \aa} \to  J_Z, 
\end{eqnarray}
is well defined. This covering is factorized to the ramified covering
$\hat c_Z$. This covering is defined by the composition of the standard $2$-sheeted covering  
\begin{eqnarray}\label{nakrTQ}
Z_{a \times \aa}/T_{\Q} \to (J_Z)/T_{\Q} 
\end{eqnarray}
and the standard $2$-sheeted covering:
$Z_{a \times \aa} \to Z_{a \times \aa}/T_{\Q}$.
The ramified covering
$(\ref{nakr})$ is equivariant with respect to the involution $(\ref{ZZ3})$,
which acts in the target and the source.

Let us consider the embedding $S^{3}/\Q  \subset
\R^{4}$, which is constructed in
[Hi]. 
The Cartesian product of joins of corresponding copies of the embeddings
gives the embedding
\begin{eqnarray}\label{Jemb}
J_{Z,j} \subset \R^{5j_{max}+3}.
\end{eqnarray}

The following lemma is analogous to Lemma
$\ref{embJ}$.

\begin{lemma}\label{embJZ}
There exists an embedding
\begin{eqnarray}\label{iJZ}
\rm{i}_{J_Z}: J_Z \int_{\chi} S^1  \subset
\D^{n-5} \times S^1  \subset \R^n.
\end{eqnarray}
\end{lemma}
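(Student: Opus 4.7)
The plan is to extend the circle-based argument of Lemma \ref{embJ} to quaternionic lens spaces, using the Hirsch PL embedding $e: S^3/\Q \hookrightarrow \R^4$ as the atomic building block (the embedding cited in the paper just before the statement). First I would lift $e$ to all joins appearing in the construction. The standard iterated-join realization places $X_1 \ast \cdots \ast X_m \subset \R^{\sum d_i + (m-1)}$ whenever $X_i \subset \R^{d_i}$, so the $j_{max}$-fold joins $JJ_\Q$ and $JJ_{\QQ}$ both embed into $\R^{5 j_{max} - 1}$; taking the Cartesian product yields $JJ_\Q \times JJ_{\QQ} \subset \R^{10 j_{max} - 2}$, which contains $J_Z = \bigcup_j J_{Z,j}$ by definition (\ref{JZ}). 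The dimension bookkeeping is favorable: each $J_{Z,j}$ has dimension $4(j_{max}-j) - 1 + 4j - 1 = 4 j_{max} - 2$, so $\dim J_Z = 4 j_{max} - 2$ and $\dim(J_Z \int_\chi S^1) = 4 j_{max} - 1 = (n+2)/2$, which is well below $n - 5$ in the range of interest.

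Second, I would realize the involution $\chi^{[5]}$ (the factor swap inherited from (\ref{ZZ3})) as the block-swap isometry of $\R^{n-5} = \R^{5 j_{max}-1} \oplus \R^{5 j_{max}-1}$, so that it preserves $J_Z$ and interchanges $J_{Z,j}$ with $J_{Z,j_{max}-j}$. The skew-product $J_Z \int_\chi S^1$ is then realized inside $\D^{n-5} \times S^1$ as the image of a map $(x,t) \mapsto (R_t(x),\, e^{2\pi i t})$, where $R_t$ is a PL isotopy of the ambient disk from the identity to a PL extension of the block-swap. Such an isotopy exists because $\dim J_Z < n - 5$ provides enough codimension to unfold the order-$2$ reflection into a full continuous family of PL homeomorphisms. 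This lands in $\D^{n-5} \times S^1$ and then, by a standard linear placement, inside $\R^n$.

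The step I expect to be most delicate is arranging the iterated-join embedding to be equivariant with respect to the block-swap involution while still realizing the stratified union $J_Z = \bigcup_j J_{Z,j}$ correctly---a general-position choice of the auxiliary affine subspaces carried out symmetrically across all $j_{max}+1$ strata, compatible with the ramified-covering structure (\ref{nakr}) from which the whole construction descends. Special attention will be needed at the overlaps $J_{Z,j} \cap J_{Z,j'}$, which are Cartesian products of subjoins and must lie in the appropriate symmetric position. Everything else is a direct analogue of the argument of Lemma \ref{embJ}: the quadratic-equation trick there is replaced by the explicit Hirsch embedding, and the $\RP^2$-factor by the present $S^1$-factor, but the dimension accounting and the ambient assembly work in the same way.
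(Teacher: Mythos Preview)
Your argument has a genuine dimension error. You embed $JJ_\Q \times JJ_{\QQ}$ in $\R^{10j_{max}-2}$ and then assert $\R^{n-5}=\R^{5j_{max}-1}\oplus\R^{5j_{max}-1}$, i.e.\ $n-5=10j_{max}-2$. But with $j_{max}=\frac{n+6}{8}$ from (\ref{jmaxQ}) one has $10j_{max}-2=\frac{5n+22}{4}\approx \tfrac{5}{4}n$, which is strictly larger than $n-5$ for every $n$ in the range of interest (e.g.\ for $n=126$ you get $\approx 160$ versus $121$). You correctly note that $\dim J_Z=4j_{max}-2\approx n/2$ is comfortably below $n-5$; the problem is not the dimension of the source polyhedron but the ambient dimension produced by your particular embedding. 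The naive Cartesian product of the two full join embeddings is simply too large to fit inside $\D^{n-5}$.

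The paper's route avoids this by working stratum-by-stratum: formula (\ref{Jemb}) puts each $J_{Z,j}=JJ_{\Q,j}\times JJ_{\QQ,j}$ into $\R^{5j_{max}+3}\approx \R^{5n/8}$, since the two join factors together involve only $j_{max}$ (not $2j_{max}$) copies of $S^3/\Q$. That ambient dimension does satisfy $5j_{max}+3<n-5$ once $n\ge 32$. The passage from the individual $J_{Z,j}$ to the union $J_Z$ is then handled ``analogously to Lemma~\ref{embJ}'', i.e.\ by a device of the same flavour as the quadratic--hypersurface trick there, which places all strata of fixed total dimension inside one common ambient piece rather than in the full product $JJ_\Q\times JJ_{\QQ}$. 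Your proposal needs to replace the product embedding by something of this kind; the rest of your outline (Hirsch's $S^3/\Q\subset\R^4$ as the atomic block, realizing $\chi$ as a linear swap and building the $S^1$--skew product from an ambient isotopy) is compatible with the paper's intent once the ambient dimension is brought down to roughly $5j_{max}$.
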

\[  \]

The following extension of the polyhedron
(\ref{ostov}), which corresponds to the Laurent extension of the structuring groups, as in Lemma  \ref{l14}, is well defined.
Let us define a ramified covering:

\begin{eqnarray}\label{nakr1}
c_Z: Z_{a \times \aa} \rtimes_{\mu_{b \times \bb}} S^1 \times S^1 \to  
J_Z \rtimes_{\mu_{b \times \bb}} S^1 \times S^1, 
\end{eqnarray}
which is an extension of the covering
(\ref{nakr}), as a parametrized covering over $2$-dimensional torus.
The factors of the torus acts on factors of the coverings  
(\ref{nakr}), correspondingly with the extension of the formula
(\ref{muQ}) on the quaternion lens space. 

The inclusion of the base of the covering
(\ref{embJZ}) is extended to the following immersion (which is denoted by the same):
\begin{eqnarray}\label{s12}
\rm{i}_{J_Z}: J_Z \int_{\mu_{b \times \bb},\chi} S^1  \looparrowright
\D^{n-5} \times S^1 \times S^1 \rtimes_{\chi} S^1 \subset \R^n.
\end{eqnarray}

An arbitrary immersion
$g$ with a $\Q \times \Z/4$-structure is defined as a small approximation of a mapping,
which is the composition of the mapping to the total space of the $2$-covering
(\ref{nakr1}), the ramified covering itself, and the immersion  (\ref{s12}). 

The decomposition to maximal strata of self-intersection points of the ramified covering
(\ref{nakr1}) determines a control of the image of the fundamental class of self-intersection points of the immersion 
$g$ by the structuring mapping. 
Cycles of the types
1,2  correspond to structuring groups of maximal strata, cycles of type 3
can be exist, because   
(\ref{s12}) is an immersion and could have additional closed component
of a self-intersection. Lemma
\ref{Q} is proved. \qed

In the following Lemma the main result is proved.
%This proof is analogous to the proof in 
 
%  ?? ????? ??????? ????????? ??????.
%????????????? 1,2, ???????????????? ???? ? ?????????????? ? ??????? ??????? %?????????, ??? ??????????? ??????????
%??????????? (??????? (\ref{ZZIaIa}) ? ????) ?? ????????????? ??????.

\begin{theorem}\label{Arf}
Let $(g,\Psi,\eta_N)$ $\Z/2^{[4]}$ be a standardized (see Definition  \ref{stand2})
immersion, with $\Q \times \Z/4$-structure (see Definition  \ref{standQ}).
Let us assume that   $(g,\Psi,\eta_N)$ is pure (see Definition \ref{pure}). 
Then
$(g,\Psi,\eta_N)$ is negligible. 
\end{theorem}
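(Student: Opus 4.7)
The plan is to show that the only coefficient in the Hurewicz image of the fundamental class of the canonical cover $\bar{L}$ (in dimension $m_\sigma = n-16k$) permitted by purity, namely that of the diagonal monomial $t_{b,m_\sigma/2} \otimes t_{\bb,m_\sigma/2}$, is forced to vanish by the additional $\Q \times \Z/4$-reduction of the structured mapping. Combining this with Definition \ref{pure} the entire projection into $D^{loc}_{m_\sigma}(\J_b \times \JJ_b \rtimes \cdots; \Z/2[\Z/2])$ vanishes, and then Definition \ref{negl} gives negligibility once we pass to the underlying $\D$-framed immersion via the transfer with respect to $\Z/2^{[2]} \subset \Z/2^{[4]}$.

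First I would use Definition \ref{standQ} together with Lemma \ref{l14} to lift the structured mapping $\eta_L$ of the self-intersection manifold through the subgroup inclusion $i_{\J_b \times \JJ_b, \Q \times \Z/4}$ of (\ref{QZ4}). Because the antidiagonal kernel $\II_b = \ker(p_\Q)$ in (\ref{prQ}) is absorbed into the quaternion factor, the characteristic class $\eta_L^{\ast}(t_b^{m_\sigma/2} \cdot t_{\bb}^{m_\sigma/2})$ factors through the restriction $H^{\ast}(B(\Q \times \Z/4);\Z/2) \to H^{\ast}(B(\J_b \times \JJ_b);\Z/2)$. The two degree-$1$ generators of $H^{\ast}(B\Q;\Z/2)$ satisfy a nontrivial quadratic relation; high powers of degree-$1$ classes lie in the image of multiplication by the degree-$4$ generator, and this forces the diagonal monomial to be divisible by a class whose image in $H^{\ast}(B(\J_b \times \JJ_b);\Z/2)$ is zero on the fundamental class of $\bar{L}^{m_\sigma}$.

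Next I would verify that this vanishing is compatible with the triple local coefficient system $\chi^{[5]}$ of Section \ref{Sec5}: the identity (\ref{chi5com}) shows that the exterior $\Z$-extension acts trivially on the image of $p_\Q$, so no new terms are introduced when one passes from the skew cover to the quotient in the Serre spectral sequence of $K(\Q \times \Z/4,1) \int_{\chi^{[5]}} \Z$. Once the diagonal coefficient is shown to vanish, the arguments of Lemmas \ref{dd} and \ref{oaa}, which control the remaining components via the Kunneth decomposition (\ref{Kunnd2}) and the Herbert-type identification of Hurewicz images of canonical covers with fundamental classes of self-intersection manifolds, combine with purity to show that the fundamental class mod $2$ is trivial in $D^{loc}_{n-8k}(\J_b \times \JJ_b \rtimes \cdots;\Z/2[\Z/2])$. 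Descending through the transfer of Definition \ref{negl} then gives a negligible $\D$-framed immersion at the bottom.

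The main obstacle is the explicit verification that the diagonal Kervaire monomial is annihilated by the restriction from $H^{\ast}(B\Q;\Z/2)$: one must use the quadratic relation among the degree-$1$ generators together with the Bockstein structure coming from $\Z/2 \to \Z/4 \to \Z/2$, and track how the diagonal class maps under $i_{\J_b \times \JJ_b, \Q \times \Z/4}^{\ast}$ in the precise dimension $m_\sigma$. This is essentially the quaternionic lemma of \cite{A-P1}, but now iterated over the extra skew factors $\mu^{(4)}_{b \times \bb}$ and $\chi^{[5]}$, and one must also check that the conjugated local systems of TypeI and TypeII described after Lemma \ref{oaa} do not conspire to reintroduce a nonzero diagonal term through the relation $a \equiv \chi^{[4]}(a)t$ on elements of $D^{loc}_{m_\sigma}$.
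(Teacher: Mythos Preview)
Your plan diverges substantially from the paper's argument, and the core mechanism you invoke is not the one that actually does the work.

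The paper does \emph{not} argue via the $\Z/2$-cohomology ring of $B\Q$ and the quadratic relation among degree-$1$ generators. Instead it works with \emph{integer} (i.e.\ $\Z/4$-valued) characteristic numbers throughout, exactly as set up in Lemma~\ref{oaa} and the local systems of Section~\ref{Sec5}. Concretely, the proof introduces the codimension-$2$ immersed submanifold $N_1\looparrowright N$ Poincar\'e dual to the Euler class of the bundle $\beta_{b\times\bb}$ of (\ref{beta}), restricts $g$ to $N_1$, and applies a Herbert-type formula with the triple local coefficients $(\mu_{b\times\bb},\chi)$ to the resulting self-intersection $L_1$. The extra double points coming from the self-intersections of $N_1\looparrowright N$ contribute equally to both sides of the Herbert identity and cancel. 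What remains is a $\Z/4$-valued characteristic number, and the quaternionic structure (via the conjugation $\mu_{\Q}$ of Lemma~\ref{l14}, as in \cite{A-P1}) forces this number to equal its own negative in $\Z/4$. Hence it lies in $2\Z/4$, i.e.\ it is even, and purity then gives that the single surviving mod $2$ monomial vanishes.

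Your approach tries to obtain the vanishing directly at the $\Z/2$ level by factoring $\eta_L$ through $B(\Q\times\Z/4)$ and invoking relations in $H^*(B\Q;\Z/2)$. There are two problems. First, the expression ``$t_b^{m_\sigma/2}\cdot t_{\bb}^{m_\sigma/2}$'' is already ill-posed: in $H^*(B\Z/4;\Z/2)$ the degree-$1$ generator squares to zero, so the Kervaire-detecting class is not a pure power but a mixed monomial in the degree-$1$ and degree-$2$ generators, and you would have to track the restriction $H^*(B(\Q\times\Z/4);\Z/2)\to H^*(B(\J_b\times\JJ_b);\Z/2)$ for the \emph{specific} inclusion (\ref{QZ4}), which is not a product inclusion. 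Second, and more fundamentally, the ``opposite to itself'' argument is vacuous over $\Z/2$; it only has content over $\Z/4$, which is precisely why the paper insists on the integer local-coefficient groups $D^{loc}_{m_\sigma}(\cdots;\Z[\Z/2])$ and on the Herbert formula with those coefficients. Your proposal never lifts to $\Z/4$, so even if the restriction computation in $\Z/2$-cohomology could be made to say something, it would not connect to the mechanism that actually forces the diagonal coefficient to vanish. The Euler-class submanifold $N_1$ and the Herbert formula with local coefficients are not optional scaffolding here; they are what allows the $\Z/4$-valued number to be isolated and then killed by the quaternionic symmetry.
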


\subsubsection*{Proof of Theorem \ref{Arf}}
Let us consider the immersion
$$ g: N^{n-8k} \looparrowright \R^n, $$
with the controle over $\RP^2 \times (S^1 \times S^1)$, which corresponds to the structuring group of the immersion, see see (\ref{prZ4Z2}).
The self-intersection manifold $L^{n-16k}$ is divided into two components, denoted shortely by $L_1$, $L_2$, as in the formula (\ref{twocomp}), with the stucturing groups, described by the dyagrames (\ref{a,Qaa}), (\ref{a,aaa}) correspondingly.

The Arf-invariant is described by the characteristic number, the image of the fundamental class in $D_{n_{\sigma}}^{loc}(\J_b \times \JJ_b \rtimes_{\mu_{b \times \bb}}(\Z \times \dot{\Z});\Z)$ by  the characteristic mapping of  the cannonical covering $[\bar{L}_1]$ over the component $L_1$. The  cannonical covering $[\bar{L}_2]$ over the component $L_2$ has trivial charackteristic numbers, because this mapping is compressed in two-sheeted covering over the mapping (\ref{hom2}). 

The local coeffisient system on $D_{n_{\sigma}}^{loc}(\J_b \times \JJ_b \rtimes_{\mu_{b \times \bb}}(\Z \times \dot{\Z});\Z/2)$ modulo 2 is trivial. The local charactheristic classes are pull-backs of classes over the group (\ref{chi55Z}), which are detected by 
the  4-shetted covering as the classes of the group (\ref{DZ2lok}). All the modulo 2 charactheristic classes of $L_2$ are trivial

The immestion $g$ is pure. This implies
 the only class for $L_1$ near the central Arf-monomial of the basus, described in Lemma \ref{obstr}, is defined by (\ref{arf}), which is Arf-invariant. 

We use arguments related with the Herbert immersion theorem \cite{He}.
We use a non-standard modification of the Herbert formula with local
$\mu_{b \times \bb},\chi^{(4)}$ coefficients valued in the group described in Lemma \ref{oaa}, and for immersions with additional marked component, which gives an extra contribution on the both sides of the Herbert formula. 
The Euler class of the bundle
(\ref{beta}) is a submanifold in the codimension 2, denote this submanifold by $N_1^{n-8k-2} \subset N^{n-8k}$. 

Let us restrict the immersion $g$ on the submanifold
$N_1^{n-8k-2} \subset N$ and let us consider the manifold $L_1^{n_{\sigma}-4} \looparrowright L^{n_{\sigma}}$ 
of self-intersection points of the immersed manifold
$g(N_1^{n-8k-2})$. 
The fundamental class of this submanifold is calculated using the Euler classes of the structured $4$-bundle over $L$ and
the image $[\bar{L}^{n_{\sigma}}]$, which is  an element in 
$D_{n_{\sigma}}^{loc}(\J_b \times \JJ_b \rtimes_{\mu_{b \times \bb}}(\Z \times \dot{\Z});\Z)$. This element modulo 2 is non-trivial and is given by the corresponding non-trivial monomial of the basis described in Lemma \ref{oaa}. We get that Arf-invariant 
is described as the Hirewich image of the fundamental class $[L_1^{n_{\sigma}-4}]$, which is an odd nontrivial class modulo $4$.

The calculation of  $[\bar{L}_1^{n_{\sigma}-4}]$ is  possible by the Herbert theorem, which is applied to $[\bar{L}]$. The calculation is also possible by a geometrical argument
as the transfer of the Euller class as the  $\Q$-class of the structuring group (\ref{hom1}).
The characteristic classes takes values modulo 4. By arguments as in \cite{A-P}, this characteristic class in   $D_{n_{\sigma-4}}^{loc}(\J_b \times \JJ_b \rtimes_{\mu_{b \times \bb}}(\Z \times \dot{\Z});\Z)$
is opposite to itself. Because $g$ is pure, this proves that the characteristic number is even. 
Theorem  \ref{Arf} is proved.  \qed

\section{Classifying space for self-intersections of skew-framed immersions \label{sec8}}

\subsection{Preliminary constructions and definitions}
In this section we assume that $n$, $k$ are even positive integers, $n>k$;
 $r\ge 3$, $\frac{n-k+2}{2}=r$,   in an arbitrary positive integer? we assume $r \equiv-1\pmod{3}$.

\subsubsection{Axillary mappings  $c: \RP^{n-k+1} \to
\R^n$,
$c_0: \RP^{n-k+1} \to
\R^n$, which are used in Lemma  \ref{7}, in Theorem $\ref{T11}$}

Let us denote by $J_0 \subset \R^{n}$ the standard sphere of
dimension $(n-k) $ in Euclidean space. This sphere is $ PL $ -
homeomorphic to the join of $\frac{n-k+2}{2}=r$ copies of
the circles $ S^1 $. Let us denote by $i_{J_0}: J_0 \subset \R^{n}$
the standard embedding.

Let us denote by
\begin{eqnarray}\label{iJ}
i_{J}: J \subset \R^{n} 
\end{eqnarray} 
the standard $(n-k+1)$-dimensional  sphere in Euclidean space.
This sphere is $ PL $ -
homeomorphic to the join of $r$ copies of
the circles $ S^1 $.

Define the following mapping
$p_0: S^{n-k+1} \to J_0$ as  the join  of $r$ copies of the standard 4-sheeted coverings  $S^1 \to S^1/\i$. The standard antipodal action  $-1 \in \I_d
\times S^{n-k+1} \to S^{n-k+1}$ (for the group  $\I_d$ and analogous denotations  $\I_\b$, $\I_{\bf{a},\aa}$ for subgroups in the dihedral group $\D$ below
see Section  \ref{sec1}) commutes with the mapping  $p_0$.
Therefore the ramified covering 
$p_0: \RP^{n-k+1} \to J$ is well defined. The mapping
\begin{eqnarray}\label{c0}
c_0: \RP^{n-k+1} \to \R^{n}
\end{eqnarray}
 is defined as the composition 
$i_J \circ p_0$.

Replace the mapping $p_0$ defined above by the mapping $p$, the join of $r$ copies of $2$-sheeted coverings $S^1 \to \RP^1$. The ramified covering $p: \RP^{n-k+1} \to J$ is analogously well defined. The mapping
\begin{eqnarray}\label{c}
 c: \RP^{n-k+1} \to \R^n
\end{eqnarray} 
is the composition $i_J \circ p$. 
	
	\subsubsection{Deleted square}

Let us consider the following space
\begin{eqnarray}\label{99}
(\RP^{n-k+1} \times
\RP^{n-k+1} \setminus \Delta)/T' = \Gamma_{\circ},
\end{eqnarray}
which is called the "deleted product", or, the "deleted square" of the space
$\RP^{n-k+1}$.  
The subspace $\Delta \subset \RP^{n-k+1} \times
\RP^{n-k+1}$ is  the diagonal subspace of the Cartesian product. 
 This space (\ref{99}) is defined by the factor on the Cartesian product by the involution $T': \RP^{n-k+1} \times \RP^{n-k+1} \to
\RP^{n-k+1} \times \RP^{n-k+1}$, which permutes the coordinates, outside the diagonal. 
The deleted product is an open manifold.

\subsubsection{The mapping $c_0$ (\ref{c0}), $c$ (\ref{c}) as formal mappings with a formal self-intersection}

Denote by
$T_{\RP^{n-k+1} \times \RP^{n-k+1}}$, $T_{\R^n \times \R^n}$
the standard involutions in the spaces  $\RP^{n-k+1} \times \RP^{n-k+1}$, $\R^n \times \R^n$, which permute the coordinates.
Let
\begin{eqnarray}\label{dd2}
d^{(2)}: \RP^{n-k+1} \times \RP^{n-k+1} \to \R^n \times \R^n 
\end{eqnarray}
be an arbitrary $T_{\RP^{n-k+1} \times \RP^{n-k+1}}$, $T_{\R^n \times \R^n}$--equivariant mapping, which is transversal along the target diagonal. 
Denote the polyhedron
$(d^{(2)})^{-1}(\R^n_{diag})/T_{\RP^{n-k+1} \times \RP^{n-k+1}}$  by $\N(d^{(2)})$,
let us call this polyhedron a formal self-intersection polyhedron of the mapping 
$d^{(2)}$.  In the case when the formal mapping 
$d^{(2)}$ is a holonomic mapping and is the formal extension of a mapping $d$
(the cases $d=c_0$, $d=c$ are required) the polyhedron 
$\N_{\circ}(d^{(2)})$ coincides with the polyhedron
of self-intersection points of $d$, which is defined by the formula:
\begin{eqnarray}\label{Nd}
\N_{\circ}(d^{(2)}) =  \{ ([\x,\y]) \in \Gamma_{\circ} : \y \ne \x, d(\y)=d(\x) \},
\end{eqnarray}
where
$\Gamma_{\circ}$ is the deleted square (\ref{99}).
We shall use the short denotation $K_{\circ} \subset \N_{\circ}$, where $\N_{\circ} = \N_{\circ}(d^{(2)})$ (see subsubsections \ref{/N}, \ref{secc0})  in the case $d^{(2)}$ is the formal extension of $c$, or, of $c_0$. In a general case $d^{(2)}$ is the result of a small regular equivariant deformation $c^{(2)} \mapsto d^{(2)}$, or, of the deformation $c_0^{(2)} \mapsto d^{(2)}$. Such a deformation hase to keep the involution (\ref{hatcompstrat}), which is free on $K_{\circ}$, outside the antidiagonal of the polyhedron $\N_{\circ}(d^{(2)})$. When we get a deformation we have inclusions:  
\begin{eqnarray}\label{inc}
 \N_{\circ}(d^{(2)}) \subset \N_{\circ}(c^{(2)}),  
\end{eqnarray}
\begin{eqnarray}\label{inc0}
\N_{\circ}(d^{(2)}) \subset \N_{\circ}(c_0^{(2)}).
\end{eqnarray}

\subsubsection{The structuring mappings $\eta_{\N_{\circ}}: \N_{\circ} \to
K(\D,1)$ for the mappings $c_0$, $c$ }

Let us define the mapping
\begin{eqnarray}\label{stG}
\eta_{\Gamma_{\circ}}: \Gamma_{\circ} \to K(\D,1),
\end{eqnarray}
which is called the structuring mapping.

The inclusion
$\bar \Gamma_{\circ} \subset \RP^{n-k+1} \times
\RP^{n-k+1}$ induces an isomorphism of the fundamental groups, because the codimension of the diagonal $\Delta \subset \RP^{n-k+1}
\times \RP^{n-k+1}$ equals $n-k+1$ and satisfies the following inequality: 
$n-k+1 \ge 3$. Therefore, the following equation is satisfied:
\begin{eqnarray}\label{pi_1}
\pi_1(\bar \Gamma_{\circ}) = H_1(\bar \Gamma_{\circ};\Z/2) = \Z/2 \times \Z/2.
\end{eqnarray}

Let us consider the induced automorphism
$T'_{\ast}: H_1(\bar
\Gamma_{\circ};\Z/2) \to H_1(\bar \Gamma_{\circ};\Z/2)$, for the definition of $T'$ see (\ref{99}). This automorphism is not the identity. Let us fix the standard isomorphism $H_1(\bar \Gamma_{\circ};\Z/2)$ with $\I_{A \times \dot{A}}$, this isomorphism maps the generator of the first factor (correspondingly, the generator of the second factor) of the group $H_1(\bar \Gamma_{\circ};\Z/2)$ (see
(\ref{pi_1})) to the element  $A=ab \in \I_c \subset \D$
(correspondingly, to the element $\dot{A}=ba \in \I_c \subset \D$), 
which is represented in the representation of
$\D$ by the symmetry with respect to the second (correspondingly, with respect to the first) coordinate axis. 

It is easy to check that the automorphism of external conjugation of the subgroup
$\I_{A \times \dot{A}} \subset \D$ on the element $b \in \D \setminus \I_{A \times \dot{A}}$ (in this formula the element
$b \in \I_b$ is a generator), which is defined by the formula:  $x \mapsto bxb^{-1}$, is transformed by the considered isomorphism into the automorphism  $T'_{\ast}$. The fundamental group  $\pi_1(\Gamma_{\circ})$
is a quadratic extension of the group  $\pi_1(\bar \Gamma)$
by the element $b$, this extension is uniquely determined (up to an equivalence) by the automorphism $T'_{\ast}$. Therefore 
$\pi_1(\Gamma_{\circ}) \simeq \D$ and the mapping 
$\eta_{\Gamma_{\circ}}: \Gamma_{\circ} \to K(\D,1)$ is well defined.

The structuring mapping
$\eta_{\N_{\circ}}: \N_{\circ} \to
K(\D,1)$ 
is defined by the restriction of
$\eta_{\Gamma_{\circ}}$ on the open subpolyhedron (\ref{Nd}). The construction depends of
 $c_0$ (\ref{c0}), $c$ (\ref{c}).

\subsection{Stratifications}		
	
	In this subsection the polyhedrons (\ref{Nd}) for mappings  (\ref{c0}), (\ref{c}) 
	are described. 
	
	\subsubsection{The stratification of the sphere $ J $ \label{stratJ}}%, $ \Sigma $, 

Let us ordered 1-dimensional lens spaces (circles), which are generators of the join, by positive integers
$1, \dots, r$, and denote by $J(k_1, \dots, k_s) \subset J$
the subjoin, which is defined by the join of the subcollection of
circles
$S^{1}/\i$  with the numbers $1 \le k_1 < \dots < k_s
\le r$, $0 \ge s \ge r$.
The stratification is induced by the stratification of the standard $r$-dimensional simplex
$\delta^{r}$ by the projection $J \to \delta^{r}$.
The inverse images of the vertexes of the simplex are circles
$J(j)
\subset J$, $ J (j) \approx S^1$, $ 1 \le j \le r $, which are irreducible components of the join.
The circle looks like the base $S^1/-1$ of $2$-sheeted covering for the mapping (\ref{c}), and like the base $S^1/\i$ for $4$-sheeted covering for the mapping (\ref{c0}).

Let us define the space
$J_1^{[s]}$ as a subspace in $J$ as the union of all subspaces  $J(k_1, \dots, k_s) \subset J$.

Therefore, the following stratification 
\begin{eqnarray}\label{stratJ}
J^{(r)} \subset \dots \subset J^{(1)} \subset J^{(0)},
\end{eqnarray}
of the space $
J $ is well defined. 
For a given stratum the number
$ r-s $ of omitted coordinates is called the deep of the stratum.

Let us introduce the following denotations:
\begin{eqnarray}\label{strat[J]}
J^{[i]} = J^{(i)} \setminus J^{(i+1)}.
\end{eqnarray}
	
\subsubsection{The stratification of $\RP^{n-k}$ \label{stratRP}}

Let us define the stratification of the projective space
$\RP^{n-k+1}$.%\i = S^{n-k}/\i$.
Denote the maximal open cell
$
p^{-1}(J(k_1, \dots, k_s))$ by $U(k_1, \dots, k_s) \subset
S^{n-k+1}/-1$. 
This cell is called an elementary stratum of the deep $r-s$.
A point on an elementary stratum
$U(k_1, \dots, k_s) \subset S^{n-k+1}/-1$
is defined by the collection of coordinates
$(\check x_{k_1}, \dots, \check
x_{k_s}, l)$, where $\check x_{k_i}$ is the coordinate
on the circle, which is covered the circle of the join with the number $k_i$, $l$ is a coordinate on
$(s-1)$-dimensional simplex of the join. 
Points on an elementary stratum
$U(k_1, \dots, k_s)$ belong to the union of subcomplexes of the join
with  corresponding coordinates.

\subsubsection{The stratification of the polyhedron  
$\N_{\circ}$ (\ref{Nd}) for the mapping $c_0$ (\ref{c0})} \label{secc0}

Let us start with the  case $d=c_0$.
The polyhedron
$\N_{\circ}$, (\ref{Nd}) is the disjoin union 
of elementary strata, which are defined as inverse images of the strata
$(\ref{strat[J]})$.
Let us denote tis strata by
\begin{eqnarray}\label{compstrat}
K^{[r-s]}(k_1, \dots, k_s), \qquad 1 \le s
\le r. 
\end{eqnarray}
Let us consider exceptional antidiagonal strata of $\N_{\circ}$ inside the anti-diagonal
\begin{eqnarray}\label{antidiag}
\Delta_{anti}= \{ ([x,y]) \in \Gamma_0: x = \i y \}.
\end{eqnarray}
An open polyhedron, which is defined by the cutting-out of anti-diagonal strata from  $\N_{\circ}$ is denoted by $K_{\circ} \subset \N_{\circ}$. 

Let us describe an elementary stratum
$ K^{[r-s]} (k_1, \dots, k_s) $ by means of coordinate system.
For the simplicity of denotations, let us consider the case
$s=r$, this is the case of maximal elementary stratum.  
Assume a pair of points
$(\x_1$, $\x_2)$ determines a point on
$K^{[0]}(1, \dots, r)$, let us fixes for the pair a pair of points
$(\check \x_1, \check \x_2)$ on the covering sphere
$S^{n-k}$,  which is mapped into the pair $(\x_1$,
$\x_2)$ by the projection $ S^{n-k+1} \to \RP^{n-k+1} $.
With respect to constructions above, denote by
$ (\check x_{1, i}, \check x_{2, i}) $, $ i = 1, \dots, r $
the collection of spherical coordinates of each point. 
A spherical coordinate determines a point on the circle
with the same number
$i$, the circle is a covering over the corresponding coordinate lens 
$J(i) \subset J$ in the join. Recall, that the pair of coordinates
with a common index determines a par in a layer of the standard cyclic 
$\i$-covering  $ S^1
\to S^1 / \i $.

Collections of coordinates  $\{(\check x_{1,i}, \check x_{2,i})\}$
are considered up to a transformation of the full collection into the antipodal collection. Moreover, a non-ordered pair of points
$[\x_1,\x_2]$ is lifted into a pair of points   $(\bar \x_1,
\bar \x_2)$ on the sphere  $S^{n-k}$  not uniquely. 
The collection of coordinates of a lift is defined up to 
$8$ different transformations. (Up to transformations of the group  $\D$ of the order $8$.)

Analogous statement is well-formulated for points in deeper elementary strata
$ K^{[r-s]} (k_1, \dots, k_s) $, $ 1 \le s \le r $.
	
\subsubsection{The stratification of the polyhedron  
$\N_{\circ}$ (\ref{Nd}) for the mapping $c$ (\ref{c})}

The polyhedron
$\N_{\circ}$, (\ref{Nd}) is the disjoin union 
of elementary strata, which are defined as inverse images of the strata
$(\ref{strat[J]})$ analogously with the case in the subsection \ref{secc0}.
Let us denote this strata by (\ref{compstrat}) as in the construction for the mapping $c_0$.
In the considered case  exceptional antidiagonal strata  on $\N_{\circ}$ are absent.

Let us describe an elementary stratum
$ K^{[r-s]} (k_1, \dots, k_s) $ by means of coordinate system.
Assume a pair of points
$(\x_1$, $\x_2)$ determines a point on
$K^{[0]}(1, \dots, r)$, let us fixes for the pair a pair of points
$(\check \x_1, \check \x_2)$ on the covering sphere
$S^{n-k}$,  which is mapped into the pair $(\x_1$,
$\x_2)$ by the projection $ S^{n-k+1} \to \RP^{n-k+1} $.
Denote by
$ (\check \x_{1, i}, \check \x_{2, i}) $, $ i = 1, \dots, r $
the collection of spherical coordinates of each point. 
 Recall, that the pair of coordinates
with a common index determines a pair in a layer of the standard $2$-fold covering  $ S^1
\to S^1 / -1 $.

Collections of coordinates  $(\check x_{1,i}, \check x_{2,i})$
are considered up to antipodal transformations of the full collection into the antipodal collection. Moreover, pairs of points
$(\x_1,\x_2)$ are non-ordered and lifts of a point in $K$ to a pair of points  $(\bar \x_1,
\bar \x_2)$ on the sphere  $S^{n-k+1}$ are not uniquely defined. 
Therefore collection of coordinates are defined up to 
$8$ different transformations. (Up to transformation of the group  $\D$ of the order $8$.)

Analogous statement is well-formulated for points in deeper elementary strata
$ K^{[r-s]} (k_1, \dots, k_s) $, $ 1 \le s \le r $.
	
	\subsubsection{Coordinates on
		$K_{\circ} \subset \N_{\circ}$, the mapping $c_0$ (\ref{c0}) \label{resedues}}
	
	Let 
	$x \in  K^{[r-s]} (k_1, \dots, k_s)$ be a point on an elementary stratum.
	Let us consider collections of spherical coordinates 
	$\check x_{1,i}$, $\check x_{2,i}$, $k_1 \le i \le k_s$, $1 \le i \le r_0$ 
	of the point  $x$. For each $i$ the following cases are possible:
	the pairs of $i$-th coordinates coincides; antipodal; the second coordinate is the result of the transformation of the first coordinate by the generator $\i$ of the cyclic cover $S^1 \mapsto S^1/\i$. 
	Let us define for an ordered pair of coordinates
	$
	\check x_{1, i} $, $ \check x_{2, i} $, $ 1 \le i \le r $  a residue
	$v_i$ with values $+1$, $-1$, $+\i$, $-\i$:
	$ v_{k_i} = \check x_{1, k_i}(\check x_{2, k_i})^{-1} $.
	
	By transformation of the collection of coordinates 
	$\check x_2$ into the antipodal collection, the collection of residues
	are transformed by the antipodal opposition:
	$$\{(\check x_{1, k_i},\check x_{2, k_i})\} \mapsto \{(-\check x_{1, k_i},\check x_{2, k_i})\}, \quad \{v_{k_i}\} \mapsto \{-v_{k_i}\},$$ 
	$$\{(\check x_{1, k_i},\check x_{2, k_i})\} \mapsto \{(\check x_{1, k_i},-\check x_{2, k_i})\}, \quad \{v_{k_i}\} \mapsto \{-v_{k_i}\}.$$ 
	
	By transformation of the collections of coordinates by the renumeration:
	$(\check x_1,\check x_2) \mapsto (\check x_2, \check x_1)$,
	the collection of residues are transformed by the complex conjugated collection:
	into the antipodal collection, the collection of residues
	are transformed by the antipodal opposition:
	$$\{(\check x_{1, k_i},\check x_{2, k_i})\} \mapsto \{(\check x_{2, k_i},\check x_{1, k_i})\}, \quad \{v_{k_i}\} \mapsto \{\bar {v}_{k_i}\},$$ 
	where  $v \mapsto \bar {v}$  means the complex conjugation. 
	Evidently, the collection of residues remains fixed in the case a different  point on the elementary stratum is marked.
	
	Elementary strata  $K^{[r-s]} (k_1, \dots, k_s)$,
	accordingly with collections of coordinates, are divided into 3 types:
	$\I_{b},\I_{a \times \aa},\I_{d}$. In the case residues takes values in
	$\{+\i,-\i\}$ (correspondingly, in $\{+1,-1\}$), one say that an elementary stratum is of the type
	$\I_{b}$ (correspondingly, of the type $\I_{a \times \aa}$). 
	In the case there are residues of the complex type
	$\{+\i,-\i\}$, and the real type $\{+1,-1\}$ simultaneously, one shall say
	about an elementary stratum of the type $\I_{d}$. 
	It is easy to check that the restriction of the characteristic mapping
	$\eta : K_{\circ} \to K(\D,1)$
	on the elementary stratum of the type $\I_{b},\I_{a \times \aa},\I_{d}$ correspondingly, is a composition of a mapping into the space $K(\I_{b},1)$
	(correspondingly, into the space  $\I_{a \times \aa}$, or  $K(\I_{d},1)$) with
	the mapping $i_{b}: K(\I_{b},1) \to K(\D,1)$ (correspondingly with the mapping  $i_{a \times \aa}: K(\I_{a \times \aa},1) \to K(\D,1)$, or with $i_{d}: K(\I_{d},1)
	\to K(\D,1)$). 
	For strata of the first two types a reduction
	of the structuring mapping (up to homotopy) is not well defined; this reduction
	is well defined only up to the conjugation
	of the subgroup
	$\I_{b}$ (correspondingly, in the subgroup $\I_{a \times \aa}$).
	
	Recall, the polyhedron
	$\N_{\circ}$ is defined by addition elementary anti-diagonal strata to the polyhedron $K_{\circ}$.	On each elementary anti-diagonal stratum
	$ K (k_1, \dots, k_s) $ a residue of each coordinate equals $+\i$.

	Define the following  open polyhedrons:
	\begin{eqnarray}\label{Ib}
		K_{\I_b \circ} \subset K_{\circ} \subset \N_{\circ}
	\end{eqnarray}
	\begin{eqnarray}\label{Ia}
		K_{\I_{a \times \aa} \circ} \subset K_{\circ} \subset \N_{\circ}
	\end{eqnarray}
	\begin{eqnarray}\label{Id}
		K_{\I_d \circ} \subset K_{\circ} \subset \N_{\circ}
	\end{eqnarray}
	as polyhedrons, which are obtain as union of all elementary strata of the corresponding type.
	The anti-diagonal stratum (formally) is  a stratum of the type
	$\I_{b}$, with this convenient the inclusion (\ref{Ib}) is modified into 	
	\begin{eqnarray}\label{anti}
K_{\I_b} \mapsto K_{\I_b;\circ} \cup \Delta_{anti}  \subset \N_{\circ}.
	\end{eqnarray}
		\subsubsection{Coordinates on
		$\N_{\circ}$, the mapping $c$ (\ref{c}) \label{cc}}
	
		Let 
	$x \in  K^{[r-s]} (k_1, \dots, k_s)$ be a point on an elementary stratum.
	Let us consider collections of spherical coordinates 
	$\check x_{1,i}$, $\check x_{2,i}$, $k_1 \le i \le k_s$, $1 \le i \le r_0$ 
	of the point  $x$. For each $i$ the following cases are possible:
	the pairs of $i$-th coordinates coincides; antipodal. 
	Let us define for an ordered pair of coordinates
	$
	\check x_{1, i} $, $ \check x_{2, i} $, $ 1 \le i \le r $  residues
	$v_i$ take values in $\{+1,-1\}$:
	$ v_{k_i} = \check x_{1, k_i}(\check x_{2, k_i})^{-1} $.
	
	By transformation of the collection of coordinates 
	are defined as before in the construction for the mapping $c_0$ (\ref{c0}).
		Elementary strata  $K^{[r-s]} (k_1, \dots, k_s)$ are only of the type
	$\I_{a \times \aa}$. 
	 The following statement is clear, because coordinate system for corresponding polyhedra are explicitly defined.
	 
	 \begin{lemma}\label{cc0}
	 The polyhedron $\N_{\circ}$
	 for the mapping $c$ (\ref{c}) is PL-homeomorphic to the polyhedron (\ref{Ia}) for the mapping $c_0$ (\ref{c0}).
	 \end{lemma}

\subsubsection{A prescribed coordinate system   on an elementary stratum  \label{prescr}}

Let us recall that the polyhedron
$K_{\I_{b};\circ}$ is decomposed into a disjoint union of elementary strata
$K(k_1, \dots, k_s)$, $0
\le s \le r$, residues of angles coordinates take values $\{+\i,-\i\}$.
On each elementary stratum with odd (even) angles
$\alpha$ of the type $\I_{\b}$ let us define a coordinate system, which is called a prescribed coordinate system $\Omega(\alpha)$.

Assume that $s$ is odd. Let us call a coordinate system a prescribed system, if
a number of coordinates with the residues $+\i$ is greater then the number of coordinates with the residues $-\i$. 
In the case the residues are divided in the middle, let us define the prescribed coordinate 
system by the following convenience: the residue with the smallest number equals to $+1$.

%The angle coordinate with the residue $-\i$ in the prescribed coordinate system with the smallest index is called a marked coordinate on 
%$K^{[r-s,i]}(k_1, \dots, k_s)$.

An analogous definition for the polyhedron $K_{\I_{a \times \aa};\circ}$ is presented.
Let us call a coordinate system a prescribed system, if
a number of coordinates with the residues $+1$ is greater then the number of coordinates with the residues $-1$. 
In the case the residues are divided in the middle (this is possible only if $s$ is even), let us define the prescribed coordinate system such that the residues with the smallest index equals to $+1$. 

%The angle coordinate with the residue $-\i$ in the prescribed coordinate system with the smallest index is called a marked coordinate on 
%$K^{[r-s,i]}(k_1, \dots, k_s)$.

%The prescribed coordinate system on an elementary stratum of the type $\I_{a \times \aa}$, $\I_b$, is determined by the corresponding 

%fixation of a component of the covering over the elementary stratum with respect to the subgroup $\{a,\aa\} \subset \D$, or $\{b\} \subset \D$ correspondingly. 

\subsubsection{Admissible boundary strata of elementary strata}

For an arbitrary elementary stratum
$\beta \subset \hat K^{[r-s,i]}(k_1, \dots, k_s)$ of the type
$\I_{\b}$ (correspondingly, of the type $\I_{\a \times \aa}$)
of
$\hat K_{\circ}$ let us consider a smallest stratum in its boundary $\alpha$ of the same type,  $\beta \ne \alpha$, this means that $\alpha$ in inside
the closure of $\beta$:
$\alpha \subset Cl(\beta) \subset Cl(\hat K^{[r-s,i]}(k_1, \dots, k_s))$. 

Prescribed coordinate system 
$\Omega(\beta)$ on the stratum
$\beta \subset \hat K(k_1, \dots, k_s)$ is restricted to the elementary stratum $\alpha \subset Cl( \beta)$ inside its boundary. We shall write:  $\alpha \prec \beta$.
In the case when the restriction of a prescribed coordinate system  $\Omega(\beta) \vert_{\alpha}$ coincide with the prescribed coordinate system  $\Omega(\alpha)$
on the smallest stratum, we shall call that the pair of strata
$(\alpha,\beta)$ is admissible. In the opposite case, the pair of strata $(\alpha,\beta)$ is not admissible.

In the case a pair
$(\alpha,\beta)$ is not admissible, the transformation  $\Omega(\beta) \vert_{\alpha}$ into $\Omega(\alpha)$ are described below for all types of strata.

For pair of strata of the type $\I_{\b}$ a non-admissible of
the pair means that the system of coordinates
$\Omega(\beta) \vert_{\alpha}$ are related with $\Omega(\alpha)$
by one of the following transformations:
\begin{eqnarray}\label{prim1}
	(\check{x}_1,\check{x}_2) \mapsto  (\check{x}_2,\check{x}_1),
\end{eqnarray}
\begin{eqnarray}\label{prim-1}
	(\check{x}_1,\check{x}_2) \mapsto  (-\check{x}_2,-\check{x}_1),
\end{eqnarray}
\begin{eqnarray}\label{prim2}
	(\check{x}_1,\check{x}_2) \mapsto  (-\check{x}_1,\check{x}_2),
\end{eqnarray}
\begin{eqnarray}\label{prim3}
	(\check{x}_1,\check{x}_2) \mapsto  (\check{x}_1,-\check{x}_2).
\end{eqnarray}

\subsection{An additional structure on the polyhedron $K_{\circ}$ of the mapping $c_0$ (\ref{c0})}
\subsubsection{ An involution $\tau_c$ \label{tauc}}

An elementary stratum, except the anti-diagonal stratum, is a 2-sheeted covering with respect to a free involution, denoted by 
$\tau_c$ (see the formula below)
over the corresponding elementary stratum
of a polyhedron $\hat K_{\circ} = K_{\circ}/\tau_c$, 
a stratum of this polyhedron is denoted by  
\begin{displaymath}\label{hatcompstrat}
\hat K^{[r-s]}(k_1, \dots, k_s), \qquad 1 \le s \le r.
\end{displaymath}

Define the involution $\tau_c$ on the space
$K_{\circ}$ by the formula:
\begin{eqnarray}\label{hatcompstrat}
\tau_c:([x,y],\lambda) \mapsto  ([ix,iy],\lambda).
\end{eqnarray}
This involution is free outside the antidiagonal (recall that by the definition of $K_{\circ}$ the antidiagonal is removed, see (\ref{antidiag})).
This statement is followed from the coordinate description of elementary strata of
$K_{\circ}$, an elementary strata is invariant by the involution $\tau_c$ and the quotient is well defined.
	
\begin{lemma}\label{1}
There exists a mapping
$\eta_c$, which is included into the following diagram, where $\D \subset C$ is a central quadratic extension by the element $c \in C \setminus \D$, $c^2=b^2 \in \D$
of the order $4$, vertical mappings are 2-sheeted coverings:
		\xymatrix{
			K_{\circ}\ar@{->}[r]^\eta   \ar@{->}[d]& B\D \ar@{->}[d]\\
		\hat{K}_{\circ}=	K_{\circ}/\tau_c \ar@{->}[r]^{\eta_c} &BC.
		}
	\end{lemma}
	
	\begin{proof}
In each elementary stratum, which consists of points $((x,y),\lambda)$ on the sphere, 
let us define the action of the dihedral group
$\D =\{a,b: [a,b]=b^2; b^4=e\}$,
by the formulas: 
\begin{align*}
a&\colon ((x,y),\lambda)\mapsto ((y,x),\lambda)\\
b&\colon ((x,y),\lambda)\mapsto ((y,-x),\lambda).
\end{align*}
Define the group
$C$ as the quadratic central extension of the group $\D$ by the element $c$,
where $c^2=b^2$. Define the action of $C$, generated by elements $a,b,c$,
by the formulas:
\begin{align*}
a&\colon ((x,y),\lambda)\mapsto ((y,x),\lambda)\\
b&\colon ((x,y),\lambda)\mapsto ((y,-x),\lambda)\\
c&\colon ((x,y),\lambda)\mapsto ((ix,iy),\lambda).
\end{align*}
This action is free and one may define $\hat{K}_{\circ}=K_{\circ}/\tau_{c}$.
The involution 
$\tau_c$, defined above, are given by the action on the element $c \in C \setminus \D$. 
\end{proof}

All the statements above are formulated for elementary strata of the polyhedron
$\hat{K}_{\circ}=K_{\circ}/\tau_{c}$.
Elementary strata of this polyhedron with the involution (\ref{hatcompstrat})  are divided into the following types: $\I_{b,c},\I_{a \times \aa,c},\I_{d,\c}$, respectively to the type of the subgroup in  $C$.  Surprisingly, strata of the type $\I_{b,c}$, 
$\I_{a \times \aa,c}$ are homeomorphic, see Proposition \ref{iso}.
(A homeomorphism in prescribed coordinate systems is given by the formula: 
$\Phi: ([x,y],\lambda) \mapsto ([ix,y],\lambda)$.
Homeomorphism
$\Phi$ induces an isomorphism of the corresponding structuring subgroups $\I_{b,c} \mapsto \I_{a \times \aa,\c}$, which is given on the generators by the formula: $b \mapsto ac$,
$c \mapsto c$.)

Analogously to (\ref{Ib}),(\ref{Ia}),(\ref{Id})
let us define the following open subpolyhedrons:  
\begin{eqnarray}\label{Ibc}
K_{\I_{b,c};\circ}/\tau_c = \hat K_{\I_{b,c}; \circ} \subset \hat{K}_{\circ},
\end{eqnarray}
\begin{eqnarray}\label{Iac}
K_{\I_{a \times \aa,c};\circ}/\tau_c = \hat K_{\I_{a \times \aa, c}; \circ} \subset \hat{K}_{\circ},
\end{eqnarray}
\begin{eqnarray}\label{Idc}
K_{\I_{d,c};\circ}/\tau_c =	\hat K_{\I_{d}; \circ}  \subset \hat{K}_{\circ}.
\end{eqnarray}
The polyhedra
$(\ref{Ibc})$, $(\ref{Iac})$ are homeomorphic. 
%In section \ref{r} a detailed proof is presented. 

Definition of prescribed coordinate system and  admissible boundary strata for the subpolyhedrons (\ref{Ibc}), (\ref{Iac}) is analogous to (\ref{Ib}), (\ref{Ia}). 

%\begin{figure}
%	\centering
%	\includegraphics[scale=0.65]{Fig4.pdf}
%\end{figure}

\subsubsection{Resolutions  $C_{\Im} \mapsto \I_{b,c}$, $C_{\Re} \mapsto \I_{a \times \aa,c}$\label{r}}

Recall, the group $C$ is generated by  generators $\{a,b,c\}$.  This group  admits two subgroups  $\{b,c\}$, $\{a \times \aa,c\}$ described by the corresponding subcollection of generators. Let us construct epimorphisms $\I_{b,c}$ by 
$ C_{\Im} \longrightarrow \I_{b,c}$, $ C_{\Re} \longrightarrow \I_{a \times \aa, c }$, which are called resolutions.

Let us consider the following diagram:

\begin{equation}\xymatrix{
	K_{b;\circ}\ar@{->}[r]  \ar@{->}[d]& B\Z_4(b) \ar@{->}[d]\\
	K_{\I_b;\circ} \ar@{->}[r] &B\D,  
}\label{112}\end{equation}
where $K_{b;\circ} \to K_{\I_b;\circ} $ is a double covering over the polyhedron (\ref{Ib}), which is described by the subgroup $\{b\} \subset \D$.
this diagram is extended into the right:
\begin{equation}\xymatrix{
	K_{b;\circ}\ar@{->}[r]  \ar@{->}[d]& B\Z_4(b) \ar@{->}[d]\ar@{->}[r]  \ar@{->}[d]& {\rm *}  \ar@{->}[d]\\
	K_{\I_b;\circ} \ar@{->}[r] &B\D \ar@{->}[r] &B \D/\Z_4(b)\ar@{=}[r] & B\Z_2.
}\label{113}\end{equation}
the composition of horizontal mappings classifies the double covering $K_{b;\circ}\to K_{\I_b;\circ}$. The generator of the right-bottom group
$\Z_2$ is the image of an arbitrary element from the non-trivial residue class of the subgroup $\Z_4(b)$ in $B\D$. 
Let us take the element  $ \dot{A}$.

 The space $B\Z = S^1$ admits the positive generator in $\Z$, which is denoted by  $\hat A$.
 Then we have the following commutative square, the vertical mappings are epimorphisms $\hat{A} \mapsto \dot{A}$, the semi-direct product $\D=\Z_4(b)\rtimes \Z_2(\dot A)$ in the bottom left term determines the analogous semi-direct product $\Z_4(b) \rtimes \Z(\hat{A})$ in the upper left term of the diagram:
\eqq
\xymatrix{
 B\Z_4(b)\rtimes \Z(\hat A)\ar@{->}[r]  \ar@{->}[d]& B\Z(\hat A) \ar@{->}[d]\\
B\D=B\Z_4(b)\rtimes \Z_2(\dot A) \ar@{->}[r] &B\Z_2(\dot{A}).
}\label{eq:square1}\eeq

Let us relates the diagram with the structured group of the polyhedron
$\hat{K}_{\I_b;\circ} = K_{\I_{b,c};\circ}/\tau_c$.
For this reason let us remark that there exist diagrams,
which are analogous to~(\ref{112}) and~(\ref{113}):

\begin{equation}\xymatrix{
\hat{K}_{b,c;\circ}=	K_{b,c;\circ}/\tau_c\ar@{->}[r]  \ar@{->}[d]& B\I_{b,c} \ar@{->}[d]\ar@{->}[r]  \ar@{->}[d]& {\rm *}  \ar@{->}[d]\\
\hat{K}_{\I_{b,c};\circ} =	K_{\I_{b,c};\circ} /\tau_c \ar@{->}[r] &BC \ar@{->}[r] &B C/C_1\ar@{=}[r] & B\Z_2
}\label{eq:lift44}\end{equation}
moreover, the compositions of the horizontal bottom mappings is classified the left vertical double covering.
Recall, in the diagram above  $\I_{b,c}$ is a subgroup in  $C$, which is generated by elements $b$ and $c$. It is convenient
to write-down this subgroup as 
$\Z_4(b)\times \Z_2(cb^3)$. Then
$C$ can be represented as a semi-direct product $\Z_4(b)\times \Z_2(cb^3)\rtimes_{\theta_\Im}\Z_2(\dot A)$,
where the action $\theta_\Im$ on  $\Z_4(b)\times \Z_2(cb^3)$ is defined by the formula: $(x,y)\mapsto (-x+2y,y)$.
This fact is followed from the formulas:
\begin{align}
&\dot A \cdot b = b^3\cdot \dot A\\
&\dot A \cdot b^3c  = bc \cdot \dot A = b^2\cdot b^3c\cdot \dot A
\end{align}
Let us denote by $ C_\Im$ the group
$\Z_4(b)\times \Z_2(cb^3)\rtimes_{\theta_\Im}\Z(\dot A)$,
then we get the following Cartesian square:

\eqq\xymatrix{
B C_\Im\ar@{->}[r]  \ar@{->}[d]& B\Z(\dot A) \ar@{->}[d]\\
BC \ar@{->}[r] &B\Z_2(\dot A)
}\label{eq:square2}
\eeq

In Theorem \ref{t11} it is proved that the mapping $\hat{K}_{\I_{b,c};\circ} \to S^1=B(\dot A)$
exists. As a corollary we get the following theorem,  called the resolution construction.

\begin{theorem}\label{resol}
There exists a mapping (a resolution)
of $\hat{K}_{\I_{b,c};\circ} = K_{\I_{b,c};\circ}/\tau_c$ of the bottom left space of the diagram (\ref{eq:lift44}) into 
the space $BC_{Im}$ in the upper left vertex of the diagram (\ref{eq:square2}), such that 
this mapping commutes with the left vertical projection $BC_{Im} \longrightarrow BC$ in the diagram. 
\end{theorem}

Let us describe analogous diagrams to the diagrams  (\ref{eq:lift64}),(\ref{eq:square2}) for the space
 $\hat{K}_{\I_{a \times \aa};\circ}$ instead of $\hat{K}_{\I_{b};\circ}$.  
 \begin{equation}\xymatrix{
 		\hat{K}_{a \times \aa;\circ}=	K_{a \times \aa,c;\circ}/\tau_c\ar@{->}[r]  \ar@{->}[d]& B\I_{a \times \aa,c} \ar@{->}[d]\ar@{->}[r]  \ar@{->}[d]& {\rm *}  \ar@{->}[d]\\
 		\hat{K}_{\I_{a \times \aa};\circ} =	K_{\I_{a \times \aa,c};\circ} /\tau_c \ar@{->}[r] &BC \ar@{->}[r] &B C/C_1\ar@{=}[r] & B\Z_2
 	}\label{eq:lift64}\end{equation}
 
Let us denote by $ C_\Re$ the group
$\Z_2(a) \times  \Z_4(c\dot{a})\rtimes_{\theta_\Re}\Z(\dot A)$,
then we get the following Cartesian square:

\eqq\xymatrix{
	B C_\Re\ar@{->}[r]  \ar@{->}[d]& B\Z(\dot A) \ar@{->}[d]\\
	BC \ar@{->}[r] &B\Z_2(\dot A)
}\label{eq:square3}
\eeq
 
The following theorem is a direct analogue of Theorem \ref{resol} for the group $C_{Re}$. 
\begin{theorem}\label{resol2}
	There exists a mapping (a resolution)
	of $\hat{K}_{\I_{a \times \aa};\circ} = K_{\I_{a \times \aa,c};\circ}/\tau_c$ of the bottom left space of the diagram (\ref{eq:lift64}) into 
	the space $BC_{Re}$ in the upper left vertex of the diagram (\ref{eq:square3}), such that 
	this mapping commutes with the left vertical projection $BC_{Re} \longrightarrow BC$ in the diagram. 
\end{theorem} 
 
Let us prove that Theorem \ref{resol2} as a corollary of Theorem \ref{resol}, let us use the following proposition.

\begin{proposition}\label{iso}
The mapping $\Phi: \hat{K}_{\I_b \circ}= K_{\I_{b,c} \circ}/\tau_c \to \hat{K}_{\I_{a \times \aa};\circ} = K_{\I_{a \times \aa,c};\circ}/\tau_c$ between the upper left polyhedron in the diagrams (\ref{eq:square2}), (\ref{eq:square3}),
which is given by the formula $\Phi: ([x,y],\lambda)\mapsto  ([ix,y],\lambda)$, is a homeomorphism.
\end{proposition}

\begin{proof}
A correctness of the definition of $\Phi$ and existence of an inverse mapping is straightforward exercise. It is convenient 
to note that the formula for
$\Phi$ gives a well defined mapping only after a factorization by $\tau_c$.
\end{proof}

\subsubsection{Proof of Theorem \ref{resol2}}
By Theorem \ref{resol} we have the mapping 
\eqq
\hat{K}_{\I_b;\circ}\to B\Z(\dot{A})=S^1,
\eeq
which is defined by the resolution.
\label{key}
Thus, we get a mapping:
\eqq
\hat{K}_{\I_{a \times \aa};\circ}\to S^1,
\eeq
which is given by the composition:
\begin{eqnarray}\label{eeq222}
 \hat{K}_{\I_{a \times \aa,c};\circ}/\tau_c\stackrel{\Phi^{-1}}{\to}  \hat{K}_{\I_{b,c};\circ}/\tau_c \to S^1.
\end{eqnarray}

We have the following commutative diagram:
\eqq
\xymatrix{
\hat{K}_{\I_b;\circ} \ar@{->}[r]   \ar@{->}[d]_\Phi& BC \ar@{->}[d]^{\Psi}\\
\hat{K}_{\I_{a \times \aa};\circ} \ar@{->}[r]^{\eta_c} &BC
}\eeq
where the mapping $\Psi$ is induced by the corresponding automorphism of the group $C$, which will be denoted by the same:
\begin{align*}
\Psi&\colon C\to C\\
\Psi&\colon a\mapsto bc\\
\Psi&\colon b\mapsto ac\\
\Psi&\colon c\mapsto c
\end{align*}

Then there exist a commutative diagram:
\eqq
\xymatrix{
\hat{K}_{\I_{a \times \aa};\circ} \ar@{->}[r]^{\Phi^{-1}}   \ar@{->}[d]& \hat{K}_{\I_b;\circ} \ar@{->}[r]\ar@{->}[d]& B\Z(\hat A)  \ar@{->}[d]\\
BC\ar@{->}[r]^{\Psi^{-1}} &BC \ar@{->}[r] & B\Z_2(\dot A)
}\eeq
The kernel of the composition of the bottom mappings
coincides with 
$\Psi^{-1}(C_{Im})$, this kernel is generated by elements: 
$c$ and $ac^3$, this can be represented as a semi-direct product  $\Z_4(c)\times\Z_2(\dot a)$. The group
 $C$ itself is represented by a semi-direct product: 
$\Z_4(c)\times\Z_2(\dot a)\rtimes_{\theta_\Re} \Z_2(\dot A)$, where the action of the generator  $\dot A$ on
$\Z_4(c)\times\Z_2(\dot a)$ is well defined by the formula: $(x,y)\mapsto (x+2y,y)$.
This follows from the formula:
\begin{align}
&\dot A \cdot c = c\cdot \dot A\\
&\dot A \cdot \dot a  = \dot a \cdot c^2 \cdot \dot A 
\end{align}

Therefore the classifying mapping
$\hat{K}_{\I_{a\times\aa};\circ}\to BC$ is lifted into $BC_\Re = B\Z_4(c)\times\Z_2(\dot a)\rtimes_{\theta_\Re} \Z(\dot A)$.
 Proposition \ref{iso} is proved. \qed

\subsection{The polyhedron $\hat K_{\I_b;\circ}$, the resolution of its structuring mapping \label{resolution}}

Let us consider the polyhedron
$\hat K_{\I_b;\circ}$, whit coordinate system of the type $\I_{b,c}$, for short re-denote this polyhedron by
$\hat K$.
Consider in
$ C$ subgroups, which are generated by the prescribed elements:

\begin{eqnarray}\label{sq1}
\xymatrix{
\Z_4(b^2,c) \ar@{^{(}->}[rr]   \ar@{^{(}->}[dd]\ar@{^{(}->}[rd]&& \Z_2 \times \Z_4(b,c)\ar@{^{(}->}[dd]\\
& \Z_2\times \Z_4(A,\dot A,c) \ar@{^{(}->}[rd]\\
\Z_2 \times \Z_4(a,\dot a,c) \ar@{^{(}->}[rr] &&C=(a,b,c)
}
\end{eqnarray}
In this diagram all mappings are induced by inclusions of corresponding subgroups
of the index $2$. The datagram admits the following 2-sheeted covering diagram,
this covering diagram consists itself of 2-sheeted coverings.

\begin{eqnarray}\label{sq2}
\xymatrix{
\hat K_{b^2}\ar@{->}[rr]   \ar@{->}[dd]\ar@{->}[rd]&& \hat K_{b}\ar@{->}[dd]\\
& \overline K  \ar@{->}[rd]\\
\hat K_{a \dot a} \ar@{->}[rr] &&\hat K
}
\end{eqnarray}

In this section points of the space
$\hat{K}_{b^2}=K_{b^2,c}$, are denoted by   $[(x,y),\lambda]$, and the momentum
coordinate $\lambda$ should be omitted.
In the space 
$\hat K_{b^2}$ a point  $(x,y)$ coincides with points   $(\i x,\i y)$,$(-x,-y)$,$(-\i x,-\i y)$.

\subsection{Construction of equivariant morphism of  a bundle over $\hat K_{b^2}$ with layers  non-ordered pairs of oriented real line into a bundle with layers $\C$}

Let us divide the space
$\hat K_{b^2}$ into two spaces $\hat K_{b^2,R}$, $\hat K_{b^2,L}$ with the common
boundary:
\begin{eqnarray}\label{RL}
\hat{K}_{b^2} = \hat K_{b^2,R} \cup_{\Sigma} \hat K_{b^2,L}. 
\end{eqnarray}

This common boundary let us denote by
$\hat K_{b^2,R} \cap \hat K_{b^2,L} = \Sigma$. Define the subspaces in all elementary strata separately. Let us consider the momenta coordinate
$\lambda$, which takes value in the simplex
$\Delta^{r-j}$, where $j$ is the deep of the strata.  Let us consider two faces of simplex $\Delta^{r-j}$, which are defined by momenta, corresponded to angles with positive and negative imaginary residues. The simplex itself is the join of
this two faces, let us assume that a parameter of the join $t \in [0,1]$ 
takes the value $0$ on the negative face and the value $1$ to the positive face.

Let as call a simplex $\Delta^{r-j}$  regular, if   numbers of positive and negative coordinates are different. In the case the numbers of positive and negative coordinates coincide, a simplex is called  exceptional.  In particular, in the case $r-j$ is odd, a corresponding simplex is regular. For a regular simplex one may define a type of the simplex. Let us call a simplex is right (a R-type simplex), if the number of positive coordinates is greater then the number of negative coordinates. Let us call a simplex is left (a L-type simplex), if the number of positive coordinates  is less then the number of negative coordinates. A type of an exceptional simplex is not defined.

Let us define the decomposition (\ref{RL}) for a subpolyhedron $\hat{K}_{b^2}^1 \subset \hat{K}_{b^2}$.

\begin{definition}\label{st}
	Let us define the polyhedron $\hat{K}_{b^2}^1$, which consists of all strata in $\hat{K}_{b^2}$
	of the deep $0$ and $1$.
	Let us define the following $\tau_a$-involutive disjoint decomposition for maximal strata:
	\begin{eqnarray}\label{s,t}
	\hat{K}_{b^2}^0 =	\hat{K}_{b^2,R}^0 \cup \hat{K}_{b^2,L}^0.
	\end{eqnarray}
	We may extend this decomposition the decomposition (\ref{s,t}) and define the $\tau_a$-involutive polyhedrons:
	\begin{eqnarray}\label{s,t1}
	\hat{K}_{b^2}^1 =	\hat{K}_{b^2,R}^1 \cup_{\Sigma} \hat{K}_{b^2,L}^1.
	\end{eqnarray}

In (\ref{s,t1}) the common boundary polyhedron $\Sigma$ consists of all exceptional strata of the deep $1$.
Obviously, one may define the decomposition  (\ref{RL}), such that
	the decompositions (\ref{s,t}), (\ref{s,t1}) are restrictions  to the corresponding subpolyhedra in the left-hand side.
	\end{definition}

% To keep the globally order of the lines in $E_{\R}$ we get a mixed denotations  $t_+$ and $t_-$, in  maximal strata for the first line. The each bundle $E_{R}$, $E_{\C}$ %constructed
% below is a pair of trivial bundles over different spaces, to get a global definition the pairs are
% identify by
% $t_{\pm} \mapsto t_{\mp}$, $z \mapsto z$ on the intersection.

Over the space $\hat K_{b^2}^1 = \hat K_{b^2,t}^1 \cup \hat K_{b^2,s}^1$ let us define a pair of bundles equipped with involutions  $\tau_a,
\tau_b,\tau_i$ and an equivariant morphism between them. 
On the space, the base of the bundles, the involutions $\tau_a,
\tau_b$ are defined as the involutions on the square (\ref{sq2}), using the square of the group (\ref{sq1}).
%If we omit the involution $\tau_a$, the bundles and the morphism become trivial.
The involution
$\tau_a$ changes the subspace  $\hat{K}_{b^2,t}^1$ into $\hat{K}_{b^2,s}^1$ in (\ref{s,t1}),  this involution translate a prescribed angle into its conjugated. 
The involution $\tau_i$ on the base is the identity.

We will use the involutions on the space (\ref{s,t1}), we assume that the involutions are extended by the involution in the source bundle $E_{\R}$.
%, which changes the lines in $E_{\R}$, and
%denote the same involution by $\tilde{\tau}_a$ when the extension preserves the lines. 
%Define a residue of an angle coordinate in   $\hat{K}_{b^2,s}^0$ as the prescribed residues of the corresponding $\tilde{\tau}_a$-involutive coordinate in %$\hat{K}_{b^2,t}^0$.  (Apriori one may define an alternative residue of an angle in an $s$-stratum, using negative angles in the $s$-strata itself. By the construction an %alternative residue  is conjugated  to the $t$-residue, defined above using $\tau_a$-antipodal $t$-stratum. 
%The alternative $\tau_a$-antisymmetric residues of angles coordinates are unrequired in the construction. The $\tau_a$-residues is used below as a total resedue. Note %that $\tilde{\tau}_a$-residues are additive, the total $\tau_a$-residue is multiplicative. )
A fiber of $E_\R$ is a disjoin union of the two real lines  $\R \sqcup \R$: $E_\R=\hat K_{b^2}^1 \times (\R \sqcup \R)$. The unit vectors in the fibers $\R \sqcup \R$ over $\hat K_{b^2,t}^1$ is denoted by $\pm t_+$,  
$\pm t_-$.  The basis vectors
$+1$ and $-1$ determine a decomposition of  the layers into two rays. 

For a $R$-stratum the lower superscript  $\pm$ of a line, which will be used in the formulas,
is associated with the inner positive-prescribed (negative-non-prescribed)  angles collection. For a $L$-stratum tht lower superscript $\pm$ of lines is associated with inner right-prescribed  (left-prescribed) angles in a $\tilde{\tau}_a$-conjugated $R$-stratum (recall the corresponding $\tilde{\tau}_a$-conjugated left $L$-angle to a right $R$-angle is collected with the opposite sign with respect to
the  angle in the $L$-stratum itself).
Geometrically, when we pass by a path from a maximal stratum $\alpha_1$ in $\hat K_{b^2,R}^0$ to a neighbouring
maximal stratum $\alpha_2$ in $\hat K_{b^2,L}^0$ trough a common exceptional stratum of deep $1$, the fibre $t_+$ is
transformed to $t_-$; because in a $t$-stratum $\alpha_1$ the fibre $t_+$ is associated with  inner right (prescribed) residue in the stratum; the extension of the same fibre in the neighbouring $s$-stratum $\alpha_2$ is denoted by $t_-$, because this fibre is associated with the inner left (non-prescribed) residue in the $\tau_a$-conjugated stratum.

Let us describe the target bundle of the morphism.
This bundle denote by 
$E_{\C}$, without equivariant $\tau_a$-structure this bundle is trivial with the complex line layer: $E_{\C} = \hat{K}_{b^2}^1 \times \C$. Assume that the trivialization of the bundle
$E_{\C}$ is described by the variable $z$ over  $\hat K_{b^2,R}^1$. Over $\hat K_{b^2,L}^1$ the trivialization  is given by the coordinate  $z$ denoted the same.
Let us describe equivariant structures on the constructed bundles.

Let us define $3$ involutive transformations of
$E_\R$ (the involution $\tau_a$ changes the component in (\ref{s,t1}) ) by the following formula:
\begin{align}\label{coort}
	\tau_a&:((x,y),t_{\pm})\mapsto  ((y,x), t_{\pm})\\
	\tau_b&:((x,y),t_{\pm})\mapsto  ((-y,x),t_{\pm})\\
	\tau_i&:((x,y),t_{\pm})\mapsto ((x,y),-t_{\mp}) 
\end{align}

By this formula 
$\tau_b$ is given by the trivial transformation of layers over the base $\hat{K}_{b^2}$. The involution
$\tau_i$ acts on the base  $\hat{K}_{b^2}$ by the identity, this transformation translates in the target bundle $E_{\R}$ lower superscripts and reveres directions of the layers. 

The action of the corresponding involutions on
$E_\C$ in the $t$-system is given by the formulas:
\begin{align}\label{tauz}
	\tau_a&:((x,y),z)\mapsto((y,x),\bar{z})\\
	\tau_b&:((x,y),z)\mapsto  ((-y,x),z)\\
	\tau_i&:((x,y),z)\mapsto ((x,y),\overline{z}).
\end{align}

The involutions defined above are pairwise commuted. 
The transformation  $\tau_a$  changes the components in (\ref{s,t}) and we may prove that
the formulas corresponds each other with respect to the identification of the fibres in a common point on $\hat{K}^1_{b^2}$. Two fibres of the bundle $E_{\C}$ are transformed by $z \mapsto z$, because the bundle $E_{\C}$ is non-trivial (is $\tau_a$-twisted) and the involution $\tau_{a}$ changes the $z$ coordinate. The transformation $\tau_a$  changes  lines in the fibre of the bundle $E_{\R}$. Because a line of $E_{\R}$ is equipped with different lower superscript, which are corresponded with respect to $\tau_a$, and the formulas for $\tau_a$ preserves the lower superscript of lines, geometrically we get the transposition of lines by $\tau_a$.

The result of the section is the following.

\begin{theorem}\label{t10}
	There exist 
	$(\tau_a,\tau_b,\tau_i)$-equivariant mapping $F$ 
	of the bundle $E_\R$ into the bundle $E_\C$, which is fibrewized monomorphism.
\end{theorem}

\begin{proof}
	The mapping $F$ is constructed by the induction over the codimension of simplex momenta. More precisely, let us start the construction of $F$ over centres of maximal simplexes, where the total collection of residues are defined.
	Over each maximal simplex in
	$\hat K_{b^2}$ this mapping is
	denote by $F^{(0)}$. Then let us extend the mapping on the segment, which join a pairs of centers of the simplexes
	and cross a codimension 1 face, or which join pairs of points
	of $\hat K_{b^2,R}$ and $\hat K_{b^2,L}$ in a common strata. 
	This extension is denoted by $F^{(1)}$.
	
	Additionally, to conclude the proof, one has to check that obstructions for equivariant extension of the mapping  
	$F^{(1)}$ on $\hat{K}_{b^2}^{(1)}$ to the hole complex
	$\hat K_{b^2}$ is trivial.

	\begin{lemma}\label{l10}
	1.	There exists a
		$(\tau_a,\tau_b,\tau_i)$-equivariant mapping 
		$F^{(0)}$ on simplexes of the complex in $\hat K_{b^2}$ of the deep $0$.
		\[  \]
		
		2. The equivariant morphism constructed by 1. is extended on the complex in $\hat K_{b^2}$ of the deeps $0$ and $1$.
	\end{lemma}

		\subsubsection*{Total $\theta$-residue,  inner-integer $\Theta$-residue}

	The base of the induction is the construction of the mapping
	$F^{(0)}$, the restriction of $F$ on the subspace, which consists of points $((x,y),\lambda)\in \hat K_{b^2}$, 
	for which all momenta coordinate $\lambda_j\ne 0$.
	Assume we get the case $\hat{K}_{b^2,R}^0$, the residues collection 
	$\{\delta_j\}$ exists for all numbers $j$ of coordinates. 
	Define in a $R$-stratum $\alpha_1$ the total residue by 
	\begin{eqnarray}\label{totalres}
	 \theta=\prod\limits_{j=1}^r\delta_j. 
	 \end{eqnarray}
	We associate this residue with the opposite base vector $-1 \in t_+$ on the line. 
	The number $r$  is odd, therefore $\theta$ is an imaginary. The conjugated total resedue is
	assotiated with the the base vector $+1 \in t_-$.

		Assume we get the case $\hat{K}_{b^2,L}^0$. 
	Define in a $L$-stratum $\alpha_2$ the total residue by the same formula (\ref{totalres}).
	We associate this residue with the opposite base vector $+1 \in t_+$ on the line. 
	The number $r$  is odd, therefore $\theta$ is an imaginary. The conjugated total residue is
	associated with the the base vector $-1 \in t_-$. 
	By the construction the total residue is 
	$\tau_a$-skew: 
	\begin{eqnarray}\label{skew}
	\bar{\theta}_{\alpha_1}=\overline{\prod\limits_{j=1}^r\delta_j\vert_{\tau_a(\alpha_2)}} = \prod\limits_{j=1}^r\delta_j\vert_{\alpha_2}=\theta_{\alpha_2}, \quad \alpha_1 = \tau_a(\alpha_2).
	\end{eqnarray}

	The following sums for a $R$-stratum $\alpha_1$:
	\begin{eqnarray}\label{Theta1}
	 \Theta(\alpha_1)=\sum\limits_{j=1}^r\delta_j  \in \i\Z%; \quad \Theta_1^L=\sum\limits_{j=1}^r(- \bar{\delta}_j)
	 \end{eqnarray}
	  are integer lifts of $\theta$, let us call the sum the inner-integer  resedue. 
	
			For an $L$-stratum $\alpha_2$ the following sums: 
	\begin{eqnarray}\label{Theta2}
		%\Theta_2^R=\sum\limits_{j=1}^r(\delta'_j); \quad 
		 \Theta(\alpha_2)=\sum\limits_{j=1}^r(-\delta'_j) \in \i\Z.
	\end{eqnarray}		
	are integer lifts of the conjugated total residue $\bar{\theta}$, let us call the sum the inner-integer
	 residue. 

	The integer residue $\Theta$ is  $\tau_a$-symmetric:  we get for the corresponding two points $\x \in \alpha_1, \tau_a(\x) \in \tau_a(\alpha_1)$:
%	$$\overline{\theta} \vert_{\x}=\theta \vert_{\tau_a(\x)},$$
	$$\sum\limits_{j=1}^r\delta_j \vert_{\x}=\sum\limits_{j=1}^r(-\delta'_j) \vert_{\tilde{\tau}_a(\x)}.$$

	\subsubsection{A formula of morphism $F^{(0)}$, Lemma \ref{l10},1}
	
	In a maximal stratum of R-type
	on $\hat{K}_{b^2}$ let us define the morphism $F^{(0)}$ by the formula:
	\begin{equation}\label{1.1+}
		F^{(0)}: ((x,y),t_{+}) \mapsto ((x,y), -\theta  t).
	\end{equation}
	\begin{equation}\label{1.1-}
		F^{(0)}: ((x,y), t_{-}) \mapsto ((x,y), \bar{\theta} t).
	\end{equation}

	One may see, the first formula (\ref{1.1+}) defines the image of the vector   $-1 \in t_+$ by the value  $\theta$, this implies the minus in the right-hand side of the formula. The second formula determines the image of the base vector on the line $t_-$ by 
	the conjugated residue  $\bar{\theta}$. 
	
	Over a maximal stratum  of 
	$K_{b^2,L}$ the morphism $F^{(0)}$ is defined by the following formula:
	\begin{equation}\label{1.2+}
		F^{(0)}: ((x,y),t_{+}) \mapsto ((x,y), \bar{\theta}  t).
	\end{equation}
	\begin{equation}\label{1.2-}
		F^{(0)}: ((x,y), t_{-}) \mapsto ((x,y), -\theta t).
	\end{equation}
	
	We may assume that the first formula
	 (\ref{1.1+}) determines the image of the base vector  $+1 \in t_+$ by $\theta'=\bar{\theta}$, see the formula (\ref{skew}). 
	 Therefore the image of the base vector
	  $-1 \in t_+$ is defined by  $-\theta' = -\bar{\theta}$. 
	  The second formula defines the image of the  vector $-1 \in t_-$ by $\theta$. The image of the base vector  $+1 \in t_-$ is defined by  $-\theta$.

	The formulas (\ref{1.1+}), (\ref{1.1-}), (\ref{1.2+}), (\ref{1.2-}) are defined
	the morphism on the collection of maximal strata in the source. We check below that
	this morphism is $(\tau_a,\tau_b,\tau_i)$-equivariant.

\subsubsection{An interpolation of the morphism 	$F^{(0)}$ between pairs of maximal strata of a different type}
	
	Let us, briefly, formulate an interpolation rool of the constructed morphism on segments, which join
	two strata of a different type. This is the main point in the construction.

Let us consider a maximal $R$-stratum
$\alpha_1$ and  a neighbouring $L$-stratum  $\alpha_2$, take a stratum of the deep $1$ in $K^{(1)}$ between them. The total residue 
$\theta_1$ of the stratum $\alpha_1$ is well-defined, it takes value $+\i$. 

By an interpolation of the total residue we assume
the homotopy of $\Theta(\alpha_1)$ (\ref{Theta1}) into $-\Theta(\alpha_2)$ (\ref{Theta2}) 
along the segment, which joins the two maximal strata on different components
(\ref{s,t1}). We have the following formula: 
\begin{eqnarray}\label{relation12}
	\theta_1 = \bar{\theta}_2; \quad \Theta(\alpha_1) =\Theta(\alpha_2),
\end{eqnarray}
where $\Theta_1=\Theta(\alpha_1), \Theta_2 =\Theta(\alpha_2)$ for short.

To calculate 
$\theta_2$ let us consider a prescribed collection (see subsection \ref{prescr}) of residues in the $R$-stratum $\alpha_1$. Let us denote this collection by  $\{\delta_j\}_{\alpha_1}$. 
When we pass into a neighbouring $L$-stratum $\alpha_2$, we may consider a corresponding collection: $\{\delta'_j\}_{\alpha_2}$, which coincides with the conjugation of the collection  $\{\bar{\delta}_j\}_{\alpha_1}$ for all residues, but the only exception, which is deformed. 
The product $\theta_2=\prod\limits_{j=1}^r\bar{\delta'}_j$ в $\alpha_2$ coincides with   $\bar{\theta}_1=\prod\limits_{j=1}^r\bar{\delta}_j$ in $\alpha_1$, this proves the formula (\ref{relation12}). 

The secon formula is followed from   (\ref{Theta2}): the sum
$\sum\limits_{j=1}^r(-\delta'_j)$ in $\alpha_2$ coinsids with the sum 
$\sum\limits_{j=1}^r\delta_j$ in $\alpha_1$.

\begin{definition}\label{3pmod}
	
	Recall, $r\ge 3$, $\frac{n-k+2}{2}=r$,  $r \equiv-1\pmod{3}$. Assume the angles, which are corresponded to vertexes of the momenta simplex, are ordered.
	Take
	\begin{eqnarray}\label{bet}
	\beta \subset 	
	K_{b^2,R}^{(1)} \cap K_{b^2,L}^{(1)}
	\end{eqnarray}
	and arbitrary stratum of the deep $1$. Such a stratum has an odd 
	number  $\frac{r-1}{2}$ of angles with residues 
	  $+\i$ and the same odd number of angles with residues  $-\i$.  
	  Let us define a positive coorientation  
	  $\beta^{\uparrow}$ on $\beta$ by the direction from $R$-maximal stratum to $L$-maximal stratum, the maximal strata are in the boundary of $\beta$.
	  The inversion of the coorientation
	$\beta^{\uparrow} \mapsto \beta^{\downarrow}$ corresponds to the direction from $L$-stratum to $R$-stratum on the stratum $\beta$. Let us define a $\beta$-cooriented disorder  $\omega(\beta^{\uparrow}) \in \{0,1\}$ as a number  $\pmod{2}$ of transpositions of numbers of different  residues to get the total ordered partition of residues: the left group of   $\frac{r-1}{2}$ coordinates with residues $-\i$ and the right group of the last  $\frac{r-1}{2}$ coordinates
	with residues $+\i$.
	
	When we change the coorientation on $\beta$, the disorder
  $\omega(\beta^{\downarrow}) \in \{0,1\}$ is defined as the number  $\pmod{2}$
  of transpositions of coordinates with different residues to get the collection 
 of $\frac{r-1}{2}$ coordinates with residues $+\i$ on the left and the collection of 
 $\frac{r-1}{2}$ coordinates with residues $-\i$ on the right.		Obviously, we get:
 \begin{eqnarray}\label{omeg}
 	 \omega(\beta^{\uparrow})+\omega(\beta^{\downarrow})=1. 	
\end{eqnarray} 	 
\end{definition}

Assume the pair of strata $\alpha_1$, $\alpha_2$ are neighbouring strata in a common strata $\beta$ of the deep $1$, which has the disorder  $\omega(\beta^{\uparrow})=0$. 
Let us define a deformation of 
 $\Theta_1$ for $R$-stratum $\alpha_1$ into $-\Theta_2$ for $s$-stratum $\alpha_2$ along
 the segment $\beta$, the deformation defines the image of the line  $t_+$ into the coordinate plane $E_{\C}$. The deformation is the linear interpolation of the corresponding non-regular imaginary term (a positive elementary imaginary residue on $R$-stratum $\alpha_1$) in additive total residue $\Theta_1$ (\ref{Theta1}) into the opposite $-\Theta_2$ to  (\ref{Theta2}) (the corresponding negative elementary imaginary residue with the opposite sign in the formula   (\ref{Theta2})).
 Geometrically, over a point of a stratum of the deep $1$ we get a clockwise rotation of the vector 
$-1 \in t_+$ through the angle $\frac{\pi}{2}$
(algebraically, through the angle $-\frac{\pi}{2}$). 
In the case $\omega(\beta^{\uparrow})=1$ we get a contr-clockwise rotation of the vector $-1 \in t_+$ through the angle $\frac{\pi}{2}$.

We may define the back deformation from $L$-stratum to $R$-stratum we may use the previous
definition to define the back deformation. But, in fact, we may use the following alternative definition of the deformation, which gives the same.
Because in the coorientation on $\beta$ from $\alpha_2$ to $\alpha_1$ we have: $\omega(\beta^{\downarrow})=1$. This means that we get the  deformation of the total residue
 $\Theta_2$ for $L$-stratum $\alpha_2$ into the opposite residue $-\Theta_1$ for $R$-stratum $\alpha_1$ along the segment. This deformation corresponds to contr-clockwise rotation of the vector
 $-1 \in t_+$ trough the angle $-\frac{\pi}{2}$ on $E_{\C}$.
In the case $\omega(\beta^{\uparrow})=1$ we get a clockwise rotation of the vector $-1 \in t_+$ trought the angle $\frac{\pi}{2}$.

The  deformation from $\alpha_1$ to $\alpha_2$ geometrically coincides with the deformation fro $\alpha_2$ to $\alpha_1$, the deformation determines by the cooriented disorder of the strata $\beta$.

Below a key remark to prove that the interpolation an equivariant  is the following.
The involution $\tau_a$ translate the interpolation into itself. 
The involution 
 $\tau_a$ keeps the cooriented disorder: $\omega(\beta^{\uparrow}) = \omega(\tau_a(\beta)^{\downarrow})$ 
 and also keeps additive total residues. The same way, this involution changes 
 the line 
$t_+$ into $t_-$ and conjugate $z$-coordinate in the image  $E_{\C}$. 
This means the involution $\tau_a$ keeps directions of rotations of source lines in the target plane.

	\subsubsection{Proof of Lemma \ref{l10}} 
	
	Let us check that the mapping defined above by (\ref{1.1+}), (\ref{1.1-}) is
	$(\tau_a,\tau_b,\tau_i)$-equivariant.

	In fact, for the base vectors $t=+1$: 
	\begin{align*}
		&F^{(0)} \tau_i((x,y),t_+) = F^{(0)}((x,y),-t_-) = ((x,y),-\overline{\theta} )\\
		& \tau_i F^{(0)} ((x,y),t_+) = \tau_i((x,y),-\theta t) = ((x,y),-\overline{\theta} )
	\end{align*}

	\begin{align*}
		&F^{(0)} \tau_a ((x,y),t_+) = F^{(0)}((y,x),t_+) = ((y,x), -\bar{\theta} t)\\
		& \tau_a F^{(0)} ((x,y),t_+) = \tau_a((x,y),-\theta t) = ((y,x),- \bar{\theta} t)
	\end{align*}

	\begin{align*}
		&F^{(0)} \tau_b ((x,y),t_+) = F^{(0)}((-y,x),t_+) = ((-y,x),-\theta )\\
		& \tau_b F^{(0)} ((x,y),t_+) = \tau_b((x,y), -\theta t) = ((-y,x),-\theta )
	\end{align*}

	The third formula for  $\tau_b$ is obvious, the only base point  is translated inside an elementary strata. In the second formula in the right-hand side we get a point over $L$-strata, in the right hand-side we get a point over $R$-strata. The first formula is written for $R$-strata, the case of $L$-strata is analogous.

	Lemma \ref{l10}.1 is proved. \qed
	\[  \]

	Let us extend the mapping
	$F^{(0)}$ on segments in the origin $E_\R$ over $\hat K_{b^2}$, which join centres of maximal simplexes, the centres in the common strata  $\hat K_{b^2,R}$, or, in a common strata $\hat K_{b^2,L}$ (Case $I$).

	Let us consider two maximal strata $\alpha_1$, $\alpha_2$ in
	$\hat K_{b^2,R}$, which are distinguished by an only  non-degenerated coordinates  $j$. 
	Let us denote by  $\Theta_1$ total residue for $\alpha_1$ and by $\Theta_2$ the total residue for $\alpha_2$.
	Then for $R$-strata we get: $\theta'=\exp(\pi\hat\theta' /2)$,  $\theta''=\exp(\pi\hat\theta''/2)$.

	Let us consider a segment of the homotopy, which is inside 
	$\hat K_{b^2,R}$ (the case inside $\hat K_{b^2,s}$  is analogous),
	this is a subcase of Case $I$). On the segment which joins centres  $((x',y'),(y',x'))$, $((x'',y''),(y'',x''))$ of the first and second simplexes correspondingly, let us denote $F^{(1)}$ by the following formula, for the base vector on the line $t_+$ in the target: 
	
	\begin{align}\label{2.1+} 
	F^{(1)}:&((1-r)(x',y')+r(x'',y''),t_{+})\mapsto  \notag\\
	&\mapsto((1-r)(x',y')+r(x'',y''),-\exp(   ((1-r)\Theta_1+r(-\Theta_2))\pi/2     )  t)
\end{align}  
For $t_-$ the formula is following: 
\begin{align}\label{2.1-}
	F^{(1)}:&((1-r)(x',y')+r(x'',y''),t_{-})\mapsto  \notag\\
	&\mapsto((1-r)(x',y')+r(x'',y''),\overline{\exp(   ((1-r)\Theta_1+r(-\Theta_2))\pi/2     )}  t)
\end{align}

Let us consider a case, when a segment of the homotopy $F^{(1)}$ joins strata in
$\hat K_{b^2,R}$ with $\hat K_{b^2,L}$ 
(Case $II$). In this case the formula for $F^{(1)}$ was described above using (\ref{relation12}). 
A lift  $\Theta$ into the imaginary axis of the total residues $\theta$, which describes the target of the base vector $-1 \in t_+$ in $\alpha_1 \subset \hat K_{b^2,R}$ (subase $II;-t_+$).  The target of the  base vector of $-t_-$ in $[\alpha_1,\alpha_2]$ is the result of an extrapolation with the boundary condition $[\Theta = +\i,-\Theta = -\i]$, the side of the circle is defined by the formula for the differential: 
\begin{eqnarray}\label{dF}
dF(-t_+) = -\i (-1)^{\omega(\beta^{\uparrow})}.
\end{eqnarray}  

The image of the base vector  $+1 \in t_-$ on the second line is defined by the conjugated formula: $dF(t_-)=+\i(-1)^{\omega(\beta^{\uparrow})}$ (subcase $II;t_-$).

	Let us check that the formula gives
	$(\tau_a,\tau_b,\tau_i)$-equivariant mapping. Let us assume that the $R$-component in the source of $\tau_a$ and
	$L$-component in the target of   $\tau_a$ are considered (Case I). 
	
	Namely, for the base vectors $-1 \in t_+$ over $R$-strata is transformed  over $L$-stratum by the formula:
\begin{align*}
	F^{(1)}\tau_a:&((1-r)(x',y')+r(x'',y''),-t_+)=
	F^{(1)}((1-r)(y',x')+r(y'',x''),-t_-)\mapsto\\
	&\mapsto((1-r)(y',x')+r(y'',x''), \exp(((1-r)\overline{\Theta}_1+r\overline{\Theta}_2)\pi/2    )  ).
\end{align*}
Also we get:
\begin{align*}
	\tau_a F^{(1)}:&((1-r)(x',y')+r(x'',y''),-t_+)\mapsto\\
	&\mapsto\tau_a((1-r)(x',y')+r(x'',y''),\exp(   ((1-r)\Theta_1+r(-\Theta_2))\pi/2     )  )=\\
	&=((1-r)(y',x')+r(y'',x''),\overline{\exp(   ((1-r)\Theta_1+r(-\Theta_2))}\pi/2     )  ).
\end{align*}
	 Because
	$\overline{\exp(z)}=\exp(\overline{z})$,  the 
	$\tau_a$-equivariance  in Case $I$ is proved.

		In the subcase $II;-t_+$ the line   $t_+$   is deformed over  $\alpha_1 \subset K_{b^2,t}$
	and $\alpha_2 
	\subset K_{b^2,s}$, the deformation is defined by the target of the vector $-1 \in t_+$ with the condition  $d(F(-t_+)) = -\i(-1)^{\omega(\beta^{\uparrow})}$ over $[\alpha_1,\alpha_2]$. This homotopy is given by the interpolation   $+\i = \Theta_1$ into $-\i = -\Theta_2$, if $\omega(\beta^{\uparrow})=0$. 
	
The $\tau_a$-conjugated deformation
of the line $t_-$, from $R$ into $L$-stratum, starts over $\tau_a(\alpha_2) \subset K_{b^2,R}$ 
and ends over
$\tau_a(\alpha_1) \subset K_{b^2,L}$.
This conjugated deformation over
 $[\tau_a(\alpha_2),\tau_a(\alpha_1)]$ is defined by the interpolation of the image 
of
$+1 \in t_-$ by the boundary condition: $\Theta(\tau_a(\alpha_2)) =\overline{\Theta}_1$ into $-\overline{\Theta}_2$. 
We get:
 $d(F(t_-))=+\i(-1)^{\omega(\beta^{\uparrow})}$, and 
$d(\tau_a(F(-t_+))) = +\i(-1)^{\omega(\beta^{\uparrow})}$, because $\omega(\beta^{\uparrow})=\omega(\tau_a(\beta)^{\downarrow})$.
		
The homotopy
$F(\tau_a(-t_+))$ coincides with $F(-t_-)$; because 
$F(-t_-)$ is antipodal to the image of the base vector   $+1 \in t_-$, in the case $\omega(\beta^{\uparrow}) =0$, because of the relation: $dF(+t_-)=+\i$; we get: $dF(-t_-)=+\i$ and $d(F(\tau_a(-t_+)))=+\i$.

	Then let us consider the case of the involution $\tau_i$. Assume that the case  $-1 \in t_+$ is considered:
		\begin{align*}
		F^{(1)}\tau_i:&((1-r)(x',y')+r(x'',y''),-t_{+})=
		F^{(1)}((1-r)(x',y')+r(x'',y''),t_{-})\mapsto\\
		&\mapsto((1-r)(x',y')+r(x'',y''),\overline{\exp( ((1-r)\Theta_1+r\Theta_2)\pi/2     )}  ).
	\end{align*}
	С другой стороны, 
	\begin{align*}
		\tau_i F^{(1)}:&((1-r)(x',y')+r(x'',y''),-t_{+})\mapsto\\
		&\mapsto\tau_i((1-r)(x',y')+r(x'',y''),\exp(   ((1-r)\Theta_1+r\Theta_2)\pi/2     )  t)=\\
		&=((1-r)(x',y')+r(x'',y''),\overline{\exp(   ((1-r)\Theta_1+r\Theta_2)\pi/2     )  }).
	\end{align*}

	The deformation in Case $+1 \in t_-$ of the two lines are real-conjugated and starting targets of the two lines coincide.  This proves $\tau_i$-equivariant.

	The $\tau_b$-equivariance is evident. Lemma  \ref{l10}.2 is proved.
\end{proof}

Let us conclude with a plane of the proof of Theorem
\ref{t10}. The obstruction of an extension of the morphism, which is constructed in Lemma \ref{l10} on strata of the deep $3$ and greater are trivial. The obstruction to extend of the morphism on strata of the deep $2$ and greater are calculated below explicitly.

\subsubsection{A morphism of a line  bundle  $\tilde \tau_a$-skew-invariant  and	$\tilde \tau_b$-invariant into the trivial $\C$-bundle over  $\hat K_{b^2}$}

Let us consider a 
$(\tau_a,\tau_b,\tau_i)$-morphism  $F$ of the bundle 
$E_\R$ into the bundle  $E_\C$, which is constructed in Theorem \ref{t10}.
Let us extend this equivariant morphism by a tensor product
$(\tau_a,\tau_b,\tau_i) \mapsto (\tilde \tau_a,\tilde \tau_b,\tilde \tau_i)$, using an additional coordinate $w$ on the sphere  $S^0=\{\pm 1\}$.

Define  $(\tau_a,\tau_b,\tau_i)$-equivariant 
$S^0$-covering over $\hat K_{b^2}$ (a double covering), which is considered as a bundle $E_S$ over $\hat K_{b^2}$ with the sphere  $S^0=\{\pm 1\}$ in a layer. 
Without equivariant structure we get:
$E_S=\hat K_{b^2}\times S^0$.  The involutions are defined by the following formula: 
\begin{align}\label{S0}
	\tau_a&:((x,y),w)\mapsto ((-x,y),-w)\\
	\tau_b&:((x,y),w)\mapsto ((-y,x),w)\\
	\tau_i&:((x,y),w)\mapsto ((x,y),-w).
\end{align}

Involutions $\tau_b$, $\tau_i$ are now defined in an extended bundle $E_{\R} \otimes E_S$,
let us denote the involutions by  $\tilde{\tau}_b$, $\tilde{\tau}_i$ correspondingly. The involution
$\tau_a$ is also defined in the extended bundle, let us denote  $\tilde{\tau}_a = \tau_i \circ \tau_b \circ \tau_a$, more detailed, let us define  $\tilde{\tau}_a$ by the formulas: 
$$\tilde{\tau}_a:((x,y),t_{\pm})\mapsto  ((-x,y),-t_{\mp})\\$$
$$ \tilde{\tau}_a:((x,y),z) \mapsto ((-x,y),z) $$
and by the formula:
$$\tilde{\tau}_a:((x,y),w)\mapsto ((x,y),w),$$
which is followed from  (\ref{S0}),(79) and from the formula below   (\ref{coort}).
Let us note that after re-denotation, if  $(x,y) \in \hat K_{b^2,R}$, then $(-x,y) \in \hat K_{b^2,L}$, but we use $R$-system and the formula (\ref{coort}).

Involutions $(\tilde{\tau}_b, \tilde{\tau}_a, \tilde{\tau}_i)$ are defined in the bundle  $E_{\C}$
by a natural way. The involutions  $\tilde{\tau}_b$, $\tilde{\tau}_a$ on $\C$--bundle 
are trivial, the involution
$\tilde{\tau}_i$ acts in each layer by the conjugation. The involution   $\tau_a$ 
invariantly transforms each oriented layer of the line subbundle
$t_+$ in the bundle $E_\R$ into the oriented layer of another line subbundle  
$t_-$ over the corresponding point of the base, by an orienting preserved morphism of the layers.
The same way, the involution
$\tilde{\tau}_a$ preserves the orienting layers of the bundle  $E_\R$ and changes the orientation of the base vectors. Involution in extended bundle allows to define an equivariant morphism   $\tilde{F}$, as a natural extension of the equivariant morphism $F$. Thus, we define 
a  $(\tilde{\tau}_a,\tilde{\tau}_b,\tilde{\tau}_i)$-equivariant morphism  $\tilde{F}$.

In the collection of the involutions only the involution $\tilde{\tau}_i$ is non-trivial on $E_S$ and conjugates the complex structure in
$E_{\C}$. The involution 
$\tilde{\tau}_i$ is an exceptional, because this involution changes the $w$-coordinate (the coordinate of the covering).

Let us formulate the result in the following lemma.

\begin{lemma}\label{mor}
	Consider the $(\tilde{\tau}_a,\tilde{\tau}_b,\tilde{\tau}_i)$-equivariant tensor product  $E_{\R} \otimes E_S$, $E_{\C}\otimes E_S$. 
	
	1.The formula of $F$ is defined
	$(\tilde{\tau}_a,\tilde{\tau}_b,\tilde{\tau}_i)$-equivariant morphism 
	$$\tilde{F}: E_\R \otimes E_S \mapsto E_\C \otimes E_S. $$
	
	2. The $\tilde{\tau}_i$-equivariant factormorphism
	$$F^{\downarrow}: E_\R \otimes_{\tilde{\tau}_a,\tilde{\tau}_b} E_S \mapsto E_\C \otimes_{\tilde{\tau}_a,\tilde{\tau}_b} E_S $$
	is well defined.
\end{lemma}

\subsubsection*{Proof of Lemma \ref{mor}}

This is well-known fact from linear algebra.  \qed

\subsubsection{A calculation of the factor-bundles}

\begin{lemma}\label{lemma2}
	The bundle
	$ S(E_\C)\otimes_{(\tilde{\tau}_a,\tilde{\tau}_b)} E_S$ is isomorphic to the trivial $\tilde{\tau}_i$-equivariant bundle  
	$K\times S^1\to K$.
\end{lemma}

\begin{proof}
	Define the mapping  $F_\C:S(E_\C)\otimes_{(\tau_a,\tau_b,\tau_i)} E_S$ into $K\times S^1$
	by the formula:
	$$
	F_\C: ((x,y),z,w) \mapsto ([x,y],c_w(z)).$$
	In this formula $z\in \C, |z|=1, w\in S^0=\{\pm 1\}$.
	Let us recall that $(x,y)\in K_{b^2} $  и $(x,y)=(-x,-y)$,
	$[x,y]\in K$, where $[x,y]=a[x,y]=b[x,y]$.
	Additionally, here we have used  
	$$c_w(z)= \left\{\begin{array}{ll}  z ,& \text{if} w=1;\\
		\overline{z},& \text{if} w=-1.  \end{array}\right.$$
	It is clear that  $c_w(z) = c_{-w}(\overline z)$.
	
	Let us check that $F_\C$ is well defined. In fact,
	\begin{align*}
		\tau_a((x,y),z,w)=((y,x),\overline{z},-w)\stackrel{F_\C}{\mapsto} ([y,x],c_{-w}(\overline z))\\
		\tau_b((x,y),z,w) =((-y,x),\overline{z},-w)\stackrel{F_\C}{\mapsto} ([-y,x],c_{-w}(\overline z))\\
		\tau_i((x,y),z,w) = ((x,y),\overline{z},-w)\stackrel{F_\C}{\mapsto} ([x,y],c_{-w}(\overline z))
	\end{align*}
	In the result in all cases we get the same point
	$([x,y],c_w(z))\in K\times S^1$.

	The inverse mapping  $G_\C: K\times S^1 \to S(E_\C)\otimes_{(\tau_a,\tau_b,\tau_i)} E_S$
	is given by the formula:
	$$G_\C: ([x,y],z)\mapsto ((x,y),z,1),$$
	where $z\in S^1\subset \C$. To check that the formula is well defined it has to be checked that 
	when a representative of the point $([x,y],z)\in K\times S^1 $ is replaced, the point  $((x,y),z,1) \in S(E_\C)\otimes_{(\tau_a,\tau_b,\tau_i)} E_S$ remains the same. A gauge
	is the following: $([y,x],z) = ([-y,x],z)=([x,-y],z)$.
	It is easy to check that the points $((y,x),z,1) $, $((-y,x),z,1)$, $((x,-y),z,1)$ coincide to
	the point
	$((x,y),z,1) \in S(E_\C)\otimes_{(\tau_a,\tau_b,\tau_i)} E_S$.
	
	Additionally, it is evident that $F_\C$  and  $G_\C$ are commuted with the projections onto the base $K$ and that this morphisms are inverse.
\end{proof}

\begin{lemma}\label{lemma1}
	The bundle $ S(E_\R)\otimes_{(\tilde{\tau}_a,\tilde{\tau}_b,\tilde{\tau}_i)} E_S$  isomorphic to the wedge covering  $K_b\to K$. 
\end{lemma}

\begin{proof}
	The bundle
	$ S(E_\R)\otimes_{(\tau_a,\tau_b,\tau_i)} E_S$ after a gauge $(\tau_a,\tau_b,\tau_i) \mapsto (\tilde{\tau}_a,\tilde{\tau}_b,\tilde{\tau}_i)$ is splitted into a $\tilde{\tau}_a$ equivariant wedge  $(\tilde{\tau}_b,\tilde{\tau}_i)$ of two $\tilde{\tau}_i$-conjugated  non-trivial $\tilde{\tau}_a$-equivariant line bundles.   
	%Also the mappings $F_\R$ and $G_\R$ are unversed and are commuted with the projection onto %$K$.
\end{proof}

\subsubsection{The convolution of the tensor product}

By Lemma \ref{lemma1} and Lemma \ref{mor}, Statement 2 the factormorphism $F^{\downarrow}$
is the pair of morphisms of line bundles:
\begin{eqnarray}\label{F+}
	F_{+1}: \hat{K}_b^1 \otimes E_S \to S^1 \otimes E_S, 
\end{eqnarray}
\begin{eqnarray}\label{F-}
	F_{-1}: \hat{K}_b^1 \otimes E_S \to S^1 \otimes E_S. 
\end{eqnarray}
The morphism (\ref{F+}), restricted to the subbundle $\hat{K}_b^1 \otimes \{+1\} \subset \hat{K}_b^1 \otimes E_S$, takes values in $E_S \otimes \{+1\}$. This morphism is the required  in Theorem \ref{resol} mapping, on the subpolyhedron consists of all strata of deeps $0$ and $1$. The morphism (\ref{F-}), restricted to the subbundle $\hat{K}_b^1 \otimes \{-1\} \subset \hat{K}_b^1 \otimes E_S$ in the target, is the second (conjugated) copy of (\ref{F+}), it takes values in
$E_S \otimes \{-1\}$.

\subsubsection*{Extension of the morphism (\ref{F+}) over $\hat{K}^{(1)}$ to a morphism over $\hat{K}^{(2)}$ and over $\hat{K}$ \label{ext}}

Let us construct an extension of the morphism (\ref{F+}) to a morphism over $\hat{K}^{(2)}$
with strata of the deep $2$. Because  a boundary of a strata of the deep $2$ inside
 $K_{b}^{(1)}$ is inside $4$-uple of maximal strata, the problem of the extension is local.
 
 Let us consider the morphism $F$ by (\ref{F+}) and let us prove that this morphism
 has the trivial monodromy along a closed path around strata of the deep $2$.
Take a stratum  $\gamma^{(2)}$ of the deep $2$ for which angles coordinates
 $j_1$, $j_2$, $j_1 < j_2$ are cancelled.  Neighbouring strata  $\alpha^{(0)}_{-\i,-\i}$,
$\alpha^{(0)}_{-\i,+\i}$, $\alpha^{(0)}_{+\i,-\i}$, $\alpha^{(0)}_{+\i,+\i}$ 
are different by residues with numbers $j_1,j_2$.

Assume that on the stratum
$\gamma^{(2)}$ the number of angles with the residue $+\i$ is greater then angles with the residue $-\i$ by $1$. This case is not the only possible, but in the last cases, when the number of coordinates with the residues of the type $+\i$ is greater (is less) then the number of coordinates with the residues $-\i$  reasoning are evident.
In the case the number of angles with the residue $+\i$ is less then angles with the residue $-\i$ by $1$ is analogous.

Tips $R,L$ of maximal strata near the stratum  $\gamma^{(2)}$ is defined by $j_1,j_2$-residues.
With our denotation  the stratum
$\alpha^{(0)}_{-\i,-\i}$ is an $L$-stratum, because the both residues of angles with numbers $j_1$, $j_2$ equal to 
$-\i$, therefore on $\alpha^{(0)}_{-\i,-\i}$ the number of coordinates with the residue $-\i$ is greater then the number of coordinates with the residue $+\i$ by $1$. The strata
$\alpha_{-\i,+\i}^{(0)}, \alpha_{+\i,-\i}^{(0)}$ are  $R$-strata, for the strata  the number of coordinates with the residue $+\i$ is greater then the number of coordinates with the residue $-\i$ by $1$.  The stratum
$\alpha_{+\i,+\i}^{(0)}$ is  $R$-stratum, for the strata  the number of coordinates with the residue $+\i$ is greater then the number of coordinates with the residue $-\i$ by $3$.

Let us consider $4$ strata of the deep $1$ in a neighbourhood of $\gamma^{(2)}$.
Denote by $\beta_{-\i,0}^{(1)}$, $\beta_{0,-\i}^{(1)}$ strata of the deep $1$ 
which are inside the intersection of $R$ and $L$ as in the formula (\ref{bet}). The 
lower index $0$ of strata corresponds to degenerated coordinate.
Assume, for the beginning, that $j_1=j, j_2=j+1$ are neighbouring positive integers.
In this case the residues on the maximal stratum $\alpha^{(0)}_{-\i,-\i}$ are 
$\delta(j)=-\i$, $\delta(j+1)=-\i$. 
On $\alpha^{(0)}_{+\i,-\i}$ residues are: $\delta(j)=+\i$, $\delta(j+1)=-\i$;
on $\alpha^{(0)}_{-\i,+\i}$ residues are: $\delta(j)=-\i$, $\delta(j+1)=+\i$.

The following formula is satisfied:
\begin{eqnarray}\label{+1}
	\omega(\beta_{-\i,0}^{\downarrow}) = \omega(\beta_{0,-\i}^{\uparrow}) + 1.
\end{eqnarray}

Because exceptional numbers are neighbouring, the ordered collection of regular residues for the strata $\beta_{-\i,0}$, $\beta_{0,-\i}$ coincide. Because on the  boundary loop 
\begin{eqnarray}\label{loop}
(+\i,+\i) \mapsto (+\i,-\i) \mapsto (-\i,-\i)  \mapsto (-\i,+\i) \mapsto (+\i,+\i) 
\end{eqnarray}
 of $\gamma^{(2)}$ the stratum $\beta_{0,-\i}^{\uparrow}$ is positively cooriented $R \mapsto L$, and $\beta_{-\i,0}^{\downarrow}$ is negatively cooriented $L \mapsto R$, the calculation of the disorders are based on the common two ordered collections of residues and the formula (\ref{+1}) is analogous to  the formula (\ref{omeg}).  

In the case, when exceptional coordinates are not neighbouring, let us prove that the formula
 (\ref{+1})  is also satisfied. Assume a residue with a number
 $j_3$, $j_1 < j_3 < j_2$ is equal to
$-\i$, the terms $\omega(\beta_{-\i,0}^{\downarrow})$,$\omega(\beta_{0,-\i}^{\uparrow})$ in (\ref{+1}) remains unchanged. 
Because on the $R$-stratum
$\alpha^{(0)}_{+\i,-\i}$  and on the $L$-stratum
$\alpha^{(0)}_{-\i,-\i}$ the residue $j_2$ is negative imaginary, in the calculation $\omega(\beta_{0,-\i}^{\uparrow})$ the both residues $j_3, j_2$ is translated into the left
and the transposition with the numbers $(j_3,j_2)$ does not exist.
Because on the $R$-stratum
$\alpha^{(0)}_{-\i,+\i}$  and on the $L$-stratum
$\alpha^{(0)}_{-\i,-\i}$ the residue $j_1$ is negative imaginary, in the calculation $\omega(\beta_{-\i,0}^{\downarrow})$ the both residues $j_1,j_3$ is translated into the right
and the transposition with the numbers $(j_1,j_3)$ does not exist.

 Assume a residue with a number
$j_3$, $j_1 < j_3 < j_2$ is equal to
$+\i$, the terms $\omega(\beta_{-\i,0}^{\downarrow})$, $\omega(\beta_{0,-\i}^{\uparrow})$ in (\ref{+1}) are changed simultaneously. 
Because on the $R$-stratum
$\alpha^{(0)}_{+\i,-\i}$  and on the $L$-stratum
$\alpha^{(0)}_{-\i,-\i}$ the residue $j_2$ is negative imaginary, in the calculation $\omega(\beta_{0,-\i}^{\uparrow})$ the  residue $j_3$  is translated into the right
 the  residue $j_2$  is translated into the left
and the transposition with the numbers $(j_3,j_2)$ exists.
Because on the $R$-stratum
$\alpha^{(0)}_{-\i,+\i}$  and on the $L$-stratum
$\alpha^{(0)}_{-\i,-\i}$ the residue $j_1$ is negative imaginary, in the calculation $\omega(\beta_{-\i,0}^{\downarrow})$ the  residue $j_3$ is translated into the left,
the  residue $j_1$ is translated into the right
and the transposition with the numbers $(j_1,j_3)$ exists.

The formula   
(\ref{+1}) is proved for an arbitrary $j_1,j_2$ and for an arbitrary collection of residues. 

The formula of the morphism $F$ along $\beta_{0,-\i}^{\uparrow}$ and $\beta_{-\i,0}^{\downarrow}$ is written using integer residues $\Theta$.
In particular, residues
  $\Theta(\alpha^{(0)}_{-\i,+\i})$, $\Theta(\alpha^{(0)}_{+\i,-\i})$ on $R$-strata coincide. 
 The stratum  $\alpha^{(0)}_{-\i,+\i})$ is a common maximal stratum for 
 $\beta_{0,-\i}^{\uparrow}$, $\beta_{-\i,0}^{\downarrow}$
 By the formula (\ref{dF}) the number of rotation of the base vector $t_+$ along the corresponding two segments of the loop (\ref{loop}) are cancelled. 
In particular, if we assume
$\omega(\beta_{0,-\i}^{\uparrow})=0$, from the formula (\ref{+1}) we get $\omega(\beta_{-\i,0}^{\downarrow})=1$. 

The formula of the morphism $F$ on the segments $[\alpha^{(0)}_{-\i,+\i},\alpha^{(0)}_{+\i,+\i}]$, $[\alpha^{(0)}_{+\i,+\i},\alpha^{(0)}_{+\i,-\i}]$
is defined by (\ref{1.1+}),(\ref{1.1-}). One may see that the target homotopies of the vector $t_+$
on the segments are opposite. This proves that the monodromy along the path (\ref{loop})
is the identity. 

In the case all maximal strata in a neighbourhood of $\gamma^{(2)}$ are $R$-strata, or $L$-strata the monodromy of the vector $t_+$ is calculated from the formula (\ref{1.1+}) and is the identity. Theorem
 \ref{t10} is proved. \qed

Let us conclude the result of the subsection by the following theorem.

\begin{theorem}\label{t11}
	There exists a mapping 
	\begin{eqnarray}\label{piF}
		\hat{\pi}: \hat{K}_{\I_b;\circ} \to S^1 = B\Z(\dot A),
	\end{eqnarray}
which is coincides with the required mapping (\ref{key}).
\end{theorem}

\subsubsection{Proof of Theorem \ref{t11}}

Take the equivariant  morphism $F$ constructed in Theorem \ref{t10}. Let us apply Lemma \ref{mor} and calculations in Lemma \ref{lemma2}. \qed

\subsection{Additional properties of the resolution (\ref{piF})}

Recall, $dim(\hat{K}_{\I_b;\circ})=n-k+1, \quad n-k+1 \equiv 1 \pmod{2}$.
The boundary of the polyhedron $\hat{K}_{\I_b;\circ}$ consists of points on all regular strata, which are closed to antidiagonal (\ref{antidiag}).
 Let us consider  of the open polyhedron,
a thin regular neighbourhood of the boundary of $\hat{K}_{\I_b;\circ}$, which is denoted by
$\hat{U}_{\circ} \subset \hat{K}_{\I_b;\circ}$. Analogously, define $U_{\circ} \subset K_{\I_b;\circ}$.

Consider the mapping $\hat{U}_{\circ} \to S^1$, which is the restriction of 
the mapping
$\hat\pi: \hat{K}_{\I_b;\circ} \to S^1$, the quotient of the equivariant mapping (\ref{piF}). Because the structuring group
$\hat{U}_{\circ}$ is reduced to the subgroup  $(b,c) \subset C$,
the mapping
$\hat{U}_{\circ} \to S^1$ contains values in the even index 2 subgroup $2[\pi_1(S^1)] \subset \pi_1(S^1)$. 

\begin{lemma}\label{222}

1. There exists a subpolyhedron
$\hat{U}_{reg,\circ} \subset  \hat{U}_{\circ}$, such that the restriction
of the mapping  $\hat{\pi}$ to the complement 
$\hat{U}_{\circ} \setminus \hat{U}_{reg,\circ}$ is homotopic to the mapping into a point, and the polyhedron itself admits the following mappings: 

\begin{eqnarray}\label{mappings}
\hat{U}_{reg,\circ} \stackrel{\hat{\eta}_{reg}}{\longrightarrow} \hat{P} \stackrel{\hat{\zeta}_{reg}}{\longrightarrow} K(\{b,c\},1),
\end{eqnarray}
where 
$\hat{P}$ is a polyhedron of the dimension  $\frac{(3n-3k+6)}{4}$ (assuming $k \equiv 0 \pmod{4}$)
(i.e. of the dimension a little greater then
$\frac{3}{4}dim(\hat{K}_{\I_b;\circ}) = \frac{3(n-k+1)}{4}$); \ $\hat{\eta}_{reg}$, $\hat{\zeta}_{reg}$ are  mappings,
which are called the control mapping and the structuring mapping, correspondingly, for $\hat{P}$.

2. The covering subpolyhedron
$U_{reg,\circ} \subset  U_{\circ}$, such that the restriction
of the mapping  $\pi$ to the complement 
$U_{\circ} \setminus U_{reg,\circ}$ is homotopic to the mapping into a point, and the polyhedron itself admits the following mappings: 

\begin{eqnarray}\label{mappingsP}
	U_{reg,\circ} \stackrel{\eta_{reg}}{\longrightarrow} P \stackrel{\zeta_{reg}}{\longrightarrow} K(\{b\},1),
\end{eqnarray}
where 
$P$ is a polyhedron of the dimension  $\frac{n-k+2}{2}$, i.e. of the dimension a little greater then
$\frac{1}{2}dim(K_{\I_b;\circ}) = \frac{n-k+1}{2}$, $\eta_{reg}$, $\zeta_{reg}$ are  mappings,
which are called the control mapping and the structuring mapping, correspondingly, for $P$.

\end{lemma}

\subsubsection*{Proof of Lemma \ref{222}}

Proof of Statement 1. Consider in the polyhedron
$\hat{U}_{reg,\circ} \subset \hat{K}_{\I_b;\circ}$: the polyhedron of elementary strata with prescribes coordinate system,
see susbsubsection (\ref{prescr}). Let us define a "nice" coordinate system on each stratum of the polyhedron. A  "nice"  coordinate system is a coordinate system, which is attached to the anti-diagonal boundary, in which  coordinates with residues $-\i$ are singular and associated momenta are small, and coordinates with residue $+\i$ are regular and  associated momenta is defined a hyperface of the corresponding hyperface of the $\lambda$-simplex.

The prescribed coordinate system on an elementary stratum could be different with respect to the "nice" coordinate system. Such a stratum let us call regular.
Let us consider all strata for which the "nice" coordinate system coincides with prescribed coordinate system. This implies that for a regular stratum the number of "nice" residue $+\i$ less or equals to the number of  "nice" angles with the residue
$-\i$. Denote the polyhedron of all "nice" strata by $\hat{Q}_{nice} \subset \hat{U}_{reg,\circ}$ Let us define a $l$-polyhedron, denoted by $(\hat{Q}^{(l)} \cap \Delta_{antidiag}) \subset \Delta_{antidiag}$, which is defined as the closure of all maximal strata in $\hat{U}_{reg,\circ}$, with the antidiagonal  boundary
a full subpolyhedron strata of the deep $l$. Obviously,
\begin{eqnarray}\label{bigcup}
 \bigcup_{l, l \le \frac{n-k+2}{4}} (\hat{Q}^{(l)} \cap \Delta_{antidiag}) \supset (\hat{Q}_{nice} \cap \Delta_{antidiag}). 
\end{eqnarray}

Each polyhedron in (\ref{bigcup}) admits a mapping onto a polyhedron of the dimension $\frac{3n-3k+6}{4}$. By this map $\frac{n-k-2}{4}$ momenta coordinates with
the residue $-\i$ are omitted, but angels with residue $-\i$ are not omitted. Momenta in $\hat{Q}^{(l)}$ are possible only for angles coordinates with the residue $+\i$.

The mapping
$\hat{\pi}$ (\ref{piF}), restricted on   
$\hat{K}_{\I_b;\circ} \setminus \hat{Q}_{nice}$, is homotopic to constant mapping, because
maximal strata have only admissible boundaries. Lemma \ref{222} Statement 1 is proved.

Proof Statement 2. Consider in the polyhedron
$U_{reg,\circ} \subset K_{\I_b;\circ}$ elementary strata with prescribes coordinate system,
see Susbsubsection (\ref{prescr}). Let us define a "nice" coordinate system on each stratum of the polyhedron as in Statement 1.
Let us consider all maximal strata for which the "nice" coordinate system coincides with prescribed coordinate system. This implies that for a regular stratum the number of "nice" residue $+\i$ less or equals to the number of  "nice" angles with the residue
$-\i$ in the full collection of resedues. For a regular stratum the number of "nice" residue $+\i$ less, or equals to the number of all coordinates of the stratum.

Denote by $P \subset \Delta_{antidiag}$ the subpolyhedron, consists of boundaries of all regular strata.  
The polyhedron $P$ is equipped by the projection $U_{reg;\circ} \longrightarrow P$ as a projection, which keeps all angles with the residue $+\i$ and all associated momenta.
By definition $P$ is a subpolyhedron in $S^{n-k+1}/\i$. This implies that the structured mapping 
$\zeta_{reg}$ exists. 

The covering $\pi$ over the mapping
$\hat{\pi}$ (\ref{piF}), restricted on   
$U_{reg,\circ} \setminus \hat{Q}_{nice}$, is homotopic to constant mapping, because
maximal strata have only admissible boundaries.
 Lemma \ref{222} Statement 2 is proved.
\qed

Let us consider the mapping
$c_0$ and consider the polyhedrons  $\N_{\circ}$.  Accordingly to  (\ref{Ib}),
(\ref{Ia}), (\ref{Ib}) a decomposition of this polyhedron into subpolyhedrons is well defined. 
This decomposition corresponds to reductions of the structured group of elementary strata
to the corresponding subgroups in  $D$. 
%Страты соответствуют компонентам (\ref{abels1}), (\ref{abels2}).
For  $k' \ge 2$ the statement about two disjoint components on the polyhedrons by a formal
deformation of equivariant holonomic mapping
$c_0^{(2)}$, is deduced from the following lemma.

\begin{lemma}\label{223}
The classifying mapping 
\begin{eqnarray}\label{5}
	\pi_d = p_{A,\dot{A}}  \circ \eta_{\d}:\hat{K}_{\I_d} \to K(\D,1) \to K(\Z/2,1)
\end{eqnarray} 
for the canonical covering over
 $\hat{K}_{\I_d}$, defined by (\ref{Idc}), where  $\eta_d$ is the structuring mapping, 
$p_{A,\dot{A}}$ is the projection on the residue class of the subgroup  $(A,\dot{A}) \subset \D$,  is lifted to a mapping:
\begin{eqnarray}\label{barpi} 
 \bar{\pi}_{d} : \hat{K}_{\I_d} \to S^1, 
\end{eqnarray}
where on each elementary strata of the polyhedrons the mapping $\bar{\pi}$ is homotopic to a constant mapping. 
\end{lemma}

\subsubsection*{Proof of Lemma \ref{223}}

Let us assume
 $N >> n$ ($N=n+4$ is sufficient) and let us consider a "stabilization" of the mapping
 $c_0$, by a replace in the definition of this mapping the parameter  $n \mapsto N$. 
Then an inclusion of polyhedrons  
\begin{eqnarray}\label{in}
\hat{K}_{\I_d}(n) \subset \hat{K}_{\I_d}(N).
\end{eqnarray}
is well defined. Let us consider a subpolyhedron in
$\hat{K}_{\I_d}(N)$, which is denoted  the same, consists of elementary strata with a 
complete collection of residues af all types $+1,-1,+\i,-\i$.
It is clear, that the inclusion
 (\ref{in}) is well defined after by this "stabilization". Moreover, the inclusion
(\ref{in}) is naturally with respect to structuring mappings, therefore the statement of Lemma is sufficiently to prove for the polyhedrons  $\hat{K}_{\I_{d}}(N)$.

The projections  $p_{b}: \hat{K}_{\I_d}(N) \to \hat{K}_{\I_b}(N)$, $p_{a \times \aa}: \hat{K}_{\I_d}(N) \to \hat{K}_{\I_{a \times \aa}}(N)$ are well defined,
 moreover, the classifying mapping  $\bar{\pi}_{\d}$ is the sum of the two mappings (the mappings into the circle are summed in the right hand side of the formula): 
$$ \bar{\pi}_d(N) = \pi_{a \times \aa} \circ p_{a \times \aa} + \pi_{b} \circ p_{b}, $$
where  $\pi_{a \times \aa}$ is obtained from  (\ref{piF}),
$\pi_{b}$ is  (\ref{piF}), $\pi_{a \times \aa}$ is analogous mapping, constructed in Theorem \ref{resol2}. Lemma \ref{223} is proved. \qed

Let us conclude investigations of properties of polyhedrons
 $\N_{\b}$, $\N_{\bf{a}\times \aa;\circ}$ by the following calculation of characteristic numbers, which is required for the next subsection and Theorem
 \ref{T11}. The fundamental class of the polyhedrons $\N_{\I_{a \times \aa};\circ}$, $\N_{\I_{a \times \aa};\circ}$ are well defined, because the polyhedrons is a union of maximal strata, and each maximal strata is equipped with the fundamental class.
 The same is satisfied for an arbitrary  submanifold of an elementary strata, which is generic with respect to the boundary of its maximal strata.

\begin{lemma}\label{l22}
1. The Hurewich image of the fundamental class of the polyhedrons $\N_{\I_{a \times \aa};\circ}$
in the group $H_{n-2k+1}(K(\D,1))$, $dim(\N_{\circ})=n-2k+1$ %в формуле (\ref{abels1})
is trivial. The image of the fundamental class of the closed polyhedrons  $\N_{\I_b}$ $(\ref{anti})$
% в формуле (\ref{abels2})
by the structuring mapping  $\eta$ is translated into the generator in  $H_{n-2k+1}(K(\D,1);\Z/2)$.

2. The Hurewitcz image of the fundamental class of an arbitrary codimension $2s$ subpolyhedron in $\N_{\I_{a \times \aa};\circ}$ (which is
invariant with respect to the involution  of the double covering  $\N_{\I_{a \times \aa};\circ} \longrightarrow \hat{\N}_{\I_{a \times \aa};\circ}$ ($K_{\I_b;\circ} \to \hat{K}_{\I_b;\circ}$) in the group $H_{n-2k-2s+1}(K(\D,1))$  is trivial.
\end{lemma}

\subsubsection*{Proof of Lemma \ref{l22}}
Proof of Statement 1. The polyhedron $\N_{\b}$ is decomposed into a collection of maximal strata, each stratum is a manifold and contains the fundamental class. The structure of maximal strata 
is agree to the structure of the polyhedron $K$. In $K$ there exists the only anti-diagonal stratum (with residues $\i$) and some number of maximal strata with a mixed structure of the imaginary residues. 

It is sufficiently to consider the transfer of the structuring mapping $\eta$ over each stratum separately and calculate the image the the fundamental class by the transfer
 $\eta_{A \times \dot{A}}^!$.  For the antidiagonal maximal strata of the polyhedron  $\N_{\b;\circ}$ we get that the image of the fundamental cycle in 
$K(A \times \dot{A},1)$ by the projection into 
$K(A \times \dot{A},1) \to K(A,1)$
is translated into the fundamental class in 
 $H_{n-k}(K(A,1);\Z_2)$, $k \equiv 0 \pmod{2}$. The generic elementary stratum
(which is not the antidiagonal maximal stratum) is the double covering 
over the corresponding stratum of the polyhedron  $\hat{K}_{b}$. 
The transfer with respect to the subgroup
 $(A,\dot{A}) \subset \D$ is shrieked into the covering  in the tower of the subgroups $(A,\dot{A}) \subset C \to \D$, therefore the fundamental; class of the stratum is mapped by  $p_{A\times \dot{A} \to A} \circ \eta^!_{A,\dot{A}}$  in
 $H_{n-k}(K(A,1);\Z_2)$ to the trivial homology class.  
 
 Statement 2 is proved analogously. Lemma \ref{l22} is proved. \qed

\subsection{Proof of Lemma \ref{7} \label{lem7}}
Let us use the construction for  the self-intersection polyhedrons of the formal extension $c^{(2)}$ of the mapping $c$, described in Subsubsection \ref{cc}. By Theorem \ref{resol2} there exists
a mapping (\ref{barpi}), the covering over this mapping gives the integer lift of the structuring mapping, Claim --1, Definition \ref{abstru}. The control condition, Claim --2, Definition \ref{abstru}, is a corollary of Theorem \ref{222} Claim --1 and Proposition \ref{iso}.

We may apply the lemma for the case $d^{(2)}$ is an equivariant generic vertical deformation of $c^{(2)}$. We get an inclusion (\ref{inc}), which keeps strata.
 Lemma \ref{7} is proved. \qed

	\section{Classifying space for iterated self-intersections of $\I_{\a \times \aa}$-framed immersions \label{iterated}}

\subsection{Preliminary constructions and definitions \label{III}}

		Let $n_1$, $n_2$ be two odd positive integers, $n_1 + n_2 = n$,
	$k_1,k_2$ be two even positive integers, $n_1>k_1$, $n_2>k_2$.
	Consider the Cartesian product
	 \begin{eqnarray}\label{barc0c1}
	 	\begin{array}{c}
	G=c_{0,1} \times c_{0,2}: \RP^{n_1-k_1} \times \RP^{n_2 - k_2} \to \\
	 S^{n_1-k_1} \times S^{n_2 - k_2} \subset \R^{n_1} \times \R^{n_2} = \R^n \\
	 \end{array}
	\end{eqnarray}
	of two  mappings (\ref{c0}) in the corresponding dimensions. 
	 Let us denote by
	$I_1 \times I_2$ the Cartesian product of the involutions by the $\i$-multiplication on the factors:
	$$ I_1: \RP^{n_1-k_1} \to \RP^{n_1-k_1}; \quad I_2: \RP^{n_2-k_2} \to \RP^{n_2-k_2}$$ on the space
	$\RP^{n_1-k_1} \times \RP^{n_2-k_2}$; the involutions commute. The involutive transformation on $\RP^{n_1-k_1} \times \RP^{n_2 - k_2}$, which permutes the factors, is denoted by $T^{int}$. We may assume that $n_1-k_1 = n_2 - k_2$ is wery great. With this assumption
	the transformation   $T^{int}$ is an involution and is free outside the diagonal. We use the same construction for the target $\R^{n_1} \times \R^{n_2}$. The following dyagrame is commutative:
	\begin{eqnarray}\label{Tint}
		\begin{array}{ccc}
		\RP^{n_1-k_1} \times \RP^{n_2-k_2} & \longrightarrow & \R^{n_1} \times \R^{n_2} \\
		  \downarrow T^{int}   &         &   \downarrow T^{int}  \\
		 \RP^{n_2-k_2} \times \RP^{n_1-k_1} & \longrightarrow & \R^{n_2} \times \R^{n_1}. \\ 
		 \end{array}
	\end{eqnarray}	
	
	By the pair of involutions  $(I_1,I_2)$  let us	  define another pair of involutions
	\begin{eqnarray}\label{II}
	  (\tilde{I},\hat{I}).
	 \end{eqnarray}
The first involution  $\tilde{I}=diag(I_1,I_2)$ is the diagonal involution.
	 Denote $\hat{I} = I_1 \pmod{\tilde{I}}$. Obviously, $I_1 \pmod{\tilde{I}} = I_2 \pmod{\tilde{I}}$ and one may consider the involution $\hat{I}$ as an involution on the factorspace of $\tilde{I}$-involution. The pair of the involution $(\tilde{I},\hat{I})$, where $\hat{I}$ is a bottom (secondary) involution, $\tilde{I}$ in a top (primary) involution, depends not of an order of the Cartesian coordinate and describes the  factor.  The mapping (\ref{barc0c1}) is a $\tilde{I}$-double covering over the following mapping:

	 \begin{eqnarray}\label{c0c1}
		\begin{array}{c}
			\tilde{G}: (\RP^{n_1-k_1} \times \RP^{n_2 - k_2})/\tilde{I} \to \\
			S^{n_1-k_1} \times S^{n_2 - k_2} \subset \R^{n_1} \times \R^{n_2} = \R^n. \\
		\end{array}
	\end{eqnarray}

The mapping (\ref{c0c1}) is the $\hat{I}$-double covering over the following mapping:
\begin{eqnarray}\label{hatc0c1}
	\begin{array}{c}
		\hat{G}: ((\RP^{n_1-k_1} \times \RP^{n_2 - k_2})/\tilde{I})/\hat{I} \to \\
		S^{n_1-k_1} \times S^{n_2 - k_2} \subset \R^{n_1} \times \R^{n_2} = \R^n, \\
	\end{array}
\end{eqnarray}
where the involution in the target in the diagrams (\ref{c0c1}), (\ref{hatc0c1}) are the identity.	
	
Because $$((\RP^{n_1-k_1} \times \RP^{n_2 - k_2})/\tilde{I})/\hat{I} \cong S^{n_1-k_1}/\i \times S^{n_2-k_2}/\i$$ is the product of the two $\Z/4$-lenses, the mapping (\ref{hatc0c1})  is rewritten in the following more clear form:

\begin{eqnarray}\label{hatc0c12}
	\begin{array}{c}
		\hat{G}: S^{n_1-k_1}/\i \times S^{n_2 - k_2}/\i \to \\
		S^{n_1-k_1} \times S^{n_2 - k_2} \subset \R^{n_1} \times \R^{n_2} = \R^n. \\
	\end{array}
\end{eqnarray}

\subsubsection{Configuration space \label{chi}}

Recall, $\Gamma_{\circ}(X) = (X \times X \setminus diag(X))/T_X$, $T_X$ is the involution, the permutation  of the coordinated in the Cartesian product, see (\ref{99}) for a particular case
$X = \RP^{n-k}$. We consider the case $X=\RP^{n_1-k_1} \times \RP^{n_2-k_2}$ and denote
by  $\bar{\Gamma}_{\circ}(\RP^{n_1-k_1} \times \RP^{n_2-k_2})$ the canonical $2$-sheeted covering over  the deleted product $\Gamma_{\circ}(\RP^{n_1-k_1} \times \RP^{n_2-k_2})$
(an unordered pair of points is covered by  two оrdered pairs of points). 
There is a $(T_{\RP^{n_1-k_1} \times \RP^{n_2-k_2}};T_{\R^n \times \R^n})$-equivariant mapping: 
$$ \bar{G}^{(2)}: \bar{\Gamma}_{\circ} (\RP^{n_1-k_1} \times \RP^{n_2-k_2}) \to \bar{\Gamma}_{\circ}(\R^{n} \times \R^n),$$
which is a formal extension of the mapping $G$ (\ref{barc0c1}).

In a more detailed form this mapping looks as following:
\begin{eqnarray}\label{barG2}
	\begin{array}{c}
\bar{G}^{(2)}: ((\RP^{n_1-k_1} \times \RP^{n_2-k_2}) \times (\RP^{n_1-k_1} \times \RP^{n_2-k_2})) \setminus diag \\
\to ((\R^{n} \times \R^n) \times (\R^{n} \times \R^n)) \setminus diag.
\end{array}
\end{eqnarray}
The involutions $T^{int}=(T_{\RP^{n_1-k_1} \times \RP^{n_2-k_2}};T_{\R^n \times \R^n})$ permute the factors in the source deleted product an in the target. 
The involution $T^{int}$ on the 2-sheeted covering 
$$\Gamma_{\circ}(\RP^{n_1-k_1} \times \RP^{n_2-k_2}) \to
\Gamma_{\circ}(\RP^{n_1-k_1} \times \RP^{n_2-k_2})$$
 is well defined; the analogous denotation for the target. The factormapping $\bar{G}^{(2)}/T^{int}$ is denoted by $G^{(2)}$.
 On the deleted product $\bar{\Gamma}_{\circ}(\RP^{n_1-k_1} \times \RP^{n_2-k_2})$ the  involutions $(\tilde{I},\hat{I},T^{int})$ are defined, the definition of  $(\tilde{I},\hat{I})$ is the extensions of (\ref{barc0c1}) to  the deleted product.
An involution and its corresponding extension are denoted by the same for short. 
The pair of involutions $(\bar{I},\hat{I})$ corresponds to the involutions defined in subsection \ref{III}.

The Cartesian products  $G^{(2)}$ by (\ref{barG2})
is  $(\tilde{I},\hat{I},T^{int})$-equivariant mapping, where the involutions $\tilde{I},\hat{I}$ on the target of the diagram is the identity.	.

\subsubsection{Iterated self-intersection polyhedron of $\bar{G}$ (\ref{barc0c1}), diagram description}  

On the secondary deleted product
$\bar{\Gamma}_{\circ}(\bar{\Gamma}_{\circ} (\RP^{n_1-k_1} \times \RP^{n_2-k_2}))$, we get  the following collection
\begin{eqnarray}\label{collectinv}
 (\tilde{I},\hat{I},T^{int};T^{ext})
\end{eqnarray}
 of  involutions, where the first two involutions are defined by (\ref{II}), $T^{int}$ is the interiore permutation, defined by (\ref{Tint}),
 $T^{ext}$ is analogously defined using the exteriore permutation.

	Let us define an  $(\bar{I},\hat{I},T^{int},T^{ext})$-equivariant, $(\bar{I},\hat{I},T^{int})$-equivariant polyhedra
	\begin{eqnarray}\label{barKK}
		\overbrace{\NN_{\circ}} \subset \bar{\Gamma}_{\circ}\bar{\Gamma}_{\circ}(\RP^{n_1-k_1} \times \RP^{n_2-k_2}),
	\end{eqnarray}
\begin{eqnarray}\label{barbarKK}
	\overline{\NN_{\circ}} \subset \bar{\Gamma}_{\circ}\bar{\Gamma}_{\circ}(\RP^{n_1-k_1} \times \RP^{n_2-k_2})/T^{int},
\end{eqnarray}
	 which is called the iterated self-intersection of $(I_1,I_2)$-equivariant mapping  $(\ref{barc0c1})$. 
	 
	 At the first step of the definition, let us define the $T^{int}$-equivariant polyhedron of of formal self-intersection point of the top mapping (\ref{barc0c1}): 
	 \begin{eqnarray}\label{barK}
	 	\bar{\N}_{\circ} \subset \bar{\Gamma}_{\circ}(\RP^{n_1-k_1} \times \RP^{n_2-k_2}),
	 \end{eqnarray}
where the "bar" means that this polyhedron is  $T^{int}$-equivariant:
\begin{eqnarray}\label{NNd}
	\begin{array}{c}
	\bar\N_{\circ} = \{ (\x_1 \times \y_1;\x_2 \times \y_2) \in \bar{\Gamma}_{\circ} : \x_1 \times \y_1 \ne \x_2 \times \y_2, \\
	 G(\x_1 \times \y_1)=G(\x_2 
	\times \y_2) \}.
	\end{array}
\end{eqnarray}
The definition is analogously to the formula (\ref{Nd}).
% Accordingly with a previous denotation, we get $(\y,\x) = \chi(\x,\y)$.

	 At the second step of the definition, let us consider the equivariant polyhedron  (\ref{barK}) as the origin of the mapping inside the external (secondary) deleted product.  Define the (\ref{collectinv})-equivariant  polyhedron (\ref{barKKdef}) with the quotient (\ref{KK}). We use the formula (\ref{Nd})-type again: we restrict the mapping $\bar{G}^{(2)}$ (\ref{barG2}) to the polyhedron (\ref{barK}) at the origin and consider an additional secondary self-intersection condition in the third line of the formula (\ref{barKKdef}):
\begin{eqnarray}\label{barKKdef}
	\begin{array}{c}
	\overbrace{\NN_{\circ}} = \\
	 \{((\x_{1,1} \times \y_{1,1};\x_{1,2} \times \y_{1,2}) \times (\x_{2,1} \times \y_{2,1};\x_{2,2} \times \y_{2,2})) \subset \bar{\Gamma}_{\circ}\bar{\Gamma}_{\circ}(\RP^{n_1-k_1} \times \RP^{n_2-k_2}), \\
	(\x_{1,1} \times \y_{1,1};\x_{1,2} \times \y_{1,2}) \ne (\x_{2,1} \times \y_{2,1};\x_{2,2} \times \y_{2,2}); \\
	G^{(2)}(\x_{1,1} \times \y_{1,1};\x_{1,2} \times \y_{1,2}) = G^{(2)}(\x_{2,1} \times \y_{2,1};\x_{2,2} \times \y_{2,2}) \}.
	\end{array}
\end{eqnarray}	 
	 Of course, the primary conditions (\ref{NNd}) in the formula (\ref{barKKdef})  for points in  $\bar{\Gamma}_{\circ}(\RP^{n_1-k_1} \times \RP^{n_2-k_2})$ is satisfied.
	 Recall, $T^{int}$, $T^{ext}$ are the involutions in the iterated deleted product, we get the quotient:
	 
	\begin{eqnarray}\label{KK}
	\NN_{\circ} \subset \bar{\Gamma}_{\circ}(\bar{\Gamma}_{\circ}(\RP^{n_1-k_1} \times \RP^{n_2-k_2})/T^{int})/T^{ext}.
\end{eqnarray}	 
The polyhedron (\ref{barKK}) is the $(\bar{I},\hat{I})$-equivariant $(T^{int};T^{ext})$-covering over (\ref{KK}), which is an analog of the cannonical covering .	 
In the formulas (\ref{barKKdef}), (\ref{quadruple}) below, we associate the first lower superscript with $T^{ext}$ and the second lower superscript with $T^{int}$.
	 
\subsubsection{Iterated self-intersection polyhedron of $\bar{G}$ (\ref{barc0c1}), coordinate description} 	 
	 
The coordinate description 	 
	  is the following. Using the formula (\ref{barKKdef}), let us consider all ordered quadruples in 	$\overbrace{\NN}$:
\begin{eqnarray}\label{quadruple}
	\begin{array}{c} ((\x_{1,1},\x_{1,2}),(\y_{1,1},\y_{1,2});(\x_{2,1},\x_{2,2}),(\y_{2,1},
		\y_{2,2}));\\
		 \x_{p,q}  \in \RP^{n_1-k_1}; \quad  \y_{p,q}  \in \RP^{n_2-k_2}, \quad p=1,2; \ q= 1,2
		 \end{array}
	\end{eqnarray} 
  with conditions:
$$G(\x_{1,1},\y_{1,1}) = G(\x_{2,1},\y_{2,1})  =
G(\x_{1,2},\y_{1,2}) = G(\x_{2,2},\y_{2,2}) \in \R^{n_1} \times \R^{n_2}.$$

The space (\ref{barKK}) is the space of all ordered quadruples  (\ref{quadruple}) with the condition:
$\x_{1,1} \neq \x_{2,1}$, or $\y_{1,1} \neq \y_{2,1}$; and $\x_{1,2} \neq \x_{2,2}$, or $\y_{1,2} \neq \y_{2,2}$ (the internal diagonal condition).
$\x_{1,1} \neq \x_{1,2}$, or $\y_{1,1} \neq \y_{1,2}$; and $\x_{2,1} \neq \x_{2,2}$, or $\y_{2,1} \neq \y_{2,2}$
(the external diagonal condition for external 1-1;2-2 coordinates);
$\x_{1,1} \neq \x_{2,2}$, or $\y_{1,1} \neq \y_{2,2}$; and $\x_{2,1} \neq \x_{1,2}$, or $\y_{2,1} \neq \y_{1,2}$
(the external diagonal condition for external 1-2;2-1 coordinates).
The formulas (\ref{quadruple}), (\ref{barKKdef}) coincide.
	 
\subsection{Stratifications}

	\subsubsection{The stratification of the Cartesian product of two spheres $ J_1\times J_2$, $dim(J_1)=n_1-k_1$, $dim(J_2)=n_2-k_2$}

Let $n_1$, $n_2$ be odd, $k_1, k_2$ be even positive number, $n_1+n_2=n$, $k_1+k_2=k$.
This stratification is the Cartesian product of the two stratifications of the factors, described by (\ref{stratJ}).
Let us define the space % $\mathcal{k}$
$J_1^{[s]} \times J_2^{[t]}$ as a subspace in $J$ as the union of all subspaces  $J(x_{\k_1}, \dots, x_{\k_s}) \times J(y_{\k_1}, \dots, y_{\k_t})\subset J_1 \times J_2$.

Therefore, the following double stratification 
\begin{eqnarray}\label{stratJ1}
	J^{(r_1)}_1\times J^{(r_2)}_2  \subset \dots \subset J^{(0)}_1 \times J^{(0)}_2,
\end{eqnarray}
of the space $
J_1 \times J_2 $ is well defined. 
For a given stratum the number
$ r_1-s + r_2 - t$, $2(r+s)-1=n-k$, of omitted coordinates is called the deep of the stratum.

\subsubsection{The stratification of $\RP^{n_1-k_1} \times \RP^{n_2-k_2}$}

Let us define the stratification of the product $\RP^{n_1-k_1} \times \RP^{n_2-k_2}$  of projective spaces is defined by the product of the  stratifications of factors defined in subsubsection \ref{stratRP}.
Denote the maximal open cell
$
p^{-1}(J(x_1, \dots, x_{r_1}) \times J(y_1, \dots, y_{r_2})$ by 
\begin{eqnarray}\label{stprod}
U(x_1, \dots, x_{r_1}) \times U(y_1, \dots y_{r_2}) \subset
S^{n_1-k_1}/-1 \times S^{n_2 - k_2}/-1. 
\end{eqnarray}
This cell is called an elementary stratum of the deep $0$.
A point on an elementary stratum of an arbitrary deep $(r_1-s,r_2-t)$
is defined by the collection of spherical coordinates
$(\check x_{\k_1}, \dots, 
	\check x_{\k_s}, \check y_{\k_1}, \dots \check y_{\k_t}; l_1 \times l_2)$, where $\check x_{\k_i}$,  $\check y_{\k_j}$ are  coordinates
on a circle, which is covered the circles of the join with the numbers $\k_i,\k_j$; $l_1 \times l_2$ are coordinates on
$(s-1)\times(t-1)$-dimensional subsimplex of the product of the joins.

\subsubsection{The stratification of the polyhedron  
	$	\overbrace{\NN_{\circ}}$ (\ref{barKK})\label{seccKK}}

The polyhedron
$	\overbrace{\NN_{\circ}}$ is the disjoin union 
of elementary strata, the strata of polyhedron corresponds to  the strata
(\ref{stprod}) in the target.
Let us denote this strata by
\begin{eqnarray}\label{compstrat}
	\begin{array}{c}
	\overline{KK}^{[r_1-s]}(\k_{1,1},\k_{2,1} \dots, \k_{1,s},\k_{2,s}) \times \overline{KK}^{[r_2-t]}(\k_{1,1},\k_{2,1}, \dots, \k_{1,t},\k_{2,t}) =\\
		\overline{KK}^{[r_1-s;r_2-t]}(\k_{1,1},\k_{2,1} \dots, \k_{1,s},\k_{2,s};\k_{1,1},\k_{2,1}, \dots, \k_{1,t},\k_{2,t}) \\
	1 \le s_i
	\le r_i, i=1,2; \quad r_1+r_2=r.  
	\end{array}
\end{eqnarray}

Let us describe an  elementary stratum   (\ref{compstrat}) by means of coordinate system.
For the simplicity of denotation, let us consider the case
$s_1=r_1$, $s_2=r_2$ this is the case of maximal elementary stratum.  Analogous formula exists for points on deeper elementary strata (\ref{compstrat}).

Assume a quadruple  of points
$((\x_{1,1}, \x_{1,2}),(\y_{1,1},\y_{1,2});(\x_{2,1}, \x_{2,2}),(\y_{2,1},\y_{2,2}))$ determines a point on (\ref{compstrat}), let us fix for a quadruple a lift 
$((\check \x_{1,1}, \check \x_{1,2}),(\check \y_{1,1}, \check \y_{1,2});(\check \x_{2,1}, \check \x_{2,2});(\check \y_{2,1}, \check \y_{2,2}))$ on the covering sphere
$S^{n_1-k_1}$,  each point is mapped into the corresponding point of the quadruple  by the projection $ S^{n_1-k_1} \times S^{n_2-k_2} \to \RP^{n_1-k_1} \times \RP^{n_2 - k_2}$.
With respect to constructions above, denote by
\begin{eqnarray}\label{collection}
	\begin{array}{c}
 (\check x_{1,1; i}, \check x_{1,2; i}), (\check y_{1,1; j}, \check y_{1;2; j});  \\
(\check x_{2,1; i}, \check x_{2,2; i}) ; (\check y_{2,1; j}, \check y_{2,2; j}), \\
 i = 1, \dots, r_1; \quad j=1, \dots, r_2 \\ 
\end{array}
\end{eqnarray}
the collection of spherical coordinates of the corresponding point. 
A spherical coordinate determines a point on the circle
with the same number
$i,j$, the circle is a covering over the corresponding coordinate  
$J_1(i) \subset J_1$ ($J_2(j) \subset J_2$) in the join. 
%The gauge transformation of the coordinates, which determines the same point in 

Recall, that a coordinate in a quadruple 
with a prescribed number $i$ ($j$) of the corresponding angle coordinate determines a point in a prescribed fiber of the standard cyclic 
$\i$-covering  $ S^1
\to S^1 / \i $.

Collections of spherical coordinates  (\ref{collection}) of points 
are considered with respect to common elementary  transformations.  A  common transformation of $x,y$-collections with respect to the first and second superscripts is possible, this transformation is described by a monodromy over a closed path
and describes the representation of the fundamental group of the configuration space, which is called the structuring group. Recall, that each $\x$ and $\y$ coordinate of a point is defined up to undependable multiplication by $-1$.  Denote by $\D\D$ the group of the order $32$ which is
the product of the two dihedral groups $\D\D=\D_{\x} \times \D_{\y}$ with a common residue class of the permutation preserving subgroup $A \times \dot{A} \subset \D$ in the each factor.   
Collection of spherical coordinates of a point on the configuration space  is transformed up to 
$8\cdot 4 \times 8 \cdot 4 \times 2=128 \cdot 16$ different transformations. (Up to transformation of the group  $\D\D \rtimes \D\D$ of the order $2048$.)

\subsubsection{A classification of strata of the polyhedron  
	$	\overbrace{\NN_{\circ}}$ (\ref{barKKdef}) by the residues collection \ref{seccKK}}
Elementary strata  of the polyhedron (\ref{barKKdef}),
accordingly with collections of coordinates, are divided into several types, a full description of  subtypes of a type is not required. Let us introduce types
\begin{eqnarray}\label{type}
\{	\I_b, \I_{a \times \aa},  \I_{d} \}%,  \I_b\I_{a \times \aa}, \I_{a \times \aa}\I_b \}
\end{eqnarray}	
of strata of the polyhedron 
(\ref{barKKdef}).

For an arbitrary elementary strata  (\ref{compstrat}) a residues collection of pairs (a number of an $x$ or $y$  residues in a pair is defined by a first superscript and a pair itself in the collection is considered over a second superscript):
\begin{eqnarray}\label{resedu}
	\begin{array}{c}
\{v_{1,\k_i}=\check x_{1,1;\k_i} \check x^{-1}_{1,2;\k_i}, v_{2,\k_i}=\check x_{2,1;\k_j} \check x^{-1}_{2,2;\k_j}; \\
w_{1,\k_j}=\check y_{1,1;\k_j} \check y^{-1}_{1,2;\k_j} ,w_{2,\k_j}=\check y_{2,1;\k_j} \check y^{-1}_{2,2;\k_j}\},\\
 1 \le \k_i \le r_1-k_1, \quad 1 \le \k_j \le r_2-k_2.
 \end{array}
\end{eqnarray}
is well defined. 

The residues collection (\ref{resedu}) for an elementary stratum of the polyhedron  (\ref{KK}) are well defined up to:

$\bullet$ a common conjugation $\{v_{p;\k_i}\},\{w_{p;\k_j}\}$-collection with a given first superscript $p=1,2$; this transformation of residues corresponds to one of the translations
	\begin{eqnarray}\label{gaugep2}
	(\x_{p,1},\y_{p,1};\x_{p,2},\y_{p,2}) \mapsto (\x_{p,2},\y_{p,2};\x_{p,1},\y_{p,1}), \quad p=1, {\rm{or}}, \- p=2;
	\end{eqnarray}

$\bullet$ up to the translation $p \mapsto p+1$, which corresponds to the translation
	\begin{eqnarray}\label{gauge12}
		(\x_{1,1},\y_{1,1};\x_{1,2},\y_{1,2}) \mapsto (\x_{2,2},\y_{2,2};\x_{2,1},\y_{2,1});
		\end{eqnarray}

$\bullet$ up to the multiplication of each element in the $\{v\}$, or, $\{w\}$-collection with a given superscript $p$ by $-1$  undependably. 
	 
	 %A relation between residues in
%the collection (\ref{resedu}) for an elementary stratum depends of a type of the stratum, see
%below the classification (\ref{IIIb}) - (\ref{Idsub}).
Let
 In the case  residues collection for all coordinates of an elementary stratum  takes values in
$\{+\i,-\i\}$ (correspondingly, in $\{+1,-1\}$), one say that an elementary stratum is of the type
$\I_b$ %, or, $\I_{a \times \aa}\I_b$ 
(correspondingly, of the type $\I_{a \times \aa}$).  
In the case there are at least $v$, or $w$ residues in the collection of the complex type
$\{+\i,-\i\}$, and of the real type $\{+1,-1\}$ simultaneously, one shall say
about an elementary stratum of the type $\I_{d}$. In particular, in the case $v_{1,1} \in \{\pm \i\}$, $w_{1,1} \in \{\pm 1\}$, we get a stratum of the type $\I_d$.

Let us introduce subtypes 
\begin{eqnarray}\label{subtypes}
	\{\I\I_{b}, \ \I\I_{a \times \aa}, \ \I\I_{d} \} 
\end{eqnarray}
of strata of the type $\I_b$.
Let us introduce the collection 
\begin{eqnarray}\label{crosscoll} 
 \{vv_{1;\k_i},vv_{2;\k_i} ww_{1;\k_j}  ww_{2;\k_j} \} 
 \end{eqnarray}
of cross-residues:
\begin{eqnarray}\label{crossresedu}
	\begin{array}{c}
		\{vv_{s;\k_i}=\check x_{1,s;\k_i}^{-1} \check x_{2,s;\k_i}, \\
		ww_{s;\k_j}=\check y_{1,s;\k_j}^{-1} \check y_{2,s+1;\k_j}\},\\
		s = 1,2; \quad 1 \le \k_i \le r_1-k_1, \quad 1 \le \k_j \le r_2-k_2. \\
	\end{array}
\end{eqnarray}

The collections $\{v_{p;\k_i}\},\{w_{p;\k_j}\}$, $\{vv_{s;\k_i}\},\{ww_{s;\k_j}\}$ are  dependent, because we get a relation:
\begin{eqnarray}\label{eq11}
 v_{1;\k_i}v_{2;\k_i} = vv_{1,\k_i}vv_{2,\k_i}, \qquad w_{1;\k_j}w_{2;\k_j} = ww_{1,\k_j}ww_{2,\k_j}. 
\end{eqnarray}

A priory for an elementary strata the collection (\ref{crosscoll}) is defined up to a general permutation  (\ref{gaugep2}), (\ref{gauge12}) and by the $-1$-multiplication. 
 In particular, the gauge (\ref{gaugep2}) changes real-imaginary types of residues.  
  An appropriate covering over a special 2-sheeted covering polyhedron, which is called the regularization, see subsubsection \ref{regularization}, is defined. A cross-residues in the collection (\ref{crossresedu}) for the regularization is a priori imaginary, or, real.

Let us define subtypes (\ref{subtypes}) of elementary strata the type $\I_b$. In the case cross-residues in $vv$-collection  take values in
$\{+\i,-\i\}$ and cross-residues in $ww$-collection  take values in
$\{+1,-1\}$, or, if  cross-residues in $vv$-collection  take values in
$\{+1,-1\}$ and cross-residues in $ww$-collection  take values in
$\{+\i,-\i\}$, one say that an elementary strata of the type $\I_b$ is of the subtype $\I\I_b$.
In the case cross-residues in $vv$-collection  take values in
$\{+\i,-\i\}$ and cross-residues in $ww$-collection  take values in
$\{+\i,-\i\}$, or, if  cross-residues in $vv$-collection  take values in
$\{+1,-1\}$ and cross-residues in $ww$-collection  take values in
$\{+1,-1\}$, one say that an elementary strata of the type $\I_b$ is of the subtype $\I\I_{a \times \aa}$.
In the case cross-residues in $vv$-collection, or in $ww$-collection  take values in
$\{+\i,-\i\}$ and  in
$\{+\i,-\i\}$ simultaneously, one say that an elementary strata of the type $\I_b$ is of the subtype $\I\I_{d}$.
A special case, when an elementary (a non-maximal) stratum contains only $x$-coordinates ($y$-coordinates are cancelled), 
or an elementary stratum contains only $y$-coordinates ($x$-coordinates are cancelled) is possible.
A subtype (\ref{subtypes}) of an arbitrary elementary strata of the type $\I_b$ (\ref{type}) is defined.

The polyhedron (\ref{KK}), the factorpolyhedron of (\ref{barKKdef}),  is decomposed into disjoin union of the following three polyhedra, the superscript denotes the type:
\begin{eqnarray}\label{IIIb}
	KK_{\I_b} \subset KK_{\circ},
\end{eqnarray}
\begin{eqnarray}\label{IIIa}
	KK_{\I_{a \times \aa; \circ}} \subset KK_{\circ},
\end{eqnarray}
\begin{eqnarray}\label{IIId}
	KK_{\I_d; \circ} \subset KK_{\circ}.
\end{eqnarray}

The subpolyhedron (\ref{IIIb}) is a disjoint union of the following subsubpolyhedra, using the corresponding secondary  subtype (\ref{subtypes}):
\begin{eqnarray}\label{Ibsub}
	KK_{\I\I_b} \subset KK_{\I_b},
\end{eqnarray}
\begin{eqnarray}\label{Iasub}
	KK_{\I\I_{a \times \aa} } \subset KK_{\I_b}, 
\end{eqnarray}
\begin{eqnarray}\label{Idsub}
	KK_{\I\I_d} \subset KK_{\I_b}. 
\end{eqnarray}
 
\subsection{Proprties of subpolyhedra (\ref{IIIb}),(\ref{IIIa}),(\ref{IIId}) }\label{propr}

\subsubsection{A regularization of the polyhedron (\ref{Ibsub}) \label{regularization}}
According subsubsection \ref{seccKK}, one may see that the structuring 
 group $\D\D \tilde\times \D\D$ of the polyhedron (\ref{KK}) is of the order $2^{11}$. 
This group is defined as the semi-direct product 
 \begin{eqnarray}\label{DDDD}
 \D\D \tilde\times \D\D \cong (\D_{A \times \dot{A}} \D) \tilde\times (\D_{A \times \dot{A}} \D)
 \end{eqnarray}
 
 In this formula the group $\D\D \cong \D_{A \times \dot{A}} \D$
is the structuring group of polyhedron, which is the secondary $T^{ext}$-covering over (\ref{Ibsub}).
This group is an index $2$ subgroup in $\D \times \D$ (the subgroup of the order $32$ in the group of the order $64$),
which  is given by the formula $(x_1 \times x_2), x_1 \cong x_2 \pmod{A \times \dot{A}}$, $A \times \dot{A} \subset \D$ (in the case coordinates in a pair  $(x_1,x_2)$ are permuted, the coordinates in the pair $(y_1,y_2)$ are permuted).

The group (\ref{DDDD}) is defined as the skew-product of the two subgroups
$\D\D$, a permutation of the factor corresponds to the the generator $T^{ext}$  of the external double covering.  

Over the subpolyhedron (\ref{Ibsub}) the structuring group (\ref{DDDD}) admits an index $2^3$ reduction, this is the structuring group of the order $2^{8}$:
\begin{eqnarray}\label{R1}
	R_1 \subset \D\D \tilde \times \D\D.
\end{eqnarray}
The group (\ref{DDDD}) keeps  the configuration (\ref{quadruple}). Let us consider a different  configuration of non-ordered pairs as the following:

\begin{eqnarray}\label{quadrupleBIS}
	\begin{array}{c} ((\x_{1,1},\x_{1,2}),(\y_{1,1},\y_{2,1});(\x_{2,1},\x_{2,2}),(\y_{2,1},
		\y_{2,2}));\\
		 \x_{p,q}  \in \RP^{n_1-k_1}; \quad  \y_{p,q}  \in \RP^{n_2-k_2}, \quad p=1,2; \ q= 1,2
		 \end{array}
	\end{eqnarray} 
\begin{eqnarray}\label{quadrupleBISS}
	\begin{array}{c} ((\x_{1,1},\x_{1,2}),(\y_{1,1},\y_{2,2});(\x_{2,2},\x_{2,1}),(\y_{2,1},
		\y_{1,2}));\\
		\x_{p,q}  \in \RP^{n_1-k_1}; \quad  \y_{p,q}  \in \RP^{n_2-k_2}, \quad p=1,2; \ q= 1,2
	\end{array}
\end{eqnarray} 
Consider transformations of (\ref{DDDD}), which also keeps the configurations (\ref{quadrupleBIS}), (\ref{quadrupleBISS}). 
This is an additional symmetry, which is described by the index $4$ subgroup (\ref{R1}).
Define the  subgroup (\ref{R1}) of transformations of (\ref{quadruple}), which keeps
the configurations (\ref{quadruple}), (\ref{quadrupleBIS}), (\ref{quadrupleBISS}) and, additionally, permutes points in the pairs $(\x_{1,1},\x_{1,2})$,  $(\x_{2,2},\x_{2,1})$ simultaneously. The analogous condition for the pairs $(\y_{1,1},\y_{1,2})$,  $(\y_{2,2},\y_{2,1})$ is also satisfied, but gives no additional restrictions.

Over the subpolyhedron (\ref{Ibsub}) we get a monodromy of an each coordinate, which is
a common value $\pmod{\pi} \in S^1/-1$ for the both coordinates in a pair
$(x_{1,\k(i)}),(x_{2,\k(i)})$,$(y_{1,\k(j)}),(y_{2,\k(j)})$. This proves that the monodromy keeps real-imaginary types of cross-residues:
$vv_{1,\k_i},vv_{2,\k_i},ww_{1,\k_j}ww_{2,\k_j} \pmod{\pm 1}$.

Recall, for the 2-sheeted covering polyhedron over $KK_{\I\I_b}$ (\ref{Ibsub}), associated with the subgroup (\ref{R1}) resedues (\ref{resedu}) are immaginary and cross-residues
(\ref{crosscoll}) are real. 
%\begin{eqnarray}\label{crossrel}
% \{vv_{1;\k_i},  vv_{2;\k_i};  ww_{1;\k_i},   ww_{2;\k_i}\} 
%\end{eqnarray}	
Take the index $4$ subgroup $R \subset R_1$ by the following 
equations:
\begin{eqnarray}\label{eq13}
v_{1;\k_i}=v_{2;\k_i}; \quad w_{1;\k_j}=w_{2,\k_j}. 
\end{eqnarray} 

Because of the equation (\ref{eq11}),
the equations (\ref{eq13}) can be presented in the following equivalent form:
\begin{eqnarray}\label{eq14}
vv_{1;\k_i}=vv_{2;\k_i}; \quad ww_{1;\k_j}=ww_{2,\k_j}. 
\end{eqnarray}  

Let us define the index $4$ subgroup of the order $2^{6}$
\begin{eqnarray}\label{R}
R \subset R_1,
\end{eqnarray}
which consists of transformations of $R_1$ (\ref{R1}), which preserves the equations (\ref{eq13}).
% which is defined  by elements $[d_1 \times d_2,\tau] \in \D\D \tilde  \times \D\D$,
%$d_1, d_2 \in \D\D$,  $d_1 = d_2 \pmod{b^2}$, $\tau$ is given by the  cyclic permutation of the dihedral blokes, namely, by the twisted permutation which changes a %primary $\x$-coordinates in a secondary pair and  a primary $\y$-coordinate  in the pair with the scaling by the element $b \in \I_b$.

The subgroup (\ref{R}) keeps pies of resedues $v_{1,\k_i}$, $v_{2,\k_i}$ the two resedues is translated into conjugated simulataneously. The same property for the resedues  $v_{1,\k_i}$, $v_{2,\k_i}$.

Define the regularization over (\ref{Ibsub})
\begin{eqnarray}\label{RIbsub}
RKK_{\I\I_b} \mapsto KK_{\I\I_b}
\end{eqnarray}
as the quadruple covering with respect to the index $2^5$ subgroup (\ref{R}) in (\ref{DDDD}). The  denotation
\begin{eqnarray}\label{barRIbsubb}
\pi_{RKK}: \overbrace{RKK_{\I\I_b}} \mapsto \overbrace{KK_{\I\I_b}}
\end{eqnarray}
for the  canonical iterated $4$-sheeted covering over the covering (\ref{RIbsub}) is used.
Transformations of the covering space (\ref{barRIbsubb}) is described by the group of the order $2^4$.

\begin{lemma}\label{RK}
	There existe a codimentsion $2$ subpolyhedron in $KK_{\I\I_b}$ ower which the the covering (\ref{RIbsub}) is trivial.
\end{lemma}

\subsubsection*{A geometrical description of the regularization mapping (\ref{RIbsub}), proof of Lemma \ref{RK}) \label{re}}

The regularization mapping (\ref{RIbsub}) is defined as a covering, associated with the index $4$ subgroup (\ref{R}). Geometrically, the regularization mapping is an inclusion of a subpolyhedron and is defined as the following. This geometrical regularization is a more general, because the construction is well-defined over the ambient polyhedron (\ref{IIIb}).

Take the projection $p_1 \times p_2$ (\ref{p_i}), $i=1$, see below.
Because the stabilization keeps the structuring group of (\ref{IIIb}), see subsubsection 
\ref{stabilizz}, one may assume that the images of the $\x-,\y-$-coordinate projections $p_1, p_2$ are inside the polyhedra (\ref{Ib}). Take the composition with the resolution mappings 
(\ref{piF}) for the coordinates. We get the following mapping:
\begin{eqnarray}\label{piFF}		
		(p_1 \times p_2) \circ (\pi_1 \times \pi_2): KK_{\I_b} \to K_{\I_b;1} \times K_{\I_b;2} \to S^1 \times S^1 = B\Z(A) \times B\Z(\dot A).
\end{eqnarray}
Take the self-intersection points of the target mapping of the composition (\ref{piFF}) and define the subpolyhedron of
the self-intersection by $RKK_{\I\I_b} \subset KK_{\I\I_b}$. We may see that the subpolyhedron
admits the required reduction of the structured group as described by (\ref{RIbsub}). Take a closed loop in $RKK_{\I\I_b}$ and assume that the inverse image of the loop by $p_1 \times p_2$ is con-connected. Then the each covering loop projects  by $p_i \circ \pi_i, \quad i=1,2$  to a common element on the circle $B(\Z)$. This determines the required reduction (\ref{R}) of the structuring group. \qed

\subsubsection{A central extension of the structuring group and the involution $\tau\tau_c$}\label{tautau}
Recal, the antidiagonal the polyhedron $K_{\I_b}$ is a double covering over $\widehat{K}_{\I_b}$, where the structuring group of the covering is described by the central extension of $\D$ by the element $c$, $c^2=-1$, see subsubsection \ref{tauc}.

We define the analoug of the involution on $RKK_{\II_b}$, deneted by $\tau\tau_c$.
This construction may be apply to $x$ and $y$ coordinates independently. The lift of the
involution $\tau_c$ on $K_{\I_b}$ to the involution $\tau\tau_c$ on $KK_{\I_b}$ is free
outside the thin diagonal $\Delta_{anti}$ (see subsection \ref{anti} ). The involution
is defined by the translation of ciclic coordinates of strata. By this translation the collection of primery resedues remains unchanged. 
The collection of secondary resedues $vv_i \in \{\pm1\}$ is translated into the opposite.
By this trick the involution $\tau\tau_c$ over $x$- and $y$- coordinates changes strata into corresponding pairs.

\subsubsection{Antidiagonal strata}\label{anti}

For the polyhedron (\ref{IIIb}) 
let us define exceptional (thin) antidiagonal strata by the formula:
\begin{eqnarray}\label{antidiag}
	\begin{array}{c}
		\Delta_{anti}^{[0]}= \{(\x_{1,1}, \x_{1,2}),(\y_{1,1},\y_{1,2});(\x_{2,1}, \x_{2,2}),(\y_{2,1},\y_{2,2})\}: \\
		\i\x_{1,1} = \x_{1,2}, \ \i\x_{2,1} = \x_{2,2}, \ \i\x_{1,1} = \x_{2,1}, \\ 
		\i \y_{1,1} = \y_{1,2}, \  \i \y_{2,1} = \y_{2,2}, \ \i\y_{1,1} = \y_{2,2}.\\	
	\end{array}
\end{eqnarray}
The relation on $\Delta_{anti}$ (comp. with the formula (\ref{crossresedu}) for cross-resedues) shows that  of $\x$, $\y$-coordinates are not symmetric with  primary $1$ and $2$-coordinates
on the configuration space.
For thin antidiagonal deeper strata formulas are analogous. %Obviously, the polyhedron (\ref{antidiag}) is inside (\ref{aantidiag}).

The union of all antidiagonal strata (see below for the definition \ref{antidiagT} of the thick antidiagonal strata $\Delta\Delta_{anti}$). The thick antidiagonal strata  inside
$RKK_{\II_b}$ is defined with the condition (we get 3 equations in (\ref{antidiag}) instead of 2 equations in (\ref{antidiagT}) ):
\begin{eqnarray}\label{extra}
	vv_i=+1 (over \quad {\rm{x}}); \qquad ww_j = +1 (over \quad {\rm{y}}).
	\end{eqnarray}
The conditios for $X$- and $y$- coordinates may be considered independently.

The union of all thick antidiagonal strata determines the polyhedron (\ref{Ibsub})
 and the following sequence of polyhedrons:
\begin{eqnarray}\label{antidia}
\Delta_{anti} \subset \Delta\Delta_{anti} \subset  RKK_{\I\I_b} \subset KK_{\I\I_b} \subset KK_{\I_b}. 
\end{eqnarray}

A general structuring group $\D\D \rtimes \D\D$ over the subpolyhedron (\ref{antidiag}) admits
a reduction into the diagonal subgroup $\{b \times 1; 1 \times \dot{b}\} = \Z/4 \times \Z/4$, $b \in \I_{b}, \bb \in \I_{\bb}$. 
Below we prove that the structure group over the subpolyhedrons $RKK_{\I\I_b}$ (\ref{RIbsub}) outside the thin antidiagonal is 
$[\Z/2 \times (\I_b \rtimes \Z(A))] \times [\Z/2 \times (\I_{\bb} \rtimes \Z(\dot{A}))]$,
where $\I_b \rtimes \Z(A)$, $\I_{\bb} \rtimes \Z(\dot{A})$ are the groups as in the upper left vertex of the diagram (\ref{eq:square2}).

\subsubsection{A stabilization of $KK_{\circ}$ \label{stabilizz}}

%Of an arbitrary elementary stratum of 

By a stabilization of the polyhedron  $\NN$ we means an inclusion of the polyhedron of the size $r_1 \times r_2$ into very large polyhedron of a size 
$R$, when by a stabilization an arbitrary strata of $KK_{\I_d; \circ}$ is included into a strata of a large polyhedron with a full collection of residues $\{v_i,w_j\}$.
 The stabilization for an each coordinate is analogous to (\ref{in}).

 The mapping (projection)
\begin{eqnarray}\label{p_i}
p_i: \overbrace{\NN}/T^{int} \to K_1(r_1) \times K_2(r_2), \quad i=1,2
\end{eqnarray}
is well defined. This projection 
 for $i=1$ omits the second superscript (for $i=2$ omits the first superscript) of a pair of points are well defined. The mapping (\ref{p_i}) is compressed with respect to 
 $\overbrace{\NN}/T^{int} \to \NN$. There is an inclusion:
\begin{eqnarray}\label{inclu}
 K_1(r_1) \times K_2(r_2) \subset K(r_1+2) \times K(r_2+2), 
\end{eqnarray}
which is an analog of the stabilization inclusion (\ref{in}). By the stabilization the image of an elementary simplex of $K_1(r_1) \times K_2(r_2)$. The construction gives the required stabilisation of $p=p_1(\overline{\NN}/T^{int})$. The stabilization preserves the structuring group of
(\ref{IIId}).

We may define the stabilization of the polyhedron $\overline{\NN}/T_{i+1}$ taking the partition (\ref{quadrupleBIS}) and define the projections $q_i: \overline{\NN}/T_{i+1} \to K_1(r_1) \times K_2(r_2) \subset K_1(R) \times K_2(R)$ analogously to  $p$. 
The thick antidiagonal consists of points:
\begin{eqnarray}\label{antidiagT}
\begin{array}{c}
\Delta\Delta_{anti}^{[0]}= \{(\x_{1,1}, \x_{1,2}),(\y_{1,1},\y_{1,2});(\x_{2,1}, \x_{2,2}),(\y_{2,1},\y_{2,2})\}: \\
\i\x_{1,1} = \x_{2,1}, \ \i\x_{1,2} = \x_{2,2}, \\ 
\i \y_{1,1} = \y_{2,2}, \  \i \y_{1,2} = \y_{2,1}.\\	
\end{array}
\end{eqnarray}
inside the subpolyhedron (\ref{IIIb}). This thick antidiagonal (\ref{antidiagT}) contains the antidiagonal (\ref{antidiag}). Formally speaking, the thick antidiagonal contains secondary diagonal strata by the formulas:
\begin{eqnarray}\label{diagT}
 \{(\y_{1,1},\y_{1,2})=\emptyset, (\y_{2,1},\y_{2,2})=\emptyset\}, \quad or, \quad
 \{(\x_{1,1}, \x_{1,2})=\emptyset;(\x_{2,1}, \x_{2,2})=\emptyset\}.
 \end{eqnarray}
This subpolyhedron (\ref{diagT}) is delated from (\ref{barKK}). One may see, formally, this
subpolyhedron is inside the boundary of (\ref{Idsub}). By the resolution,  this polyhedron is 
eliminated, we may assume that  the boundary of (\ref{Ibsub}) does not  contain this polyhedron.

The stabilization preserves the structuring group of
(\ref{Idsub}).
The stabilization of $\NN$ is used  to cancel the polyhedron (\ref{IIId}) by a codimension $2$ formal vertical deformation, comp. with Lemma  \ref{223}.  The stabilizations is used in Lemma \ref{l13} with a cancelation of the polyhedron (\ref{Idsub}).

\subsection{A resolution of $RKK_{\I\I_b}$}\label{rezol}

Denote $r_1^+ = r_1+2$, $r_2^+ = r_2+2$ 
%(in an analogous construction for $RKK_{\I\I_d}$ we get $r_1^+=r_1+4, r_2^+=r_2+4$). 
Take the polyhedron $RKK_{\I\I_b}^+$, which is construct by the stabilization $\NN_{\circ}(r_1,r_2) \mapsto \NN_{\circ}(r_1^+,r_2^+)$ with additional $4$-coordinates. Take a generic inclusion
 $RKK_{\I\I_b}^+ \supset RKK_{\I\I_b}$, called the stabilization. After the stabilization the image of polyhedron  $RKK_{\I\I_b}$ contains only maximal strata
 with  full $x$-,$y$-coordinate collections $\{-\i,+\i\}$ of residues $v_{1;\k_i},w_{1;\k_j}\}$
 and full collection $\{+1,-1\}$ of cross-resedues. 
 \begin{eqnarray}\label{crossrel1}
 \{vv_{1;\k_i},  vv_{2;\k_i};  ww_{1;\k_j},   ww_{2;\k_j}\}, \quad 1 \le i \le r_1^+, 1 \le j \le r_2^+. 
\end{eqnarray}

Let us apply  Lemma \ref{222}, statement 2  for the $x$-,$y$-coordinates to get the resolution of $RKK_{\I\I_b}^+$.

%\subsubsection{Relations for cross-residues for polyhedron $RKK_{\I\I_b}$ (\ref{RIbsub})
%	\label{recr}}
%From this the following property for the subpolyhedron (\ref{barRIbsub}) is deduced.

Take the quotient $(\RP^{n_1-k_1} \times \RP^{n_2-k_2})/\{I_1,I_2\}\}$ (for the definition $I_1,I_2$, see subsubsection  \ref{III}), take the standard complex conjugation
involution of this quotient, denote it by
$$ Conj_1 \times Conj_2: (\RP^{n_1-k_1} \times \RP^{n_2-k_2})/\{I_1,I_2\} \to (\RP^{n_1-k_1} \times \RP^{n_2-k_2}).$$
Denote by 
$$C_{Conj_1 \times Conj_2} = ((\RP^{n_1-k_1} \times \RP^{n_2-k_2})/\bar{I}) \rtimes_{Conj_1 \times Conj_2} S^1\times S^1, $$
the torus of the involution $Conj_1 \times Conj_2$.

There exist a mapping:
\begin{eqnarray}\label{s12}
	s:  RKK_{\I\I_b} \setminus \Delta\Delta_{anti} \longrightarrow  C_{Conj_1 \times Conj_2}
\end{eqnarray}
with the source is outside the thick antidiagonal,   the coordinate mappings (\ref{piF}) near the antidiagonal are, generally speaking, disconnected
This equivariant mapping (using the stabilization (subsubsection \ref{stabilizz})  of the polyhedron we may assume tar all strata outside $\Delta_{anty}$ have mixed collextion of primery resedues $\{+\i, -\i\}$) is extended to the following mapping outside the this diagonal:
\begin{eqnarray}\label{s13}
	s:  RKK_{\I\I_b} \setminus \Delta_{anti} \longrightarrow  C_{Conj_1 \times Conj_2}
\end{eqnarray}
This mapping satisfies  special boundary conditions on the thin antidiagonal, anagously . 

\subsubsection{The canonical mapping (\ref{s12})}\label{can}
 The mapping for ais defined as following. Let us present the construction with the assumption that the polyhedron (\ref{IIIb}) is stable in the sense of subsubsection \ref{stabilizz} and, also we may assume that the numbers $r_1^+, r_2^+$ of angles coordinates of a maximal strata are equal $3 \pmod{4}$, see Definition \ref{3pmod}.
 The canonical mapping for an original polyhedron is determined using the composition with the stabilization mapping.
For an elementary strata the full subcollection of the cross-residues is divided into two subcollections: 
\begin{eqnarray}\label{VV+-}
V_+=\{v_{1;\k_i}=+\i\},  V_- =\{v_{1;;\k_i}=-\i\}, \quad V= V_+ \cup V_-;
\end{eqnarray}
\begin{eqnarray}\label{WW+-}
	W_+=\{w_{1;\k_i}=+\i\}, W_- =\{w_{1;;\k_i}=-\i\}, \quad W= W_+ \cup W_-.
\end{eqnarray}
Because of the equation (\ref{eq14}), the second collection of cross-residues is unrequired.

 This subcollection of the coordinates with the  momenta determines the coordinate projections (\ref{s12}) onto $(\RP^{n_1-k_1} \times \RP^{n_2-k_2})$; restricted on an elementary stratum by identity on  angles $V_+$ and by the conjugation on angles $V_-$.

The mappings (\ref{s12}) are defined using mappings (\ref{piF}), see Theorem \ref{secoequil},  for each $\x, \ i=1; \y, \  i=2$-projection. The mapping (\ref{piF}) considered for each coordinate along the circle coordinate $S^1$ is not well defined near antidiagonal, but the monodromy of this mapping takes even values. Along the circle generator the mapping $\pi_{i}$, $i=1,2$ changes the collection $V$ to its complement conjugated subcollection. This gives 
the mapping (\ref{s12}), where the projection on the torus $p_C: C_{Conj_1 \times Conj_2}  \to S^1 \times S^1$
corresponds to the composition $p_C \circ s_i = \pi_{i}$, $i=1,2$. 

The mappings $s_1 \times s_2$ are not covering mappings over a common mapping (\ref{RIbsub}).
Theorem \ref{secoequil} below defines a %(partial defined) 
"secondary" \- mapping:
\begin{eqnarray}\label{sig12}
	\sigma:   RKK_{\I\I_b} \setminus \Delta\Delta_{antidiag}  \longrightarrow CC_{Conj^2}, \quad i=1,2,
\end{eqnarray}
where 
$$ Conj^2: \RP^{n_1-k_1}/I_1 \times \RP^{n_2-k_2}/I_2 \to \RP^{n_1-k_1}/I_1 \times \RP^{n_2-k_2}/I_2$$
is the pair of the complex conjugations over the coordinates, $CC_{Conj}$ is the torus of this two involutions.
The source of the mapping $(\ref{sig12})$  is considered  outside the thick antidiagonal (\ref{antidiagT}).

Denote by 
\begin{eqnarray}\label{primcov}
\pi:	\overbrace{KK_{\I_d;\circ}} \longrightarrow \overline{KK_{\I_d;\circ}}
\end{eqnarray}	
the canonical primery double covering (interiore in the formula  (\ref{KK}) over the secondary double covering (exteriore in the formula  \ref{KK})  over the polyhedron (\ref{IIId}).
Denote by 
\begin{eqnarray}\label{primcov}
	p:	\overbrace{RKK_{\I\I_d}} \longrightarrow RKK_{\I\I_d}
\end{eqnarray}	
the restriction of the canonical secondary double covering over the regularized (see subsubsection \ref{re}) polyhedron $(\ref{IIId})$.

The following theorem is the analoug of Lemma \ref{223}. This is required in Lemma \ref{l13}.

\begin{theorem}\label{rezbb}
	
	1.   The canonical primary covering $(\ref{primcov})$ over the polyhedron $\overline{KK_{\I_d;\circ}}$   is a pull-back of the standard covering over the circles.
	The formula of the covering is invariant with respect to the  involution $\chi: (\x,\y) \mapsto (\y,\x)$ (subsubsection \ref{subchi}).

	2.    The  canonical secondary covering $\overbrace{RKK_{\I\I_d}} \longrightarrow RKK_{\I\I_d}$ is a pull-back of  the product of the two coordinate standard coverings over the circle. The $\chi$-involution. The construction is skew-invariant with respect to the  involution $\chi: (\x,\y) \mapsto (\y,\x)$.

\end{theorem}

\subsubsection*{Proof of Theorem \ref{rezbb}}

Statements are  corollaries of Lemma \ref{223}. Apply a stabilization, described in subsubsection  \ref{stabilizz}. We may prove the lemma with the assumption that all elementary strata admits $\x$ and $\y$-residues of all different type. Let us consider the  projection 
$$ p: \overbrace{KK_{\I\I_d}} \longrightarrow \overline{K_{\I_d;\circ}} \subset \overline{K}_{\circ;1} \times \overline{K}_{\circ;2}, $$ 
%$$ p_2: \overbrace{KK_{\I\I_d;\circ}}  \longrightarrow \overline{K}_{\circ;2} \supset %\overline{K_{\I_d;2}}, $$
on the polyhedrons  (\ref{Nd}), where $\overline{K_{\I_d;\circ,i}} \subset \overline{K}_{\circ,i}$, $i=1,2$ are  the subpolyhedrons, defined by the formula (\ref{Id}). 
%Define $\overbrace{KK_{\I\I_d;i}} \subset  \overline{K}_{\circ;i}$ the
 %inverse images of the subpolyhedron $\overline{K_{\I_d;i}}$ by projections $p_i$.
 %Obviously, 
 %$$ \overbrace{KK_{\I\I_d;1}}  \cup  \overbrace{KK_{\I\I_d;2}} = %\overbrace{KK_{\I\I_d;\circ}}. $$
 %Because of the stabilization property we get:
 %$$ \overbrace{KK_{\I\I_d;1}}  = \overbrace{KK_{\I\I_d;2}} = \overbrace{KK_{\I\I_d;\circ}}. $$
 The canonical (primary) covering over $\overbrace{KK_{\I\I_d;i}}$ is the pull-back of
 the coordinate canonical coverings over  $\overline{K_{\I_d;\circ,1}} \times \overline{K_{\I_d;\circ,2}}$.
By Lemma  \ref{223} each canonical coordinate covering is the pull-back of the covering over the circle.

The construction of the covering is given by the primary residues (\ref{resedu}), which are invariant with respect to the involution $\x \mapsto \y$. The formula of the residues used two coordinates primary indexes, which are invariant.
Statement 1 is proved. 
 
	Proof of Statement 2. Consider the canonical secondary covering $\overbrace{RKK_{\I_b}} \longrightarrow RKK_{\I_b}$ and restrict this covering over the subpolyhedron $RKK_{\I\I_d}
	\subset  KK_{\I_b}$. 
	%The total space of this restriction coincides with the secondary covering
	%$\overline{KK}_{\I\I_b,\I_d} \longrightarrow KK_{\I\I_b,\I_d}$ over the polyhedron
	%(\ref{Idsub}).
	 Let us apply a stabilization and decompose, analogous with the proof Statement 1, the polyhedron as following:
	\begin{eqnarray}\label{dec}
		\overline{KK}_{\I\I_d} = \overline{KK}_{\I\I_d;1} \cup \overline{KK}_{\I\I_d;2},
		\end{eqnarray}
		where $\overline{KK}_{\I\I_d;i}$, $i=1,2$ is a polyhedron defined as the inverse  image of the subpolyhedron $\bar{K}_{\I_d;\circ} \subset \overline{K}_{\circ}$
		by the coordinate projection:  
		$$p_i: \overline{KK}_{\I\I_d} \to \bar{K}_{\I_d;\circ},$$
		using the collection of secondary residues 	and the stable condition.	
	
	%	\begin{eqnarray}\label{dec2}
	%		\overline{KK}_{\I\I_b,\I_d} = \overline{KK}_{\I\I_b,\I_d;1} = %\overline{KK}_{\I\I_b,\I_d;2},
	%	\end{eqnarray}

		Each polyhedron in the right-hand side of (\ref{dec}) is a double covering of the 
		projection on the first index $1$ in (\ref{c}).
		This decomposition is well defined, because for $\overline{KK}_{\I\I_b}$   real cross-residues
		$vv_{1,\k_i}$, $vv_{2,\k_i}$ are equal by (\ref{eq14}). This proves that the first index $1$ is global over the subpolyhedron (\ref{dec}) with external $T^{ext}$-equivariant structure (\ref{collectinv}). An analogous
		construction for the second coordinate  uses $ww_{1,\k_j}$, or,  $ww_{2,\k_j}$  cross-residues.

	The canonical (secondary) covering over $KK_{\I\I_d;i}$, $i=1,2$ is a pull-back of
	the canonical covering over  $K_{\I_d;\circ}$; by Lemma  \ref{223} this covering is the pull-back of the covering over the circle.
	The construction of the covering is given by the secondary cross-residues (\ref{crossresedu}), which are skew-invariant with respect to the involution $\x \mapsto \y$. The formula of the cross-residues used two secondary coordinates, the involution $\chi$ permutes the secondary coordinates.
	Statement 2 is proved. Theorem \ref{rezbb} is proved. \qed

\subsubsection{Properties of $\overline{RKK}_{\I\I_b}$}

Let us consider a sequence (\ref{antidia}). Let us define a splitting
\begin{eqnarray}\label{splitt}
SP: RKK_{\I\I_b} \setminus \Delta\Delta_{anti}	 \longrightarrow K_{\I_b,1} \times K_{\I_b,2} \setminus (\Delta_1 \times K_{\I_b,2} \cup K_{\I_b,1} \times \Delta_2).
\end{eqnarray}	 
%which is $ T$-equivariant  with respect to the secondary involution in the preimage and the %canonical involution in the image. 
The mapping (\ref{splitt}) (as usual, the formula is presented for a maximal strata) in the spherical coordinates is  given by (comp. subsubsection \ref{can}): 
\begin{eqnarray}\label{splitting}
\begin{array}{c}
 (\check x_{1,1; i}, \check x_{1,2; i}), (\check y_{1,1; j}, \check y_{1;2; j}); 
(\check x_{2,1; i}, \check x_{2,2; i}) ; (\check y_{2,1; j}, \check y_{2,2; j}) \mapsto \\
(\check x_{1,1; i}, \check x_{2,1; i}) \times (\check y_{1,2; j}, \check y_{2;2; j}).
\end{array}
\end{eqnarray}
 %The splitting (\ref{splitt}) is a well defined $PL$-mapping, because  
 %the permutation of the second index (the permutation by $\tau$ in the subgroup (\ref{RR})) %gives the $b$-image of the same point because of the identity:
 %$$ -\i \x_{1,1} =  \x_{2,1}, \ -\i \x_{1,1} = \x_{1,2}, \ \x_{1,1} = \x_{2,2}; \ or $$    
 %$$ \i \x_{1,1} =  \x_{2,1}, \ -\i \x_{1,1} = \x_{1,2}, \ -\x_{1,1} = \x_{2,2}. $$
 %The equation for the $\y$ coordinate is analogous. 
  % Recall, that polyhedrons in the the source and the target of (\ref{splitt}) are considered %outside the thick antidiagonal.

%\begin{theorem}\label{secoequi1}
%\end{theorem}

Statement 1 of the following theorem is the analoug of Theorem \ref{t11}. Statement 2
is the analoug of Lemma  \ref{222}. Statement 3 is used in  Lemma \ref{l13}.

\begin{theorem}\label{secoequil}
\[ \]
1. There exists the $\hat{I},T^{ext}$-equivariant resolution mapping 
\begin{eqnarray}\label{Piff}
	\pi = \pi_1 \times \pi_2: \overline{RKK}_{\I_b} \setminus \Delta\Delta_{anti}  \to S^1 \times S^1,
\end{eqnarray}
which is the Cartesian product of  the two coordinate mappings (\ref{piF}).

2.  The mapping (\ref{Piff}), restricted to   $\Delta\Delta_{anty} \cap RKK_{\I_b}$ is continuous in  a complement of
a polyhedron  of dimension $[\frac{r_1 + r_2 -2}{4}]+2$.  

%The singular polyhedron is inside
%$$\Delta^{r_1-1} \times \Delta^{r_2-1} \setminus (\Delta^{r_1-1} \times %\Delta^{r_2-1})_{\circ} \simeq S^{r_1+r_2-3}.$$

3. Assume we have a generic polyhedron $Q \subset RKK_{\I_b}$.
Let $RKK_{\I\I_b}(Q) \subset RKK_{\I\I_b}$ be a subpolyhedron which consists of quadruples
which are inside $Q$. Then the singular points of the restriction of the mapping (\ref{Piff}) to $KK_{\I\I_b}(Q)$ is a codimension $\frac{3(r_1+r_2-2)}{4}-2$ subpolyhedron  $SS(Q) \subset Q$.

\end{theorem}

\subsubsection*{Proof of Theorem \ref{secoequil}}
Proof 1. The required mapping (\ref{Piff}) outside the antidiagonal is induced
from coverings over the coordinate mappings (\ref{piF}) using the stabilization (\ref{stabilizz}) and the equivariant  mappings (\ref{splitt}).  The coordinate mappings corresponds to
to $\I_b$ and $\II_b$ cyclic structures, or,  to resedues and cross-resedues of $\x$ and $\y$ coordinates, subsubsection \ref{can}. When we take a generic cyclic  transformation of $x-$ coordinates, imaginary $W$ resedues remains unchanged, but the corresponding collection residues $V$  is translated into the conjugated. (The corresponding secondary $vv$-collection remain unchanged.)  This observation is required to well formulate boundary conditions along the antidiagonal.

Proof 2.  Let us investigate boundary conditions for the mapping (\ref{Piff}).
Take the polyhedron $RKK_{\I_b} \setminus \Delta\Delta_{anti}$. Denote by $S$ the  bondary of the polyhedron $RKK_{\I_b}$ near the thick antidiagonal.  The image of the momentum mapping $\lambda$ on $S$ is closed to the codimension $1$  skeleton in
$\Delta^{r_1-1} \times \Delta^{r_2-1}$. 

Denote by $\Pi_1 \subset RKK_{\I_b}$, $\Pi_2 \subset RKK_{\I_b}$ 
the polyhedron, where the mapping $\pi_1$, $\pi_2$ has ramifications.
The polyhedron $\Pi_1$ is described using the collection of primery resedues $\{v_1\} \in \pm\i$ and the collection of the secondary resedues  $\{vv_1\} \in \pm 1$ over the $x$-coordinate. The ramification is along a subpolyhedron in $\Delta\Delta_{anti}$, where
the collection of resedues $v_1$ is divided coordinates into the halph into two subcollections: $+\i$ and $-\i$, and the collection of secondary cross-resedues $vv_i$ in divided the collections of coordinates with $v_i=+\i$ and with $v_i=-\i$  into the two secondary subcollections  with $vv_i$ $\{+1\}$ and with $\{-1\}$. (Outside the subpolyhedron
of primery ramification monodromy of resedues is the constant, inside the subpolyhedron of primery ramification and outside the subsubpolyhedron of secondary ramification the monodromy is the constant.)

The polyhedron $\Pi_1$ is the union of $\Pi_1^{\pm\i}$ and $\Pi_1^{\pm1}$.  Define   $\Pi_1^{\pm\i}$ as the collection of strata of $RKK_{\I_b}$, for which the number of primery resedues $+\i$ equals to the number of primery resedues $-\i$ up to $\pm 1$. Define $\Pi_1^{\pm1}$ for which the number of secondary resedues $+1$ equals to the number of secondary resedues $-1$ up to $\pm 1$. 
Outside $\Pi_1=\Pi_1^{\i} \cup \Pi_1^{\pm1}$ the mapping (\ref{Piff})  (including the factor $\pi_2$) is homotopic to a constant. The resedues type is preserved along a closed path and a coordinate system
transforms by the identity.
The same construction holds for $\Pi_2=\Pi_2^{\pm\i} \cup \Pi_2^{\pm1}$.

Consider $\Pi_1 \cap \Pi_2 \subset RKK_{\I_b}$. The complement of this polyhedron 
 $\overline{\Pi_1} \cup \overline{\Pi_2}$ contains the common intersection 
  $\overline{\Pi_1} \cap \overline{\Pi_2}$.  The restrictions  of the mapping (\ref{Piff})
to each polyhedron and to each complement is constant. Because the intersection $\overline{\Pi_1} \cap \overline{\Pi_2}$ is connected, the restriction of the mapping (\ref{Piff}) to the union  $\overline{\Pi_1} \cup \overline{\Pi_2}$ is homotopic to a constant.

Then for the closure of $\Pi_1 \cup \Pi_2$ in $S$ we get 
 $dim(\Pi_1 \cup \Pi_2 \cap S) = [\frac{r_1+r_2}{4}]+2$.

Outside the polyhedron $\Pi_1 \cap \Pi_2$ the mapping $\pi$ is homopopic to a constant mapping. 

Proof 3. Because $Q \subset RKK_{\I_b}$ is codimension $4q$ generic, this implies that the polyhedron $KK_{\I\I_b}(Q)$ is generic with respect to  strata of $KK_{\I_b}$.
This means that each strata  of $KK_{\I\I_b}(Q)$ is a codimension $4q$ in the corresponding cell of $KK_{\I\I_b}$. The singular polyhedron consists of strata of the codimension $\frac{3(r_1+r_2-2)}{4}-2$ in $KK_{\I_b}$, then the codimension of $SS(Q) \subset Q$ is the same.

Theorem \ref{secoequil} is proved. \qed

Let us formulate an analog of Lemma \ref{l22}. 
An analogue of the involution $\tau_c$ from Subsection \ref{tauc} on the polyhedron $KK_{\I\I_b}$  is required. Consider the projection (\ref{splitt}). Coordinate two involutions $\tau_{c;i}$, $i=1,2$ (subsubsection \ref{tautau}) 
determine the involution on $KK_{\I\I_b}$ outside the thin antidiagonal. The diagonal of $\tau_{c;i}$-involutions is the involution on $KK_{\I\I_b;\circ}$, which is free outside the thick antidiagonal. Denote this involution by $\tau\tau_c$. On the polyhedron 
$KK_{\I\I_{a \times \aa}}$, the involution $\tau\tau_c$ is analogously defined.

%\begin{corollary}
%	There exists a mapping (\ref{sig12}) with singularities described by Lemma \ref{222}
%	for the pair of the mapping (\ref{piF})  in the target of (\ref{splitt}).
%	\end{corollary}

\begin{lemma}\label{l24}
	
	1. The Hurewitcz image of the fundamental class of the antidiagonal polyhedrons $\Delta_{antydiag}$ (\ref{antidia}) in
	$KK_{\I\I_{a \times \aa};\circ}$
	in the group $H_{n_1+n_2-k_1-k_2}(K(\D \rtimes \D,1);\Z/2)$, $dim(KK_{\I\I_{a \times \aa}})=n_1+n_2-k_1-k_2$ %(\ref{abels1})
	is the generator $t_{n_1-k_1} \otimes t_{n_2-k_2} \in H_{n_1+n_2-k_1-k_2}(K(\Z/4 \times \Z/4 ;\Z/2)) \to H_{n_1+n_2-k_1-k_2}(K(\D \rtimes \D,1);\Z/2)$.

	2. The Hurewicz image by the structured homomorphism of the fundamental class of an arbitrary codimension $2s$ subpolyhedron in $\NN_{\I\I_b}$ $($in $\NN_{\I\I_{a \times \aa}}$$)$, which is
	invariant with respect to the involution $\tau\tau_c$ of the double covering
	 $\NN_{\I\I_{b}} \longrightarrow \widehat{\NN}_{\I\I_{b}}$
	  $(\NN_{\I\I_{a \times \aa}} \longrightarrow \widehat{\NN}_{\I\I_{a \times \aa}})$  in the group $H_{n_1+n_2-k_1-k_2-2s}(K(\D \rtimes \D,1))$  is trivial.
\end{lemma}

\subsubsection*{Proof of Lemma \ref{l24}}

Proof  is analogous to Lemma \ref{l22}. \qed

%. On the extension we may apply Theorem \ref{resol} formally for each coordinate. Theorem %\ref{secoequi} is proved. \qed

Let us prove an analog of Lemma \ref{222} Statement 2 for polyhedrons $RKK_{\I\I_b}$.

Recall, $dim(KK_{\I\I_b})=n_1-k_1+n_2-k_2, \quad n_1,n_2 \equiv 1 \pmod{2}$, $k_1,k_2 \equiv 0 \pmod{2}$.
The boundary of the polyhedron $RKK_{\I\I_b}$ consists of points on all regular strata, which are closed to anti-diagonal (\ref{antidiag}).
Let us consider  of the open polyhedron,
a small regular neighbourhood of the thin antidiagonal polyhedron $\Delta_{anti} \subset RKK_{\I\I_b}$, which is denoted by
$U_{\circ} \subset RKK_{\I\I_b} \setminus \Delta_{anti}$.

Consider the mapping $U_{\circ} \to S^1 \times S^1$, which is the restriction of 
  the equivariant mapping (\ref{Piff}). Because the structuring group
$U_{\circ}$ is reduced to the subgroup  $\I_b \times \I_{\bb} = (b,\dot{b}) $,
the mapping
$U_{\circ} \to S^1$ contains values in the even index $2\times 2$ subgroup $2[\pi_1(S^1)]\times 2[\pi_1(S^1)] \subset \pi_1(S^1) \times \pi_1(S^1)$.

\subsubsection{The popyhedron $KK_{\I\I_b}$ with an  involution $\chi$  \label{subchi}}

In this subsection we construct a polyhedron $KK_{\I\I_b}$ as the union of coordinate subpolyhedra by the formula (\ref{unify}).
On this polyhedron 
the involution $T^{int}$ (\ref{Tint}) is invariant.

Take the union of the two polyhedra 
\begin{eqnarray}\label{unify}
\begin{array}{c}
KK_{\I\I_b}=KK_{\I\I_b}(r_1,r_2) \cup_{KK_{\I\I_b}{r_2,r_2)}} KK_{\I\I_b}(r_2,r_1), \\
r_1 + r_2 = r; \quad r_1 = -1 \pmod{4}; r_2 = -1 \pmod{4}.
\end{array}
\end{eqnarray}
In the case $r_1=r_2=r$ we get $KK_{\I\I_b}=KK_{\I\I_b}(r,r)$. 
On the polyhedron $KK_{\I\I_b}$ there is the involution
$$\chi: KK_{\I\I_b} \to KK_{\I\I_b},$$
which is defined by the permutation of the coordinates $(\x,\y) \mapsto (\y,\x)$. 
This involution is the diagonal of the transformation $T^{int}$,
Constructions of the subsection \ref{propr} are invariant with respect to the involution $\chi$.

Petr Akhmetiev, Troitsk, IZMIRAN \ pmakhmet@mail.ru

\end{document}